\theoremstyle{plain}
\numberwithin{figure}{section}
\newcommand{\Addresses}{{
  \bigskip
  \footnotesize

  James Farre, \textsc{Department of Mathematics, Yale University}\par\nopagebreak
  \textit{E-mail address}: \texttt{james.farre@yale.edu}
  }}
\theoremstyle{plain}
\newlength{\dhatheight}
\theoremstyle{theorem}
\newtheorem{theorem}{Theorem}[section]
\newtheorem{proposition}[theorem]{Proposition}
\newtheorem{observation}[theorem]{Observation}
\newtheorem{question}[theorem]{Question}
\newtheorem{conjecture}[theorem]{Conjecture}
\newtheorem{definition}[theorem]{Definition}
\newtheorem{fact}[theorem]{Fact}
\newtheorem{corollary}[theorem]{Corollary}
\newtheorem{lemma}[theorem]{Lemma}
\newtheorem{remark}[theorem]{Remark}
\newcommand{\para}[1]{\medskip\noindent\textbf{#1.}}
\newcommand{\G}{\PSL_2 \mathbb{C}}
\newcommand{\g}{\Gamma}
\renewcommand{\to}[1][]{\xrightarrow{\ #1\ }}
\newcommand{\bR}{\mathbb{R}}
\newcommand{\bC}{\mathbb{C}}
\newcommand{\bQ}{\mathbb{Q}}
\newcommand{\bZ}{\mathbb{Z}}
\newcommand{\bN}{\mathbb{N}}
\newcommand{\bH}{\mathbb{H}}
\newcommand{\cC}{\mathcal{C}}
\newcommand{\cE}{\mathcal{E}}
\newcommand{\cG}{\mathcal{G}}
\DeclareMathOperator{\tr}{tr}
\newcommand{\cT}{\mathcal{T}}
\newcommand{\cH}{\mathcal{H}}
\newcommand{\cN}{\mathcal N}
\newcommand{\cEL}{\mathcal{EL}}
\newcommand{\teich}{\mathscr{T}}
\newcommand{\inverse}{^{-1}}
\newcommand{\Hb}[1]{\textnormal{H}_{\textnormal{b}}^{#1}}
\newcommand{\Hbr}[1]{\overline{\textnormal{H}}_{\textnormal{b}}^{#1}}
\newcommand{\Hcb}[1]{\textnormal{H}_{\textnormal{cb}}^{#1}}
\newcommand{\Cb}[1]{\textnormal{C}_{\textnormal{b}}^{#1}}
\newcommand{\Cc}[1]{\textnormal{C}_{#1}}
\newcommand{\Ccc}[1]{\textnormal{C}^{#1}}
\newcommand{\HH}{\textnormal{H}}
\newcommand{\CP}{\mathbb {CP}}
\DeclareMathOperator{\axis}{axis}
\DeclareMathOperator{\Mod}{Mod}
\DeclareMathOperator{\PSL}{PSL}
\DeclareMathOperator{\cen}{center}
\DeclareMathOperator{\interior}{int}
\DeclareMathOperator{\Isom}{Isom}
\DeclareMathOperator{\smear}{smear}
\DeclareMathOperator{\Vol}{Vol}
\DeclareMathOperator{\vol}{vol}
\DeclareMathOperator{\Hom}{Hom}
\DeclareMathOperator{\area}{Area}
\DeclareMathOperator{\im}{im}
\DeclareMathOperator{\inj}{inj}
\DeclareMathOperator{\str}{str}
\DeclareMathOperator{\Out}{Out}
\DeclareMathOperator{\supp}{supp}
\title{Borel and volume classes for dense representations of discrete groups}
\author{James Farre}
\begin{document}

\begin{abstract}
We show that the bounded Borel class of any dense representation $\rho: G\to \PSL_n\bC$ is non-zero in degree three bounded cohomology and has maximal semi-norm, for any discrete group $G$.  When $n=2$, the Borel class is equal to the $3$-dimensional hyperbolic volume class.  Using tools from the theory of Kleinian groups, we show that the volume class of a dense representation $\rho: G\to \PSL_2\bC$ is uniformly separated in semi-norm from any other representation $\rho': G\to \G$ for which there is a subgroup $H\le G$ on which $\rho$ is still dense but $\rho'$ is discrete or indiscrete but stabilizes a point, line, or plane in $\bH^3\cup \partial \bH^3$.   We exhibit a family of dense representations of a non-abelian free group on two letters and a family of discontinuous dense representations of $\PSL_2\bR$, whose volume classes are linearly independent and satisfy some additional properties; the cardinality of these families is that of the continuum.  We explain how the strategy employed may be used to produce non-trivial volume classes in higher dimensions, contingent on the existence of a family of hyperbolic manifolds with certain topological and geometric properties.  
\end{abstract}
\maketitle
\vspace{-1cm}
\section{Introduction}
The bounded cohomology of discrete groups admits an almost entirely algebraic description, but many bounded classes are most naturally understood in the context of the geometry of non-positively curved metric spaces.  
Isometric actions of discrete groups on quasi-trees and Gromov hyperbolic metric spaces are responsible for producing an abundance of non-trivial bounded classes in degree two.  
Conversely, bounded cohomology has been used as a tool to understand the space of isometric actions that a discrete group admits on, say, a non-compact symmetric space $X$. Broadly, we aim to understand to what extent bounded cohomology parameterizes such actions, including those  that are not covering actions or that factor through some other group.  
We narrow our focus and consider the setting that $\Isom^+(X) = \PSL_n\bC$, $n\ge 2$.  
In this paper, we will explain the extent to which bounded cohomology sees the different isometric actions of a discrete group $G$ with a dense orbit in $X$.   
Our investigation relies heavily on the existence of certain \emph{discrete} free subgroups of $\Isom^+(\bH^3) = \PSL_2\bC$.  
Indeed, we will study the geometry of a  complete hyperbolic $3$-manifold homeomorphic to a genus $2$ handlebody in order to show that many non-conjugate dense representations of $G$ yield different volume classes in $\Hb3 (G;\bR)$.

Immersed locally geodesic tetrahedra in a complete hyperbolic $3$-manifold $M$ lift to embedded geodesic tetrahedra in the universal cover $\bH^3$.  
We can measure the (signed) hyperbolic volume of such a tetrahedron, which is bounded above by $v_3=1.01494...$.  This rule defines a bounded cocycle, hence a class in the degree $3$ bounded cohomology of the manifold.  

The bounded cohomology ring is an invariant of the fundamental group $\pi_1(M)$  \cite{Gromov:bdd}, and any (other) action $\rho:\pi_1(M)\to \Isom^+(\bH^3)$  yields a  bounded class $[\rho^*\vol_3]\in \Hb 3(\pi_1(M);\bR)$, which is obtained from a cocycle that measures the volumes of geodesic tetrahedra with vertices in the orbit $\rho(\pi_1(M)).x$, where $x\in \bH^3\cup\partial \bH^3$. 

We say that a representation is \emph{geometrically elementary} if its image stabilizes a line, a totally geodesic plane, or an (ideal) point in $\bH^{3}\cup\partial\bH^{3}$. 

\begin{theorem}[Theorem \ref{thm:v3_norm} and Theorem \ref{thm:main}]\label{thm:intro_main} If $G$ is a discrete group and  $\rho: G \to \G$ is a dense representation, i.e. $\overline {\rho (G)} = \G$, then \[\|[\rho^*\vol_3]\|_\infty =v_3.\]
In particular $[\rho^*\vol_3]\not=0\in \Hb 3(G;\bR)$.  Moreover, if $\rho_0: G\to \G$ is any other representation and there is a subgroup $H\le G$ such that $\overline{\rho(H)} = \G$, but $\rho_0$ is geometrically elementary or discrete restricted to $H$, then \[\|[\rho^*\vol_3]-[\rho_0^*\vol_3]\|_\infty\ge v_3, \] and this bound is sharp.
\end{theorem}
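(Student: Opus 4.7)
The first assertion splits into an upper bound, which is immediate because $\rho^*\vol_3$ pointwise takes values in $[-v_3,v_3]$, and a matching lower bound. For the lower bound my plan is to invoke Monod's theorem: a continuous homomorphism with dense image into a locally compact second-countable group induces an isometric pullback on continuous bounded cohomology. Applied to the dense $\rho:G\to \G$, this reduces the problem to computing $\|[\vol_3]\|_\infty$ in $\Hcb 3(\G;\bR)$, which equals $v_3$ by a classical argument using regular ideal tetrahedra of maximal volume.

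For the second assertion, restriction $\Hb 3(G;\bR)\to \Hb 3(H;\bR)$ is norm-nonincreasing and intertwines the pullbacks along $\rho,\rho_0$, so it suffices to assume $G=H$, \ie that $\rho$ is itself dense. If $\rho_0(G)$ is geometrically elementary, I pick a basepoint on the line, plane, or ideal point stabilized by $\rho_0(G)$; then the cocycle $\rho_0^*\vol_3$ computed from this basepoint vanishes identically because every orbit tetrahedron is degenerate, so $[\rho_0^*\vol_3]=0$ and the desired inequality reduces to the first assertion.

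The substantive case is when $\rho_0|_G$ has discrete non-elementary image $\Gamma_0\le\G$, with quotient manifold $M_0=\bH^3/\Gamma_0$. Here my plan is to exploit the geometry of complete hyperbolic genus-$2$ handlebodies announced in the abstract: such handlebody representations $\sigma:F_2\to\G$ carry in their convex cores a flexible family of immersed locally geodesic tetrahedra whose volumes approach $v_3$, realized by fixed words $w_0,\dots,w_3$ in free generators $e_1,e_2$. Using density of $\rho$ I would select $h_1,h_2\in G$ with $\rho(h_i)$ close to $\sigma(e_i)$, so that the $4$-tuple $\bigl(w_0(h_1,h_2),\dots,w_3(h_1,h_2)\bigr)$ contributes a value near $v_3$ to $\rho^*\vol_3$. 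The freedom in the deformation space of such $\sigma$, together with the freedom in choosing each $h_i$ among the many elements of $G$ whose $\rho$-image lies in a fixed neighborhood of $\sigma(e_i)$, should then be used to arrange that the same $4$-tuple evaluated under $\rho_0$ yields a nearly degenerate configuration in $M_0$. The hard step, and the main obstacle, is upgrading this cochain-level discrepancy to a bound on the cohomology class: one must preclude a bounded coboundary absorbing the gap uniformly along a suitable family of such tests. I expect the rigidity and flexibility of handlebody representations of genus $2$, combined with the isometric embedding inherited from the dense case, to be the mechanism that achieves this. Finally, sharpness of $v_3$ in the second inequality is witnessed by any geometrically elementary $\rho_0$ (for instance, the trivial representation), for which $[\rho_0^*\vol_3]=0$ and the left side equals $v_3$ by the first assertion.
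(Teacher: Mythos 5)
Your first assertion rests on a theorem that is not available. There is no general result saying that a homomorphism of a \emph{discrete} group with dense image in a locally compact group induces an isometric (or even injective) pullback $\Hcb 3(\G;\bR)\to\Hb 3(G;\bR)$; results of this flavor exist in degree two (boundary-map/quasimorphism techniques) and for restriction to lattices (transfer), but a dense subgroup admits no transfer map, and no degree-three analogue of the boundary-map norm computations is known. Establishing exactly this isometry for the volume class is the content of Theorem \ref{thm:v3_norm} and is the main point of the paper, so invoking it as a black box is circular. Note also that for $G=F_2$ one has $\HH_3(F_2;\bR)=0$, so no lower bound on $\|[\rho^*\vol_3]\|_\infty$ can be extracted by pairing with cycles; the actual proof instead builds quasi-cycles --- Soma's $(v_3-\epsilon)$-efficient straight chains with uniformly bounded boundary in a geometrically infinite handlebody (Lemma \ref{manifold chains}, Proposition \ref{group chains}, Lemma \ref{to infinity}) --- transports them to $G$ using density via the approximation scheme (Proposition \ref{approximation of chains}), and concludes with the duality estimate of Lemma \ref{scheme}. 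None of this is replaced by anything in your outline.

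For the second assertion, your treatment of the geometrically elementary case and of sharpness is fine, but the substantive case ($\rho_0$ discrete on $H$) is only a heuristic, and you yourself identify the unresolved step: nothing in your sketch prevents a bounded coboundary from absorbing the cochain-level discrepancy you engineer, and no mechanism is offered to rule this out. The paper avoids this difficulty by a different route: using Breuillard--Gelander it first replaces $H$ by a rank-two free subgroup on which $\rho$ is dense and faithful, and then (Lemma \ref{lem:technical} and Proposition \ref{prop:dense_not}, via residual finiteness, the Tameness and Covering Theorems, and a Margulis-lemma argument when $\rho_0$ is not faithful) produces a further rank-two subgroup $\hat H$ on which $\rho$ is still dense while $\rho_0|_{\hat H}$ is geometrically finite or elementary. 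Then $[(\rho_0\circ i)^*\vol_3]=0$ by Theorem \ref{soma's theorem} (or Lemma \ref{lem:geom_elem}), $\|[(\rho\circ i)^*\vol_3]\|_\infty=v_3$ by the first part, and the inequality follows because restriction to $\hat H$ is norm non-increasing. This subgroup-passing argument is the missing idea in your proposal; also note your sketch tacitly assumes $\rho_0|_H$ is faithful, whereas the statement only assumes it is discrete, and the non-faithful case requires the separate kernel analysis of Proposition \ref{prop:dense_not}, Case 2.
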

We consider in Section \ref{question}, for a dense representation $\rho: F_2\to \G$, the collection of \emph{$\rho$-dense subgroups} 
\[\mathcal {DS}(\rho) = \{H\le_{f.g.} F_2: \overline {\rho(H)} = \PSL_2\bC\}, \] where $H\le _{f.g.}F_2$ means that $H$ is a finitely generated subgroup of $F_2$.
If $\rho_1,\rho_2\in \Hom(F_2, \G)$ are conjugate, it is clear that $\mathcal{DS}(\rho_1)=\mathcal{DS}(\rho_2)$, while
if $\rho_1$ and $\rho_2$ are dense and $\mathcal{DS}(\rho_1)\not=\mathcal{DS}(\rho_2)$, then Theorem \ref{thm:intro_main} together with Lemma \ref{lem:geom_elem} implies that $\|[\rho_1^*\vol_3]-[\rho_2^*\vol_3]\|_\infty\ge v_3$.  
However, we have not been able to construct examples of non-conjugate dense representations $\rho_1$ and $\rho_2$ such that $\mathcal {DS}(\rho_1) = \mathcal {DS}(\rho_2)$.  
It is possible that $\mathcal {DS}(\rho)$ is a complete invariant of the $\G$ conjugacy class of a dense representation $\rho: F_2 \to \G$; by Theorem \ref{thm:intro_main},  $[\rho^*\vol_3]$ would then be a complete invariant of the $\G$ conjugacy class of a dense $\rho$; see Section \ref{question}.  

Suppose  $M$ is a hyperbolic $3$-manifold of finite volume and $\rho: \pi_1(M)\to \G$; the \emph{volume of $\rho$} is a numerical invariant that can be obtained by pairing the \emph{bounded fundamental class} or \emph{volume class} $[\rho^*\vol_3] \in \Hb 3(\pi_1(M);\bR)$ of the representation with a (relative) fundamental cycle of $M$.  
This numerical invariant has been studied from this perspective in \cite{BBI:mostow}, where Bucher, Burger, and Iozzi show that when the maximal volume for a representation is achieved, the representation must be conjugated by an isometry to the lattice embedding $\pi_1(M)\hookrightarrow \G$.   
We refer to this kind of result informally as a \emph{volume rigidity} result.   Theorem \ref{thm:intro_main} explains that, for a dense representation, while the volume of $\rho$ is non-maximal, the {\it volume class} of that representation has maximal semi-norm.

Dunfield \cite{Dunfield:volume_rigidity} proved a  volume rigidity theorem for closed hyperbolic 3-manifolds following an observation of Goldman \cite{Goldman:maximal} about  Gromov and Thurston's proof of Mostow's famous rigidity theorem.   Francaviglia \cite{Francaviglia:volume_rigidity} and Klaff \cite{Klaff:volume_rigidity} proved  a  volume rigidity theorem for finite volume hyperbolic manifolds, using the notion of \emph{pseudo-developing maps} defined in \cite{Dunfield:volume_rigidity}.   While Dunfield's definition of the volume of a representation via pseudo-developing maps {\it a priori} depends on choices, it turns out to be an invariant of the representation \cite[Theorem 1.1]{Francaviglia:volume_rigidity}.  The interested reader is directed toward \cite{BBI:mostow} for more history of volume rigidity results.

One advantage of using bounded cohomology to define the volume of a representation, is that no choices are made.  We sometimes see that the pullback of a continuous bounded cohomology class {\it itself} can characterize a representation, up to conjugation. For example, \cite[Theorem 3]{BIW:hermitian} states that the pullback of the bounded K\"ahler class is a complete invariant of continuous actions of a broad class of groups on an irreducible Hermitian symmetric space that is not of tube type.  This result fails miserably when the Hermitian symmetric space is of tube type.  For example, $\bH^2$ is such a space, and the bounded K\"ahler class in $\Hcb 2(\PSL_2\bR;\bR)$ is a multiple of the two dimensional hyperbolic volume class (also the Euler class).  For any two discrete and faithful representations $\rho, \rho':\pi_1(S)\to \PSL_2\bR$ of a closed surface group, we have $[\rho^*\vol_2] = \pm[{\rho'}^*\vol_2]$; see, e.g. \cite[Lemme 3.10]{Barge-Ghys:bddsurface}.  The space of $\PSL_2\bR$-conjugacy classes of discrete and faithful representations of a closed hyperbolic surface group is a union of two high dimensional cells that can each be identified with the Teichm\"uller space of that surface. 

We are most interested in hyperbolic manifolds with \textit{infinite volume}, and we work directly with volume classes in bounded cohomology.  For example, let $S$ be a finite type oriented surface with negative Euler characteristic.  We say that two representations $\rho_1, \rho_2: \pi_1(S)\to \G$ are \emph{quasi-isometric} if there is a $(\rho_1, \rho_2)$-equivariant quasi-isometry $\bH^{3}\to \bH^{3}$.  The following quasi-isometric volume rigidity theorem sees a combination of both of the  phenomena in the previous paragraph.  

\begin{theorem}[Theorem \ref{thm:rigidity}]\label{thm:intro_old_work} 
There exists a constant $\epsilon = \epsilon(S)$ such that the following holds.  Suppose that $\rho_0: \pi_1(S)\to \G$ is a discrete and faithful representation without parabolic elements, and that $[\rho_{0}^{*}\vol_{3}]\not=0$.  If $\rho: \pi_1(S) \to \G$ is any other representation without parabolics satisfying \[\|[\rho_0^*\vol_3]- [\rho^*\vol_3]\|_\infty<\epsilon,\]
then $\rho$ is discrete and faithful, and $\rho$ is quasi-isometric to $\rho_0$.  
If $\rho_0$ is totally degenerate, then $\rho_0$ and $\rho$ are conjugate in $\G$.  
\end{theorem}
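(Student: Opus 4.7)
The plan is to establish a trichotomy on $\rho$ -- discrete, dense, or geometrically elementary -- and use Theorem \ref{thm:intro_main} together with semi-norm estimates to funnel $\rho$ into the discrete case. The trichotomy is exhaustive because any indiscrete subgroup of $\G$ has closure equal to a proper positive-dimensional closed Lie subgroup -- which must stabilize a point, line, or plane in $\bH^{3}\cup\partial\bH^{3}$ -- or equal to all of $\G$.

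If $\rho$ is dense, applying Theorem \ref{thm:intro_main} with $H=\pi_{1}(S)$ (on which $\rho_{0}$ is discrete) directly gives $\|[\rho^{*}\vol_{3}]-[\rho_{0}^{*}\vol_{3}]\|_{\infty} \ge v_{3}$. If instead $\rho$ is geometrically elementary, then picking a basepoint on the stabilized point, line, or plane places the entire $\rho$-orbit inside a geodesic subset of dimension at most two, so the pulled-back volume cocycle is identically zero and $[\rho^{*}\vol_{3}]=0$. In that case $\|[\rho^{*}\vol_{3}]-[\rho_{0}^{*}\vol_{3}]\|_{\infty} = \|[\rho_{0}^{*}\vol_{3}]\|_{\infty}$, and I would invoke known lower bounds on the semi-norm of non-vanishing volume classes of non-parabolic discrete faithful surface group representations to bound this below by a positive constant depending only on $S$. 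Taking $\epsilon(S)$ smaller than both thresholds rules out the last two cases and forces $\rho$ to be discrete.

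Once $\rho$ is discrete, tameness (Agol, Calegari--Gabai) realizes $\rho$ as the holonomy of a hyperbolic $3$-manifold $N_{\rho}$ homotopy equivalent to $S$. To conclude faithfulness and quasi-isometry to $\rho_{0}$, I would combine the Ending Lamination Theorem of Minsky and Brock--Canary--Minsky with a continuity principle relating the volume class to the parabolic locus and ending lamination data of $N_{\rho}$. The continuity can be extracted from pleated surface representatives of the volume cocycle together with the model manifold technology: small perturbation of the volume class in semi-norm translates into small perturbation of end invariants. Since both representations have empty parabolic locus, the resulting match of ending laminations up to $\Mod(S)$ yields the claimed quasi-isometry, and simultaneously rules out non-faithful $\rho$, whose end invariants would be incompatible with those of $\rho_{0}$.

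Finally, in the totally degenerate case, both ending laminations of $\rho_{0}$ are filling and the matched ending laminations of $\rho$ coincide with them; the Double Limit Theorem together with the Ending Lamination Theorem then pin down $\rho_{0}$ and $\rho$ up to $\G$-conjugacy from their ending lamination data, forcing them to be conjugate in $\G$. The principal obstacle is the continuity step above -- producing quantitative control of end invariants from small semi-norm perturbation of the volume class -- which is the technical core of the argument and the place where geometric input about pleated surfaces in $N_{\rho}$ meets the algebraic semi-norm on $\Hb{3}(\pi_{1}(S);\bR)$.
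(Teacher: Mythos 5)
Your first half tracks the paper: the trichotomy discrete/dense/geometrically elementary (this is exactly Lemma \ref{lem:geom_elem}), ruling out the dense case via Theorem \ref{thm:intro_main} with $H=\pi_1(S)$, and ruling out the elementary case because $[\rho^*\vol_3]=0$ would force $\|[\rho_0^*\vol_3]\|_\infty<\epsilon$, contradicting Soma's Theorem \ref{soma's theorem}, which gives the exact value $v_3$ (not just ``a positive constant depending on $S$''). Up to that point you are doing what the paper does.

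The genuine gap is the step from ``$\rho$ is discrete'' to ``$\rho$ is faithful and quasi-isometric to $\rho_0$.'' You propose to manufacture a ``continuity principle'' -- small semi-norm perturbation of the volume class gives small perturbation of end invariants -- from pleated surfaces and the model manifold, and you explicitly leave this as the unresolved technical core. But this is precisely the content the theorem rests on, and it is not something to be sketched: in the paper it is Theorem \ref{old work}, i.e.\ the author's prior results (\cite{Farre:bdd}, Theorems 1.2 and 1.3, combined with the Ending Lamination Theorem, the bi-Lipschitz model, and Sullivan rigidity), and the constant $\epsilon(S)=\min\{\epsilon'/2,v_3\}$ is inherited from that prior work -- your $\epsilon$ cannot even be specified without it. Moreover your formulation is off in two ways. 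First, what is true (and needed) is not approximate control of end invariants but that below a uniform threshold the geometrically infinite end invariants of $M_\rho$ and $M_{\rho_0}$ coincide exactly; ``small perturbation of ending laminations'' has no useful meaning here. Second, faithfulness does not follow from ``incompatible end invariants'': if $\rho$ is discrete but not faithful, $\bH^3/\im\rho$ is not homotopy equivalent to $S$, so your end-invariant comparison is not even set up; faithfulness is exactly the ``moreover'' clause of Theorem \ref{old work} (\cite[Theorem 1.3]{Farre:bdd}) and must be invoked or proved, not asserted. You also do not address the geometrically finite ends when producing the equivariant quasi-isometry (the paper extends a bi-Lipschitz map over the flaring parts of the convex-core complements), and in the totally degenerate case the Double Limit Theorem is irrelevant -- matching end invariants plus the Ending Lamination Theorem \ref{thm:ELT} alone give conjugacy. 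As written, the proposal reduces the theorem to an unproven claim that is in fact the deepest ingredient.
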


Theorem \ref{thm:intro_old_work} is proved by combining Theorem \ref{thm:intro_main} with previous work of the author, the classification of finitely generated marked Kleinian groups,  and a theorem of Soma (Theorem \ref{soma's theorem}).  We provide a detailed proof of Theorem \ref{thm:intro_old_work} in Section \ref{sec:qi_classification} after establishing definitions and some background on the geometry and topology of  tame hyperbolic $3$-manifolds of infinite volume. We also elaborate on the role of parabolic cusps and the geometric meaning of the quasi-isometric equivalence relation, therein.  We note that quasi-isometric equivalence of discrete and faithful representations of finitely generated torsion free Kleinian groups is equivalent to the existence of a \emph{volume preserving} bi-Lipschitz homeomorphism of quotient manifolds $\bH^3/\im\rho \to\bH^3/\im \rho_0$ inducing $\rho_0\circ\rho\inverse$ on fundamental groups. 
 
Recently, Bucher, Burger, and Iozzi proved a volume rigidity result for representations of finite volume hyperbolic $3$-manifold groups into $\PSL_n\bC$ with respect to the so-called \emph{Borel invariant} of a representation, defined in \cite{BBI:borel}.  To do so, they computed the continuous bounded cohomology $\Hcb3 (\PSL_n\bC;\bR)$, which is generated by a single class $\beta_n$, called the \emph{bounded Borel class}.  They also computed the semi-norm of the bounded Borel class and how it behaves under various natural inclusions $\PSL_k\bC \hookrightarrow \PSL_n\bC$, $k\le n$ (see Section \ref{borel}).  The bounded Borel class generalizes hyperbolic volume in the sense that $\beta_2 = [\vol_3]$.  
Due to the work of \cite{BBI:borel}, most of the  argument that proves Theorem \ref{thm:intro_main} generalizes to dense representations into $\PSL_n\bC$.
\begin{corollary}[Theorem \ref{Borel:main} and Corollary \ref{cor:Borel_main}]\label{cor:intro_borel}
Let $G$ be a discrete group and $\rho: G \to \PSL_n\bC$ be dense.  Then \[\|\rho^*\beta_n \|_\infty = v_3 \frac{n(n^2-1)}{6}.\]
Suppose that  $\rho_0 : G \to \PSL_2\bC$ is such that there exists a subgroup $H\le G$ such that $\overline{\rho(H)} = \PSL_n\bC$ and $\rho_0$ is  geometrically elementary or  discrete and faithful restricted to $H$.  Then \[\|\rho^*\beta_n - (\iota_k\circ \rho_0)^*\beta_k \|_\infty \ge v_3 \frac{n(n^2-1)}{6},\] for all $ k\ge 2$, where $\iota_k: \G\to \PSL_n\bC$ is the (unique up to conjugation) irreducible representation. 
\end{corollary}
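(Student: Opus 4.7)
My plan is to split the proof into the norm equality (the first part of the Corollary) and the separation estimate (the second part), leveraging the results from \cite{BBI:borel} to reduce as much as possible to the $\G$-level Theorem \ref{thm:intro_main}.

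For the norm equality $\|\rho^*\beta_n\|_\infty = v_3 n(n^2-1)/6$, the upper bound is immediate, combining the general fact that pullback is norm non-increasing on bounded cohomology with the Bucher--Burger--Iozzi computation $\|\beta_n\|_\infty = v_3 n(n^2-1)/6$.  For the matching lower bound, I would invoke the standard principle that a homomorphism $\rho: G \to H$ from a discrete group into a locally compact group with dense image induces an \emph{isometric} map $\rho^*: \Hcb{\bullet}(H;\bR) \to \Hb{\bullet}(G;\bR)$ on continuous bounded cohomology, realizing both cohomologies via a common $L^\infty$-resolution (for example on the Furstenberg boundary of $\PSL_n\bC$) on which density of the image preserves essential supremum of cocycle representatives.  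Since $\beta_n$ generates $\Hcb 3(\PSL_n\bC;\bR)$, this forces $\|\rho^*\beta_n\|_\infty = \|\beta_n\|_\infty$.

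For the separation $\|\rho^*\beta_n - (\iota_k \circ \rho_0)^*\beta_k\|_\infty \ge v_3 n(n^2-1)/6$, I would first pass to $H$ via the norm non-increasing restriction $\Hb 3(G;\bR)\to\Hb 3(H;\bR)$, and then apply the Bucher--Burger--Iozzi identity expressing $\iota_k^*\beta_k$ as an explicit multiple of the volume class $[\vol_3] \in \Hcb 3(\G;\bR)$, so that $(\iota_k\circ\rho_0|_H)^*\beta_k$ becomes an explicit multiple of $(\rho_0|_H)^*[\vol_3] \in \Hb 3(H;\bR)$.  In the geometrically elementary case this class vanishes: the image of $\rho_0|_H$ lies in an amenable stabilizer (for an ideal point or a line, whose positive-degree bounded cohomology vanishes) or in a conjugate of $\PSL_2\bR$ (for a totally geodesic plane, on which the degree-three volume class vanishes).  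The estimate then reduces to the norm equality applied to $H$.  In the discrete case, I would adapt the geometric cochain construction underlying Theorem \ref{thm:intro_main}, using the companion identity $\iota_2^*\beta_n = c_n [\vol_3]$ with $c_n = n(n^2-1)/6$ to transport the argument from ideal-simplex volumes in $\bH^3$ to Borel-cocycle values on flag configurations in $\bC^n$.

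The principal obstacle is the discrete case, where both $\rho^*\beta_n$ and the rescaled $(\rho_0|_H)^*[\vol_3]$ can carry substantial semi-norm, so the separation depends on their geometric transversality rather than on the magnitude of either term.  The heart of Theorem \ref{thm:intro_main} --- exploiting density of $\rho|_H$ in $\G$ to realize ideal simplices of volume close to $v_3$ on which the discrete faithful $\rho_0|_H$-cocycle stays uniformly small --- must be upgraded so that density of $\rho|_H$ in $\PSL_n\bC$ produces flag configurations with Borel invariant close to $v_3 n(n^2-1)/6$, while the $(\iota_k\circ\rho_0|_H)$-cocycle, concentrated on $\iota_k$-image configurations, remains controllable.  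Verifying these cocycle-level estimates in the higher-dimensional symmetric space is where the bulk of the technical work lies, and the functoriality provided by \cite{BBI:borel} should make this feasible by reducing to the geometric inputs already used in the $\G$-case.
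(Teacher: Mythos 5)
Your reduction of the upper bound and of the geometrically elementary case is fine, but both halves of your plan rest on steps that are not available.

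For the norm equality, the ``standard principle'' you invoke --- that a dense representation of a discrete group induces an \emph{isometric} pullback $\Hcb 3(\PSL_n\bC;\bR)\to \Hb 3(G;\bR)$ via a common $L^\infty$-resolution on the Furstenberg boundary --- is not a theorem you can cite, and in degree three it cannot be made to work this way.  The complex $L^\infty((\PSL_n\bC/P)^{\bullet+1})$ computes $\Hb\bullet(G;\bR)$ isometrically only if the $G$-action on the flag variety with the Lebesgue measure class is amenable; if it were, then (since a measurable function invariant under a dense subgroup is invariant under the whole group, by weak-$*$ continuity of the action) one would get $\Hb\bullet(G;\bR)\cong\Hcb\bullet(\PSL_n\bC;\bR)$ isometrically in every degree, contradicting already in degree two the infinite-dimensionality of $\Hb2(F_2;\bR)$ for a dense free subgroup, and contradicting the continuum of independent degree-three classes produced in this paper.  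Equivariant boundary maps give only norm non-increasing upper bounds in degree three; the degree-two situation (where every class has a canonical norm-realizing boundary representative by double ergodicity) does not persist.  The lower bound is in fact the main content here: it is proved by producing explicit $(v_3-\epsilon)$-efficient one-vertex chains from a geometrically infinite Kleinian free group (Soma's smearing construction, Proposition \ref{group chains} and Lemma \ref{to infinity}), transporting them through the Veronese embedding via Proposition \ref{prop:borel_values} and the partial continuity of $B_n^F$ on generic flag configurations (Lemma \ref{lem:borel_cont}), approximating them inside $\rho(G)$ by density (Corollary \ref{Borel approximation}), and applying Lemma \ref{scheme}.

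For the separation estimate, your reduction to $H$ via $\iota_k^*\beta_k=\frac{k(k^2-1)}{6}[\vol_3]$ is correct, but in the case where $\rho_0|_H$ is discrete and faithful you are missing the decisive step, and the replacement you sketch is not the mechanism that makes the argument work.  The paper does not attempt to control the values of the $\rho_0$-cocycle on the near-maximal configurations produced by density of $\rho$ (your ``geometric transversality''); if $\rho_0|_H$ is geometrically infinite its volume class itself has norm $v_3$, so no cocycle-level smallness is available and no triangle inequality helps.  Instead one passes to a further rank-two free subgroup $i:\hat H\to H$ on which $\rho$ is \emph{still} dense and faithful (using \cite[Theorem 1.1]{BG:dense} together with the Margulis-type argument of Proposition \ref{prop:dense_not}) but on which $\rho_0$ becomes geometrically finite --- this is Lemma \ref{lem:technical}, whose proof needs the Tameness and Covering Theorems --- so that Soma's Theorem \ref{soma's theorem} gives $[(\rho_0\circ i)^*\vol_3]=0$, hence $(\iota_k\circ\rho_0\circ i)^*\beta_k=0$, and the bound follows from the norm equality on $\hat H$ plus the fact that $i^*$ is semi-norm non-increasing.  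Without this subgroup-passing idea (or a worked-out substitute for it), the discrete case of your proposal remains an open gap.
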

\begin{remark}
Compare the hypotheses of the second statements in Theorem \ref{thm:intro_main} to those in the second statement of Corollary \ref{cor:intro_borel}.  In Corollary \ref{cor:intro_borel} we assume that $\rho_0$ is {\it faithful} in addition to being discrete.  This is because it is easy to construct somewhat explicit dense representations of a free group $F_2$ on two letters to $\G$.  From this, we obtain the additional control needed to remove the extra hypothesis and prove Theorem \ref{thm:intro_main}; see Proposition \ref{prop:dense_not}, Case 2.
\end{remark}

 We will  consider the \emph{reduced bounded cohomology} $\Hbr 3(G;\bR) = \Hb 3(G;\bR)/\mathcal Z$, where $\mathcal Z$ is the subspace of zero-norm bounded cohomology.  The reduced space $\Hbr 3(G;\bR)$ is a Banach space with respect to the quotient norm. 
 Our techniques apply to families of dense representations satisfying some technical condition on subgroups, as in Theorem \ref{thm:intro_main}.
 \begin{theorem}[Theorem \ref{ell 1}]\label{thm:norm}Suppose $\{\rho_i:G\to \G\}_{i = 1}^N$ are dense representations.  If there are subgroups $H_i\le G$ such that $\overline{\rho_i(H_i)} = \PSL_2\bC$   but $\rho_i|_{H_j}$ is geometrically elementary or discrete for $i\not=j$, then for any $a_1, ..., a_N\in \bR$, we have
 \[\left\|\sum_{i = 1}^N a_i[\rho_i^*\vol_3]\right\|_\infty \ge\max\{|a_i|\}\cdot v_3.\]
 Consequently, $\{[\rho_i^*\vol_3]: i = 1,2, ..., N \}\subset \Hbr 3(G;\bR)$ is a linearly independent set.
\end{theorem}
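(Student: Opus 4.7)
The plan is to reduce the multi-representation estimate to the binary separation in Theorem \ref{thm:main} by restriction.  Let $k$ be an index with $|a_k| = \max_i |a_i|$.  Restriction to a subgroup is norm non-increasing on $\Hb 3$, so
\[
  \left\| \sum_i a_i [\rho_i^* \vol_3] \right\|_\infty \ge \left\| \sum_i a_i [\rho_i|_{H_k}^* \vol_3] \right\|_\infty.
\]
Dividing by $a_k$ (and, if necessary, replacing $\vol_3$ by $-\vol_3$) we may assume $a_k = 1$, so the task reduces to the following strengthening of Theorem \ref{thm:main}: for a dense representation $\rho: H \to \G$ and any finite family $\rho_1, \dots, \rho_m: H \to \G$ of geometrically elementary or discrete representations, and any scalars $b_i \in [-1, 1]$,
\[
  \Big\| [\rho^* \vol_3] + \sum_i b_i\, [\rho_i^* \vol_3] \Big\|_\infty \ge v_3.
\]

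For each geometrically elementary $\rho_i$, I would choose the basepoint defining the cocycle $\rho_i^* \vol_3$ to lie on the stabilized ideal point, geodesic endpoint, or boundary circle of the stabilized plane.  Then the entire orbit of the basepoint sits on a set (a point, a two-point set, or a round circle on $\partial \bH^3$) on which every four points are coplanar in $\bH^3$; hence the cocycle is identically zero and $[\rho_i^* \vol_3] = 0$ in $\Hb 3(H; \bR)$.  So after relabeling we may assume the remaining $\rho_i$ are all discrete, and the task is to separate a dense volume class from any bounded linear combination of discrete volume classes.

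The proof of Theorem \ref{thm:main} exploits the handlebody geometry of the earlier sections to construct, for the dense $\rho$, a sequence of bounded chains $c_n \in C_3(H)$ with $\|c_n\|_1 \le 1$ such that $\langle c_n, \rho^* \vol_3 \rangle \to v_3$ while $\langle c_n, \rho_0^* \vol_3 \rangle \to 0$ for any single discrete $\rho_0$.  The main, and I expect only, obstacle is to arrange a single sequence $c_n$ whose pairings with all finitely many classes $[\rho_i^* \vol_3]$ tend to zero simultaneously.  The plan is to inspect the binary construction directly: the configurations of group elements used to build $c_n$ are drawn from an open dense family whose $\rho$-images approximate regular ideal tetrahedra of volume near $v_3$, and for each fixed discrete $\rho_i$ the configurations on which $\langle c_n, \rho_i^* \vol_3 \rangle$ fails to be small form a nowhere dense closed condition on this family.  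A finite union of nowhere dense closed sets is still nowhere dense, and the density of $\rho$ furnishes configurations avoiding all of them simultaneously, producing the required $c_n$.

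Finally, the linear independence of $\{[\rho_i^* \vol_3]\}$ in $\Hbr 3(G; \bR)$ is immediate from the semi-norm bound: if $\sum_i a_i [\rho_i^* \vol_3]$ vanished in the reduced space, its semi-norm would be zero, forcing $\max_i |a_i| \cdot v_3 \le 0$ and hence $a_i = 0$ for every $i$.
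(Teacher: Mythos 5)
Your reduction to the index $k$ with $|a_k|=\max_i|a_i|$ and the final deduction of linear independence are fine, and killing the geometrically elementary restrictions by a choice of basepoint is exactly Lemma \ref{lem:geom_elem}. The gap is in how you handle the discrete restrictions $\rho_i|_{H_k}$. You assert that the proof of Theorem \ref{thm:main} produces chains $c_n$ on $H_k$ pairing close to $v_3$ against $\rho_k^*\vol_3$ while pairing close to $0$ against any single discrete $\rho_0^*\vol_3$, and that the only remaining issue is simultaneity over finitely many $\rho_i$. But the paper never establishes any such chain-level control against a discrete class, and it cannot be expected in general: a discrete, faithful, geometrically infinite restriction has $\|[\rho_i|_{H_k}^*\vol_3]\|_\infty=v_3$ by Soma's theorem, so there is no a priori reason its pairing with the efficient chains built from the dense $\rho_k$ should be small. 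Your proposed fix, that for each fixed discrete $\rho_i$ the ``bad'' configurations form a nowhere dense closed set so that a Baire-type argument handles finitely many at once, is unsupported: the chains are built by choosing elements $g\in G$ whose $\rho_k$-images approximate prescribed isometries, and among all such $g$ the images $\rho_i(g)$ are completely unconstrained, since $G$ is discrete and the only topology in play is on the $\rho_k$-images. There is no identified space in which both conditions are open/closed so that a finite intersection of ``good'' sets is nonempty.

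What the paper actually does at this point is pass to further subgroups rather than control pairings: starting from a rank-two free subgroup of $H_1$ on which $\rho_1$ is dense and faithful (via \cite{BG:dense}), it applies Proposition \ref{prop:dense_not} once for each discrete $\rho_j$, producing a nested chain of rank-two free subgroups $H_1^{(N)}\le\cdots\le H_1^{(1)}$ on which $\rho_1$ stays dense while each $\rho_j$ becomes geometrically finite or geometrically elementary, hence has vanishing volume class. Proposition \ref{prop:dense_not} is the substantive input, and it rests on Lemma \ref{lem:technical} (a finite-index subgroup all of whose rank-two free subgroups are geometrically finite under the discrete representation), which uses the Tameness and Covering Theorems. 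After this restriction the sum collapses to $a_1[(\rho_1\circ\iota_1^{(N)})^*\vol_3]$, whose norm is $|a_1|v_3$ by Theorem \ref{thm:v3_norm}, and norm non-increase of restriction finishes the proof. So your outer skeleton matches the paper, but the core step separating the dense class from the discrete ones is missing; to repair your argument you would need to prove (or invoke) Proposition \ref{prop:dense_not} and iterate it, rather than argue at the level of the chains themselves.
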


In Section \ref{constructions}, we construct families of representations that satisfy the hypotheses of Theorem \ref{thm:norm}.  More specifically, for a non-abelian free group $F_2$ on two letters and every $\theta\in (0,1)$, we construct a representation $\rho_\theta: F_2 \to \G$; any finite rationally independent set $\Lambda'\subset (0,1)$  yields a family $\{\rho_\theta\}_{\theta\in \Lambda'}$ that satisfies the hypotheses of Theorem \ref{thm:norm}.  In the first line of the following corollary, we invoke the axiom of choice.
\begin{corollary}[Theorem \ref{uncountable family}]\label{dim cor}
Let $\Lambda\subset (0,1)$ be such that $\Lambda\cup\{1\}$ is a basis for $\bR$ as a $\bQ$-vector space, and let $\{\rho_\theta\}_{\theta\in\Lambda}$ be the dense representations constructed in Section \ref{constructions}.
The map \[
\begin{aligned}
\Lambda &\to \Hbr3(F_2)\\
\theta&\mapsto  [\rho_\theta^*\vol_3]
\end{aligned}\]  is injective with discrete image, and $\{[\rho_\theta^*\vol_3]: \theta\in \Lambda\}$ is a linearly independent set.  In particular, $\dim_\bR \langle [\rho_\theta^*\vol_3]: \theta\in \Lambda\rangle_\bR = \#\bR$.
\end{corollary}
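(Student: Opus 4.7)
The plan is to reduce all four assertions of the corollary --- $\bR$-linear independence, injectivity, discreteness of the image, and the dimension computation --- to finitely many applications of Theorem \ref{thm:norm}, together with a small cardinality argument. Because $\Lambda\cup\{1\}$ is a $\bQ$-basis of $\bR$, any finite subset $\{\theta_1,\dots,\theta_N\}\subset \Lambda$ is $\bQ$-linearly independent. The construction promised in Section \ref{constructions} (as noted in the paragraph preceding the statement) then provides, for every finite rationally independent tuple, dense representations $\rho_{\theta_i}:F_2\to \G$ together with subgroups $H_{\theta_i}\le F_2$ on which $\rho_{\theta_i}$ remains dense while $\rho_{\theta_j}|_{H_{\theta_i}}$ is geometrically elementary or discrete whenever $j\ne i$. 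This is exactly the hypothesis of Theorem \ref{thm:norm}.

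Next I would apply Theorem \ref{thm:norm} to obtain
\[
\left\| \sum_{i=1}^N a_i\,[\rho_{\theta_i}^{*}\vol_3]\right\|_\infty \;\ge\; \max_i|a_i|\cdot v_3
\]
for every $(a_1,\dots,a_N)\in\bR^N$. Taking a single $a_i\ne 0$ and the rest zero shows that no finite subcollection of the classes $[\rho_\theta^{*}\vol_3]$ satisfies a nontrivial $\bR$-linear relation, and since linear independence is a finite-subset condition, the full family is linearly independent in $\Hbr{3}(F_2;\bR)$. Specializing instead to $N=2$ with $a_1=1$, $a_2=-1$ gives $\|[\rho_\theta^{*}\vol_3]-[\rho_{\theta'}^{*}\vol_3]\|_\infty \ge v_3>0$ for $\theta\ne\theta'$: this is simultaneously injectivity of the map $\theta\mapsto[\rho_\theta^{*}\vol_3]$ and uniform $v_3$-separation of its image, which forces discreteness.

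For the dimension statement I would simply observe that any Hamel basis of $\bR$ over $\bQ$ has cardinality $\#\bR$, since $\bR$ is uncountable and $\bQ$ is countable; hence $\#\Lambda=\#\bR$, and the $\bR$-span of a linearly independent set of that cardinality has dimension $\#\bR$. Granted Theorem \ref{thm:norm} and the family from Section \ref{constructions}, the corollary is a formal consequence; the main obstacle to the whole program is therefore not this corollary but rather the construction of $\{\rho_\theta\}_{\theta\in(0,1)}$ itself, where one must build representations whose $\rho$-dense subgroups $H_\theta$ are parametrized by $\theta$ via a geometric quantity (a hyperbolic length or length ratio in a quotient handlebody, presumably) rigid enough to distinguish different parameters and make $\rho_{\theta'}|_{H_\theta}$ collapse to a discrete or elementary action as soon as $\theta'\ne\theta$. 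Once this upstream work is in place, the present corollary follows immediately from the three paragraphs above.
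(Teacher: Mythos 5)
Your proposal is correct and follows essentially the same route as the paper: finite subsets of $\Lambda$ are rationally independent, Lemma \ref{N reps} then puts each finite subfamily under the hypotheses of Theorem \ref{ell 1}, and the resulting norm bound $\bigl\|\sum a_i[\rho_{\theta_i}^*\vol_3]\bigr\|_\infty\ge \max|a_i|\cdot v_3$ yields linear independence, injectivity, and the uniform $v_3$-separation giving discreteness, with the cardinality of the Hamel basis supplying the dimension count. Your $N=2$ observation just makes explicit the separation that the paper leaves implicit when it says injectivity follows from linear independence.
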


As a further curiosity, we show that the spaces $ \Hbr 3(\PSL_2\mathbb \bR ;\bR)$ and $ \Hbr 3(\PSL_2\mathbb \bC ;\bR)$ are quite large when we endow $\PSL_2\bR$ and $\G$ with the {\it discrete} topology.  We construct `wild' field maps $\sigma: \bC \to \bC$ that induce homomorphisms $\rho_\sigma:\PSL_2\bR\to \PSL_2\bC$, which are continuous only if $\PSL_2\bR$ is endowed with the discrete topology.  If $\sigma$ is not the identity or complex conjugation, then $\sigma(\bR)$ is dense in $\bC$, hence $\rho_\sigma(\PSL_2\bR)$ is dense in $\G$.    Indeed, we construct many such $\sigma$ by extending bijections between transcendence bases of $\bC$ over $\bQ$.  
We carefully construct a family of dense representations $\{\rho_t':\PSL_2\bC\to \G\}_{t\in \bR}$ that restrict to dense representations $\{\rho_t: \PSL_2\bR\to \G\}_{t\in \bR}$.  For many free subgroups $F_2\le \PSL_2\bR$,  $\rho_t(F_2)$ is Schottky, while other free subgroups are mapped densely by $\rho_t$.  

\begin{corollary}[Theorem \ref{G dense}]\label{cor:intro_G}
There are dense representations $\{\rho_t': \PSL_2\bC \to \G\}_{t\in \bR}$ that restrict to dense representations $\{ \rho_t: \PSL_2\bR \to \G\}_{t\in \bR}$ such that $\{[\rho_t^*\vol_3]: t\in \bR\}$ is a linearly independent set in $\Hbr 3(\PSL_2\bR;\bR)$ and $\{[ {\rho_t'}^*\vol_3]: t\in \bR\}$ is a linearly independent set in $\Hbr 3(\PSL_2\bC;\bR)$.
\end{corollary}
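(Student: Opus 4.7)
The plan is to build the family $\{\rho_t', \rho_t\}_{t \in \bR}$ from parametrized wild field embeddings $\sigma_t: \bC \to \bC$ and reduce the linear independence claim directly to Theorem \ref{ell 1}. Fix a transcendence basis $\mathcal B$ of $\bC$ over $\bQ$ containing an uncountable family $\{\xi_t\}_{t \in \bR}$ of distinct real transcendentals. For each $t$, choose two hyperbolic elements $A_t, B_t \in \PSL_2\bR$ with matrix entries in $\bQ(\xi_t)$, arranged to pair disjoint closed disks in $\partial \bH^2$ and hence to generate a classical Schottky group $H_t := \langle A_t, B_t\rangle \le \PSL_2\bR$. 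Define $\sigma_t$ on $\mathcal B$ by sending $\xi_t$ to a point $\eta_t \in \bC \setminus \bR$ to be chosen below, fixing every other element of $\mathcal B$, and extending uniquely to a field embedding $\sigma_t: \bC \to \bC$ via the universal properties of polynomial rings and algebraic closure. Then $\rho_t'$ and $\rho_t$ are defined by applying $\sigma_t$ entrywise to matrices in $\PSL_2\bC$ and $\PSL_2\bR$ respectively.

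Density of each $\rho_t, \rho_t'$ in $\G$ follows directly from the discussion preceding the statement: since $\sigma_t|_\bR$ is not the identity (it moves $\xi_t$ off $\bR$) nor complex conjugation, its image is dense in $\bC$, so $\rho_t(\PSL_2\bR) = \PSL_2(\sigma_t(\bR))$ and $\rho_t'(\PSL_2\bC) = \PSL_2(\sigma_t(\bC))$ are dense. To get linear independence I would apply Theorem \ref{ell 1} with the test subgroups $\{H_t\}$. First, for $s \ne t$, the embedding $\sigma_s$ fixes $\xi_t$ and hence every entry of $A_t, B_t$, so $\rho_s(H_t) = H_t$ remains the original Schottky group and is therefore discrete in $\G$. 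Second, $\eta_t$ will be chosen so that $\rho_t(H_t) = \langle \sigma_t(A_t), \sigma_t(B_t)\rangle$ is dense in $\PSL_2\bC$. Theorem \ref{ell 1} applied to any finite sub-family $\{t_1, \dots, t_N\}$ then gives
\[ \left\| \sum_{i=1}^N a_i [\rho_{t_i}^* \vol_3] \right\|_\infty \ge v_3 \cdot \max_i |a_i|, \]
establishing linear independence in $\Hbr 3(\PSL_2\bR;\bR)$; the identical argument applied on $\PSL_2\bC$ gives the corresponding statement for the $\rho_{t_i}'$'s in $\Hbr 3(\PSL_2\bC;\bR)$.

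The main obstacle is the genericity step: selecting $\eta_t$ so that $\sigma_t(A_t)$ and $\sigma_t(B_t)$ topologically generate $\PSL_2\bC$, and also so that $\eta_t$ is algebraically independent of $\mathcal B \setminus \{\xi_t\}$ so the prescription extends to a well-defined field embedding. Pairs in $\PSL_2\bC \times \PSL_2\bC$ whose topological closure is discrete or geometrically elementary form a meager set of Haar measure zero, so generic complex values of $\eta_t$ yield a dense pair; the algebraic-independence requirement excludes only countably many further values. These two constraints are compatible, leaving an abundance of valid $\eta_t$ for each $t$, and the construction is carried out independently on each $\xi_t$. I expect the bookkeeping needed to verify that the simultaneous uncountable prescription on $\mathcal B$ genuinely extends to a field embedding of $\bC$ (rather than merely a ring homomorphism of the ambient polynomial ring) to be the subtlest but ultimately routine part.
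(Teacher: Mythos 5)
Your overall architecture (wild field embeddings applied entrywise, test Schottky subgroups with entries fixed by the other embeddings, then Theorem \ref{ell 1}) matches the paper's, and the parts you do carry out correctly — density of $\rho_t(\PSL_2\bR)$ from non-triviality of $\sigma_t|_\bR$, $\rho_s(H_t)=H_t$ discrete for $s\ne t$ because $\sigma_s$ fixes $\bQ(\xi_t)$ pointwise, and the reduction to Theorem \ref{ell 1} — are fine. The genuine gap is exactly the step you flag as the "genericity step," and it does not work as argued, for three reasons. First, the set of pairs in $(\PSL_2\bC)^2$ generating a non-dense subgroup is \emph{not} meager or Haar-null: Schottky pairs form a nonempty open set (convex cocompact representations are stable), so your premise is false. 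Second, even if that set were small in $(\PSL_2\bC)^2$, this would say nothing about its intersection with the one-complex-parameter algebraic curve $\eta\mapsto (A(\eta),B(\eta))$ you are actually sampling from; indeed, by stability of Schottky groups, all $\eta$ in an open neighborhood of $\xi_t$ give a discrete image, so the bad set in the $\eta$-plane has nonempty interior. Third, the values of $\eta$ excluded by the algebraic-independence requirement are not countable: they form the relative algebraic closure of $\bQ(\mathcal B\setminus\{\xi_t\})$ in $\bC$, a set of continuum cardinality which need not be null or meager (it can be non-measurable), so the "countably many further exclusions" intersection argument collapses. As written, you have not produced a single admissible $\eta_t$, and existence genuinely requires a construction, not genericity.

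This is precisely where the paper's proof of Theorem \ref{G dense} does something different: instead of moving one basis element to an unspecified generic $\eta_t$, it designates \emph{two} real transcendence-basis elements $\alpha_t,\beta_t$ per parameter $t$ and sends them to the explicit pair $(e^a,e^b)$, where $a,b$ are suitable algebraic numbers with $|e^a|,|e^b|<\mu_3$ and non-real arguments; Lindemann--Weierstrass guarantees $\{e^a,e^b\}$ is algebraically independent, so the prescription extends to a field isomorphism, and density of the image group $H_{e^a,e^b}$ is proved by an explicit Margulis-lemma computation (small translation lengths along orthogonally meeting axes, non-real rotation), not by genericity. All remaining basis elements are sent into $\{|z|>100\}$, which keeps the cross-images $\rho_t(H_{\alpha_s,\beta_s})$ Schottky by ping-pong. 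If you want to salvage your one-parameter version, you would have to make an analogous explicit choice, e.g.\ arrange $A_t,B_t$ so that substituting a concrete non-real $\eta$ transcendental over $\bQ(\mathcal B\setminus\{\xi_t\})$ (such choices exist, e.g.\ $\xi_t+i$, though they are not "all but countably many") produces two loxodromics of translation length below the Margulis constant with distinct fixed points and non-real trace, and then run the indiscreteness-plus-non-elementary argument of Lemma \ref{lem:geom_elem}; without such an explicit verification the proof is incomplete.
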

There seems to be quite a bit of flexibility in our construction of wild field maps $\sigma: \bC\to \bC$; the dimension of the  vector space of bounded $3$-cochains on $\G$ or $\PSL_2\bR$ is $2^{\#\bR}$, which is an upper bound on the real dimension of degree $3$ reduced bounded cohomology.  It certainly seems possible that the dimension of reduced bounded cohomology for these groups is $2^{\#\bR}$; see Question \ref{q:G_dense}.

  The main line of argument used to prove our theorems is as follows: a densely embedded group $G\le \G$ can approximate the geometry of \emph{any} finitely generated Kleinian group, up to a certain scale.  More precisely, given a finitely generated Kleinian group $\g = \langle \gamma_1, ..., \gamma_k\rangle\le \G$, a dense representation $\rho: G\to \PSL_2\bC$, an $\epsilon>0$, and a positive integer $N$, there are $g_1, ..., g_k\in G$ such that all words of length at most $N$ in the $\rho(g_i)$ are in the $\epsilon$-neighborhood of corresponding words in the $\gamma_i$.  Suppose $\bH^3/\g$ has a submanifold $K\subset M$ with large volume and small surface area.   Assume that $K$ is equipped with a straight triangulation by geodesic tetrahedra, and that the triangulation does not have too many triangles on the boundary.   Then we can homotope the triangulation so that there is only one vertex and such that we do not lose too much volume during the homotopy, which is a small miracle of hyperbolic geometry.  The edges of the tetrahedra are now labeled by elements of $\pi_1(M)$, because they are closed, based loops.  This finite triangulation lifts to the universal cover.  The idea is now to use our approximation of words of length at most $N$ in the $\gamma_i$ by words of length at most $N$ in the $\rho(g_i)$ to build a chain on $G$ that has almost the same shape as our lifted chain, via its $\rho$-action.  In this way, we use the geometry of discrete groups to build chains on our abstract group $G$ that have large volume and small boundary area, which is enough to show that $[\rho^*\vol_3]\not=0$; if in addition, the chains on $\g$ are $\epsilon$-\emph{efficient}, we can show that $\|[\rho^*\vol_3]\|\ge \epsilon$.  Adapting a construction of Soma \cite[Lemma 3.2]{Soma:boundedsurfaces}, we are able to construct $(v_3-\epsilon)$-efficient chains, for all $\epsilon>0$; see also Section \ref{sec:efficient_chains} and, in particular, Lemma \ref{manifold chains} where we recreate Soma's construction.  
  
  The preceding paragraph is made precise in Section \ref{approximation section} where we record a key observation of this paper, Proposition \ref{approximation of chains}.  
  Our main line of argument is very generally applicable in the sense that it can be adapted to work in all dimensions, contingent on the existence of a sequence of hyperbolic $n$-manifolds with prescribed topological properties. 
Proposition \ref{prop:higher_dim} formalizes this idea to give one of many sufficient conditions by which one can apply the techniques in this paper in higher dimensions.  First, we need some terminology.  The following definition is made in order to highlight the necessary algebraic and topological ingredients of the proof of Theorem \ref{thm:intro_main}. 

\begin{definition}[Definition \ref{free approximation}] 
Let $\Gamma$ be a discrete group, $\alpha\in \HH_n(\Gamma;\bR)$, and $K>0$.  We say that $\alpha$ is \emph{$K$-freely approximated} if there is an integer $m$, a homomorphism $\varphi: F_m\to \Gamma$, and a chain $Z\in \textnormal C_n(F_m;\bR)$ such that $\varphi_*(Z)\in \alpha$ and $\|\partial Z\|_1\le K$.   
\end{definition}

The conclusion of the following proposition may seem surprising, at first.   
\begin{proposition}[Proposition \ref{prop:higher_dim}]\label{prop:higher_dim_intro}
Suppose $(M_i)$ is a sequence of oriented, closed hyperbolic $n$-manifolds with volume tending to infinity.  Let $[M_i]\in \HH_n(\pi_1(M_i);\bR)$ be the image of the fundamental class of $M_i$ under the natural isomorphism $\HH_n(M_i; \bR) \to \HH_n(\pi_1(M_i);\bR)$.  

If there is a $K$ such that $[M_i]$ is $K$-freely approximated for all $i$, then for any dense representation $\rho: F_2 \to \Isom^+(\bH^n)$,  \[[\rho^*\vol_n]\not=0 \in \Hb n(F_2; \bR).\]
\end{proposition}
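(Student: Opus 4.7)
The approach is a duality argument. I will produce a sequence of chains $Y_i \in \Cc{n}(F_2;\bR)$ whose boundaries satisfy $\|\partial Y_i\|_1 \le K$ uniformly in $i$, but for which $\langle \rho^*\vol_n, Y_i\rangle \to \infty$. This suffices: if $[\rho^*\vol_n]=0$ in $\Hb n(F_2;\bR)$, then $\rho^*\vol_n = \delta b$ for some bounded $(n{-}1)$-cochain $b$, so $\langle \rho^*\vol_n, Y_i\rangle = \langle b, \partial Y_i\rangle \le \|b\|_\infty \, K$, a contradiction (note that $Y_i$ need not be a cycle for this equality).

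To construct $Y_i$, fix the discrete faithful lattice embedding $\iota_i:\pi_1(M_i)\hookrightarrow \Isom^+(\bH^n)$ and unpack the $K$-free approximation hypothesis: choose a homomorphism $\varphi_i : F_{m_i}\to \pi_1(M_i)$ and a chain $Z_i\in \Cc{n}(F_{m_i};\bR)$ with $\varphi_{i*}(Z_i)\in [M_i]$ and $\|\partial Z_i\|_1 \le K$. Under the natural isomorphism $\HH_n(M_i;\bR)\to\HH_n(\pi_1(M_i);\bR)$, the cocycle $\iota_i^*\vol_n$ pairs with $[M_i]$ to give $\vol(M_i)$, hence
\[
\langle (\iota_i\circ \varphi_i)^*\vol_n,\, Z_i\rangle \;=\; \langle \iota_i^*\vol_n,\, \varphi_{i*}(Z_i)\rangle \;=\; \vol(M_i),
\]
which tends to infinity by hypothesis.

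Next I would transport $Z_i$ to $F_2$ using density of $\rho$. Only finitely many elements of $F_{m_i}$ support $Z_i$; let $N_i$ bound their word length in the free generators $a_1,\ldots,a_{m_i}$. For any $\epsilon_i>0$, density of $\rho(F_2)$ in $\Isom^+(\bH^n)$ and uniform continuity of the group operations on compact neighborhoods allow one to pick $f_k^{(i)}\in F_2$ so that $\rho(f_k^{(i)})$ is within $\epsilon_i$ of $\iota_i(\varphi_i(a_k))$, and then telescope to control words of length $\le N_i$. Define the homomorphism $\psi_i:F_{m_i}\to F_2$ by $a_k\mapsto f_k^{(i)}$ and set $Y_i:=\psi_{i*}(Z_i)$. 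Since $\psi_i$ is a homomorphism, $\psi_{i*}$ is an $\ell^1$-contraction on chains, so $\|\partial Y_i\|_1 = \|\psi_{i*}(\partial Z_i)\|_1 \le K$. Continuity of the signed volume of a geodesic $n$-simplex as a function of its vertices in $\overline{\bH^n}$, together with the finiteness of $\supp Z_i$, lets us choose $\epsilon_i$ small enough that $|\langle \rho^*\vol_n,Y_i\rangle - \vol(M_i)|\le 1$, delivering the divergence we need.

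The main obstacle is the error-control step in the middle paragraph: for fixed $i$ the word length $N_i$ and the number of simplices in $Z_i$ are finite, but both potentially degrade the admissible $\epsilon_i$. Because $\epsilon_i$ may be chosen independently for each $i$, this is not a genuine obstruction, but it is the technical content of Proposition \ref{approximation of chains}, which packages the density-plus-continuity procedure into a clean statement at the chain level. In the present setting, we bypass all of the one-vertex straightening technology used elsewhere in the paper, because $Z_i$ is already given as a chain on a free group and no passage from a geometric triangulation of a manifold to a group chain is required.
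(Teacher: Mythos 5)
Your proposal is correct and follows essentially the same route as the paper: pair the freely approximating chains with the volume cocycle to get $\vol(M_i)$, transport them to $F_2$ via density of $\rho$ (exactly the content of Proposition \ref{approximation of chains}), and conclude from the uniform bound $\|\partial Y_i\|_1\le K$ that $\rho^*\vol_n$ admits no bounded primitive. The only cosmetic remark is that the continuity you invoke should be that of $\vol_n^x$ for simplices with vertices in the orbit of an interior basepoint $x\in\bH^n$, where it is genuinely continuous, rather than continuity on all of $\overline{\bH^n}$, where degenerate ideal configurations cause failures.
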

\noindent See also Remark \ref{rem:dense_n=4} for a discussion of the (possible) dimension of $\Hb n (F_2;\bR)$, for even $n\ge 4$.
\begin{remark}
For  many sequences $(M_i)$ of closed hyperbolic surfaces and hyperbolic $3$-manifolds with volume tending to infinity, one can produce a bound $K$ depending on that sequence such that $[M_i]$ is $K$-freely approximated.  

However, it is not at all clear if there can be any sequence of hyperbolic $n$-manifolds satisfying the hypotheses of the proposition, for $n\ge 4$.   For example, if it were true that there is some acylindrically hyperbolic group $\Gamma$ and $\Hb n (\Gamma; \bR) = 0$, then $\Hb n(F_2;\bR) = 0$, as well \cite{FPS:quasi-cocycles}.  So if there is such an $n\ge 4$ and $\Gamma$, then no sequence of hyperbolic $n$-manifolds satisfying the topological constraints of the hypotheses of Proposition \ref{prop:higher_dim} can exist. 
\end{remark}

  To obtain the uniform separation of certain volume classes, as in Theorem \ref{thm:intro_main}, we appeal to the classification theory and structure theory of finitely generated Kleinian groups, and make extensive use of the Covering Theorem \ref{thm:covering} and the Tameness Theorem \ref{thm:tameness}, which are now standard tools for studying hyperbolic $3$-manifolds.  The  Ending Lamination Theorem \ref{thm:ELT} also plays an important role and informs our understanding of the geometry of hyperbolic $3$-manifolds of infinite volume, generally.   

The structure of the paper is as follows.  In Section \ref{sec:bdd_cohomology}, we provide some background on bounded cohomology, volume classes, and the bounded Borel class.  In Section \ref{approximation section}, we give a general strategy for approximation and record our key insight Proposition \ref{approximation of chains}. 

 In Section \ref{hyperbolic}, we describe the geometry of ends of hyperbolic $3$-manifolds and give some context for the three major structural results we need in the sequel.  In Section \ref{sec:qi_classification}, we assume Theorem \ref{thm:intro_main} and deduce Theorem \ref{thm:intro_old_work} from previous work of the author, collected in Theorem \ref{old work}.      

In Section \ref{volume classes}, we employ our resources coming from the classification theory of tame hyperbolic $3$-manifolds to prove Theorem \ref{thm:intro_main}. In Section \ref{sec:efficient_chains}, we recall Soma's construction of $(v_3-\epsilon)$-efficient chains, collect some technical facts, and prove the first statement of Theorem \ref{thm:intro_main}.  In Section \ref{sec:separation}, we take a rather technical foray into analyzing infinite index subgroups of discrete and dense representations of a free group $F_2$ on $2$ letters (Lemma \ref{lem:technical} and Proposition \ref{prop:dense_not}).  The rest of Theorem \ref{thm:intro_main} follows quickly, thereafter. 
Then we generalize Theorem \ref{thm:intro_main} to give Theorem \ref{thm:norm}.  

In Section \ref{Borel} we pull back a higher rank  formulation of the volume class, known as the bounded Borel class of a dense representation $\rho : \g \to \PSL_n\bC$; the argument we present for $n=2$ generalizes to the higher rank setting, after a technical detour, thanks to \cite{BBI:borel}, and we record the necessary modifications to the proof of Theorem \ref{thm:intro_main} to obtain Corollary \ref{cor:intro_borel}.  

Section \ref{sec:applications} concerns constructions of certain families of dense representations, `applications' of Theorem \ref{thm:norm} that produce large subspaces of bounded cohomology, and questions.  In Section \ref{constructions}, we construct an uncountable family of non-conjugate representations, every finite subset of which satisfies the hypotheses of Theorem \ref{thm:norm}.  This shows that the dimension of (reduced) bounded cohomology spanned by the volume classes for dense representations of a free group is large.  Later, we use wild field embeddings to produce many linearly independent volume classes in degree three reduced bounded cohomology of $\G$ and $\PSL_2\bR$, endowed with the discrete topology.
We pose some questions that presented themselves to us during this investigation.

Finally, in Section \ref{sec:higher_dim} we discuss the problem of understanding the (non-)triviality of volume classes of dense representations in dimensions $n\ge 4$.  We also show that closed hyperbolic surfaces are $2$-freely approximated; in fact our proof shows that a closed hyperbolic surface of genus at least $2$ can be $\epsilon$-freely approximated for any $\epsilon>0$, using covering space theory; note the similarity of Lemma \ref{lem:surface_approx} with the standard computation of simplicial volume of closed surfaces of negative Euler characteristic.  The main result of Section \ref{sec:higher_dim}, Proposition \ref{prop:higher_dim}, is essentially independent of the more technical work done in Sections \ref{hyperbolic}-\ref{sec:applications}, and so can be read and understood directly after Section \ref{approximation section}.

\section*{Acknowledgments}
The author would like to thank the anonymous referees for thorough readings of this manuscript, insightful mathematical comments, and suggestions that improved the quality and correctness of this article.  We would also like to thank Maria Beatrice Pozzetti for many very enjoyable  conversations, her sustained interest in this work, and for suggesting that we consider the bounded Borel class.  We thank Kenneth Bromberg for his guidance and support and Michael Landry for producing the figure.  The author recognizes the support of the NSF, in particular, grants DMS-1246989 and DMS-1509171.

\section{Continuous bounded cohomology of groups and spaces}\label{sec:bdd_cohomology}
In this section we establish some preliminaries on bounded cohomology, the volume class, and the bounded Borel class.  In Section \ref{approximation section}, we introduce the main novel idea of the paper,  Proposition \ref{approximation of chains}.
 \subsection{Continuous bounded cohomology of groups}\label{bounded groups}
Let $\cG$ be a topological group.  Then $\cG$ acts on the space of continuous functions $\{\cG^k \to \bR\}$ as follows.  If $g\in \cG$ and $f: \cG^k\to \bR$ is continuous, then \[g.f(g_1,...,g_k): = f(g\inverse g_1, ..., g\inverse g_k).\]
We define a cochain complex for $\cG$ by considering the collection of continuous, $\cG$-invariant functions 
\[\textnormal C^n(\cG;\bR) = \lbrace f: \cG^{n+1} \to \bR: g.f = f, ~ \forall g \in \cG\rbrace.\]  The homogeneous co-boundary operator $\delta$ for the trivial $\cG$-action on $\bR$ is, for ${f\in \Ccc n(\cG;\bR)}$, \[\delta f (g_0, ... ,g_{n+1}) = \sum_{i=0}^{n+1} (-1)^i f(g_0, ..., \hat{g}_i,..., g_{n+1}),\] where $\hat g_i$ means to omit $g_i$, as usual.    
The co-boundary operator gives the collection $\Ccc \bullet(\cG;\bR)$ the structure of a cochain complex.  An $n$-cochain $f$ is \emph{bounded} if \[\|f\|_\infty = \sup|f(g_0, ..., g_{n})| < \infty,\] where the supremum is taken over all ($n+1$)-tuples $(g_0, ..., g_{n}) \in \cG^{n+1}$.  The subspace of continuous bounded $n$-cochains is denoted by $\Cb n(\cG;\bR)$.    

The operator  $\delta : \Cb n (\cG;\bR) \to \Cb {n+1}(\cG;\bR)$ is a bounded linear, hence continuous, operator between Banach spaces with operator norm at most $n+2$, so the collection of bounded cochains $\Cb \bullet(\cG;\bR)$ forms a subcomplex of the ordinary cochain complex. The cohomology of $(\Cb \bullet(\cG;\bR),\delta)$ is called the \emph{continuous bounded cohomology} of $\cG$, and we denote it by $\Hcb \bullet (\cG;\bR)$.  The $\infty$-norm $\|\cdot\|_\infty$ descends to a semi-norm on bounded cohomology, so that if $\alpha\in \Hcb \bullet (\cG;\bR)$, \[ \|\alpha\|_\infty = \inf_{a\in \alpha}\|a\|_\infty.\]  A continuous group homomorphism $\varphi: H\to \cG$ induces a map $\varphi^*:\Hcb \bullet (\cG;\bR)\to \Hcb \bullet (H;\bR)$ that is norm non-increasing.  

When $G$ is a discrete group, the continuity assumption on cochains is vacuous, and we write $\Hb \bullet (G;\bR) = \Hcb\bullet (G;\bR)$ to denote its bounded cohomology.  Soma has shown  \cite{soma:nonBanach} that the pseudo-norm is in general not a norm in degree $\ge 3$.  We will consider the quotient $\Hbr 3(G;\bR) =\Hb 3(G;\bR)/\mathcal Z$ where $\mathcal Z\subset \Hb 3(G;\bR)$ is the subspace of zero-semi-norm elements.  Then $\Hbr 3(G;\bR)$ is a Banach space with the quotient norm $\| \cdot \|_\infty$.   

\subsection{Norms on chain complexes}\label{bounded spaces}
Given a connected countable CW-complex $X$, we define a norm on the singular chain complex of $X$ as follows.  Let $\Sigma_n = \lbrace \sigma: \Delta_n\to X\rbrace$ be the collection of singular $n$-simplices.  Write a singular chain $A\in \Cc n\left( X;\bR\right)$ as an $\bR$-linear combination \[A = \sum \alpha_\sigma\sigma,\] where each ${\sigma\in \Sigma_n}$.  The $\ell_1$-norm of $A$ is defined as \[\left\| A \right\|_ 1 = \sum \left| \alpha_\sigma\right|.\]
This  norm promotes the algebraic chain complex $\Cc \bullet (X ;\bR)$ to a chain complex of normed linear spaces; the boundary operator is a bounded linear operator.  Keeping track of this additional structure, we can take the topological dual chain complex
\[\left(\Cc \bullet (X;\bR),\partial, \| \cdot  \|_1\right)^*=\left(\Cb \bullet (X;\bR),\delta, \| \cdot \|_\infty\right).\]
   The $\infty$-norm is naturally dual to the $\ell_1$-norm, so the dual chain complex consists of \emph{bounded} cochains.  
Define the \emph{bounded cohomology} $\Hb \bullet(X ;\bR)$ as the cohomology of this complex.  
Gromov \cite{Gromov:bdd} gave an argument, using the theory of multicomplexes, showing that for reasonable spaces $M$, the homotopy class of a classifying map $ M \to K(\pi_1(M),1)$ induces an isometric isomorphism $\Hb\bullet(\pi_1 (M);\bR)\to\Hb \bullet(M;\bR)$.  Recently, \cite{FM:multicomplexes} provided  a thorough treatment of the theory of multicomplexes and gave self contained account of Gromov's theorem; they also give other applications to bounded cohomology and simplicial volume.  See also \cite{Ivanov:foundations} for an approach using normed homological algebra. 

For a discrete group $G$, we will consider the normed chain complex $(\Cc \bullet (G;\bR), \partial, \|\cdot \|_1)$ that defines the homology of $G$.  The collection of \emph{$n$-co-invariants} $\Cc n(G; \bR)$ of $G$ is the $\bR$-linear span of $\Sigma_n(G)=\{(g_0, ..., g_n): g_i \in G\}/\sim$, where $\sim $ is the equivalence relation generated by $(g_0, ..., g_n) \sim (gg_0, ..., gg_n)$; we denote an equivalence class by $[g_0, ..., g_n]\in \Sigma_n(G)$, and we think of $[g_0, ..., g_n]$ as an $n$-simplex in the universal cover of a $K(G,1)$ for $G$, defined up to covering transformations, thus defining a simplex in the quotient $K(G,1)$.  
A group chain or $n$-co-invariant  $Z\in \Cc n(G;\bR)$ is then a sum \[Z = \sum_{i =1}^k a_i [g_0^i, ..., g_n^i],\]
where $[g_0^i, ..., g_n^i]\not = [g_0^j, ..., g_n^j]$ for $i\not = j$.  The $\ell_1$-norm is defined by $\|Z\|_1 = \sum_{i = 1}^k |a_i|$.  The boundary operator $\partial : \Cc n(G;\bR) \to \Cc {n-1}(G;\bR)$ is the pre-dual of the co-boundary operator $\delta$.  One thinks of $\partial$ as the alternating sum of face maps on $n$-simplices.   If $f\in \Cb n(G;\bR)$ and $Z \in \Cc n(G;\bR)$, then we have a trivial inequality $|f(Z)|\le \|f\|_\infty \|Z\|_1$.  

\subsection{Isometric chain maps}\label{isometric chain maps}
We will be interested in free marked Kleinian groups $\rho: F_d\to \G$, i.e. $F_d$ is a free group of rank $d$ and $\rho$ is a discrete and faithful representation.  Thus $\im\rho = \g$ acts properly discontinuously by orientation preserving isometries on $\bH^3$, and the space $M_\rho = \bH^3/\g$ of orbits is a complete hyperbolic $3$-manifold of infinite volume.  Call the orbit projection $\pi: \bH^3\to M_\rho$.  
There is a natural subspace of the singular chain complex $\Cc \bullet (M_\rho)$ obtained by \emph{straightening}.  We have an $\ell_1$-norm non-increasing chain map  \cite{Thurston:notes}
\[\str : \Cc \bullet (M_\rho;\bR) \to \Cc \bullet (M_\rho;\bR)\] defined by homotoping a singular $n$-simplex  $\sigma: \Delta_n\to M_\rho$,  relative to its vertex set, to the unique locally geodesic hyperbolic tetrahedron $\str \sigma$.  We ignore issues of parameterization, though Thurston provides a natural one in \cite[Chapter 6.1]{Thurston:notes}.  Then $\Cc \bullet^{\str}(M_\rho;\bR)$ denotes the image of $\str$, and
if $\bar x\in M_\rho$, we denote by $\Cc \bullet^{\str}(M_\rho, \{\bar x\};\bR)$ the subcomplex spanned by the straight simplices whose vertices all map to $\bar x$.  We will now construct  a chain map $\str_{\bar{x}}: \Cc \bullet(M_\rho;\bR)\to \Cc \bullet^{\str}(M_\rho, \{\bar x\};\bR)$.

Fix $x\in \pi\inverse(\bar x)$, and let $\mathcal D = \{y\in \bH^3: d(x , y)\le d(\gamma( x) , y) \text{ for all } \gamma \in \g\}$ be the Dirichlet fundamental domain for $\g$ centered at $x$; delete a face of $\mathcal D$ in each face-pair $(F, \gamma F)$ to obtain a connected Borel set of representatives for the action of $\g$ on $\bH^3$, which we still call $\mathcal D$.  Let $\sigma: \Delta_n\to M_\rho$ and choose a lift $\widetilde {\sigma}:\Delta_n \to \bH^3$.  The vertices $v_0, ..., v_n$ of $\widetilde{\sigma}$ are uniquely labeled  by group elements $v_i = \gamma_i y_i$ where $\gamma_i\in \g$,  $y_i \in \mathcal D$.  Define $\str_{\bar x} \sigma = \pi(\sigma_x(\gamma_0, ..., \gamma_n))$, where $\sigma_x(\gamma_0, ..., \gamma_n)$ is the straightening of any simplex whose ordered vertex set is $(\gamma_0x, ..., \gamma_nx)$.  The definition is  independent of the choice of lift, because any other lift of $\sigma$ has vertex set equal to $(\gamma \gamma_0y_0, ..., \gamma\gamma_ny_n)$ for some $\gamma \in \g$.  All maps are chain maps and the operator norm satisfies $\| \str_{\bar x} \|\le 1$.  This is just because some simplices in a chain may collapse or cancel after applying $\str_{\bar x}$.   

 Let $\tau: \Delta_k\to M_{\rho}$ be a straight simplex.  We can apply the prism operator to the straight line homotopy between lifts of $\tau$ and $\str_{\bar x} \tau$ to $\bH^3$.  We obtain a chain  homotopy  $H^\bullet_{\bar x}: \textnormal C^{\str}_\bullet (M_{\rho};\bR) \to  \textnormal C^{\str}_{\bullet+1}(M_{\rho};\bR)$ between $\str_{\bar x}$ and $id$.   That is, \[H_{\bar x}^{\bullet - 1} \partial +\partial H_{\bar x}^\bullet = \str_{\bar x} - id.\]  
 Compare with \cite[Chapter 6.1]{Thurston:notes}.
 The homotopy space $\Delta_k\times I$ is triangulated by the prism operator using $k+1$ simplices of dimension $k+1$, so 
 \begin{equation}\label{eqn:hmtpy_norm}
 \|H^k_{\bar x}\|= k+1.
\end{equation}

If $Z\in \Cc n^{\str}(M_\rho, \{\bar x\};\bR)$, then $Z$ defines a chain in $\Cc n(\g;\bR)$  by linear extension of the rule
\[\pi(\sigma_x(\gamma_0, ..., \gamma_n)) \mapsto [\gamma_0, ... , \gamma_n].\] 
To see that this map is well-defined, observe that $\pi(\sigma_x(\gamma_0, ..., \gamma_n)) = \pi(\sigma_x(\gamma_0', ..., \gamma_n'))$ means that $\sigma_x(\gamma_0, ... , \gamma_n)$ differs from $\sigma_x(\gamma_0',...,\gamma_n')$ by a deck transformation $\gamma\in \g$; hence, $[\gamma_0', ..., \gamma_n'] = [\gamma\gamma_0, ..., \gamma \gamma_n] = [\gamma_0, ..., \gamma_n]$.  We denote this map by $\iota_x: \textnormal C^{\str}_\bullet (M_\rho,\{\bar x\};\bR)\to \Cc \bullet (\g;\bR)$.

One checks easily that $\iota_x$ is an isometric isomorphism of normed chain complexes with their $\ell_1$-norms.  Thus if $Z \in \Cc n^{\str}(M_\rho,\{\bar x\};\bR)$, then we have $\|Z\|_1 = \|\iota_x(Z)\|_1$ and $\|\partial \iota_x (Z)\|_1 = \|\iota_x (\partial Z)\|_1  = \|\partial Z\|_1$.
Conversely, if we have a chain $Z \in \Cc n^{\str} (\g;\bR)$, one sees that $\pi_*(Z.x) = \iota_x\inverse(Z) \in \Cc n^{\str}(M_\rho, \{\bar x\};\bR)$.

\subsection{The volume class}\label{volume class}
Let $x\in \bH^3\cup \partial \bH^3$ and consider the function $\vol^x_3: (\G)^4\to \bR$ which assigns to $(g_0, ..., g_3)$ the signed hyperbolic volume of the convex hull of the points $g_0x, ..., g_3x$.  Any geodesic tetrahedron in $\bH^3$ is contained in an ideal geodesic tetrahedron, and there is an upper bound $v_3$ on the volume that is attained by a \emph{regular} ideal geodesic tetrahedron.  That is, $\displaystyle \| \vol^x_3\|_\infty = v_3$.
One checks using Stokes' Theorem that $\delta\vol^x_3=0$, so that $[\vol^x_3]\in \Hcb 3(\G;\bR)$.  Moreover, for any $x,y\in \bH^3\cup \partial \bH^3$, we have $[\vol^x_3] = [\vol^y_3]$.  This is because the straight line homotopy between geodesic triangles can be triangulated by $3$ (partially ideal) tetrahedra using the prism operator, and so 
\begin{equation}\label{eqn:chain_homotopy}
\vol^x_3 - \vol_3^y = \delta H_{x,y},
\end{equation}
where $H_{x,y} \in \Cb 2(\G;\bR)$ measures the volume of the straight line homotopy between the geodesic triangles $(g_0, g_1, g_2).x$ and $(g_0, g_1, g_2).y$.  In particular, $\|H_{x,y}\|_\infty \le 3v_3$, so that $[\vol^x_3] = [\vol^y_3]$, as claimed; the previous section gives a dual discussion.  If basepoints are not relevant, we write $[\vol_3]$ to denote the class of $[\vol^x_3]$.

The continuous bounded cohomology of $\G$ is generated by $[\vol_3]$, i.e. $\Hcb3(\G;\bR)= \langle [\vol_3]\rangle_\bR$ \cite{Bloch}. In fact, $\|[\vol_3]\|_\infty = v_3$;  see e.g. \cite{BBI:mostow} for a discussion of the hyperbolic volume class in dimensions $n\ge 3$.
If $\g$ is a discrete group and $\rho: \g \to \G$ is a group homomorphism, then $[\rho^*\vol_3]\in\Hb3(\g;\bR)$ is called the \emph{bounded fundamental class of $\rho$} or the \emph{volume class of $\rho$}.  Observe that for any $g\in \PSL_2\bC$, we have an equality at the level of cochains \[ (g\rho g\inverse)^*\vol_3^{gx} = \rho^*\vol_3^{x},\] so that the volume class is an invariant of the $\G$-conjugacy class of $\rho$.  It is also true that $[\rho^*\vol_3]$ is an invariant of the {\it closure} of the action of $\G$ by conjugation, if $\g$ is finitely generated.  Indeed, Thurston \cite[proof of Proposition 1.1]{Thurston:vol} pointed out that for non-conjugate representations $\rho$ and $\rho'$ of a finitely generated, discrete group $\g$ into $\Isom^+(\bH^3)$, $\rho\in \overline{\G.\rho'}$ implies that both $\im\rho$ and $\im \rho'$ are virtually abelian.  By Theorem \ref{soma's theorem} or Lemma \ref{lem:geom_elem} below, the volume classes of such representations are zero.  Hence if $\g$ is finitely generated, we obtain a well-defined, equivariant function on the character variety \[\Hom(\g, \G)/\!\!/\G\to \Hb3(\g;\bR)\] with respect to the natural actions of $\Out(\g)$ on each space.

The following theorem of Soma characterizes when a finitely generated Kleinian representation has non-trivial volume class.  See Section \ref{hyperbolic} for definitions of geometrically finite hyperbolic manifolds and characterizations of ends of geometrically infinite hyperbolic manifolds.  
\begin{theorem}[{\cite[Theorem 1]{Soma:Kleinian}}]\label{soma's theorem}
If $\g\le \PSL_2\bC$ is a finitely generated Kleinian group of infinite co-volume without elliptic elements and  $\rho: \g \to \G$ is any discrete and faithful representation, then the following are equivalent:
\begin{itemize}
\item $[\rho^*\vol_3] = 0\in \Hb 3(\g;\bR)$
\item $\|[\rho^*\vol_3]\|_\infty < v_3$
\item $M_\rho$ is geometrically finite or $\g$ is virtually abelian.
\end{itemize}
\end{theorem}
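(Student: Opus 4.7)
The proof establishes a three-way equivalence by cycling through (i) $[\rho^*\vol_3]=0$ gives $\|[\rho^*\vol_3]\|_\infty=0<v_3$ trivially; (ii) $M_\rho$ geometrically finite or elementary implies $[\rho^*\vol_3]=0$; and (iii) if $M_\rho$ is neither geometrically finite nor elementary, then $\|[\rho^*\vol_3]\|_\infty=v_3$, so in particular not strictly less than $v_3$.

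For the elementary branch of (ii), if $\rho(\g)$ stabilizes an ideal point $\xi$, a geodesic line $L$, or a totally geodesic plane $P$, then choose basepoint $x=\xi\in \partial\bH^3$, or $x\in L$, or $x\in P$, and work with the cocycle $\rho^*\vol_3^x$: every orbit tetrahedron has all four vertices on the stabilized set, hence is degenerate, so $\rho^*\vol_3^x\equiv 0$ already at the cochain level. For the geometrically finite branch, first assume there are no cusps. Then the convex core $C=\mathrm{CH}(\Lambda(\g))/\g$ is compact, and the equivariant nearest-point projection $r:\bH^3\to \mathrm{CH}(\Lambda(\g))$ is $1$-Lipschitz. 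Applying the prism operator to the straight-line homotopy $\mathrm{id}\sim r$ gives a bounded cochain $H\in\Cb{2}(\g;\bR)$ with $\delta H=\vol_3-\vol_3\circ r$. The pulled-back cocycle $\vol_3\circ r$ is supported on simplices with vertices in the compact core, and one argues it is itself a bounded coboundary from the vanishing of $H^3$ of the compact $3$-manifold $C$ with non-empty boundary together with the transfer principle for bounded cohomology. The cusped case follows by truncating along horoball neighborhoods and absorbing the controlled boundary contributions into the primitive.

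The hard direction is (iii). If $M_\rho$ is non-elementary and geometrically infinite, the Tameness Theorem asserts that $M_\rho$ is homeomorphic to the interior of a compact $3$-manifold, and the end-structure theory then gives at least one simply degenerate end $E$. In $E$ there is a sequence of pleated surfaces $S_n$ (uniformly Lipschitz, bounded genus) exiting the end, cobounding submanifolds $K_n\subset M_\rho$ with $\vol(K_n)\to\infty$ while $\area(\partial K_n)$ stays uniformly bounded. Applying a Soma-type $(v_3-\epsilon)$-efficient straight triangulation of $K_n$, as referenced in the excerpt, yields group chains $Z_n\in\Cc{3}(\g;\bR)$ with $\rho^*\vol_3(Z_n)\ge (v_3-\epsilon)\|Z_n\|_1$ and $\|\partial Z_n\|_1/\|Z_n\|_1\to 0$. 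The duality pairing $|c(Z_n)-W(\partial Z_n)|\le \|c\|_\infty\|Z_n\|_1+\|W\|_\infty\|\partial Z_n\|_1$ for any cocycle $c\in [\rho^*\vol_3]$ with $\rho^*\vol_3=c+\delta W$ then forces $\|[\rho^*\vol_3]\|_\infty\ge v_3-\epsilon$; letting $\epsilon\to 0$ and combining with the trivial upper bound $\|\vol_3\|_\infty=v_3$ gives equality.

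The principal obstacle is the efficient-chain construction powering (iii): one must produce straight triangulations of the $K_n$ whose individual tetrahedra are nearly regular, attaining volume near $v_3$, while uniformly controlling the surface-area contribution from $\partial K_n$. This is precisely the Soma-type construction invoked repeatedly in the excerpt, and the treatment of cusped simply degenerate ends adds bookkeeping but no new conceptual content.
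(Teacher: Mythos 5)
Your three-step cycle is the right skeleton, and two of its three legs are fine: the elementary case via a degenerate basepoint is exactly the trick the paper uses in Lemma \ref{lem:geom_elem}, and the geometrically infinite case (norm $=v_3$ via $(v_3-\epsilon)$-efficient chains hung on surfaces exiting a degenerate end, fed into the duality estimate) is the Soma construction that the paper recreates in Section \ref{sec:efficient_chains} — with the caveat that those chains come from \emph{smearing} a nearly regular simplex (measure chains pushed to straight chains), not from a genuine triangulation of the submanifolds $K_n$, since no triangulation has almost all of its tetrahedra nearly regular. The genuine gap is in the geometrically finite branch of (ii). After reducing to a cocycle whose simplices have vertices in the orbit of a point of the convex core (note that if you put the basepoint in the convex hull the retraction step is vacuous), you claim this cocycle is a bounded coboundary ``from the vanishing of $H^3$ of the compact $3$-manifold $C$ with non-empty boundary together with the transfer principle for bounded cohomology.'' That step fails. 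Vanishing of ordinary $H^3$ says nothing about bounded primitives: the comparison map $\Hb3(\g;\bR)\to \textnormal{H}^3(\g;\bR)$ has enormous kernel, and indeed $\textnormal{H}^3(F_2;\bR)=0$ while this very paper shows $\Hb3(F_2;\bR)$ has dimension at least that of the continuum. Worse, the compact core of a geometrically finite and of a geometrically infinite handlebody quotient is the same topological object, so any argument that uses only the topology of $C$ would also ``prove'' vanishing in the geometrically infinite case, contradicting (iii). Finally, there is no transfer available: $\g$ is non-elementary, hence not amenable, and no amenable subgroup or cover is in play.

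What is actually needed in that step is geometric, not topological: geometric finiteness forces the ends to flare exponentially outside the convex core (of finite volume), which gives a linear isoperimetric inequality / positive Cheeger constant — equivalently, a primitive $2$-form for the hyperbolic volume form with uniformly bounded pointwise comass (this is the equivalence of \cite{KK:cheeger} with \cite{Bowen:cheeger} that Section \ref{sec:higher_dim} alludes to). Integrating such a primitive over straightened $2$-faces and applying Stokes' theorem produces an honest bounded $2$-cochain $b$ with $\delta b=\rho^*\vol_3^x$, which is Soma's route to $[\rho^*\vol_3]=0$ in the geometrically finite case; parabolic cusps are then handled by truncating along horoball neighborhoods, whose cross-sections are amenable (Euclidean) and contribute controlled terms. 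As written, your proposal is missing exactly this geometric input, so the equivalence is not established. (For context: the paper itself does not reprove this theorem — it quotes \cite[Theorem 1]{Soma:Kleinian} and only reconstructs the efficient-chain half, so the comparison here is with Soma's argument rather than with a proof in the text.)
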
	

\noindent Hence, if $M_\rho$ has a geometrically infinite relative end, then $\|[\rho^*\vol_3]\|_\infty =v_3$.  See  \cite[Lemma 3.2 and Proposition 3.3]{Soma:boundedsurfaces} for the proof of this fact.  We record here an observation.

 \begin{lemma}\label{lem:geom_elem}
 Let $G$ be a discrete group.  If $\rho: G\to \G$ is indiscrete but not dense, then $\rho$ is geometrically elementary and  $[\rho^*\vol_3] = 0\in \Hb3(G; \bR)$.  
 \end{lemma}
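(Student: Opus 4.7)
The plan is to use the classification of closed subgroups of $\PSL_2\bC$ and exploit basepoint freedom for the volume cocycle. Write $H = \overline{\rho(G)}$. Since $\rho$ is indiscrete, the identity component $H^0$ has positive dimension, and since $\rho$ is not dense, $H^0$ is a proper connected closed subgroup of $\G$.

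First I would invoke the standard classification of proper connected closed subgroups of $\PSL_2\bC$: up to conjugation they are one-parameter parabolic, hyperbolic, or elliptic subgroups; the group of translations along a geodesic; $SO(2)$; the stabilizer of a point at infinity (the Borel subgroup); the stabilizer of a point in $\bH^3$ (a conjugate of $\PSU(2)$); or the stabilizer of a totally geodesic plane (a conjugate of $\PSL_2\bR$). In every case, $H^0$ stabilizes a point, a line, or a plane in $\bH^3 \cup \partial \bH^3$, and one checks in each case that the normalizer of $H^0$ in $\G$ also stabilizes the same object. Since $H$ normalizes $H^0$, the whole group $H$, and in particular $\rho(G)$, stabilizes that same point, line, or plane. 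Hence $\rho$ is geometrically elementary.

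Next, for the vanishing of $[\rho^*\vol_3]$, I would use the basepoint freedom recorded in equation \eqref{eqn:chain_homotopy}: the class $[\vol_3^x]$ is independent of $x \in \bH^3 \cup \partial \bH^3$, so we may choose $x$ to lie on the geometric object fixed by $\rho(G)$. If $\rho(G)$ fixes a point $x \in \bH^3$ or $x = \xi \in \partial \bH^3$, then the orbit $\rho(G).x$ is a single point and every tetrahedron $(\rho(g_0), \dots, \rho(g_3)).x$ is degenerate. If $\rho(G)$ stabilizes a geodesic line $\ell$ or a totally geodesic plane $P$, choose $x \in \ell$ or $x \in P$, respectively; then $\rho(G).x$ lies inside a $1$- or $2$-dimensional totally geodesic subspace of $\bH^3$, so all four vertices of every tetrahedron $(\rho(g_0), \dots, \rho(g_3)).x$ are coplanar and the signed hyperbolic volume vanishes. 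In every case $\rho^*\vol_3^x$ is identically zero as a cochain, so $[\rho^*\vol_3] = 0$ in $\Hb{3}(G;\bR)$.

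The only non-trivial part is the classification/normalizer step: one needs a tidy list of connected closed subgroups and the easy observation that normalizing a subgroup which uniquely stabilizes $X$ forces preservation of $X$. Everything else is a short cochain-level computation exploiting the freedom to choose any basepoint in $\bH^3 \cup \partial\bH^3$.
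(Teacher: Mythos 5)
Your argument is correct and reaches the same conclusion, but the first half takes a genuinely different route from the paper. The paper works with the full closure $H=\overline{\rho(G)}$ directly: it chooses a \emph{maximal} proper closed subgroup $M\supseteq H$ and identifies $M$ via Mostow's theorem on maximal subgroups together with compactness of the radical and the classification of Riemannian symmetric spaces, concluding that $M$ stabilizes a point, an ideal point, or a geodesic plane. You instead pass to the identity component $H^0$, which is a positive-dimensional proper connected closed subgroup, invoke the classification of such subgroups to produce a canonical invariant object $X$ (point, line, or plane), and then use the elementary facts that $H$ normalizes $H^0$ and that the normalizer of a subgroup with a \emph{unique} invariant $X$ preserves $X$. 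Your route is more elementary (no Mostow, no symmetric-space structure theory) and it handles the case where $\overline{\rho(G)}$ preserves only a geodesic line just as smoothly as the others; the price is that you must actually have a complete list of connected closed subgroups, and the list you wrote is not quite complete: it omits, for instance, one-parameter loxodromic (screw-motion) subgroups, the full unipotent group $\cong\bC$, the full diagonal $\cong\bC^*$, and the intermediate solvable subgroups of the Borel of real dimension $2$ and $3$. None of these omissions is fatal --- every such subgroup still preserves a line or an ideal point and its normalizer preserves the same object, so the normalizer step goes through verbatim --- but as written the case analysis should be completed (or replaced by the corresponding classification of real Lie subalgebras of $\mathfrak{sl}_2\bC$). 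The second half of your argument, choosing the basepoint $x$ on the invariant point, line, or plane so that $\rho^*\vol_3^x$ vanishes identically as a cochain and hence $[\rho^*\vol_3]=0$, is exactly the paper's argument, including the use of basepoint independence of the class for $x\in\bH^3\cup\partial\bH^3$.
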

 
 \begin{proof}
 Since $\rho$ is indiscrete and not dense, $H = \overline{\rho(G)}\le \G$ is a proper, closed Lie subgroup.  According to \cite[Proposition, p. 246]{Sullivan:stability}, $H$ fixes a point, an ideal point, or stabilizes a geodesic plane, i.e. $H$ is geometrically elementary.  
 
 We claim that $\rho^*\vol^y_3 =0$ for some $y\in \bH^3\cup \partial \bH^3$.  We just need to choose $y$ to be contained in the invariant point or plane so that every tetrahedron has zero volume.  Since $\rho^*\vol^y_3 =0\in[\rho^*\vol_3]$, it follows that $[\rho^*\vol_3]=0$.  
\end{proof}

\subsection{The bounded Borel class}\label{borel}
We will consider a generalization of the hyperbolic volume class $\beta_n \in \Hcb 3(\PSL_n\bC;\bR)$ called the \emph{bounded Borel class}, which coincides with the $3$-dimensional hyperbolic volume class for $n=2$.
Using a stability theorem of Monod \cite{Monod:stability}, Bucher, Buger, and Iozzi \cite[Theorem 2]{BBI:borel} show that $\langle \beta_n\rangle_\bR = \Hcb3(\PSL_n\bC;\bR)$, for all $n\ge3$, and they compute the $\ell_1$-norm of $\beta_n$; see Theorem \ref{Borel class theorem}.

Let $\mathscr F(\bC^n)$ be the space of complete flags of $\bC^n$.  That is, $F \in \mathscr F(\bC^n)$ is a sequence of vector subspaces $\{0\}\le F^1 \le ... \le F^n = \bC^n$ such that $\dim_\bC (F^j) = j$.  We fix $F\in \mathscr F (\bC^n)$ and consider a Borel measurable $\PSL_n\bC$-invariant function 
\begin{align}\label{eqn:almost_definition}
B_n^F: (\PSL_n\bC)^4 &\to\bR \\ \nonumber
(g_0, ..., g_3)& \mapsto B_n(g_0.F, ..., g_3.F) \nonumber
\end{align}
that satisfies the cocycle condition everywhere, but is not everywhere continuous.  Equation (\ref{eqn:borel_def}), below, provides a useful definition of $B_n: \mathscr F(\bC^n)^4\to \bR$ on generic configurations of flags.  The cocycle $B_n$ was defined on generic configurations  of quadruples of flags in \cite[Section 2]{Goncharov:Borel} and extended to non-generic configurations in \cite[Equation (6)]{BBI:borel}.

\para{Continuity} Since $B_n^F$ is not everywhere continuous, $B_n^F\not\in \Cb 3 (\PSL_n\bC ;\bR)$, the continuous cochain group from Section \ref{bounded groups}.  However, in the appropriate cochain complex that computes the continuous bounded cohomology of $\PSL_n\bC$, $B_n^F$ represents $\beta_n$; we prefer to omit the technical details of the construction of the {\it strong resolution of $\bR$ by relatively injective $\PSL_n\bC$-Banach modules} in which $B_n^F$ is a cocycle (see \cite[Sections 3-7]{BBI:borel}).  Instead it is convenient to think of $[B_n^F] \in \Hb 3(\PSL_n\bC;\bR)$, where $\PSL_n\bC$ is given the \emph{discrete} topology.  

More precisely, let $\PSL_n\bC^\delta$ denote $\PSL_n\bC$ with the discrete topology and let $id: \PSL_n\bC^\delta \to \PSL_n\bC$ be the identity.  Then $id$ is continuous, hence induces a map $id^*: \Hcb \bullet (\PSL_n\bC;\bR) \to \Hcb \bullet (\PSL_n\bC^\delta; \bR)$.  We can work with the class 
\[id^*\beta_n = [B_n^F] \in \Hcb 3(\PSL_n\bC^\delta;\bR) = \Hb 3(\PSL_n\bC;\bR).\] 

Our  goal in this section is to describe the main results from \cite{BBI:borel} and to extract a continuity property of the cocycle $B_n^F$ on non-degenerate generic configurations of complete flags.  

\para{The case $n=2$ and the Bloch-Wigner function}
Note that $\mathscr F(\bC^2) = \CP^1 = \partial \bH^3$. The \emph{Bloch--Wigner function} $D: \CP^1\to \bR$ is a variant of the di-logarithm function that computes the volume of the ideal tetrahedron with ordered vertex set $( \infty, 0, 1, z)\in (\CP^1)^4$.  If  $z_0, z_1, z_2 \in \CP^1$ are distinct and $z_3\in \CP^1$ is arbitrary, then there exists a unique $g\in \G$ such that $g.(z_0, ..., z_3) = (\infty, 0, 1, g.z_3)$.  Then $[z_0:...:z_3]:=g.z_3$ is the \emph{cross ratio} of the $4$-tuple, and 
\[\vol(z_0, ..., z_3):=D([z_0:...:z_3])\] is the oriented volume of the ideal geodesic tetrahedron spanned by $(z_0, ..., z_3)$. When at least two of $z_0, ..., z_3$ coincide, $\vol(z_0, ..., z_3) =0$.  

It is well known that $D$ is real analytic on $\bC\setminus \{0, 1, \infty\}$, attains its extreme values at $\zeta_3 = e^{i\pi/3}$ and $\bar \zeta_3$,  and extends continuously to $\CP^1$.  However, $\vol: (\CP^1)^4\to \bR$ is not continuous everywhere.  Indeed, consider the sequence $\underline z_k = (\infty, 0, 2^{-k}, 2^{-k}\zeta_3)$, for $k= 0, 1, ...$; the M\"obius transformation $z\mapsto 2^kz$ takes $\underline z_k$ to $\underline z_0$, hence $\vol(\underline z_k)$ is constant (and positive), but the limiting configuration $(\infty, 0, 0 ,0 )$ has $0$ volume.  For fixed $z\in \CP^1$, it is not difficult to see that the map 
\begin{align*}
B_2^z: (\G)^4&\to \bR\\
(g_0, ... ,g_3)& \mapsto \vol(g_0.z, ..., g_3.z)
\end{align*}
is a $\G$-invariant cocycle.  
The bounded Borel class is just $\beta_2: = [\vol_3^x] \in \Hcb3 (\G;\bR)$, where $x\in \bH^3$ is {\it not} an ideal point, and it is clear that $id^*[\vol_3^x] = [B_2^z] \in \Hb 3(\G;\bR)$ for any $x\in \bH^3\cup \partial \bH^3$; see Section \ref{volume class}.   
In Section \ref{sec:applications}, we show that  $\textnormal H_{\textnormal{b}}^3(\G;\bR)$ is quite large.

\para{Generic configurations of flags} For convenience, we fix a Hermitian inner product $\bC^n\otimes \bC^n \to \bC$, thus identifying a maximal subgroup $K\le \PSL_n\bC$ preserving this inner product.  Then $K$ is a maximal compact subgroup isomorphic to $\textnormal{PSU}(n)$.  
Note that $\PSL_n\bC$ acts transitively on $\mathscr F(\bC^n)$, so that if we choose also an orthonormal ordered basis $(e_1, e_2, ... ,e_n)$, we may identify the stabilizer of the standard flag $\langle e_1\rangle \le \langle e_1, e_2\rangle \le ... \le \langle e_1, ..., e_n\rangle$ with the upper triangular group $P\le \PSL_n\bC$, and $\mathscr F(\bC^n) \cong \PSL_n\bC/ P$. 

Given a flag $F\in \mathscr F(\bC^n)$, using the Gram--Schmidt process and our chosen inner product, we may find an orthonormal basis $(f^1, ..., f^n)$ such that 
\[F^j = \langle f^j\rangle \perp \langle f^1, ..., f^{j-1}\rangle, \]
and each $f^j$ is uniquely determined up to multiplication by a complex number of norm $1$.  Call $(f^1, ..., f^n)$ an \emph{affine representative} of $F$.  

Say  that $(F_0, ..., F_3)\in \mathscr F(\bC^n)^4$ is a \emph{generic} configuration of flags if  whenever $0\le j_0, ..., j_3\le n-1$ satisfy $j_0 + ...+j_3 = k$, then $\dim \langle F_0^{j_0}, ..., F_3^{j_3}\rangle = \min \{n,k\}$; genericity is an open condition among $4$-tuples of flags.   Let 
\[ \mathbb M = \{(j_0, ..., j_3) \in \{0,1, ... ,n-2\}^4 : j_0 + ... + j_3 = n-2 \}, \]
so that if  $(F_0, ..., F_3)$ is a generic configuration of flags and $(j_0, ..., j_3)\in \mathbb M$, then
\begin{equation}\label{eqn:dim_quotient}
\dim_\bC \frac{\langle F_0^{j_0+1}, ..., F_3^{j_3+1}\rangle}{\langle F_0^{j_0}, ..., F_3^{j_3}\rangle}= 2.
\end{equation}
Moreover, $\#\mathbb M = \frac{1}{6}n(n-1)(n+1)$, and if $(j_0, ..., j_3)\in\{0, ..., n-1\}^4\setminus \mathbb M$, then equality (\ref{eqn:dim_quotient}) does not hold  \cite[p. 3147]{BBI:borel}. 

\para{Definition of the cocycle on generic configurations} Let $\mathscr F(\bC^n)^{(4)}\subset \mathscr F(\bC^n)^{4}$ be the subspace of generic configurations of flags.   Let  $\underline F = (F_0, ..., F_3)\in\mathscr F(\bC^n)^{(4)}$ and $\mathbb J= (j_0, ..., j_3)\in \mathbb M$.  Find also an affine representative  $(f_i^1, ..., f_i^n)$ of $F_i$, for $i = 0, ..., 3$.   The functions 
\[\underline F \mapsto V_{\mathbb J} (\underline F) := \langle f_0^1, ..., f_0^{j_0}, ..., f_3^1, ..., f_3^{j_3}\rangle \in \textnormal {Gr}_{n-2}(\bC^n)\] and 
\[\underline F \mapsto V_{\mathbb J} (\underline F)^\perp \in \textnormal {Gr}_{2}(\bC^n)\]
vary continuously in generic configurations $\underline F$.   The orthogonal projection 
\[\bC^n =  V_{\mathbb J} (\underline F) \oplus   V_{\mathbb J} (\underline F)^\perp \to V_{\mathbb J} (\underline F)^\perp.\]
coincides with the quotient
\[\bC^n \to \frac{\langle F_0^{j_0+1}, ..., F_3^{j_3+1}\rangle}{\langle F_0^{j_0}, ..., F_3^{j_3}\rangle}= \frac{\bC^n}{V_{\mathbb J} (\underline F)}\cong V_{\mathbb J}(\underline F)^\perp.\]
Using genericity again, the orthogonal projections of $f_0^{j_0+1} ,..., f_3^{j_3+1}$ to $V_{\mathbb J}(\underline F)^\perp$ are non-zero and  pairwise linearly independent.  Let \[t_{\mathbb{J}}(\underline F)\in \mathbb P (V_{\mathbb J}(\underline F)^\perp)^4 \cong (\CP^1)^4\] be the $4$-tuple consisting of the projectivizations of the orthogonal projections of $f_i^{j_i+1}$ to $V_{\mathbb J}(\underline F)^\perp$.  Observe that $t_{\mathbb J}$ is independent of our choice of affine representatives of $F_0, ..., F_3$, because the projectivization map only depends on the complex line spanned by $f_i^j$. 

Following \cite{Goncharov:Borel} and \cite{BBI:borel}, we define the $\PSL_n\bC$-invariant function 
\begin{align}\label{eqn:borel_def}
B_n: \mathscr F(\bC^n)^{(4)} &\to \bR\\
\underline F &\mapsto  \sum_{\mathbb J \in \mathbb {M}} \vol(t_{\mathbb J}(\underline F)). \nonumber
\end{align}
From the definition, it is clear that \[\sup_{\underline F\in \mathscr F(\bC^n)^{(4)}}|B_n(\underline F)|\le \#\mathbb M\cdot v_3 = \frac{(n^3-n)}{6}\cdot v_3.\]  

By avoiding non-generic configurations of flags, for each $\mathbb J \in \mathbb M$, the function $\underline F \mapsto t_{\mathbb J}(\underline F)$ varies continuously and has image contained in the locus of {\it distinct} $4$-tuples in $(\CP^1)^4$.  Since $\vol$ varies continuously on distinct $4$-tuples, all summands in (\ref{eqn:borel_def}) are continuous, and so $B_n$ is continuous with respect to the subspace topology on $\mathscr F(\bC^n)^{(4)}$.  
We refer the reader to \cite[Section 3]{BBI:borel}, which explains how to extend $B_n$ to a bounded Borel measurable cocycle $\mathscr F(\bC^n)^4\to \bR$.  After defining this extension, for a fixed flag $F\in \mathscr F(\bC^n)$, equation (\ref{eqn:almost_definition}) gives a $\PSL_n\bC$-invariant alternating cocycle $B_n^F$ \cite[Corollary 13]{BBI:borel}.

\para{The Veronese embedding} 
There is a unique irreducible representation $\iota_n: \G\to \PSL_n\bC$, up to conjugation, and it induces an equivariant map  $\hat\iota_n: \partial \bH^3 \to \mathscr F(\bC^n)$ called the \emph{Veronese embedding}.  

Let $x\in \partial \bH^3$; the cocycle $B_n^{\hat \iota_n (x)}$ pulls back to a multiple of the volume cocycle $\vol_3^x$, which allows us to push constructions in $3$-dimensional hyperbolic geometry to the higher rank setting.
\begin{proposition}[{\cite[Proposition 21]{BBI:borel}}]\label{prop:borel_values}
For all $g_0, ..., g_3\in \G$ and $\mathbb J\in \mathbb M$, 
\[ \vol(t_{\mathbb J}(\hat\iota_n(g_0.x), ...,\hat\iota_n(g_3.x))) = \vol(g_0.x, ..., g_3.x),\]
so that
\begin{equation*}
  B_n^{\hat\iota_n(x)}(\iota_n(g_0), ..., \iota_n(g_3)) = \frac{n(n^2-1)}{6}\vol_3^x(g_0, ..., g_3).
  \end{equation*}
  In other words, $\iota_n^*B_n^{\hat\iota_n(x)}=\frac{n(n^2-1)}{6}\vol_3^x$.
\end{proposition}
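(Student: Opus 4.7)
The plan is to prove the displayed equality of cocycles by first reducing the second formula to the first (since summing over $\mathbb J$ gives the factor $\#\mathbb M = \frac{n(n^2-1)}{6}$), and then establishing, for each $\mathbb J \in \mathbb M$, that the projectivized 4-tuple $t_{\mathbb J}(\hat\iota_n(g_0.x),\dots,\hat\iota_n(g_3.x)) \in (\CP^1)^4$ is $\PSL_2\bC$-equivalent to $(g_0.x,\dots,g_3.x)$. Since $\vol$ on ideal tetrahedra is a $\PSL_2\bC$-invariant function of the unordered 4-tuple (depending only on the cross-ratio), this gives the equality of volumes.

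First I would set up explicit coordinates. Identify $\bC^n$ with $\mathrm{Sym}^{n-1}(\bC^2)$, so that $\iota_n: \G \to \PSL_n\bC$ is the induced action on symmetric powers and the Veronese map sends a point $p = [v] \in \mathbb P(\bC^2) = \partial \bH^3$ to the flag
\[ \hat\iota_n(p)^k = v^{n-k} \cdot \mathrm{Sym}^{k-1}(\bC^2), \quad k=1,\dots,n. \]
Note that this flag is $\PSL_2\bC$-equivariant: $\hat\iota_n(g.p) = \iota_n(g).\hat\iota_n(p)$.

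Next, I would use equivariance to normalize. By triple-transitivity of $\PSL_2\bC$ on $\CP^1$ and $\PSL_n\bC$-invariance of $B_n$, we may replace $(g_0.x,\dots,g_3.x)$ by $(\infty, 0, 1, z)$, where $z = [g_0.x:\dots:g_3.x]$ is the cross-ratio; the volume $\vol(g_0.x,\dots,g_3.x) = D(z)$ is unchanged, and the Veronese flags take a tractable monomial form in the basis $\{e_1^{n-1}, e_1^{n-2}e_2, \dots, e_2^{n-1}\}$ with $v_0 = e_1$, $v_1 = e_2$, $v_2 = e_1+e_2$, $v_3 = e_1 + ze_2$.

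Then, for any $\mathbb J = (j_0,j_1,j_2,j_3) \in \mathbb M$ (so $\sum j_i = n-2$), I would compute $V_{\mathbb J}(\underline F) = \sum_i \hat\iota_n(p_i)^{j_i}$ explicitly and check that the quotient $\bC^n/V_{\mathbb J}$ is naturally isomorphic to $\mathrm{Sym}^1(\bC^2)=\bC^2$ with the $\SL_2\bC$ action, in such a way that the image of $\hat\iota_n(p_i)^{j_i+1}$ in this quotient is the line spanned by $v_i$ itself (up to a common nonzero factor depending only on $\mathbb J$). This identification is the crux: the Veronese structure intertwines the filtration quotients with the natural representation, so the four lines produced by the $(j_i+1)$-st pieces of the flags descend to the four original points $[v_i]$ in a single $\CP^1$. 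Hence $t_{\mathbb J}(\underline F) = (p_0, p_1, p_2, p_3)$ up to a Möbius transformation and
\[ \vol(t_{\mathbb J}(\hat\iota_n(g_0.x),\dots,\hat\iota_n(g_3.x))) = \vol(g_0.x,\dots,g_3.x). \]

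The main obstacle will be verifying the identification of $\bC^n/V_{\mathbb J}$ with $\bC^2$ cleanly. The cleanest way is to observe that $V_{\mathbb J}$ is an $\SL_2\bC$-submodule of $\mathrm{Sym}^{n-1}(\bC^2)$ only virtually (it depends on the configuration), but its orthogonal complement carries a canonical rank-two structure inherited from the original $\bC^2$, because the Veronese embedding is $\SL_2\bC$-equivariant and the subspaces $v_i^{n-j_i}\cdot \mathrm{Sym}^{j_i-1}(\bC^2)$ are the kernels of the natural "evaluation/derivation" maps to $\bC^2$ given by extracting the top-order behavior at $v_i$. Once this identification is set up, the equality of cross-ratios (hence of volumes) follows formally and the summation over $\mathbb M$ yields the second displayed equation.
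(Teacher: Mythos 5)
The paper itself gives no proof of this statement: it is quoted directly from \cite[Proposition 21]{BBI:borel}, so the only comparison available is with the cited source, and your plan follows the route one would expect there — realize $\bC^n=\mathrm{Sym}^{n-1}(\bC^2)$ so that $\hat\iota_n([v])$ is the osculating flag $F^k=v^{\,n-k}\cdot\mathrm{Sym}^{k-1}(\bC^2)$, use equivariance of $\hat\iota_n$ together with $\PSL_n\bC$-invariance of $B_n$ to normalize to $(\infty,0,1,z)$, show that for each $\mathbb J\in\mathbb M$ the \emph{ordered} $4$-tuple $t_{\mathbb J}$ has the same cross-ratio as $([v_0],\dots,[v_3])$, and then sum over the $\#\mathbb M=\tfrac{n(n^2-1)}{6}$ indices to get the displayed identity on generic tuples. (One small slip: $\vol$ is alternating, not a function of the unordered tuple; this is harmless since your identification preserves the ordering.)

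The crux, as you acknowledge, is the claim that the images of the lines $F_i^{j_i+1}$ in $\bC^n/V_{\mathbb J}$ reproduce the original configuration, and this is precisely the step you have not verified; moreover the justification you sketch does not stand as written. The complement $V_{\mathbb J}(\underline F)^{\perp}$ is taken with respect to the auxiliary Hermitian form, which is not $\SL_2\bC$-invariant, so it carries no ``canonical rank-two structure inherited from $\bC^2$''; also $F_i^{j_i}$ has codimension $n-j_i$, so it is not the kernel of a map to $\bC^2$. Only the quotient $\bC^n/V_{\mathbb J}$ is intrinsic, its identification with $\bC^2$ depends on the configuration, and the position of the fourth line must genuinely be checked. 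Two honest ways to finish: (a) compute in the coordinates you already normalized ($v_0=e_1$, $v_1=e_2$, $v_2=e_1+e_2$, $v_3=e_1+ze_2$), expanding $v_i^{\,n-1-j_i}w^{\,j_i}$ modulo $V_{\mathbb J}$; e.g.\ for $n=3$, $\mathbb J=(1,0,0,0)$ the images are $e_1e_2$, $e_2^2$, $2e_1e_2+e_2^2$, $2e_1e_2+ze_2^2$, whose cross-ratio agrees with that of the $[v_i]$, and the general case is the same kind of computation; or (b) dualize: under the apolar pairing the annihilator of $V_{\mathbb J}$ is $\bigl(\prod_k m_k^{j_k}\bigr)\cdot\mathrm{Sym}^1((\bC^2)^*)$, where $m_k$ is a linear form vanishing on $v_k$, the line annihilating the image of $F_i^{j_i+1}$ is spanned by $m_i\prod_k m_k^{j_k}$, and dividing by $\prod_k m_k^{j_k}$ together with the (projective, hence cross-ratio preserving) correspondences $[v]\mapsto[m_v]$ and line~$\mapsto$~annihilator gives the claim for every $\mathbb J$ at once. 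Finally, since the proposition is asserted for all $g_0,\dots,g_3$, you should dispose of tuples where the $g_i.x$ are not pairwise distinct: there both sides vanish, because $\vol$ does and the extended cocycle is alternating in its flag arguments (\cite[Corollary 13]{BBI:borel}), so your genericity reduction loses nothing.
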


The function $B_n^F$ is not continuous near non-generic configurations of flags.  However, $B_n^F$ satisfies the following continuity property, which is all we need in the sequel.

\begin{lemma}\label{lem:borel_cont}
Let $x\in \partial \bH^3$ and suppose $g_0, ..., g_3\in \G$ are such that $g_0.x, ..., g_3.x$ are pairwise distinct.  Then there exist open neighborhoods $U_i\subset \PSL_n\bC$ of $\iota_n(g_i)$ such that the map 
\begin{align*}
U_0 \times ... \times U_3 \subset (\PSL_n\bC)^4 & \to \bR\\
(h_0, ..., h_3)&\mapsto  B_n^{\hat\iota_n(x)}(h_0, ..., h_3)
\end{align*}
is continuous.  
\end{lemma}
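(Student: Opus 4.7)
The plan is to reduce continuity of $B_n^{\hat\iota_n(x)}$ on a neighborhood of $(\iota_n(g_0),\ldots,\iota_n(g_3))$ to the continuity of $B_n$ on the open locus $\mathscr F(\bC^n)^{(4)}$ of generic configurations, which is recorded in the excerpt just before the lemma.

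First I would check that if $y_0,\ldots,y_3\in\partial\bH^3=\CP^1$ are pairwise distinct, then $(\hat\iota_n(y_0),\ldots,\hat\iota_n(y_3))\in\mathscr F(\bC^n)^4$ is a generic configuration. Up to conjugation, $\iota_n$ is the $(n-1)$-st symmetric power of the standard $\G$-representation on $\bC^2$, so $\hat\iota_n$ identifies $\CP^1$ with the rational normal curve in $\CP^{n-1}$ and sends a point $y$ to the osculating flag of the curve at $y$. A classical general-position property of the rational normal curve then gives the required identity $\dim\langle F_{y_0}^{j_0},\ldots,F_{y_3}^{j_3}\rangle=\min(n,k)$ for every $(j_0,\ldots,j_3)\in\{0,\ldots,n-1\}^4$ with $k=j_0+\cdots+j_3$. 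Concretely, identifying $\bC^n$ with the space of polynomials of degree at most $n-1$ and the $j$-th osculating step at $y$ with the subspace of polynomials vanishing to order at least $n-j$ at $y$, the identity reduces via duality to the statement that a non-zero polynomial of degree at most $n-1$ has at most $n-1$ zeros counted with multiplicity.

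Next, since $\mathscr F(\bC^n)^{(4)}$ is open in $\mathscr F(\bC^n)^4$ and the orbit map $\PSL_n\bC\ni h\mapsto h.\hat\iota_n(x)\in\mathscr F(\bC^n)$ is continuous, the preimage of $\mathscr F(\bC^n)^{(4)}$ under $(h_0,\ldots,h_3)\mapsto(h_0.\hat\iota_n(x),\ldots,h_3.\hat\iota_n(x))$ is open in $(\PSL_n\bC)^4$. By the first step, this preimage contains $(\iota_n(g_0),\ldots,\iota_n(g_3))$, so I can choose product neighborhoods $U_0\times\cdots\times U_3$ of that point inside it. Composing the continuous product orbit map with the continuity of $B_n$ on the generic locus then yields continuity of $B_n^{\hat\iota_n(x)}$ on $U_0\times\cdots\times U_3$, which is what the lemma asserts.

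The only substantive obstacle is the general-position claim used in the first step. This is a classical fact about osculating flags of the rational normal curve, amounting to a polynomial-interpolation argument of Vandermonde type, and I expect a clean proof using the identification above together with the fundamental theorem of algebra. The remaining steps are purely formal manipulations with open sets and the continuity statement already in hand.
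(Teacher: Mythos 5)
Your proposal is correct and takes essentially the same route as the paper: observe that distinct points on $\partial\bH^3$ map under $\hat\iota_n$ to a generic configuration of flags, then pull back an open neighborhood inside $\mathscr F(\bC^n)^{(4)}$ through the continuous orbit map $(h_0,\ldots,h_3)\mapsto(h_0.\hat\iota_n(x),\ldots,h_3.\hat\iota_n(x))$ (the paper phrases this via the quotient projection $\PSL_n\bC\to\PSL_n\bC/P_x$), and conclude from the continuity of $B_n$ on the generic locus. The only difference is that you supply a sketch of the genericity of osculating flags of the rational normal curve at distinct points (reducing to the fact that a nonzero polynomial of degree at most $n-1$ has at most $n-1$ zeros with multiplicity), whereas the paper simply asserts this fact; your sketch is sound.
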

\begin{proof}
Since $4$-tuples of distinct flags in the image of $\hat\iota_n$ are in generic position, we have  $(\hat\iota_n(g_0.x), ..., \hat\iota_n(g_3.x)) \in \mathscr F(\bC^n)^{(4)}$.  Let $V\subset \mathscr F(\bC^n)^{(4)}$ be an open neighborhood of $(\hat\iota_n(g_0.x), ..., \hat\iota_n(g_3.x))$, so that $V$ is open in $ \mathscr F(\bC^n)^4$, as well.  Consider the stabilizer $P_x$ of $\hat \iota_n(x)$, the quotient projection $\pi: \PSL_n\bC\to  \PSL_n\bC/P_x\cong \mathscr F(\bC^n)$, and the product $\pi^4: (\PSL_n\bC)^4\to \mathscr F(\bC^n)^4$.   Then $(\pi^4)\inverse (V)\subset (\PSL_n\bC)^4$ is an open neighborhood of $(\iota_n(g_0), ..., \iota_n(g_3))$.  Products of open sets form a basis for the topology of the product of spaces, and  $B_n$ is continuous on $\mathscr F(\bC^n)^{(4)}$.  This completes the proof of the lemma.
\end{proof}

\noindent We will need one more important result about the bounded Borel class and its semi-norm.
\begin{theorem}[{\cite[Theorem 2]{BBI:borel}}]\label{Borel class theorem}
For each $n\ge 2$, the Borel class $\beta_n$ generates $\Hcb 3(\PSL_n\bC;\bR)$, and its $\ell_\infty$-norm is \[\|\beta_n\|_\infty = v_3 \frac{n(n^2-1)}{6}.\]
For the irreducible representation $\iota_n: \G\to \PSL_n\bC$, the pullback satisfies \[\iota_n^*\beta_n = \frac{n(n^2-1)}{6}[\vol_3] \in \Hcb 3(\G;\bR).\]
\end{theorem}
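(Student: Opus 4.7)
The plan is to establish the three assertions in a way that makes the Veronese pullback the engine driving both the norm computation and the non-vanishing required for generation. Concretely, I would first derive the cocycle identity
\[\iota_n^* B_n^{\hat\iota_n(x)} = \frac{n(n^2-1)}{6}\,\vol_3^x\]
by summing the point-wise equality of Proposition \ref{prop:borel_values} over the $\#\mathbb M = \tfrac{n(n^2-1)}{6}$ indices $\mathbb J \in \mathbb M$. Interpreting $B_n^{\hat\iota_n(x)}$ as a cocycle in a strong $\PSL_n\bC$-resolution of $\bR$ that realizes $\Hcb 3(\PSL_n\bC;\bR)$---this is precisely the technical machinery set up in \cite{BBI:borel} and is where the bulk of the hidden work lives---then gives the pullback identity $\iota_n^*\beta_n = \tfrac{n(n^2-1)}{6}[\vol_3]$ after passing to cohomology classes.

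With the pullback in hand I would pin down the norm. The upper bound $\|\beta_n\|_\infty \le v_3\cdot \tfrac{n(n^2-1)}{6}$ is immediate from the defining formula (\ref{eqn:borel_def}) together with the bound $|\vol(t_{\mathbb J}(\underline F))|\le v_3$ on each of the $\#\mathbb M$ summands; the same estimate survives the Borel-measurable extension to non-generic configurations because that extension is constructed by closing up, not by enlarging values. For the matching lower bound, the pullback formula together with the norm-non-increasing property of $\iota_n^*$ and Bloch's identity $\|[\vol_3]\|_\infty = v_3$ in $\Hcb 3(\G;\bR)$ gives
\[\frac{n(n^2-1)}{6}\,v_3 \;=\; \bigl\|\iota_n^*\beta_n\bigr\|_\infty \;\le\; \|\beta_n\|_\infty,\]
forcing equality.

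The main obstacle is the generation statement, since the first two parts already show $\beta_n\neq 0$; what remains is to prove $\dim_\bR \Hcb 3(\PSL_n\bC;\bR)\le 1$. My plan here would be to invoke the stability theorem of Monod \cite{Monod:stability}, which compares bounded cohomology across the inclusions $\PSL_k\bC \hookrightarrow \PSL_{k+1}\bC$ in low degree, and to combine it with Bloch's computation $\Hcb 3(\G;\bR) = \bR\cdot[\vol_3]$. The delicate aspect is that stability is cleanest in a range of $n$ depending on the degree, so one must either deploy a degree-$3$ version of stability or treat the low-rank cases separately by exploiting that any non-vanishing class produced by Veronese pullback from $\G$ already accounts for the full space. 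With $\dim_\bR \Hcb 3(\PSL_n\bC;\bR)\le 1$ in hand, the non-vanishing of $\beta_n$ from the norm computation immediately promotes it to a generator, completing the proof.
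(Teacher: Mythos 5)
The paper does not prove this statement at all: it is imported verbatim from \cite[Theorem 2]{BBI:borel}, so the only comparison available is with the cited argument and with the ingredients the paper reassembles in Section \ref{borel}. Your outline reconstructs that argument correctly and in the same order: the cochain-level identity $\iota_n^*B_n^{\hat\iota_n(x)}=\frac{n(n^2-1)}{6}\vol_3^x$ (already recorded here as Proposition \ref{prop:borel_values}, which contains the sum over $\mathbb M$), the upper bound coming from the explicit cocycle with $\#\mathbb M=\frac{n(n^2-1)}{6}$ summands each bounded by $v_3$, the matching lower bound from the fact that $\iota_n^*$ is norm non-increasing together with $\|[\vol_3]\|_\infty=v_3$, and generation via Monod's stability theorem reducing $\Hcb 3(\PSL_n\bC;\bR)$ to the known one-dimensional $n=2$ case; the genuinely technical point --- that the merely Borel-measurable cocycle $B_n^F$ represents $\beta_n$ in a resolution computing continuous bounded cohomology, with pullback implemented by precomposition along $\hat\iota_n$ --- is rightly deferred to \cite{BBI:borel} rather than reproved, which is appropriate given that the statement itself is a citation.
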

For a discrete group $G$ and representation $\rho: G \to \PSL_n\bC$, the \emph{bounded Borel class of $\rho$} or the \emph{Borel class of $\rho$} is $\rho^*\beta_n\in \Hcb 3(G;\bR)$.  Note that by Theorem \ref{Borel class theorem}, if $\rho: F_2 \to \PSL_2\bC$ is discrete, faithful, and geometrically infinite, then $\|(\iota_n\circ\rho)^*\beta_n\|_\infty = \frac{n(n^2-1)}{6} \|[\rho^*\vol_3]\|_\infty  = v_3 \frac{n(n^2-1)}{6}$, where the last equality was by Theorem \ref{soma's theorem}.

\subsection{An approximation scheme}\label{approximation section}

We now give a criterion for the pullback of a continuous bounded class to be non-zero and have positive semi-norm in bounded cohomology.  We claim no originality for the following lemma; it is a distillation and abstraction of a standard argument.  See  \cite[Section 3]{Yoshida:bdd} and \cite[Proposition 3.3]{Soma:boundedsurfaces}. 

\begin{lemma}\label{scheme}
Let $G$ be a discrete group, $\mathcal G$ a group, $\rho: G \to \mathcal G$ a homomorphism, and $[B]\in \Hb n (\mathcal G; \bR)$.  Suppose there exist $\epsilon>0$ and chains $Z_k \in \Cc n(G;\bR)$ for $k = 1, 2, ...$ such that 
\begin{enumerate}[(i)]
\item $\displaystyle \frac{|\rho^*B(Z_k)|}{\|Z_k\|_1}>\epsilon$ for all $k$,
\item $\displaystyle \liminf_{k\to \infty} \frac{\|\partial Z_k\|_1}{\|Z_k\|_1} = 0$. 
\end{enumerate}
Then $[\rho^*B]\not = 0\in \Hb n(G;\bR)$ and $\|[\rho^*B]\|_\infty \ge \epsilon$. 
\end{lemma}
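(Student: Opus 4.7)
The plan is to argue directly by a duality estimate between bounded cochains on $G$ and chains, exploiting that
\[\|[\rho^*B]\|_\infty = \inf_\phi \|\rho^*B - \delta \phi\|_\infty\]
ranges over $\phi \in \Cb{n-1}(G;\bR)$. I would suppose for contradiction that the seminorm is strictly less than $\epsilon$ and derive a contradiction with hypothesis (i) by feeding hypothesis (ii) into the standard pairing between $\Cb{n}(G;\bR)$ and $\Cc{n}(G;\bR)$ recorded in Section \ref{bounded spaces}.

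Concretely, if $\|[\rho^*B]\|_\infty < \epsilon$ then there exist $\eta > 0$ and $\phi \in \Cb{n-1}(G;\bR)$ with $\|\rho^*B - \delta \phi\|_\infty \leq \epsilon - \eta$. For each $k$ I would apply the triangle inequality together with the pairing $|f(Z)| \leq \|f\|_\infty\, \|Z\|_1$ and the defining duality $\delta \phi(Z_k) = \phi(\partial Z_k)$ to conclude
\[|\rho^*B(Z_k)| \;\leq\; |(\rho^*B - \delta\phi)(Z_k)| + |\phi(\partial Z_k)| \;\leq\; (\epsilon - \eta)\|Z_k\|_1 + \|\phi\|_\infty\, \|\partial Z_k\|_1.\]
Dividing through by $\|Z_k\|_1 > 0$ (which is forced by (i), since $|\rho^*B(Z_k)| > 0$) and passing to a subsequence along which $\|\partial Z_k\|_1/\|Z_k\|_1 \to 0$, furnished by (ii), yields
\[\liminf_{k\to\infty} \frac{|\rho^*B(Z_k)|}{\|Z_k\|_1} \;\leq\; \epsilon - \eta.\]
This contradicts (i), which forces that same $\liminf$ to be at least $\epsilon$. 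Therefore $\|[\rho^*B]\|_\infty \geq \epsilon > 0$, and in particular $[\rho^*B] \neq 0$.

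The argument is essentially formal once the duality between the seminorm and the chain-cochain pairing is in hand, so I do not anticipate any genuine obstacle. The only point requiring minor care is that hypothesis (i) is a strict inequality while the seminorm is defined as an infimum (not necessarily a minimum); introducing the slack $\eta$ when selecting $\phi$ cleanly finesses this. Everything else is absorbed into the two inequalities $|f(Z)|\leq \|f\|_\infty\|Z\|_1$ and $|\delta\phi(Z)| = |\phi(\partial Z)| \leq \|\phi\|_\infty \|\partial Z\|_1$, which are the two formal consequences of the definitions we need.
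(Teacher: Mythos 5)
Your proposal is correct and is essentially the paper's argument run contrapositively: the paper fixes an arbitrary bounded primitive $b$ and shows directly that $\|\rho^*B+\delta b\|_\infty\ge\epsilon-e$ via the same two duality inequalities, the triangle inequality, division by $\|Z_k\|_1$, and the subsequence from (ii), while you package the identical estimates as a contradiction with a slack $\eta$. There is no substantive difference in method or content.
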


\begin{proof}
Given $b\in \Cb{n-1}(G;\bR)$, we need to show that $\|\rho^*B+\delta b\|_\infty >\epsilon$.   We have the trivial inequality 
\[|(\rho^*B+\delta b)(Z_k)| \le \|\rho^*B+\delta b\|_\infty \|Z_k\|_1.\]
By the triangle inequality, we have 
\[|(\rho^*B+\delta b)(Z_k)| \ge |\rho^*B(Z_k)|-|\delta b(Z_k)|.\]
Another application of the trivial inequality yields
 \[|\delta b(Z_k)| = |b(\partial Z_k)|\le \|b\|_\infty \|\partial Z_k\|_1.\]
 Stringing together the inequalities and dividing through by $\|Z_k\|_1$, we obtain
  \[ \frac{|(\rho^*B+\delta b)(Z_k)|}{\|Z_k\|_1} \ge \frac{|\rho^*B(Z_k)|}{\|Z_k\|_1} - \|b\|_\infty \frac{\|\partial Z_k\|_1}{\|Z_k\|_1}. \]
  
  By passing to a subsequence, we assume that $\displaystyle \lim_{k\to \infty} \frac{\|\partial Z_k\|_1}{\|Z_k\|_1} = 0$, and we see that for any $e>0$, there is a $K$ such that for $k\ge K$, we have $\frac{|(\rho^*B+\delta b)(Z_k)|}{\|Z_k\|_1}>\epsilon-e$; thus $\|\rho^*B+\delta b\|_\infty> \epsilon - e$. Since $b$ and $e$ were arbitrary, this implies that $\|[\rho^*B]\|\ge\epsilon$.  
\end{proof}

The following lemma is an easy consequence of the continuity of multiplication and inversion in a topological group $\mathcal G$.
\begin{lemma}\label{approximation}
Let $\rho_0: F_d \to \mathcal G$ be a homomorphism, $W\subset F_d$ a finite set, and $\{z^1, ..., z^d\}$ a free basis for $F_d$.  Given neighborhoods $V_w$ of $\rho_0(w)$ for $w\in W$, there are neighborhoods $U_{i}$ of $\rho_0(z^i)$ such that if $\rho: F_d\to \mathcal G$ is any homomorphism satisfying $\rho(z^i)\in U_{i}$ for each $i =1, ..., d$, then  $\rho(w)\in V_w$, for all $w\in W$.   
\end{lemma}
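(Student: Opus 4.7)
The plan is to use the fact that in any topological group, the multiplication map $\mathcal G \times \mathcal G \to \mathcal G$ and the inversion map $\mathcal G \to \mathcal G$ are continuous, so any word map is continuous as a composition. For each $w \in W$, write $w = z^{i_1}^{\epsilon_1}\cdots z^{i_\ell}^{\epsilon_\ell}$ as a reduced word in the free basis, where $\epsilon_j \in \{\pm 1\}$. Define the \emph{word evaluation map}
\begin{equation*}
\mathrm{ev}_w: \mathcal G^d \to \mathcal G, \qquad (g_1,\ldots,g_d)\mapsto g_{i_1}^{\epsilon_1}\cdots g_{i_\ell}^{\epsilon_\ell}.
\end{equation*}
This is continuous because it is a finite composition of multiplications, inversions, and coordinate projections, all of which are continuous. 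Moreover, $\mathrm{ev}_w(\rho_0(z^1),\ldots,\rho_0(z^d)) = \rho_0(w)$, and for any homomorphism $\rho: F_d \to \mathcal G$, $\mathrm{ev}_w(\rho(z^1),\ldots,\rho(z^d))=\rho(w)$.

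Next I would use continuity of $\mathrm{ev}_w$ at the point $(\rho_0(z^1),\ldots,\rho_0(z^d))$: the preimage $\mathrm{ev}_w^{-1}(V_w)$ is open in $\mathcal G^d$ and contains this point. Since the product topology has a basis of rectangular open sets, I can choose open neighborhoods $U_{1,w},\ldots,U_{d,w}$ of $\rho_0(z^1),\ldots,\rho_0(z^d)$ respectively such that $U_{1,w}\times\cdots\times U_{d,w} \subset \mathrm{ev}_w^{-1}(V_w)$.

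Finally, since $W$ is finite, I would set
\begin{equation*}
U_i := \bigcap_{w\in W} U_{i,w},
\end{equation*}
which is a finite intersection of open neighborhoods of $\rho_0(z^i)$, hence still an open neighborhood. Then for any homomorphism $\rho: F_d \to \mathcal G$ with $\rho(z^i)\in U_i$ for each $i$, and for any $w \in W$, we have $(\rho(z^1),\ldots,\rho(z^d))\in U_{1,w}\times \cdots\times U_{d,w}\subset \mathrm{ev}_w^{-1}(V_w)$, so $\rho(w) = \mathrm{ev}_w(\rho(z^1),\ldots,\rho(z^d))\in V_w$, as required. There is no real obstacle here; the argument is entirely formal and the only subtlety is simply bookkeeping the neighborhoods per word before intersecting, which is possible because $W$ is finite.
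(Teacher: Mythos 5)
Your proof is correct and is exactly the argument the paper intends: the paper states the lemma as "an easy consequence of the continuity of multiplication and inversion," and your word-evaluation maps $\mathrm{ev}_w$, together with taking a basic rectangular neighborhood inside $\mathrm{ev}_w^{-1}(V_w)$ and intersecting over the finite set $W$, simply spell out that same argument in full.
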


Note that in the above lemma, we do not require $\rho$ to be faithful.  For example, given a homomorphism $\rho': G\to \mathcal G$, and a set $\{g_1, ..., g_d\}\subset G$, the rule $z^i\mapsto \rho'(g_i)$ defines a homomorphism  $\rho: F_d \to \mathcal G$.  The following proposition is the key insight in this paper.

\begin{proposition}\label{approximation of chains}
Let $G$ be a discrete group, $\mathcal G$ be a topological group, and $\rho: G\to \mathcal G$ a homomorphism with dense image.  Consider a homomorphism $\rho_0: F_d\to \mathcal G$ and a continuous cocycle $B\in \Cb n(\mathcal G;\bR)$.  For any $Z\in \Cc n (F_d;\bR)$ and for any $\epsilon>0$, there exists $Z(\epsilon) \in \Cc n(G;\bR)$ such that \[|\rho_0^*B(Z)- \rho^*B (Z(\epsilon)) | <\epsilon.\]  
Moreover, $\|Z(\epsilon)\|_1\le \|Z\|_1$ and $\|\partial Z(\epsilon)\|_1 \le \|\partial Z\|_1$.  
\end{proposition}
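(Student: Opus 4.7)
Fix a free basis $\{z^1, \ldots, z^d\}$ of $F_d$, and write $Z = \sum_{i=1}^k a_i [w_0^i, \ldots, w_n^i]$ with $a_i \ne 0$; we may assume $Z \ne 0$, the other case being trivial. Let $W = \{w_j^i : 1 \le i \le k,\ 0 \le j \le n\} \subset F_d$ be the finite set of vertices appearing in $Z$. The plan is to build, using the density of $\rho(G)$ in $\mathcal G$, a homomorphism $\tilde\rho : F_d \to G$ such that $\rho \circ \tilde\rho$ agrees with $\rho_0$ closely enough on $W$ that the continuous cocycle $B$ does not see the difference on the finitely many simplices comprising $Z$. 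The chain $Z(\epsilon) := \tilde\rho_*(Z) \in \Cc n(G; \bR)$ will then be the desired approximating chain.

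To carry this out, recall that by the definition in Section \ref{bounded groups}, $B \in \Cb n(\mathcal G; \bR)$ is a continuous function $\mathcal G^{n+1} \to \bR$. First I use continuity of $B$ at each of the finitely many tuples $(\rho_0(w_0^i), \ldots, \rho_0(w_n^i))$ to find open product neighborhoods $V_{i,0} \times \cdots \times V_{i,n} \subset \mathcal G^{n+1}$ on which $B$ varies by at most $\epsilon/\|Z\|_1$. For each $w \in W$, I then set
\[ V_w \;=\; \bigcap_{(i,j):\, w_j^i = w} V_{i,j}, \]
which is an open neighborhood of $\rho_0(w)$. Lemma \ref{approximation} supplies open neighborhoods $U_\ell \subset \mathcal G$ of $\rho_0(z^\ell)$ such that any homomorphism $\varphi : F_d \to \mathcal G$ with $\varphi(z^\ell) \in U_\ell$ for each $\ell$ satisfies $\varphi(w) \in V_w$ for all $w \in W$.

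Now I invoke density of $\rho(G)$ in $\mathcal G$ to choose $g_\ell \in G$ with $\rho(g_\ell) \in U_\ell$, and define $\tilde\rho : F_d \to G$ by $z^\ell \mapsto g_\ell$. Then $\varphi := \rho \circ \tilde\rho : F_d \to \mathcal G$ satisfies $\varphi(z^\ell) \in U_\ell$, hence $\varphi(w) \in V_w$ for every $w \in W$. In particular, $(\rho\tilde\rho(w_0^i), \ldots, \rho\tilde\rho(w_n^i)) \in V_{i,0} \times \cdots \times V_{i,n}$ for each $i$, and setting $Z(\epsilon) := \tilde\rho_*(Z) = \sum_i a_i [\tilde\rho(w_0^i), \ldots, \tilde\rho(w_n^i)]$ gives
\begin{align*}
\bigl|\rho_0^* B(Z) - \rho^* B(Z(\epsilon))\bigr|
&\le \sum_{i=1}^k |a_i| \cdot \bigl| B(\rho_0(w_0^i), \ldots, \rho_0(w_n^i)) - B(\rho\tilde\rho(w_0^i), \ldots, \rho\tilde\rho(w_n^i)) \bigr| \\
&< \sum_{i=1}^k |a_i| \cdot \frac{\epsilon}{\|Z\|_1} = \epsilon.
\end{align*}

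The norm bounds are essentially automatic. The map $\tilde\rho_* : \Cc n(F_d; \bR) \to \Cc n(G; \bR)$ is well defined on co-invariant classes since $\tilde\rho$ is a homomorphism, and distinct simplices in the expansion of $Z$ may coincide after $\tilde\rho_*$ is applied; collecting terms, $\|Z(\epsilon)\|_1 \le \sum_i |a_i| = \|Z\|_1$. Because $\tilde\rho_*$ commutes with the boundary operator, the same reasoning applied to $\partial Z$ yields $\|\partial Z(\epsilon)\|_1 = \|\tilde\rho_*(\partial Z)\|_1 \le \|\partial Z\|_1$. The only point requiring care is the uniform choice of coordinate neighborhoods in the second paragraph, but this is harmless because $Z$ has finite support and only finitely many continuity constraints need be satisfied simultaneously.
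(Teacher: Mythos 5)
Your proof is correct and follows essentially the same route as the paper: continuity of $B$ at the finitely many tuples coming from $Z$, Lemma \ref{approximation} to transfer the constraints to a free basis, density of $\rho(G)$ to define the homomorphism $F_d\to G$, and the norm non-increasing pushforward. The only cosmetic difference is that the paper normalizes each simplex so its first vertex is the identity, which is immaterial since $B$ is $\mathcal G$-invariant and evaluation on co-invariant chains is representative-independent.
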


\begin{proof}

Write $Z = \sum_{j = 1}^M a_j [v_0^j, ..., v_n^j]$, and choose the representative $ (id, w_1^j, ..., w_n^j)\in [v_0^j, ..., v_n^j]$ so that $w_i^j = (v_0^j)\inverse v_i^j$ for each $i = 1, ..., n$.  Take $W = \{w_i^j: i= 1,...,n \text{ and } j = 1, ..., M\}\subset F_d$, and let $\{z^1, ..., z^d\}$ be a free basis for $F_d$. 

The cocycle $B: \mathcal G^{n+1}\to \bR$ is continuous, so for each $i = 1, ..., n$ and $j = 1, ..., M$, there are neighborhoods $V_i^j\subset \mathcal G$ of $\rho_0(w_i^j)$ such that if $\gamma_i^j\in V_i^j$, then 
 \begin{equation}\label{simplex volume}
 |B(id,\rho_0(w_1^j), ..., \rho_0(w_n^j ))- B(id,\gamma_1^j, ..., \gamma_n^j)|<\frac{\epsilon}{\|Z\|_1}, ~\forall j = 1, ... , M.
 \end{equation}

Given the data $\rho_0$, $W\subset F_d$, $\{z^1, ..., z^d\}$, and $V_i^j$, find neighborhoods $U_i\subset \mathcal G$ of $\rho_0(z^i)$ guaranteed to us by Lemma \ref{approximation}.  Now, $\rho$ has dense image, so we can find $z^i_\epsilon\in G$ such that $\rho(z^i_\epsilon) \in U_i$ for each $i = 1, ...,d$.  Define  $\iota_\epsilon : F_{d}\to G$ by $z^i\mapsto z^i_\epsilon$.  Then $\rho(\iota_\epsilon(w_i^j)) \in V_i^j$ for all $w_i^j\in W$.   

We write $Z(\epsilon) = \iota_{\epsilon*}(Z) \in \Cc n(G;\bR)$ for the chain corresponding to $Z$ under this identification. The map $i_{\epsilon*}$ is  an $\ell_1$-norm non-increasing chain map, so $\|Z(\epsilon)\|_1\le \|Z\|_1$ and $\|\partial Z(\epsilon)\|_1 \le \|\partial Z\|_1$. 

 Repeated applications of the triangle inequality and (\ref{simplex volume}) give  
 \[|\rho_0^*B(Z) - \rho^*B(Z(\epsilon)) | < \frac{\epsilon}{\|Z\|_1}\|Z\|_1=\epsilon,\] 
 which is what we wanted to show.
\end{proof}

\begin{remark}\label{rmk:to_infinity}
Note that if $Z_k\in \Cc n (F_d;\bR)$ satisfy $\|Z_k\|_1\to \infty$ and $Z_k(\epsilon)\in \Cc n(G;\bR)$ are obtained as in Proposition \ref{approximation of chains}, then $\|Z_k(\epsilon ) \|_1\to \infty$ as well.
\end{remark}

We will apply Lemma \ref{scheme} and Proposition \ref{approximation of chains} in two settings.  In Section \ref{volume classes}, we set $\mathcal G = \Isom^+(\bH^3) = \G$ and $[B] = [\vol_3^x]\in \Hcb 3 (\Isom^+(\bH^3); \bR)$, where $x\in \bH^3$.  
In Section \ref{Borel}, we set $\mathcal G = \PSL_n\bC$, for $n\ge 2$ and consider the cocycle $ B = B_n^F: (\PSL_n\bC)^4\to \bR$ from Section \ref{borel} representing the  Borel class $\beta_n\in \Hcb 3(\PSL_n\bC;\bR)$.  Later, we consider hyperbolic volume classes in dimensions $4$ and higher and establish a criterion that would guarantee that the volume class of a dense representation does not vanish.  Unfortunately, we do not know if the criterion is ever satisfied.  

We have already encountered a technical issue: the cocycle $B_n^F$ is {\it not} everywhere continuous.  However, Lemma \ref{lem:borel_cont} provides us with {\it enough} continuity.  
It is straightforward to modify the proof of Proposition \ref{approximation of chains}, using Lemma \ref{lem:borel_cont}, to obtain the following corollary.  Recall that the Veronese embedding $\hat \iota_n: \partial \bH^3 \to \mathscr F(\bC^n)$ is a topological embedding that is equivariant with respect to the irreducible representation $\iota_n: \G\to \PSL_n\bC$.   
\begin{corollary}\label{Borel approximation}
Let $\rho_0: F_d\to \PSL_2\bC$ be a homomorphism, $\rho: G\to \PSL_n\bC$ be dense, and  $Z = \sum_{j = 1}^M a_i[w_0^j, ..., w_3^j]\in \Cc 3 (F_d;\bR)$.  

If there is a point $x\in \bH^3$ such that, for each $j= 1, ..., M$, the four points $\rho_0(w_0^j).x, ...., \rho_0(w_3^j).x\subset \partial \bH^3$ are pairwise distinct, then for any $\epsilon>0$, there exists $Z(\epsilon) \in \Cc 3(G;\bR)$ such that \[|(\iota_n\circ \rho_0)^*B_n^{\hat \iota_n (x)}(Z) - \rho^*B_n^{\hat \iota_n (x)} (Z(\epsilon)) | <\epsilon.\]  
Moreover,  $\|Z(\epsilon)\|_1\le \|Z\|_1$ and $\|\partial Z(\epsilon)\|_1 \le \|\partial Z\|_1$.  
\end{corollary}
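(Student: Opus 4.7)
The plan is to mirror the proof of Proposition \ref{approximation of chains}, replacing the everywhere continuity of an ordinary bounded cocycle with the local continuity of $B_n^{\hat\iota_n(x)}$ near non-degenerate generic flag configurations supplied by Lemma \ref{lem:borel_cont}. First I would write $Z=\sum_{j=1}^M a_j[v_0^j,\ldots,v_3^j]$ and use $G$-invariance of each class to normalize to the representative $(id,w_1^j,w_2^j,w_3^j)$ with $w_i^j:=(v_0^j)^{-1}v_i^j$; writing $w_0^j:=id$, the pairwise-distinctness hypothesis on the $\rho_0(w_i^j).x$ persists after applying the isometry $\rho_0(v_0^j)^{-1}$, so for each $j$ the quadruple $\rho_0(w_0^j).x,\ldots,\rho_0(w_3^j).x\in\partial\bH^3$ remains pairwise distinct.

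For each $j$, Lemma \ref{lem:borel_cont} applied with $g_i=\rho_0(w_i^j)$ supplies open neighborhoods $V_i^j\subset\PSL_n\bC$ of $\iota_n\rho_0(w_i^j)$ on which $B_n^{\hat\iota_n(x)}$ is jointly continuous; shrinking them, I arrange that any $(h_0,\ldots,h_3)\in V_0^j\times\cdots\times V_3^j$ satisfies
\[
\bigl|B_n^{\hat\iota_n(x)}(\iota_n\rho_0(w_0^j),\ldots,\iota_n\rho_0(w_3^j))-B_n^{\hat\iota_n(x)}(h_0,\ldots,h_3)\bigr|<\epsilon/\|Z\|_1.
\]
Then I apply Lemma \ref{approximation} to $\iota_n\circ\rho_0\colon F_d\to\PSL_n\bC$ with the finite word set $W=\{w_i^j\}$ and target neighborhoods $V_i^j$, obtaining neighborhoods $U_k\subset\PSL_n\bC$ of $\iota_n\rho_0(z^k)$ for a chosen free basis $z^1,\ldots,z^d$. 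Using density of $\rho(G)$ in $\PSL_n\bC$, choose $z^k_\epsilon\in G$ with $\rho(z^k_\epsilon)\in U_k$; the rule $z^k\mapsto z^k_\epsilon$ defines a homomorphism $\iota_\epsilon\colon F_d\to G$, and by construction $\rho\circ\iota_\epsilon\colon F_d\to\PSL_n\bC$ sends each $w_i^j$ into $V_i^j$. Setting $Z(\epsilon):=\iota_{\epsilon*}(Z)$ yields $\|Z(\epsilon)\|_1\le\|Z\|_1$ and $\|\partial Z(\epsilon)\|_1\le\|\partial Z\|_1$ because $\iota_{\epsilon*}$ is an $\ell_1$-norm non-increasing chain map, and the inequality $|(\iota_n\circ\rho_0)^*B_n^{\hat\iota_n(x)}(Z)-\rho^*B_n^{\hat\iota_n(x)}(Z(\epsilon))|<\epsilon$ follows from a simplex-by-simplex application of the triangle inequality, summing $|a_j|\cdot(\epsilon/\|Z\|_1)$ over $j$.

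I do not anticipate any serious obstacle: Lemmas \ref{lem:borel_cont} and \ref{approximation} each dispose of the one technical point they must, and everything else is bookkeeping transplanted verbatim from the proof of Proposition \ref{approximation of chains}. The only subtlety worth emphasizing is that the pairwise-distinctness hypothesis is precisely what places every simplex of $Z$ into the generic locus $\mathscr F(\bC^n)^{(4)}$, where $B_n$ is continuous; without that hypothesis the failure of continuity of $B_n^F$ on the complement of the generic locus would prevent one from transferring the single-simplex estimate into a global approximation of $\rho_0^*B_n^{\hat\iota_n(x)}(Z)$ by $\rho^*B_n^{\hat\iota_n(x)}(Z(\epsilon))$.
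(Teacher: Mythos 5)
Your argument is correct and is exactly the route the paper intends: it explicitly declines to write out a proof, noting only that one modifies the proof of Proposition \ref{approximation of chains} by substituting the local continuity of $B_n^{\hat\iota_n(x)}$ from Lemma \ref{lem:borel_cont} for everywhere-continuity, which is precisely what you do (including the correct observation that normalizing to representatives with first entry $id$ preserves the pairwise-distinctness hypothesis, and that this hypothesis is what keeps every simplex in the generic locus). No changes needed.
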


The reader who is interested only in the question of (non-)vanishing of higher dimensional volume classes can skip directly to Section \ref{sec:higher_dim}, and in particular Proposition \ref{prop:higher_dim}, where the ideas from this section are applied.  

\section{The structure of tame hyperbolic $3$-manifolds}\label{hyperbolic}
In this section, we review the classification theory of finitely generated Kleinian groups.  We use this classification to provide a detailed argument to deduce Theorem \ref{thm:intro_old_work} from Theorem \ref{thm:intro_main}  and previous work of the author, building on work of Soma.  
The presence of parabolic elements in Kleinian  groups significantly complicates the discussion of ends and end invariants.  The only part of the paper in which we need to understand influence of parabolic cusps is in Lemma \ref{lem:technical}, and is essentially independent of the main line of argument.  

We begin by defining some of the basic objects associated to a marked Kleinian group, turn to an observation about groups of isometries of non-positively curved symmetric spaces generated by `small' elements, and discuss the structure of the ends of complete hyperbolic $3$-manifolds with infinite volume and finitely generated fundamental group. We end with the proof of Theorem \ref{thm:intro_old_work}, once we have established these preliminary notions.  

A \emph{Kleinian group} is a discrete subgroup of $\G$.  Let $\Gamma$ be an abstract discrete group, and suppose $\rho: \Gamma \to \G$ is an injective group homomorphism with discrete image, i.e. $\rho$ is discrete and faithful.  We call $\rho$ a \emph{Kleinian representation} or a \emph{marked Kleinian group}. 
We will be most interested in the case that $\Gamma$ is a non-abelian free group or the fundamental group of a closed oriented surface of genus at least two; we will always assume that $\Gamma$  is torsion free, however if $\g$ is finitely generated and admits a Kleinian representation, then $\g$ is virtually torsion free, by Selberg's Lemma. 
 
We spend some time providing statements and context for the structure and classification theorems that we use in the sequel.  Namely, the Tameness Theorem (Theorem \ref{thm:tameness}), the Covering Theorem (Theorem \ref{thm:covering}), and the Ending Lamination Theorem (Theorem \ref{thm:ELT}) for finitely generated marked Kleinian groups.  
In order to state these theorems, we discuss families of hyperbolic surfaces, called \emph{simplicial hyperbolic surfaces}, that exit the geometrically infinite ends of hyperbolic $3$-manifolds. We use properties of simplicial hyperbolic surfaces again in Section \ref{sec:efficient_chains}. 

Given $\rho: \g\to \G$ as above, the space $M_\rho = \bH^3/\im \rho$ of orbits is a complete hyperbolic $3$-manifold. 
Since $\bH^3$ is contractible, $M_\rho$ is a classifying space for $\g$, and $\rho$ induces a homotopy class of maps $K(\Gamma, 1) \to M_\rho$ that labels the homotopy classes of loops in $M_\rho$.  If $\Gamma$ is a closed surface group or a free group, then we can take $K(\Gamma, 1)$ to be a closed surface or a $3$-dimensional handlebody, respectively.  

\subsection{The Margulis Lemma}
Let $\cG$ be a semi-simple Lie group of non-compact type, let $K$ be a maximal compact subgroup, and $X = \cG/K$ the associated Riemannian symmetric space.  In this paper,  we will be most interested in $\cG = \PSL_n\bC$ and $\cG = \Isom^+(\bH^d)$ for $n, d\ge 2$. Then $X = \bH^3$ when $n=2$ or $d = 3$.  Following Thurston \cite[Lemma 5.10.1]{Thurston:notes}, the following is known to hyperbolic geometers as ``The Margulis Lemma," although it is perhaps more accurate to refer to it as the ``Kazhdan--Margulis Theorem" \cite[Theorem 4.53]{Kapovich:book}.

\begin{lemma}[The Margulis Lemma] \label{lem:Margulis}
Given $\mathcal G$ and $X$ as above, there is a number $\mu = \mu(X)>0$ such that the following holds.  If $\Gamma = \langle \gamma_1, ..., \gamma_k\rangle \le \mathcal G$ is a discrete group, and there is a point $x\in X$ such that $d(x, \gamma_i.x)\le \mu$ for each $i = 1, ..., k$, then $\Gamma$ is virtually nilpotent.
\end{lemma}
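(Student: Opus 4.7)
The plan is to combine the classical Zassenhaus lemma (which produces a neighborhood of the identity in $\mathcal G$ whose discrete subgroups are nilpotent) with a compactness argument using the stabilizer $K_x$ of the basepoint to pass, if necessary, to a finite-index subgroup.

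First I would establish the Zassenhaus lemma. Work in exponential coordinates on the Lie algebra $\mathfrak g$ of $\mathcal G$. By the Baker--Campbell--Hausdorff formula, for small $A, B \in \mathfrak g$ one has $[\exp A, \exp B] = \exp\bigl([A,B] + O(|A|^2|B| + |A||B|^2)\bigr)$. Thus if $|A|,|B| \le \eta$ with $\eta$ sufficiently small, the commutator has norm at most $C\eta^2 < \eta$. Choose a symmetric neighborhood $U$ of the identity on which this contraction is valid and iterate: any $n$-fold iterated commutator of elements of $U$ has norm at most $\eta^{2^n}$. If $\Gamma$ is discrete and all its generators lie in $U$, then (since the closed ball around the identity of radius $\eta^{2^n}$ contains only finitely many elements of $\Gamma$, and for $n$ large the only element is $e$) these iterated commutators eventually vanish, so $\Gamma$ is nilpotent.

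The second step is to pass from small displacement at $x$ to elements near the identity of $\mathcal G$ modulo the compact stabilizer $K_x$. The condition $d(x, \gamma_i.x)\le \mu$ says $\gamma_i$ lies in the symmetric set $\Omega_\mu := \{g \in \mathcal G : d(x, g.x)\le \mu\}$, which is a tubular neighborhood of $K_x$ whose thickness (in any fixed left-invariant Riemannian metric on $\mathcal G$) tends to $0$ with $\mu$. So each $\gamma_i$ is close to some element of $K_x$, but typically not close to the identity. To remedy this, I would use a pigeonhole argument in the compact group $K_x$: for any fixed word-length $L$, the word ball $B_L(\Gamma)$ is contained in $\Omega_{L\mu}$, hence lies in a compact set of $\mathcal G$, and by discreteness of $\Gamma$ is finite with cardinality bounded in terms of $L$, $\mu$, and the volume of $\Omega_{L\mu}$. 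Cover $K_x$ by translates of $U$ and apply pigeonhole: one obtains many pairs $w_1, w_2 \in B_L(\Gamma)$ with $w_1 w_2^{-1}\in U$. Taking $L$ large enough in terms of the index of covers of $K_x$ yields enough such elements to generate a subgroup $\Gamma_0 \le \Gamma$ of finite index (the standard coset-representative argument: only boundedly many cosets of $\Gamma_0$ can appear among words of length $\le L$). Since the generators of $\Gamma_0$ lie in $U$, Step 1 gives that $\Gamma_0$ is nilpotent, and hence $\Gamma$ is virtually nilpotent.

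The main technical obstacle is the second step: the BCH contraction in Step 1 shrinks elements only in the direction transverse to $K_x$, whereas the $K_x$-component of an element of $\Omega_\mu$ can be arbitrary, so the Zassenhaus lemma cannot be applied to the $\gamma_i$ directly. The pigeonhole step is what permits one to replace the $\gamma_i$ by suitable short words that genuinely land inside $U$, and this is precisely the reason the conclusion is only \emph{virtual} nilpotency and not outright nilpotency; the constant $\mu = \mu(X)$ depends only on geometric data of $X$ (injectivity-type constants of the exponential map, norm of the bracket, and the volume of $K_x$), as required.
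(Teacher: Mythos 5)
The paper does not actually prove this lemma; it is quoted as a classical result with references to Thurston \cite[Lemma 5.10.1]{Thurston:notes} and Kapovich \cite[Theorem 4.53]{Kapovich:book}, so the only comparison available is with the classical Kazhdan--Margulis/Zassenhaus argument, which is exactly the route you outline. Your Step 1 is an acceptable sketch of the Zassenhaus lemma, with one point worth making explicit: from ``all sufficiently long iterated commutators of the generators are trivial'' you need the standard group-theoretic fact that $\gamma_{c+1}(\Gamma)$ is the normal closure of the simple commutators of weight $c+1$ in a generating set, so that their vanishing really does force nilpotency of class at most $c$.

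The genuine gap is in Step 2, and it sits precisely at the step that produces the word ``virtually.'' Two smaller slips first: the words of length at most $L$ lie in $\Omega_{L\mu}$, not near $K_x$, so it is this larger compact set you must cover, and you should cover it by translates $gW$ of a symmetric neighborhood $W$ with $W\cdot W\subset U$ and compare $w_1^{-1}w_2\in W^{-1}W\subset U$; the element $w_1w_2^{-1}$ you wrote is only a conjugate of such an element, by a group element that need not be close to the identity, so it need not lie in $U$. The substantive problem is the concluding ``standard coset-representative argument'': knowing that the finitely many words of length at most $L$ meet boundedly many left cosets of $\Gamma_0:=\langle\Gamma\cap U\rangle$ does not bound $[\Gamma:\Gamma_0]$, since elements of word length greater than $L$ may a priori fall into ever new cosets. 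What the classical proof actually shows is that \emph{every} coset of $\Gamma_0$ contains a representative of uniformly bounded word length (equivalently, bounded displacement of $x$, whence finiteness by discreteness and properness); this is done by an induction on word length in which a long word is shortened modulo $\Gamma_0$. The pigeonhole produces a subword $u=w_a^{-1}w_b\in\Gamma\cap U$ in the \emph{middle} of the word, and absorbing it into $\Gamma_0$ requires moving it past the remaining suffix, i.e.\ controlling conjugates of elements of $U$; this is why the neighborhoods must be chosen with a conjugation-stability property, or why other treatments route the index bound through the rotational parts in the compact group $K_x$. As written, your proposal asserts this finite-index step rather than proving it, so the lemma is not yet established, although the overall strategy is the same as in the cited sources.
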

For the rest of this section, we will only be interested in the case that $G = \G$ and $X = \bH^3$.  In this setting, the largest number $\mu_3 = \mu(\bH^3)$ for which Lemma  \ref{lem:Margulis} holds is called the \emph{$3$-dimensional Margulis constant}.  Any discrete virtually nilpotent subgroup $\Gamma\le \G$ is virtually isomorphic to $1, \bZ$ or $\bZ^2$.

Thurston \cite[Corollary 5.10.2]{Thurston:notes} pointed out that a hyperbolic manifold admits a \emph{thick--thin} decomposition as a consequence of Lemma \ref{lem:Margulis}.  Given a complete hyperbolic $n$-manifold $M$ and $\epsilon>0$, let
\[
M^{\ge \epsilon} = \overline{\{x\in M : \text{the ball of radius $\epsilon$ centered at $x$ is embedded in $M$}\}},
\]
be the \emph{thick part} of $M$ and $M^{<\epsilon} = M\setminus M^{\ge \epsilon}$ the \emph{thin part}.   In general, the \emph{injectivity radius} $\inj_x(M)$ is the supremum of the radii of balls centered at $x$ that embed into $M$. A hyperbolic manifold $M$ is said to have \emph{bounded geometry} if there is a positive lower bound to $\inj_x (M)$, which is independent of $x$.  

\begin{corollary}[Thick--Thin Decomposition]\label{cor:thick--thin}
Every component of $M^{<\mu_3}$ is either 
\begin{itemize}
\item the quotient of a horoball by a group of parabolic isometries that is free abelian of rank $1$ or $2$, or
\item the $r$ neighborhood of a closed geodesic in $M$, where $r\ge 0$.
\end{itemize}
\end{corollary}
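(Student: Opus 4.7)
The plan is to apply Lemma \ref{lem:Margulis} at each point of the thin part and then classify the resulting virtually nilpotent subgroups. Fix a component $C$ of $M^{<\mu_3}$, a point $\bar x\in C$, and a lift $\tilde x\in \bH^3$. By the definition of the thin part, the set $S_{\tilde x}=\{\gamma\in \pi_1(M)\setminus\{1\}: d(\tilde x,\gamma\tilde x)<\mu_3\}$ is nonempty. Let $\Gamma_{\tilde x}=\langle S_{\tilde x}\rangle$. By Lemma \ref{lem:Margulis}, $\Gamma_{\tilde x}$ is virtually nilpotent, and since we may assume $\pi_1(M)$ is torsion free (using Selberg's Lemma to pass to a finite cover if necessary), $\Gamma_{\tilde x}$ is one of the following discrete torsion-free subgroups of $\G$: infinite cyclic generated by a loxodromic element, infinite cyclic generated by a parabolic, or free abelian of rank two consisting of parabolics sharing a common fixed point at infinity.

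In each case I would then exhibit a standard invariant submanifold of $\bH^3$ whose quotient is the component $C$. If $\Gamma_{\tilde x}=\langle \gamma\rangle$ with $\gamma$ loxodromic of axis $A$, then $T:=\{y\in \bH^3:d(y,\gamma y)<\mu_3\}$ is an open metric $r$-neighborhood of $A$ for some $r=r(\gamma)\ge 0$, and $T/\langle \gamma\rangle$ embeds into $M$ as an open tubular neighborhood of the closed geodesic $A/\langle\gamma\rangle$. If $\Gamma_{\tilde x}$ is (rank one or two) parabolic with common fixed point $p\in \partial \bH^3$, then $H:=\{y\in \bH^3: d(y,\gamma y)<\mu_3 \text{ for some nontrivial } \gamma\in\Gamma_{\tilde x}\}$ is an open horoball centered at $p$, and $H/\Gamma_{\tilde x}$ embeds into $M$ as the asserted horoball quotient. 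The monotonicity of the translation length along horocycles exiting toward $p$ and the fact that equidistant tubes about a geodesic axis are convex are the only geometric inputs needed here.

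The main obstacle is verifying that $\Gamma_{\tilde x}$ is independent of the choice of $\bar x\in C$ up to conjugation, so that the model region $T$ or $H$ just constructed actually coincides with $C$. The crux is ruling out the scenario in which two distinct maximal virtually abelian subgroups of $\pi_1(M)$ simultaneously contribute short elements at a common point of $\bH^3$: if they did, the group they generate would also satisfy the hypotheses of Lemma \ref{lem:Margulis} and yet fail to lie in a single virtually abelian subgroup, contradicting the classification of discrete virtually nilpotent subgroups of $\G$ enumerated above. Granting this, the conjugacy class of $\Gamma_{\tilde x}$ varies locally constantly with $\bar x$ on the connected set $C$, hence is globally constant, and a short verification using discreteness of $\pi_1(M)$ confirms that the model neighborhood is embedded in $M$ and exhausts $C$. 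I expect this gluing and embedding argument, rather than the local classification, to be the most delicate step.
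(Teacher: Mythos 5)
The paper does not actually prove this corollary; it is quoted from Thurston as a direct consequence of the Margulis Lemma \ref{lem:Margulis}, and your outline is exactly that standard argument, so in approach you and the paper agree. The outline is essentially sound, but three points need repair. First, the appeal to Selberg's Lemma is both unnecessary and unhelpful: since $M$ is a hyperbolic \emph{manifold}, $\pi_1(M)$ acts freely on $\bH^3$ and is automatically torsion free (elliptics have fixed points), whereas if torsion were actually present, passing to a finite cover would not by itself give the statement for $M$. Second, in the loxodromic case the model region must be $T_E=\bigcup_{\gamma\in E\setminus\{1\}}\{y: d(y,\gamma y)<\mu_3\}$ for the \emph{maximal} cyclic subgroup $E$, not the displacement set of a single short generator: for a short screw motion with rotation close to $\pi$, the square is short out to a much larger radius than the generator, so $\{d(y,\gamma y)<\mu_3\}$ need not exhaust the component (your parabolic case is set up correctly in this respect). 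Third, and relatedly, your claim that the conjugacy class of $\Gamma_{\tilde x}$ is locally constant on $C$ is false as stated, by the same example (moving outward in the tube, $\Gamma_{\tilde x}$ can jump from $\langle\gamma\rangle$ to $\langle\gamma^2\rangle$); the locally constant object is the maximal elementary subgroup $E\supseteq\Gamma_{\tilde x}$, which is what your disjointness argument for distinct maximal virtually abelian subgroups actually pins down. With those adjustments the gluing step you flagged goes through in the standard way, and each component of $M^{<\mu_3}$ is $T_E/E$ for a single maximal elementary $E$, giving the tube or cusp dichotomy.
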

\noindent The pre-compact components of $M^{<\mu_3}$ are called \emph{Margulis tubes}.  The horoball quotients are called \emph{rank-$1$} or \emph{rank-$2$  parabolic cusps}, accordingly.

\subsection{Relative ends and tameness}
Fix a Kleinian representation $\rho: \g\to \G$, where $\g$ is finitely generated and torsion free. 
Let $Q_\rho$ denote the union of parabolic cusps in $M_\rho^{<\mu_3}$, and let $M_\rho^\circ = M_\rho\setminus Q_\rho$.  The set $P_\rho = \partial M_\rho^\circ$ is called the \emph{parabolic locus} and consists of a finite number of tori and open annuli corresponding to the frontiers of the rank-$2$ and rank-$1$ parabolic cusps, respectively.  McCullough has shown \cite{McMullough:relative_core} that there is a \emph{relative compact core} $ K_\rho \subset M_\rho^\circ$, a co-dimension $0$ compact submanifold such that the inclusion $K_\rho\hookrightarrow M_\rho$ is a homotopy equivalence, $\partial K_\rho$ contains all toroidal components of $P_\rho$,  and $\partial K_\rho$ meets each annular component in a compact annulus.  We let $P_\rho^\circ = K_\rho \cap P_\rho$ so that $P_\rho^\circ \hookrightarrow \partial M_\rho^\circ$ is a homotopy equivalence.  

Given a relative compact core $K_\rho\subset M_\rho$,  
the ends $\mathcal E_\rho$ of $M_\rho^\circ$ are in one-to-one correspondence with the components of $\partial K_\rho \setminus P_\rho^\circ$, which are oriented surfaces of finite type and negative Euler characteristic.  The elements of $\mathcal E_\rho$ are called \emph{relative ends} of $M_\rho$; for each component $R\subset \partial K_\rho \setminus P_\rho^\circ$, the component $E_R$ of $M_\rho^\circ \setminus K_\rho$ whose closure contains $R$ is a neighborhood of the relative end $[E_R]\in \mathcal E_\rho$.  
 
The following theorem is known as the Tameness Theorem, and has been proved independently by Agol \cite[Section 6 and Theorem 10.2]{Agol:tameness} and Calegari--Gabai \cite[Theorem 7.3]{CG:tameness} building on important partial results of Marden, Thurston, Bonahon, Canary, Souto and many others. 
The statement of the tameness theorem that we include here can be found in \cite[Section 4.2]{Namazi--Souto:density}.   
\begin{theorem}[The Tameness Theorem]\label{thm:tameness}
There is a relative compact core $K_\rho\subset M_\rho^\circ$ such that for each component $R$ of $\partial K_\rho\setminus P_\rho^\circ$, the closure of $E_R$ is homeomorphic to $R\times [0, \infty)$.  Moreover, $M_\rho$ is homeomorphic to $\interior K_\rho$, and   $M_\rho^\circ$ is homeomorphic to the complement in $K_\rho$ of $\partial K_\rho \setminus P_\rho^\circ$.
\end{theorem}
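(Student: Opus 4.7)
The plan is to reduce the theorem to showing that each end $E_R$ of $M_\rho^\circ$ outside a chosen relative compact core has a product neighborhood. McCullough's theorem, referenced just above, already furnishes a relative compact core $K_\rho \subset M_\rho^\circ$ meeting every parabolic cusp correctly, so the content is entirely about the ends. I would dichotomize according to whether the end $[E_R]$ is geometrically finite or geometrically infinite, since these two regimes require very different machinery.

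For geometrically finite ends the argument is classical: after intersecting with the convex core and a horospherical truncation near the parabolic locus $P_\rho$, the boundary-at-infinity component of the Kleinian manifold $(\bH^3 \cup \Omega_\rho)/\im\rho$ corresponding to $[E_R]$ is a finite-type Riemann surface by Ahlfors' finiteness theorem, and the flow of the nearest-point retraction onto the convex core provides an explicit product structure on $\overline{E_R}$. After collaring the rank-$1$ cusps meeting $R$ correctly, this gives the homeomorphism $\overline{E_R} \cong R \times [0,\infty)$.

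For geometrically infinite ends this is the deep part, and I would follow either Agol's or Calegari--Gabai's strategy. Both proceed via Bonahon's topological criterion: it suffices to exhibit a sequence of closed embedded (or immersed and homologous-to-embedded) $\pi_1$-injective surfaces $S_n \subset M_\rho^\circ$ exiting $[E_R]$ in the homotopy class of $R$. Canary's observation that a geometrically infinite end contains a sequence of simplicial hyperbolic surfaces of bounded area leaving every compact set gives the starting raw material. Then one must upgrade these to \emph{embedded} exiting surfaces: Calegari--Gabai's shrinkwrapping produces minimal (or CAT($-1$)) surfaces embedded away from a preassigned geodesic link and uses PL minimal-surface theory to obtain an exiting sequence, while Agol builds the surfaces from end-reductions and Thurston-norm-minimizing representatives of relative homology classes, then bounds their geometry to conclude the end is tame. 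Either way, once one has the exiting family, a standard compact-exhaustion argument yields the product structure on $\overline{E_R}$.

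The main obstacle is precisely this last upgrade in the geometrically infinite case: one must rule out pathological ``wild'' end behavior (a priori an end could look like a Whitehead manifold), which is exactly what prevents a purely topological proof and forces the use of minimal-surface techniques or delicate combinatorics of branched surfaces. The parabolic truncations introduce bookkeeping but no essential new difficulty: after choosing $K_\rho$ so that $\partial K_\rho$ is transverse to $P_\rho$ and meets each rank-$1$ cusp in an annulus (using McCullough's relative core), the product structures on the ends can be glued to the cusp neighborhoods to give the final two sentences, namely that $M_\rho \cong \interior K_\rho$ and that $M_\rho^\circ$ is obtained from $K_\rho$ by removing $\partial K_\rho \setminus P_\rho^\circ$.
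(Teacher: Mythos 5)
The paper does not prove this statement: Theorem \ref{thm:tameness} is the Tameness Theorem of Agol \cite{Agol:tameness} and Calegari--Gabai \cite{CG:tameness}, quoted (in the form found in \cite{Namazi--Souto:density}) as a background tool, so there is no internal proof to compare your argument against. At the level of a survey, your outline is a fair description of how the theorem is actually proved: geometrically finite ends handled classically via Ahlfors finiteness and the product structure coming off the convex core boundary, geometrically infinite ends via exiting sequences of surfaces produced by shrinkwrapping (Calegari--Gabai) or end reductions and norm-minimizing surfaces (Agol). But as a proof it has no independent content: every genuinely hard step --- the existence of shrinkwrapped/CAT($-1$) representatives, the PL minimal surface theory, the tameness criterion itself --- is invoked as a black box, so what you have written is a citation of those papers rather than an argument, which is exactly what the present paper already does.

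One concrete inaccuracy is worth flagging. You assert that it suffices to exhibit exiting \emph{$\pi_1$-injective} surfaces in the homotopy class of $R$, with Canary's simplicial hyperbolic surfaces as raw material and ``a standard compact-exhaustion argument'' to finish. In the compressible-boundary case --- which is the case this paper actually needs, since $M_\rho$ is a handlebody when $\Gamma$ is free (Corollary \ref{cor:handlebody}) --- the component $R$ of $\partial K_\rho\setminus P_\rho^\circ$ is compressible, the exiting surfaces cannot be $\pi_1$-injective in $M_\rho$, and Bonahon's criterion does not apply as stated; this failure is precisely why tameness remained open after Bonahon \cite{Bonahon:ends} and why Agol and Calegari--Gabai must combine their constructions with devices such as Canary's branched-cover trick rather than a straightforward exhaustion. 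So the reduction step of your sketch, taken literally, breaks down in exactly the regime this paper relies on, and repairing it amounts to reproving the theorem you are citing.
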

 
 If $K_\rho\subset M_\rho$ is a relative compact core such that the complement of $K_\rho$ in $M_\rho^\circ$ consists of product neighborhoods of ends, as in Theorem \ref{thm:tameness}, say that $K_\rho$ is a \emph{standard} relative compact core.  
 A \emph{compression body} $B$ is an oriented, compact, irreducible $3$-manifold which has a distinguished boundary component $\partial_{ext}B$ whose fundamental group surjects onto $\pi_1(B)$; 
 $\partial_{ext}B$ is called the \emph{exterior boundary} of $B$.  If the exterior boundary of $B$ is incompressible, then $B$ is homeomorphic to the trivial interval bundle over $\partial _{ext} B$. 
 If $B$ is not a trivial interval bundle, the exterior boundary is the unique compressible component of $\partial B$.  The compression body $B$ is a handlebody if and only if $\partial B = \partial _{ext} B$, which is true if and only if $\pi_1(B)$ is a free group; see \cite[Section 2.5]{Souto:tameness_note}.  
  \begin{theorem}[{\cite[Theorem 2.1]{Bonahon:compression}}]\label{thm:compression_body}
 Let $S$ be a component of $\partial K_\rho$.  There is a compression body $B_S\subset K_\rho$ whose exterior boundary is $S$ such that inclusion $B_S\hookrightarrow K_\rho$ induces an injection $\pi_1(B_S)\hookrightarrow \pi_1(K_\rho) = \pi_1(M_\rho)$, and $B_S$ is unique up to isotopy.
 \end{theorem}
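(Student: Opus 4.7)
The plan is to construct $B_S$ explicitly as a regular neighborhood of $S$ together with a maximal system of disjoint compressing disks for $S$ in $K_\rho$, cap off any resulting $2$-sphere boundary components, and then deduce uniqueness from the uniqueness of such maximal disk systems up to isotopy. First, observe that $K_\rho$ is irreducible: the inclusion $K_\rho \hookrightarrow M_\rho$ is a homotopy equivalence by construction, and $M_\rho$ is aspherical since its universal cover is $\bH^3$. Kneser--Haken finiteness then bounds the number of pairwise disjoint, non-parallel, properly embedded essential disks in $K_\rho$ with boundary on $S$, so I can choose a maximal such collection $D_1, \dots, D_n$. Let $N_0$ be a regular neighborhood of $S \cup D_1 \cup \cdots \cup D_n$ in $K_\rho$; topologically this is $S\times [0,1]$ with $2$-handles attached dually to the $D_i$. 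Every $2$-sphere component of $\partial N_0 \setminus S$ bounds a $3$-ball in $K_\rho$ by irreducibility, so I form $B_S$ by filling in these balls. By construction $B_S$ is a compression body with exterior boundary $S$.

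For the $\pi_1$-injectivity, the key point is that the interior boundary $\partial B_S \setminus S$ is incompressible in $K_\rho$. A hypothetical compressing disk for a component $\Sigma \subset \partial B_S \setminus S$ cannot lie in $B_S$ (where $\Sigma$ is incompressible by construction), and if it lies in $K_\rho \setminus \mathrm{int}(B_S)$, then it would extend $\{D_1, \dots, D_n\}$ to a strictly larger disjoint, non-parallel compressing system for $S$, contradicting maximality. The Loop Theorem then gives that each component of $\partial B_S \setminus S$ injects on $\pi_1$ into $K_\rho$. Decomposing $K_\rho$ along these incompressible surfaces yields a graph-of-groups structure on $\pi_1(K_\rho)$ with injective edge groups, whence the vertex group $\pi_1(B_S)$ injects into $\pi_1(K_\rho)$.

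For uniqueness, suppose $B_S'$ is another such compression body with exterior boundary $S$. I would isotope $\partial B_S' \setminus S$ to be transverse to $\partial B_S \setminus S$. Using incompressibility of both interior boundaries together with irreducibility of $K_\rho$, innermost-disk arguments eliminate every inessential simple closed curve of intersection; an exchange argument, exploiting the uniqueness of maximal systems of compressing disks on $S$ up to isotopy, then produces an ambient isotopy carrying $B_S'$ onto $B_S$. The main obstacle is precisely this last step: while the existence of $B_S$ reduces to a maximal-disk-system construction, uniqueness requires a delicate Haken-style normal-form argument that reduces the combinatorial complexity of $\partial B_S \cap \partial B_S'$ through isotopies which preserve the compression-body structure on each side. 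This is the technical heart of the argument and essentially recapitulates the standard characteristic-submanifold uniqueness technique in the setting of compressing disks on a fixed boundary surface.
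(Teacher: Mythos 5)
This statement is not proved in the paper at all: it is quoted verbatim from Bonahon (Theorem 2.1 of the cited reference; see also McCullough--Miller's ``incompressible neighborhoods''), so there is no in-paper argument to compare yours against. Judged on its own terms, your existence half follows the standard construction and is essentially sound, with one step stated too loosely: a compressing disk $D$ for a frontier component $\Sigma\subset\partial B_S\setminus S$ has its boundary on $\Sigma$, not on $S$, so it does not literally ``extend $\{D_1,\dots,D_n\}$ to a larger compressing system for $S$.'' One must first convert it, by isotoping $\partial D$ across $B_S$ to $S$ (pushing arcs over the $2$-handles along the scars of the $D_i$, or boundary-tubing), into an embedded compressing disk for $S$ disjoint from and non-parallel to the $D_i$; only then does maximality give the contradiction. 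That repair is routine, and the subsequent Loop Theorem plus graph-of-groups step for $\pi_1$-injectivity is fine.

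The genuine gap is in the uniqueness half. The mechanism you propose — ``the uniqueness of maximal systems of compressing disks on a fixed boundary surface up to isotopy'' — is false: already in a genus-$2$ handlebody there are infinitely many pairwise non-isotopic maximal systems of disjoint, non-parallel essential disks (twist one system by a mapping class), even though they all produce the same characteristic compression body (the whole handlebody). So uniqueness of $B_S$ cannot be reduced to uniqueness of the disk system, and the ``exchange argument'' you gesture at has nothing to exchange against. The actual proof (Bonahon's appendix; McCullough--Miller, Theorem 1.1.1) runs differently: one first proves a structural lemma classifying incompressible surfaces in a compression body — an incompressible, boundary-incompressible surface disjoint from the exterior boundary is parallel to interior boundary components — and then, given two candidates $B_S$ and $B_S'$, uses irreducibility and incompressibility of both frontiers to remove all intersection curves and push the frontier of $B_S'$ onto the frontier of $B_S$, yielding the ambient isotopy. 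You flag this step as the technical heart, which is honest, but as written the route through uniqueness of disk systems would fail; the missing ingredient is the incompressible-surfaces-in-compression-bodies lemma, not a normal-form argument for disk systems.
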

The submanifold $B_S$ is called the \emph{characteristic compression body neighborhood} of $S$ for $K_\rho$;  see also \cite[Theorem 1.1.1]{McCullough--Miller}, where characteristic compression body neighborhoods are called \emph{incompressible neighborhoods}.  If a component of $\partial K_\rho$ is incompressible, then the characteristic compression body neighborhood corresponding to that component is just a collar neighborhood.  Furthermore, the covering space corresponding to that surface group is a trivial $\bR$ bundle over that surface by Bonahon's Tameness Theorem \cite[Th\'eor\`eme A]{Bonahon:ends}. 
The following corollary is our main application of the Tameness Theorem \ref{thm:tameness}.  
\begin{corollary}\label{cor:handlebody}
If $\g$ is a free group of rank $k<\infty$, $\rho: \g \to\G$ is discrete and faithful and $K_\rho$ is a standard compact core for $M_\rho$, then there is a homeomorphism $f: \cH_k \to K_\rho$, where $\cH_k$ is a closed handlebody of genus  $k$ and $f_* = \rho$ on fundamental groups. There are homotopically essential and distinct simple closed curves $\nu_1, ..., \nu_m \subset \partial \cH_k$ with disjoint representatives such that $f(\{\nu_1, ... ,\nu_m\})$ are the core curves of the annuli $P_\rho^\circ \subset \partial K_\rho$, and $m\le 3(k-1)$.   Finally, $M_\rho$ is homeomorphic to the interior of $\cH_k$.
\end{corollary}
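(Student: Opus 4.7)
The plan is to identify $K_\rho$ with a handlebody of genus $k$ by combining the Tameness Theorem with classical $3$-manifold topology, and then to read off the structure of the parabolic locus from that identification. The assertion that $M_\rho$ is homeomorphic to the interior of $\cH_k$ will then be immediate from Theorem \ref{thm:tameness}.

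To identify $K_\rho$, first I would verify that it is a compact, orientable, irreducible $3$-manifold with non-empty boundary: orientability is inherited from $M_\rho$, compactness is part of the definition of a relative compact core, and irreducibility follows because $M_\rho$ is aspherical with contractible universal cover $\bH^3$, so any embedded $2$-sphere bounds a ball. Since $\pi_1(K_\rho) \cong \rho(\g) \cong F_k$ is free, I would then invoke the classical theorem that a compact, orientable, irreducible $3$-manifold with boundary and free fundamental group is a handlebody (a standard consequence of the Loop Theorem and Dehn's Lemma; see Hempel, \emph{$3$-Manifolds}). Since handlebodies have connected boundary and $\chi(K_\rho) = 1-k$, the genus must equal $k$, so $K_\rho \cong \cH_k$. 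The resulting homeomorphism $f: \cH_k \to K_\rho$ can be chosen to induce $\rho$ on $\pi_1$ after a consistent choice of basepoint and marking.

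Next I would analyze the parabolic locus. Because $F_k$ contains no $\bZ^2$ subgroup, $\rho(\g)$ admits no rank-$2$ parabolic subgroup, so $P_\rho^\circ$ is a disjoint union of compact annuli $A_1, \ldots, A_m$ in $\partial K_\rho$, one for each conjugacy class of maximal parabolic cyclic subgroup. Taking $\nu_i$ to be the $f$-preimage of the core curve of $A_i$, each $\nu_i$ is essential in $\partial \cH_k$, since its image represents a non-trivial (parabolic) element of $\pi_1(K_\rho)$. Moreover, the $\nu_i$ are pairwise non-isotopic on $\partial \cH_k$: an isotopy between $\nu_i$ and $\nu_j$ on the boundary would yield a free homotopy between $f(\nu_i)$ and $f(\nu_j)$ in $K_\rho$, forcing the corresponding parabolic elements to be conjugate in $\pi_1(M_\rho)$ and contradicting distinctness of the maximal parabolic subgroups.

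Finally I would invoke the standard surface-topological fact that any collection of disjoint, pairwise non-isotopic, essential simple closed curves on a closed orientable surface of genus $g \ge 2$ has cardinality at most $3g-3$, with equality achieved by pants decompositions; applied to $\partial \cH_k$ this gives $m \le 3(k-1)$. The only non-trivial input is the handlebody recognition theorem for irreducible $3$-manifolds with free fundamental group, but this is a well-known classical result; I do not expect any substantial obstacle, as the remaining steps are routine applications of the structural machinery set up earlier in the section.
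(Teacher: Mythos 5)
Your proof is correct, but it reaches the handlebody identification by a different key input than the paper. The paper stays inside the machinery it has just set up: it applies Bonahon's characteristic compression body theorem (Theorem \ref{thm:compression_body}) to the boundary of $K_\rho$, observes that the compression body has free fundamental group (it injects into $\pi_1(K_\rho)\cong F_k$), hence is a handlebody whose single boundary component is a component of $\partial K_\rho$, and concludes that $K_\rho$ itself is a genus-$k$ handlebody. You instead invoke the classical theorem that a compact, orientable, irreducible $3$-manifold with free fundamental group is a handlebody (Loop Theorem/Dehn's Lemma, Hempel). Both are valid; your route buys independence from Theorem \ref{thm:compression_body}, at the cost of having to verify irreducibility of $K_\rho$ itself — note that asphericity of $M_\rho$ only gives that an embedded sphere in $K_\rho$ bounds a ball in $M_\rho$, and one still needs the short argument that this ball lies in $K_\rho$ (e.g.\ because the complementary components of a standard core are noncompact products, so a compact ball with boundary in $K_\rho$ cannot contain one); this is routine but should be said. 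For the parabolic locus you are more explicit than the paper, which declares the identification of the $\nu_i$ immediate from the Tameness Theorem: the absence of $\bZ^2$ subgroups of $F_k$ rules out rank-$2$ cusps, non-conjugacy of distinct maximal parabolic subgroups gives pairwise non-isotopic core curves, and the standard bound of $3k-3$ disjoint, essential, pairwise non-isotopic simple closed curves on a closed genus-$k$ surface yields $m\le 3(k-1)$, matching the paper's count.
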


\begin{proof}
We are guaranteed the existence of a standard compact core $K_\rho$ from Theorem \ref{thm:tameness}.   Then $\pi_1(K_\rho) \cong F_k$, because $K_\rho$ is homotopy equivalent to $M_\rho$, which has fundamental group isomorphic to $\g=F_k$.  So the characteristic compression body neighborhood of $\partial K_\rho$ in $K_\rho$ is a handlebody of genus $k$, which has only one boundary component; hence, $K_\rho$ is a handlebody of genus $k$.  We can thus find $f: \cH_k\to K_\rho$ inducing $\rho$ on fundamental groups.  There are at most $3(k-1)$ homotopically essential, distinct, simple closed curves in a surface  of genus $k$ bounding a closed handlebody of genus $k$.  The identification of $f(\{\nu_1, .., \nu_m\})$ with the core curves of $P_\rho^\circ$ is immediate from Theorem \ref{thm:tameness}, as is the statement that $M_\rho$ is homeomorphic to the interior of $\cH_k$.
\end{proof}
\begin{remark}\label{rmk:not_unique}
The homeomorphism $f$ is not unique, nor are the curves $\nu_1, ..., \nu_m$, even up to isotopy.  However, given any two homeomorphisms $f: \cH_k \to K_\rho$ and $f': \cH_k\to K_\rho$ inducing $\rho$ on fundamental groups, there is a homeomorphism $\phi: \partial \cH_k\to \partial \cH_k$ that extends to a homeomorphism $\Phi: \cH_k\to \cH_k$ such that $f\circ \Phi$ is homotopic to $f'$.  If $\nu_1', ..., \nu_m'$ are the curves corresponding to  $f':\cH_k\to M_\rho$ from Corollary \ref{cor:handlebody}, then $\phi(\{\nu_i\})$ is isotopic in $\partial \cH_k$ to $\{\nu_i'\}$.  See \cite[Section 4.2]{Namazi--Souto:density}.
\end{remark}

Given a component $R$ of $\partial K_\rho\setminus P_\rho^\circ$, let $\Mod_0(R, K_\rho\setminus P_\rho^\circ )\le \Mod(R)$ be the group of homotopy classes of orientation preserving self homeomorphisms of $R$ which extend to homeomorphisms of $K_\rho$ homotopic to the identity on $K_\rho$.  If $R$ is incompressible, then $\Mod_0(R, K_\rho\setminus P_\rho^\circ )$ is trivial.  For a component $S$ of $\partial K_\rho$, define $\Mod_0(S,K_\rho)$ similarly.  

 \subsection{Structure of ends and invariants}
Let $\rho: \g \to \G$ be a discrete and faithful representation of a finitely generated group $\g$ without torsion, as above.  The \emph{limit set} $\Lambda_\rho\subset \partial \bH^3$ is the set of accumulation points of $(\im\rho).x$ for some (any) $x\in\partial \bH^3$.  The complement $\Omega_\rho = \partial \bH^3\setminus \Lambda_\rho$ is called the \emph{domain of discontinuity}, and $\im\rho$ acts properly discontinuously as a group of conformal automorphisms of $\partial \bH^3$ preserving $\Omega_\rho$. Then $\overline M_\rho = \bH^3\cup \Omega_\rho/ \im \rho$ is a $3$-manifold with boundary $\Omega_{\rho}/\im\rho$ that is a disjoint union of Riemann surfaces with finite hyperbolic area. 
The \emph{convex core} $\mathcal {CC}(M_\rho)\subset M_\rho$ is the quotient of the convex hull of $\Lambda_\rho$ by $\im\rho$.    

Let $K_\rho$ be a standard relative compact core for $M_\rho$.  Say that  a relative end $[E_R]\in \cE_\rho$  is {\it geometrically finite} if $E_R$ meets $\cC\cC(M_\rho)$ in a set of finite volume; we also say that the corresponding boundary component $R\subset \partial K_\rho\setminus P_\rho^\circ$ is geometrically finite.    Call $[E_R]$ (or $R$) {\it geometrically infinite} otherwise.  If $R$ is geometrically finite, then the complement $E_R\setminus \cC\cC(M_\rho)$ has \emph{flaring geometry}; the metric on $E_R\setminus \cC\cC(M_\rho)\cong R\times (0, \infty)$ is isotopic to a metric that is bi-Lipschitz equivalent to the metric $\cosh^2 (t) dx^2 +dt^2$, where $dx^2$ is the intrinsic path metric on the component of $\partial \cC\cC(M_\rho)$ corresponding to $R$.

Suppose $R$ is geometrically finite; then the inclusion $R\hookrightarrow E_R$ is isotopic, through level surfaces of the product $E_R$, into $\partial \overline M_\rho$ and defines a point in a quotient of the Teichm\"uller space $\nu(R) \in \teich (R)/\Mod_0(R, K_\rho\setminus P_\rho^\circ )$.  The equivalence class $\nu(R)$ is the \emph{end invariant} of $[E_R]$.  

We would like to give a description of geometrically infinite ends; to motivate Definition \ref{def:simply degenerate}, we start with an example.  Let $S$ be a closed, oriented surface of negative Euler characteristic, and $\varphi: S\to S$ be a pseudo-Anosov homeomorphism.  The equivalence relation generated by $(x, t) \sim(\varphi(x), t+1)$ defines a normal covering projection $\pi: S\times \bR \to N_\varphi$ onto the mapping torus $N_\varphi$ of $\varphi$ with infinite cyclic deck group.  Thurston's Hyperbolization Theorem \cite[Theorem 0.2]{Thurston:vol} states that the mapping torus $N_\varphi$ has a complete hyperbolic metric; thus, so does the Riemannian covering space $\pi: S\times \bR\to N_\varphi$, which gives rise to a discrete and faithful representation $\rho: \pi_1(S) \to \G$.  We assume that the Riemannian covering $\pi: M_\rho\to N_\varphi$ is a local isometry. Let $\gamma_0$ be a simple closed curve in $S\times \{0\}\subset M_\rho$.   Then $\pi(\gamma_0)$ has a unique geodesic representative $\pi(\gamma_0)^*$ in its homotopy class. Each component of the preimage $\pi\inverse(\pi(\gamma_0)^*) = \{\gamma_i^*\}_{i\in \bZ}$ corresponds to a translate of the geodesic representative $\gamma_0^*$ of $\gamma_0$ in $M_\rho$ by an element of the deck group of $\pi$.  Since the action of a generator of the deck group induces $\varphi$ on level surfaces, the curve $\gamma_i^*$ is homotopic to $\gamma_i = \varphi^i(\gamma_0)\subset S\times \{0\}$.  In particular, the two ends of $M_\rho$ in this example are geometrically infinite, because $\cC\cC(M_\rho)$ contains all closed geodesics, and $\{\gamma_i^*\}_{i\in \bZ}\subset \cC\cC(M_\rho)$ is not a compact set, but $\{\gamma_i^*\}$ exits both ends of $M_\rho$.  

Although we will not discuss measured geodesic laminations in detail, we note also that the curves $\{\gamma_i\}_{i>0}$ limit, as projective measured laminations, to the projective class of a measured geodesic lamination that is fixed by $\varphi$, and similarly in the opposite direction. 
\begin{definition}\label{def:simply degenerate}
With notation as above, a relative end $E_R$ of $M_\rho$ is called \emph{simply degenerate} or  \emph{degenerate} if there is a sequence of closed geodesics $\{\gamma_i^*\}_{i \in \bN}\subset E_R$ that exit compact subsets of $E_R$ and which are homotopic  in $E_R$ to {\it simple} curves $\gamma_i\subset R$.  
\end{definition}
By \cite[Theorem 7.2]{CG:tameness}, if $R$ is not geometrically finite, then  $E_R$ is \emph{simply degenerate}.  Given a Riemannian metric $g$ of finite area and pinched negative curvature, a  \emph{geodesic lamination} $\lambda\subset R$ is a closed set foliated by complete $g$-geodesics.   Bonahon \cite{Bonahon:ends} showed that if $\{\gamma_i\}$ and $\{\gamma_i'\}$ are any two sequences of closed geodesics in $M_\rho$ exiting $E_R$, then the \emph{geometric intersection} 
\[i\left(\frac{\gamma_i}{\ell_g(\gamma_i)}, \frac{\gamma_i'}{\ell_g(\gamma_i')}\right)\to 0, \text{ as } i\to \infty.\]
 Here, $\ell_g(\gamma)$ denotes the length on $(R,g)$ of the unique closed geodesic in the homotopy class of $\gamma$.  By Thurston's theory of \emph{measured geodesic laminations} \cite[Chapter 8]{Thurston:notes},  there is a geodesic lamination $\lambda_R$ such that $\gamma_i/\ell_g(\gamma_i)$ converges in measure to a measured geodesic lamination whose support is $\lambda_R$.  It is known (\cite{Thurston:notes, Bonahon:ends, Canary:ends}) that $\lambda_R$ is compactly supported, \emph{minimal}, and  \emph{filling}, i.e. the complement of $\lambda_R$ in $R$ consists of a collection of ideal polygons with geodesic boundary and once punctured polygons, and no leaf of $\lambda_R$ is isolated.  A  compactly supported, minimal and filling geodesic lamination is called an \emph{ending lamination} for $R$, and the set of ending laminations, given a topology by \emph{unmeasuring}, defines a space $\cEL(R)$ of ending laminations supported by $R$.  See \cite{Klarreich:boundary} to see why  the topology on $\cEL(R)$ given by unmeasuring is the natural one, from the perspective of Kleinian groups. See \cite{Gabai:EL} for a precise definition of the topology of $\cEL(R)$ and an investigation of its connectivity properties.  
  For any two negatively curved Riemannian structures on $R$, the spaces of geodesic ending laminations are canonically homeomorphic.  The ending lamination $\lambda_R\in \cEL(R)/\Mod_0(R, K_\rho\setminus P_\rho^\circ )$ is the \emph{end invariant} $\nu(R)$ of the degenerate end $[E_R]$.

The following theorem of Thurston \cite{Thurston:notes} and Canary \cite[Corollary B]{Canary:covering} states that the only way that `new' geometrically infinite relative ends can appear in the total space of a Riemannian covering of complete hyperbolic $3$-manifolds of infinite volume are as finite covers of `old' geometrically infinite relative ends.  All other ends are geometrically finite.
\begin{theorem}[The Covering Theorem]\label{thm:covering}
Let $\rho: \g\to \G$ be a torsion free finitely generated marked Kleinian group such that $M_\rho$ has infinite volume, and let $i: \hat \g \to \g$ be inclusion of a finitely generated subgroup.  Then either
\begin{enumerate}[(a)]
\item $M_{\rho\circ i}$ is geometrically finite, i.e. all relative ends of $M_{\rho\circ i}$ are geometrically finite, or
\item For every simply degenerate end of $M_{\rho\circ i}^\circ$, there is a neighborhood $E_{\hat R}$ of that end and a neighborhood $E_R$ of a simply degenerate end of  $M_\rho^\circ$ such that the covering projection $M_{\rho\circ i}\to M_\rho$ restricts to a finite sheeted covering $ E_{\hat R} \to E_R$. 
\end{enumerate}
\end{theorem}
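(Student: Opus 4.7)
The plan is to apply the Tameness Theorem to put both $M_\rho$ and $M_{\rho\circ i}$ into standard form, then transport simplicial hyperbolic surfaces exiting a degenerate end of the cover $M_{\rho\circ i}$ down to $M_\rho$ via the covering projection $\pi\colon M_{\rho\circ i}\to M_\rho$ and analyze where they can accumulate. The rough dichotomy is: either the projected surfaces remain in a compact part of $M_\rho^\circ$, and a compactness argument contradicts the fact that their lifts exit the end of $M_{\rho \circ i}^\circ$; or the projected surfaces themselves exit a relative end $E_R\subset M_\rho^\circ$, in which case $E_R$ is simply degenerate and $\pi$ restricts to a finite sheeted covering on a sufficiently deep product neighborhood.

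First, using Theorem \ref{thm:tameness}, pick standard relative compact cores $K_{\rho\circ i}\subset M_{\rho\circ i}^\circ$ and $K_\rho\subset M_\rho^\circ$. We may assume that $M_{\rho\circ i}$ is not geometrically finite, so there is a component $\hat R$ of $\partial K_{\rho\circ i}\setminus P_{\rho\circ i}^\circ$ with $E_{\hat R}$ simply degenerate. By the definition of a simply degenerate end together with the standard construction of simplicial hyperbolic surfaces carrying a sequence of simple closed geodesics exiting $E_{\hat R}$, there exist maps $\hat f_n\colon\hat R\to M_{\rho\circ i}^\circ$ in the homotopy class of the inclusion, each a simplicial hyperbolic surface, such that $\hat f_n(\hat R)$ exits every compact subset of $E_{\hat R}$. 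Each such surface has intrinsic area at most $2\pi|\chi(\hat R)|$ and has $\mu_3$-thick part of diameter bounded in terms of $\chi(\hat R)$ alone.

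Now set $f_n:=\pi\circ \hat f_n\colon\hat R\to M_\rho^\circ$. Since $\pi$ is a local isometry, the same area and thick-diameter bounds hold for $f_n(\hat R)$ in $M_\rho^\circ$. If a subsequence of the thick parts of $f_n(\hat R)$ remained in a fixed compact set $C\subset M_\rho^\circ$, then Arzel\`a--Ascoli, applied to the uniformly Lipschitz simplicial hyperbolic maps $f_n$, would produce a subsequential limit $f_\infty\colon\hat R\to M_\rho^\circ$. Lifting $f_\infty$ to the cover in each of the countably many ways, one checks that the maps $\hat f_n$ must eventually be uniformly close to one such lift $\hat f_\infty$, contradicting the fact that $\hat f_n(\hat R)$ exits $E_{\hat R}$. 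Hence the thick parts of $f_n(\hat R)$ must leave every compact subset of $M_\rho^\circ$. Passing to a subsequence, $f_n(\hat R)$ eventually lies in a neighborhood of a single relative end $E_R$ of $M_\rho^\circ$; since $f_n(\hat R)$ has bounded area and exits $E_R$ while carrying closed geodesics, $E_R$ is simply degenerate. Taking $E_R'\subset E_R$ a sufficiently deep sub-end, the component of $\pi\inverse(E_R')$ meeting the tails of $\hat f_n(\hat R)$ is an end neighborhood $\hat E_{\hat R}'\subset E_{\hat R}$, and $\pi\colon \hat E_{\hat R}'\to E_R'$ is a proper local isometry between products $R\times [0,\infty)$, hence a finite sheeted covering with degree at most $|\chi(\hat R)|/|\chi(R)|$.

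The main obstacle is the compactness step: making precise that a sequence of simplicial hyperbolic surfaces in a hyperbolic $3$-manifold whose thick parts remain in a fixed compact set admits a convergent subsequence in a manner compatible with lifts to $M_{\rho\circ i}$, and showing that such convergence is genuinely incompatible with the lifts exiting $E_{\hat R}$. Controlling the thin parts and ruling out pathologies where a surface wraps around a short geodesic many times require Thurston's efficiency of pleated surfaces and a careful analysis of the Margulis tubes met by the $f_n(\hat R)$, which is the technical core of Canary's argument.
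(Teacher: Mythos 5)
This statement is quoted in the paper from Thurston and Canary (it is cited, not proved there), so your sketch can only be judged against the known proof. The overall strategy---push simplicial hyperbolic surfaces exiting the degenerate end of $M_{\rho\circ i}$ down through the covering and ask where they accumulate---is indeed Canary's, but your treatment of the recurrent case contains a genuine gap, and it is exactly the step that carries all the content of the theorem. If the projected surfaces $f_n=\pi\circ\hat f_n$ stay in a fixed compact set $C\subset M_\rho^\circ$, Arzel\`a--Ascoli does give a convergent subsequence $f_n\to f_\infty$, but this does \emph{not} force the $\hat f_n$ to be uniformly close to a single lift of $f_\infty$: the $\hat f_n$ may be close to \emph{different} lifts, differing by elements of $\pi_1(M_\rho)$ not in $\pi_1(M_{\rho\circ i})$, and these lifts can perfectly well march out the end $E_{\hat R}$ while their projections converge. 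The infinite cyclic cover of a closed hyperbolic mapping torus is the standard example: the surfaces exiting an end of $S\times\bR$ all project into a fixed compact set (all of the base), no contradiction arises, and neither conclusion (a) nor (b) holds. Your argument never uses the hypothesis that $M_\rho$ has infinite volume, and since the theorem is false without it, the proof cannot be correct as written.

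In the actual proof, the recurrent case is not ruled out by compactness alone; rather, one shows that recurrence of the projected surfaces produces, via the bounded diameter and bounded area estimates, two surfaces $\hat f_j$, $\hat f_k$ far apart in the end whose projections are homotopic in $M_\rho$, and hence an element $\alpha\in\pi_1(M_\rho)\setminus\pi_1(M_{\rho\circ i})$ essentially translating the end neighborhood up itself. This forces $M_\rho$ to be finitely covered by a fibered-type (finite volume) manifold, contradicting the infinite volume hypothesis --- this is where Thurston's virtual-fiber phenomenon enters and why the hypothesis is indispensable. The remaining parts of your sketch (exiting surfaces detect a simply degenerate end $E_R$ downstairs, and a proper local isometry between deep product neighborhoods is a finite covering with degree bounded by the Euler characteristic ratio) are in line with Canary's argument, but you should rework the dichotomy so that the infinite-volume assumption is what excludes the recurrent case, rather than an Arzel\`a--Ascoli argument that cannot do so.
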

We may assume that $E_{\hat R}$ and $E_R$ are both products so that the covering $E_{\hat R} \to E_R$ induces a finite sheeted covering $\hat R\to R$ of surfaces.

\subsection{Simplicial Hyperbolic Surfaces}\label{sec:simplicial_hyperbolic}
One way to study the geometry of simply degenerate ends of hyperbolic $3$-manifolds is to `probe' them with negatively curved surfaces.
A \emph {triangulation} $\cT$ of a closed surface $S$, is a 3-tuple $\cT = (V, A, T)$;  $V$ is a finite set of vertices, $A$ is a maximal simple arc system, and $T$ is a union of oriented $2$-simplices with embedded interiors in $S$ that are compatible with $V$ and $A$; the sum of simplices in $T$ is required to represent the fundamental class of $S$.  With this definition, a triangulation $\cT$ is not necessarily associated to the geometric realization of a simplicial complex, e.g. $\cT$ may only have the structure of a $\Delta$-complex.

The following definitions are essentially due to Bonahon \cite{Bonahon:ends}; we make some modifications to allow for the possibility that a relative compact core for $M_\rho$ has compressible boundary components.  Namely, we will be interested in the case that $\rho: F_k\to \G$ is discrete and faithful and without parabolics, so that $M_\rho$ is the interior of a genus $k$ handlebody, by Corollary \ref{cor:handlebody}.  There is only one component $S$ of the boundary of a standard relative compact core  for $M_\rho$.  In what follows, we encourage the reader to keep in mind this example with  $E=E_S$, a product neighborhood of the end of $M_\rho$.   See also \cite[Section 4]{Canary:covering}.

Let $E\subset M_\rho$ be a codimension $0$ submanifold homeomorphic to a product $S\times \bR$ where $S$ is a closed oriented surface of genus at least $2$.  A \emph {simplicial pre-hyperbolic surface} is a pair $(f,\cT)$ where $f :S\to E$ is a $\pi_1$-injective continuous map; $\cT$ is a triangulation of $S$ so that for each $\alpha\in A$, ${\im f\circ \alpha \subset E}$ is an $M_\rho$-local geodesic segment, and for each $2$-simplex $\tau$ of $\cT$, $\im  f\circ \tau\subset E$ is $M_\rho$-geodesically immersed.  
A simplicial pre-hyperbolic surface is a \emph{simplicial hyperbolic surface} if the cone angle about each vertex in the intrinsic metric $g_f$ on $S$ induced by $f$ is at least $2\pi$.  By a lemma of Ahlfors \cite{Ahlfors:lemma}, there is a unique hyperbolic metric $g$ on $S$ in the conformal class of $g_f$ such that the identity $(S,g)\to (S, g_f)$ is $1$-Lipschitz.  Thus, geometric bounds on hyperbolic surfaces translate to geometric bounds for simplicial hyperbolic surfaces, further giving us geometric estimates on $E$, since the composition mapping $S\to E\to M_\rho$ is $1$-Lipschitz. 

From a pre-simplicial hyperbolic surface $(f: S\to E, \cT)$ with one vertex, we can often construct a simplicial hyperbolic surface $(g: S\to E, \cT)$ homotopic within $E$ to $f$ as follows. Since $\cT$ has only one vertex $v$, every arc $\alpha\in A$ maps to a locally geodesic loop based at $f(v)$; the image is not necessarily smooth at $f(v)$. Suppose there is an arc $\alpha \in A$ such that the geodesic representative $f(\alpha)^*$ of $f(\alpha)$ in $M_\rho$ is contained in $E$, as is (the projection of) the straight line homopty between (appropriate lifts of) $f(\alpha)$ and $f(\alpha)^*$.  There is a new map $g: S\to E$ homotopic to  $f$ obtained by `dragging' all arcs along the image of $v$ under the straight line homotopy between $f(\alpha)$ and $f(\alpha)^*$ and re-straightening all $1$- and $2$-simplices in the image relative to $g(v)$.  As long as the straight line homotopy between $f$ and $g$ is contained in $E$, then $(g,\cT)$ is a simplicial hyperbolic surface.  Indeed, we now just need to check that the cone angle about $v$ is at least $2\pi$.  But this follows from the fact that $g(v)$ lies on a smooth geodesic subsegment of $g(\alpha)=f(\alpha)^*$; the intersection of $g(S)$ with a small sphere about $g(v)$ is a piecewise spherical geodesic loop passing through antipodal points, hence the cone angle in the metric induced by $g$ about $v$ is at least $2\pi$ \cite[Lemma 4.2]{Canary:covering}.

We will make use of the following result for simplicial hyperbolic surfaces, due to Bonahon \cite{Bonahon:ends}.
\begin{lemma}[Bounded Diameter Lemma]\label{lem:bounded_diameter}
For any compact set $K\subset M_\rho$, there is a compact set $K'\subset M_\rho$ such that if $f: S\to E\subset M_\rho$ is a simplicial hyperbolic surface and $\im f \cap K\not=\emptyset$, then $\im f\subset K'$.  The diameter of $f(S)$ is bounded above by a constant that depends only on the topology of $S$ and the injectivity radius of $M_\rho$ in $K'$.
\end{lemma}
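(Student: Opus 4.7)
The plan is to combine Ahlfors' lemma and the Gauss--Bonnet theorem to get a uniform area bound on $f(S)$, and then convert bounded area into bounded diameter via a monotonicity-of-area estimate in regions where the injectivity radius of $M_\rho$ is bounded below.

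By Ahlfors' lemma, the unique hyperbolic metric $g$ on $S$ in the conformal class of $g_f$ makes the identity $(S,g)\to (S,g_f)$ a $1$-Lipschitz map, so $f:(S,g)\to M_\rho$ is $1$-Lipschitz. Gauss--Bonnet gives $\mathrm{area}(S,g) = -2\pi\chi(S)$, depending only on the topology of $S$. Consequently the image $f(S)$ in $M_\rho$ has area at most $-2\pi\chi(S)$, counted with multiplicity.

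The key estimate is that for any compact set $N \subset M_\rho$ with $\iota := \inf_{y\in N}\inj_y M_\rho > 0$, the intersection $f(S) \cap N$ has diameter bounded in terms of $\chi(S)$ and $\iota$. For $y \in f(S)\cap N$, the ball $B_{\iota/2}(y)$ lifts isometrically to $\bH^3$. Picking any preimage $z$ of $y$, the component $D_z$ of $f^{-1}(B_{\iota/2}(y))$ through $z$ lifts to a $1$-Lipschitz continuous map to $\bH^3$ whose image meets both $y$ and $\partial B_{\iota/2}(y)$, and so has $g$-area bounded below by some $\alpha(\iota) > 0$ by a standard monotonicity argument (using the constant curvature $-1$ of $(S,g)$ together with the fact that the $1$-Lipschitz condition forces the pulled-back area form to dominate a hyperbolic disk of controlled radius). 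A chain of $k$ disjoint $(\iota/2)$-balls along $f(S) \cap N$ therefore consumes $g$-area at least $k\,\alpha(\iota)$, forcing $k \le -2\pi\chi(S)/\alpha(\iota)$. Joining any two points of $f(S) \cap N$ by a path in $f(S)$ and covering it by such balls yields a diameter bound of the asserted shape.

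To conclude, take $K'$ to be a sufficiently large compact neighborhood of $K$, chosen so that $f(S) \subset K'$ whenever $f(S)$ meets $K$; since $K'$ is compact and $M_\rho$ is a manifold without orbifold points, $\iota_0 := \inf_{y\in K'}\inj_y M_\rho$ is positive, and the previous paragraph gives the stated diameter bound. The main obstacle is arranging that $f(S)$ cannot escape such a $K'$ uniformly. This is where the $\pi_1$-injectivity of $f : S \to E \cong S\times\bR$ and the Thick--Thin Decomposition (Corollary \ref{cor:thick--thin}) intervene: a deep excursion of $f(S)$ into a Margulis tube or cusp would correspond to an essential curve in $(S,g)$ whose $f$-image is a nontrivial power of the (virtually) cyclic generator of that thin component's fundamental group, and both the topological bound on the number of disjoint essential simple closed curves of $S$ and the area bound $-2\pi\chi(S)$ restrict how many such excursions are possible and how deep they can penetrate. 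Combining this thin-part control with the thick-part diameter bound yields both the containment in $K'$ and the desired diameter estimate depending only on the topology of $S$ and the injectivity radius of $M_\rho$ on $K'$.
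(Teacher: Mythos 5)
The paper itself gives no proof of this lemma (it is quoted from Bonahon), so your argument has to stand on its own, and it has genuine gaps. The first is the mechanism behind your per-ball area bound. Since $f:(S,g)\to M_\rho$ is $1$-Lipschitz, the pulled-back area form is dominated \emph{by} the $g$-area form, not the other way around, and there is no monotonicity formula for merely Lipschitz images; the fact that $f(\overline{D_z})$ joins the center of $B_{\iota/2}(y)$ to its boundary only bounds the intrinsic \emph{diameter} of $D_z$ from below, not its area (a long thin collar can cross the ball with arbitrarily small area). What actually works is the ingredient you never use at this step: $\pi_1$-injectivity of $f$ transfers the ambient bound $\inj_{f(z)}(M_\rho)\ge\iota$ to the intrinsic bound $\inj_z(S,g)\ge\iota$, so the embedded intrinsic ball $B^S_{\iota/2}(z)\subset D_z$ has area at least $2\pi(\cosh(\iota/2)-1)$, and Gauss--Bonnet then gives the packing bound. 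Second, even with that repair, your ``key estimate'' is false as stated: a path in $f(S)$ joining two points of $f(S)\cap N$ may leave $N$ and dive through a thin component of $M_\rho$, where the balls are not embedded and the per-ball bound fails. Indeed, in a manifold with a very short geodesic a single simplicial hyperbolic surface can have two pieces lying in the Margulis-thick part that are separated by an excursion through a tube of depth comparable to $\log(1/\ell)$, so $\mathrm{diam}(f(S)\cap N)$ cannot be bounded in terms of $\chi(S)$ and $\iota$ alone; the correct intermediate statement is a diameter bound \emph{modulo the thin parts} of $M_\rho$.

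The third gap is the containment $\im f\subset K'$, which is the real content of the lemma (once it holds, the stated diameter bound is exactly the packing argument with $\iota=\inf_{K'}\inj$). You choose $K'$ ``so that $f(S)\subset K'$ whenever $f(S)$ meets $K$,'' which is the thing to be proved, and your sketch for why the image cannot escape rests on the claim that the area bound $2\pi|\chi(S)|$ restricts how deeply excursions into tubes can penetrate. That is not true: a collar in the intrinsic hyperbolic metric has area $O(1)$ yet can reach the core of an arbitrarily deep tube (this is precisely why the final constant must be allowed to depend on the injectivity radius over $K'$). The containment instead follows from the mod-thin-parts diameter bound together with two facts you would still need to establish: distinct $\epsilon$-thin components are a definite distance apart, so only finitely many of them meet any compact set; and, since $\rho$ has no parabolics, each such component is a compact Margulis tube. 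One then takes $K'$ to be a metric neighborhood of $K$ of radius given by the electric diameter bound, enlarged by the finitely many tubes it can reach. As written, your proposal proves neither the electric bound nor this finiteness step, so the lemma is not established.
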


Suppose $\rho: F_k\to \G$ is discrete and faithful without parabolics and $M_\rho$ has a geometrically infinite end $E_S$, where $S$ is closed. Then using also our characterization of geometrically infinite ends of a hyperbolic $3$-manifold as those that have a sequence $\{\gamma_i^*\}$ of  closed geodesics exiting the end $E_S$ of $M_\rho$ homotopic to simple curves in $S\times \{0\}$, we can construct an infinite sequence of `well spaced' simplicial hyperbolic surfaces exiting $E_S$.  
Choose points $v_i\in \gamma_i^*$, and let $A_i$ be a maximal system of based loops containing $\gamma_i\subset S\times \{0\}$, cutting $S$ into triangles. For large enough $i$, the straight line homotopy between $A_i$ and the corresponding locally geodesic loops based at $v_i$ is contained in an open neighborhood of $E_S$ \cite{Canary:ends}, thus we can `hang' simplicial hyperbolic surfaces $f_i : S\to E_S$ from the geodesics $\gamma_i^*$, as above.  Since  $\{\gamma_i^*\}$ exit all compact subsets of $E_S$ to $[E_S]$, by Lemma \ref{lem:bounded_diameter}, we can pass to a subsequence so that $\im f_i$ has empty intersection with $\im f_j$, if $i\not= j$.  

\subsection{The Ending Lamination Theorem}
We continue with our notation from before; $\rho: \g \to \G$ is a torsion free finitely generated marked Kleinian group, and $K_\rho$ is a standard relative compact core for $M_\rho$.  We summarize how to collect the end invariants $\nu$.  Let $S_1, ..., S_k$ be the components of $\partial K_\rho$ with negative Euler characteristic; they are closed surfaces that inherit the boundary orientation from $K_\rho$.  The annular components of $P_\rho^\circ$ have core curves that are identified with homotopically distinct, essential simple closed curves $\nu(P_\rho^\circ)\subset \sqcup S_i$.  Then for each component $R\subset \partial K_\rho\setminus P_\rho^\circ$ we record a piece of data; if $R$ is geometrically finite, then $\nu(R)\in \teich (R)/\Mod_0(R, K_\rho\setminus P_\rho^\circ)$ is an equivalence class of conformal structure at infinity.  If $R$ is geometrically infinite, then $\nu(R)\in \cEL(R)/\Mod_0(R,K_\rho\setminus P_\rho^\circ)$ is the corresponding equivalence class of ending lamination. To $\rho$, we associate all of these data $\nu(\rho)$.  

 The motto of the Ending Lamination Theorem is, ``the topology and geometry at infinity determine the metric.''  The Ending Lamination Theorem is a classification theorem for finitely generated Kleinian groups; it helps answer many of the questions about the behavior of hyperbolic $3$-manifolds, their deformations spaces, and  limiting behavior.
\begin{theorem}[The Ending Lamination Theorem]\label{thm:ELT}
Let $\g$ be a finitely generated non-abelian group without torsion, let  $\rho, \rho': \g \to \G$ be marked Kleinian groups, and let $K_{\rho}\subset M_{\rho}^\circ$ and $K_{\rho'}\subset M_{\rho'}^\circ$ be standard relative compact cores.  

If there is a homeomorphism $\phi: K_\rho\setminus P_\rho^\circ\to K_{\rho'}\setminus P_{\rho'}^\circ$ such that $\phi_* = \rho'\circ {\rho}\inverse$ and $\phi(\nu(\rho)) = \nu(\rho')$, then $\phi$ extends to a homeomorphism $\Phi: M_\rho\to M_{\rho'}$ that is isotopic to an isometry inducing ${\rho'}\circ \rho\inverse$ on fundamental groups.  Equivalently, there is a $g\in \G$ such that $g\rho g\inverse = \rho'$.

In other words, if the topological type of $M_\rho$ and $M_{\rho'}$ agree and so do the relative end invariants, then $M_\rho$ is isometric to $M_{\rho'}$ in the homotopy class of the classifying map determined by ${\rho'}\circ \rho\inverse$.  
\end{theorem}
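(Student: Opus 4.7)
The plan is to follow the framework of Minsky and Brock--Canary--Minsky, which reduces the Ending Lamination Theorem to three stages: construction of a combinatorial model manifold from the invariants $\nu$, a bi-Lipschitz model theorem identifying this model with the genuine hyperbolic quotient, and Sullivan's rigidity theorem to upgrade a bi-Lipschitz conjugacy to an isometric one. The upshot will be that $\nu(\rho) = \phi^{*}\nu(\rho')$ already determines $M_\rho$ up to bi-Lipschitz equivalence in the marked homotopy class prescribed by $\phi$, and Sullivan rigidity will then promote bi-Lipschitz to isometric.

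First I would construct, purely combinatorially from $\nu(\rho)$, a model manifold $\mathcal{M}(\nu)$ homeomorphic to $K_\rho\setminus P_\rho^\circ$. For each component $R$ of $\partial K_\rho \setminus P_\rho^\circ$, the datum $\nu(R)\in \teich(R)/\Mod_0$ or $\nu(R)\in\cEL(R)/\Mod_0$ determines, via the Masur--Minsky hierarchy machine, a sequence of tight geodesics in the curve graph of $R$ together with subsurface projections to each essential subsurface of $R$. These combinatorial coefficients specify a decomposition of $R\times[0,\infty)$ into geometric blocks glued across model Margulis tubes whose complex translation lengths are prescribed (up to bounded multiplicative error) by the projection distances. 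The main technical obstacle, and the heart of Minsky--Brock--Canary--Minsky, is the bi-Lipschitz model theorem: there is a bi-Lipschitz homeomorphism $F_\rho:\mathcal{M}(\nu(\rho))^\circ\to M_\rho^\circ$ inducing $\rho$ on $\pi_1$, with constants depending only on the topology of $\partial K_\rho$. The key input is a system of a priori length bounds showing that precisely those curves with unbounded subsurface projection coefficients admit short geodesic representatives in $M_\rho$; these bounds are proved by exhausting geometrically infinite ends by simplicial hyperbolic surfaces of bounded diameter (Lemma \ref{lem:bounded_diameter} and the construction of Section \ref{sec:simplicial_hyperbolic}), interpolating between consecutive such surfaces, and controlling the geometry across Margulis tubes in the thin part afforded by Corollary \ref{cor:thick--thin}.

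Once the model theorem is in hand, the hypothesis $\phi_*=\rho^{-1}\circ\rho'$ and $\phi(\nu(\rho))=\nu(\rho')$ implies $\mathcal{M}(\nu(\rho))$ and $\mathcal{M}(\nu(\rho'))$ are canonically identified through $\phi$. The composition $\Psi:=F_{\rho'}\circ\phi\circ F_\rho^{-1}:M_\rho\to M_{\rho'}$ is then a bi-Lipschitz homeomorphism in the marked homotopy class of $\rho'\circ\rho^{-1}$, and a standard extension across parabolic cusps yields a bi-Lipschitz homeomorphism $\Phi:M_\rho\to M_{\rho'}$. Lifting $\Phi$ to a $(\rho,\rho')$-equivariant quasiconformal map $\widetilde\Phi:\bH^3\to\bH^3$, one obtains an equivariant quasiconformal boundary map $\partial\bH^3\to\partial\bH^3$. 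To conclude I would invoke Sullivan's theorem that a finitely generated Kleinian group admits no measurable invariant line field on its limit set, combined with Ahlfors' finiteness and the fact that the restriction of $\widetilde\Phi$ to the domain of discontinuity is already conformal by construction (the end invariants identify the conformal structures at infinity on geometrically finite components): this forces the boundary extension to be M\"obius, producing a $g\in\G$ with $g\rho g^{-1}=\rho'$, whose associated isometry is isotopic to $\Phi$.
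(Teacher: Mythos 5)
Your outline is essentially the route the paper itself indicates: the paper does not prove Theorem \ref{thm:ELT} but quotes it from the literature, attributing it to Minsky's model manifold built from the end invariants via the Masur--Minsky hierarchy machinery, the Brock--Canary--Minsky bi-Lipschitz model theorem, and Sullivan's rigidity theorem to promote the bi-Lipschitz conjugacy (conformal at infinity) to an isometry --- exactly the three stages you describe. The only point your sketch glosses over, which the paper flags, is that the compressible-ends case (relevant here since $\g$ may be free, so $K_\rho$ is a handlebody) additionally requires the Tameness Theorem and Canary's branched-cover trick to reduce to the surface-group case before assembling the end models.
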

We owe the reader attributions and references.   The final ingredients to prove Theorem \ref{thm:ELT} were given by Brock--Canary--Minsky \cite[Ending Lamination Theorem for Incompressible Ends]{BCM:ELTII}, and in a follow up paper \cite{BCM:ELTIII} that carries out the details needed to  modify the proof when $M_\rho$ has compressible ends, using Canary's Branched Cover Trick \cite{Canary:ends}, as outlined in \cite[Section 1.2]{BCM:ELTII}.   For Kleinian surface groups, Minsky built a \emph{model manifold} $M_\nu$ from a list of end invariants $\nu$ out of \emph{blocks} and \emph{tubes}.  He also constructed a Lipschitz map (with Lipschitz constant depending only on the topology of the surface)  $M_\nu\to M$, where $M$ is a hyperbolic manifold with end invariants $\nu$ \cite[Extended Model Theorem]{Minsky:ELTI}.  The geometry and arrangement of the tubes in the model manifold $M_\nu$ is extracted from a \emph{hierarchy of tight geodesics} and careful analysis of the geometry of Harvey's \emph{complex of curves} carried out by Masur--Minsky in \cite{MM:complexI, MM:complexII}.  The Lipschitz model map was then promoted to a bi-Lipschitz model map that extends to a conformal map at infinity \cite[Bi-Lipschitz Model Theorem]{BCM:ELTII}.  So, if $M_\rho$ and $M_{\rho'}$ have the homotopy type of a finite type surface (with a parabolicity condition on the boundary curves of the surface) and they have the same end invariants, as seen from some reference surface, then there is a bi-Lipschitz homeomorphism between them, compatible with markings, that extends to a conformal map at infinity; by Sullivan's Rigidity Theorem \cite{Sullivan:rigidity}, the bi-Lipschitz mapping is homotopic to an isometry.  

If all of the relative ends of $M_\rho$ are simply degenerate, we say that $M_\rho$ is \emph{totally degenerate}.  The Ending Lamination Theorem \ref{thm:ELT} implies that the marked isometry type of a totally degenerate manifold $M_\rho$ is completely determined by the topology of a standard relative compact core and list of ending laminations.

The general statement for the Ending Lamination Theorem (including the case that there are compressible relative ends) is made possible by the Tameness Theorem \ref{thm:tameness}.  One studies the covering spaces associated to the relative ends of a given  manifold $M$, builds models of the ends from the end invariants, and assembles the pieces to obtain a bi-Lipschitz model for $M$.  All of this builds on the important contributions of Thurston, Ahlfors, Bers, Marden, Maskit, Sullivan, Bonahon, Otal, O'shika and many others.  
Soma \cite{Soma:ELT} has recently given an alternate strategy using methods from bounded cohomology and volume rigidity.

\subsection{Quasi-isometric classification of marked Kleinian surfaces and free groups}\label{sec:qi_classification}
For constants $M\ge1$ and $A\ge 0$, an $(M,A)$-quasi-isometry $f: X\to Y$ between metric spaces is a map that satisfies
\[\frac1M d_X(x,x')-A\le d_Y(f(x), f(x'))\le M d_X(x, x')+A\] 
for all $x, x'\in X$.  
Thurston defined the \emph{quasi-isometric topology} on the space of hyperbolic manifolds with a given homotopy type in \cite[Section 1]{Thurston:vol}; say that two discrete and faithful representations $\rho_1, \rho_2: \g \to \PSL_2\bC$ of a finitely generated non-elementary group $\g$ are \emph{quasi-isometric}, and write $\rho_1\sim_{q.i}\rho_2$, if there are $M$ and $A$ as above and a  $(\rho_1, \rho_2)$-equivariant $(M,A)$-quasi-isometry $\bH^3\to \bH^3$.  An equivariant quasi-isometry of representations extends to an equivariant quasi-conformal map at infinity, where the quasi-isometry constants give control on the quasi-conformal constant of the map at infinity.

A weaker version of the implication `(1) $\Rightarrow$ (3)' in the following theorem was given by Soma \cite[Theorem A]{Soma:boundedsurfaces}; in that paper, all manifolds are homotopy equivalent to a closed surface, have bounded geometry, and two degenerate ends.  The number $\epsilon$ in Soma's \cite[Theorem A]{Soma:boundedsurfaces} depends on the lower bound $\epsilon'$ for the injectivity radius of the manifolds involved and $\epsilon$ goes to zero with $\epsilon'$.  In particular, the techniques that prove \cite[Theorem A]{Soma:boundedsurfaces} do not apply to manifolds with unbounded geometry.  Part of the novelty of Theorem \ref{old work} is that our techniques are equally amenable to manifolds with unbounded geometry, a topologically generic set at the boundary of the deformation space of hyperbolic metrics, and our constant $\epsilon$ is uniform over all metrics; it only depends on the homotopy type of the manifolds $M_{\rho_i}$ but not their geometry.  

\begin{theorem}[\cite{Farre:bdd, Farre:relations}]\label{old work}
Let $S$ be an orientable hyperbolic surface of finite type.  There is a constant $\epsilon = \epsilon(S)>0$ such that if $\rho_1, \rho_2: \pi_1(S)\to \G$ are discrete and faithful with no parabolic elements, then the following are equivalent.
\begin{enumerate}[$(1)$]
\item $\|[\rho_1^*\vol_3] - [\rho_2^*\vol_3]\|_\infty <\epsilon$
\item $[\rho_1^*\vol_3] = [\rho_2^*\vol_3]$
\item $\rho_1\sim_{q.i.}\rho_2$.
\end{enumerate}
Moreover, if $M_{\rho_1}$ has a geometrically infinite end and $\rho_3: \pi_1(S)\to \G$ is an arbitrary representation satisfying $\|[\rho_1^*\vol_3] - [\rho_3^*\vol_3]\|_\infty <\epsilon$, then $\rho_3$ is faithful.
\end{theorem}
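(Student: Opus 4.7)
The plan is to prove (2)$\Rightarrow$(1) trivially, (3)$\Rightarrow$(2) by a bounded prism-operator argument, (1)$\Rightarrow$(3) as the main content, and the moreover statement as a consequence of Theorem \ref{thm:intro_main}. For (3)$\Rightarrow$(2), given a $(\rho_1, \rho_2)$-equivariant quasi-isometry $\tilde h: \bH^3\to \bH^3$ and a point $x\in \bH^3$, equivariance gives $\tilde h(\rho_1(g).x) = \rho_2(g).\tilde h(x)$ for all $g\in \pi_1(S)$. Applying the prism operator to the straight-line homotopy between the straight tetrahedra with vertex sets $\{\rho_1(g_i).x\}_{i=0}^3$ and $\{\rho_2(g_i).\tilde h(x)\}_{i=0}^3$ yields a $\pi_1(S)$-invariant $2$-cochain $\Phi$ with $\delta\Phi = \rho_1^*\vol_3^x - \rho_2^*\vol_3^{\tilde h(x)}$. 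The Morse lemma bounds the prism's hyperbolic volume, so $\Phi$ is bounded; combining with equation (\ref{eqn:chain_homotopy}) gives $[\rho_1^*\vol_3]=[\rho_2^*\vol_3]$.

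For (1)$\Rightarrow$(3), fix $\epsilon<v_3$. If $[\rho_1^*\vol_3]=0$, Soma's Theorem \ref{soma's theorem} gives $M_{\rho_1}$ geometrically finite, and $\|[\rho_2^*\vol_3]\|_\infty<v_3$ forces $M_{\rho_2}$ geometrically finite as well. Without parabolics, both are convex-cocompact, and the equivariant quasi-conformal conjugation between their limit sets extends to an equivariant bi-Lipschitz map of convex hulls, giving $\rho_1\sim_{q.i.}\rho_2$. If $[\rho_1^*\vol_3]\neq 0$, then $M_{\rho_1}$ has a geometrically infinite end. The strategy is to recover the end invariants of $\rho_1$ --- the topological pattern of geometrically finite versus infinite ends together with the ending laminations on the infinite ends --- from pairings $[\rho_1^*\vol_3](Z)$ against suitable relative cycles $Z$. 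Using simplicial hyperbolic sweep-outs of each degenerate end (Section \ref{sec:simplicial_hyperbolic}) and the $(v_3-\eta)$-efficient chain construction of Section \ref{sec:efficient_chains}, I construct, for each candidate set of end invariants $\nu$, sequences of relative cycles $Z_k(\nu)$ with $\|\partial Z_k(\nu)\|_1/\|Z_k(\nu)\|_1\to 0$ whose pairings approach $v_3$ on volume classes matching $\nu$ and are small on all others. Lemma \ref{scheme} then produces a uniform separation constant $\epsilon(S)$. Matching end invariants, combined with Theorem \ref{thm:ELT} and the Bi-Lipschitz Model Theorem of \cite{BCM:ELTII}, upgrades agreement of invariants to a quasi-isometric equivalence.

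The main obstacle is the uniformity of $\epsilon(S)$: it must depend only on the topology of $S$ and not on the geometry of $M_{\rho_i}$, and in particular must not degenerate with vanishing injectivity radius (where Soma's earlier \cite[Theorem A]{Soma:boundedsurfaces} breaks down). To arrange this, the chains $Z_k(\nu)$ should be built from triangulations of $S$ of combinatorial complexity depending only on the genus, and the Bounded Diameter Lemma \ref{lem:bounded_diameter} is used to control how the simplicial hyperbolic surfaces interact with the thin parts of $M_{\rho_1}$, so that the estimates feeding into Section \ref{sec:efficient_chains} depend only on the topology of $S$.

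For the moreover statement, assume $M_{\rho_1}$ has a degenerate end so that $\|[\rho_1^*\vol_3]\|_\infty=v_3$ by Theorem \ref{soma's theorem}, and suppose $\rho_3$ is not faithful with $\|[\rho_1^*\vol_3]-[\rho_3^*\vol_3]\|_\infty<\epsilon<v_3$. We derive a contradiction by cases on $\rho_3$. If $\im\rho_3$ is indiscrete and not dense, Lemma \ref{lem:geom_elem} gives $[\rho_3^*\vol_3]=0$, so the norm distance is $v_3$. If $\im\rho_3$ is dense, apply Theorem \ref{thm:intro_main} with $H=\pi_1(S)$ (on which $\rho_1$ is discrete and $\rho_3$ is dense) to get distance $\geq v_3$. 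If $\rho_3$ is discrete and non-faithful, then $\im\rho_3$ is a proper Kleinian quotient of the Hopfian surface group $\pi_1(S)$; so $\im\rho_3$ is either elementary --- whence $[\rho_3^*\vol_3]=0$ by Lemma \ref{lem:geom_elem} or Theorem \ref{soma's theorem} --- or a Kleinian group of strictly smaller complexity, in which case the Covering Theorem \ref{thm:covering} applied to the covering $\bH^3/\ker\rho_3\to M_{\rho_3}$ forces the topological end pattern of $\rho_3$ to be incompatible with that of $\rho_1$, so the separation analysis of (1)$\Rightarrow$(3) again separates their volume classes by at least $v_3$, a contradiction.
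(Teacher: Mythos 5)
Your step (3)$\Rightarrow$(2) does not work as written. The prism cochain $\Phi(g_0,g_1,g_2)$ between the triangles $(\rho_1(g_i).x)$ and $(\rho_2(g_i).\tilde h(x))$ is automatically bounded by $3v_3$ (every straight tetrahedron has volume at most $v_3$; the Morse lemma plays no role and in any case does not bound the distance between corresponding vertices, since $\tilde h$ is typically at unbounded distance from any isometry). The fatal problem is invariance: replacing $(g_0,g_1,g_2)$ by $(gg_0,gg_1,gg_2)$ moves the bottom triangle by $\rho_1(g)$ and the top triangle by $\rho_2(g)$, two \emph{different} isometries, so the prism is not an isometric image of the original and $\Phi$ is not $\pi_1(S)$-invariant. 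Hence $\Phi$ is not a cochain in $\Cb 2(\pi_1(S);\bR)$, and producing a bounded primitive of $\rho_1^*\vol_3^x-\rho_2^*\vol_3^{\tilde h(x)}$ in the non-invariant complex is vacuous (that complex is acyclic). The paper's argument is genuinely different: the equivariant quasi-isometry extends to an equivariant quasiconformal map at infinity, which by inflexibility theory (McMullen, Brock--Bromberg) extends to a \emph{volume-preserving} bi-Lipschitz diffeomorphism $M_{\rho_1}\to M_{\rho_2}$ compatible with markings, and then \cite[Corollary 3.6]{Farre:relations} gives $[\rho_1^*\vol_3]=[\rho_2^*\vol_3]$; the volume-preservation is the key input that your prism cannot substitute for.

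For (1)$\Rightarrow$(3) and the ``moreover'' statement, your proposal is a plan to re-prove the main theorems of \cite{Farre:bdd} (``for each candidate $\nu$ build relative cycles whose pairings detect $\nu$, with $\epsilon(S)$ uniform over all geometries''), but no such construction is carried out, and the uniformity over manifolds with arbitrarily small injectivity radius is exactly the hard point of that paper. The theorem here is attributed to \cite{Farre:bdd, Farre:relations}, and the paper's proof imports these inputs: it sets $\epsilon=\min\{\epsilon'/2,v_3\}$ and cites \cite[Theorem 1.2]{Farre:bdd} for the statement that $\|[\rho_1^*\vol_3]-[\rho_2^*\vol_3]\|_\infty<\epsilon$ forces the geometrically infinite end invariants to agree, and \cite[Theorem 1.3]{Farre:bdd} for faithfulness of $\rho_3$. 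The actual work done in the proof is the step you compress into one sentence: upgrading agreement of geometrically infinite end invariants to $\rho_1\sim_{q.i.}\rho_2$ by cases (totally degenerate: Theorem \ref{thm:ELT} gives conjugacy; one degenerate and one geometrically finite end: the Bi-Lipschitz Model Theorem of \cite{BCM:ELTII} on the convex core extended over the flaring end; geometrically finite: compact convex cores plus the flaring product structure, using absence of parabolics). Your handling of the discrete non-faithful case of the ``moreover'' statement is also not an argument: ``$\bH^3/\ker\rho_3$'' is not a hyperbolic manifold associated to $\rho_3$, the quotient $\pi_1(S)/\ker\rho_3$ need not be elementary or of ``smaller complexity'' in any usable sense, and the claimed incompatibility of end patterns plus ``the separation analysis'' is precisely the content of \cite[Theorem 1.3]{Farre:bdd}, not something your sketch establishes.
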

\begin{proof}[Proof of Theorem \ref{old work}]
Let $\epsilon< \min\{\epsilon'/2, v_3\}$, where $\epsilon'$ is as in \cite[Theorem 1.2]{Farre:bdd} and only depends on the topology of $S$.  The last statement is \cite[Theorem 1.3]{Farre:bdd}, which states that $\rho_3$ is faithful. 

Condition (2) implies condition (1), trivially.   
Now we show that (3) implies (2).  We need the classical fact that an equivariant quasi-isometry extends to an equivariant quasi-conformal homeomorphism at infinity (see, e.g. \cite[Corollary 5.9.6]{Thurston:notes}).  This quasi-conformal homeomorphism extends to a {\it volume preserving} bi-Lipschitz diffeomorphism $M_{\rho_1}\to M_{\rho_2}$ inducing $\rho_2\circ \rho_1\inverse$ on fundamental groups; see \cite[Appendix B]{McMullen:book},  \cite[Theorem 5.6]{BB:inflexibility}, or \cite[Theorem 3.1]{Farre:relations}, where those results are summarized.  We can apply \cite[Corollary 3.6]{Farre:relations} or the proof of \cite[Theorem 3.2]{Farre:relations} to see that  $[\rho_1^*\vol_3] = [\rho_2^*\vol_3]$.  

Finally, we show that (1) implies (3).  With $\epsilon$ as in the first paragraph, \cite[Theorem 1.2]{Farre:bdd} states that the geometrically infinite end invariants of $M_{\rho_1}$ must be the same as the geometrically infinite end invariants of $M_{\rho_2}$.   If $M_{\rho_1}$ is totally degenerate, then $\nu({\rho_1})=\nu(\rho_2)$.  By the Ending Lamination Theorem \ref{thm:ELT}, $\rho_1$ is conjugate, hence quasi-isometric, to $\rho_2$.  

So, we now assume that at least one end of $M_{\rho_1}$ is geometrically finite.  If $\pi_1(S)$ is free, then by Corollary \ref{cor:handlebody}, $M_{\rho_1}$ and $M_{\rho_2}$ are  handlebodies of the same genus, and since $\im \rho_i$ has no parabolic elements, there is only one relative end of $M_{\rho_i}$, for $i = 1,2$.  For geometrically finite hyperbolic manifolds with no parabolic cusps, $\cC\cC(M_{\rho_i})$ is a standard compact core for $M_{\rho_i}$;  we may find a bi-Lipschitz homeomorphism $\phi: \interior \cC\cC(M_{\rho_1})\to \interior \cC\cC(M_{\rho_2})$ inducing $\rho_2\circ \rho_1\inverse$ on fundamental groups.  
The neighborhood  $E_i = M_{\rho_i}\setminus \cC\cC(M_{\rho_i})$ of the end of $M_{\rho_i}$ is a product $S'\times (0, \infty)$, where $S'$ is a closed surface of genus equal to the rank of $\pi_1(S)$, and its metric is bi-Lipschitz equivalent to $\cosh^2 (t) dx_i^2 +dt^2$, where $dx_i$ is the induced path metric on $\partial \cC\cC(M_{\rho_i})$.  Thus $\phi$ extends to a homeomorphism mapping level surfaces of $E_1$ to level surfaces of $E_2$ with respect to the aforementioned product structure on $E_i$.   Since the path metrics on $\partial \cC\cC(M_{\rho_i})$ are bi-Lipschitz equivalent to each other and $\cC\cC(M_{\rho_1})$ and $\cC\cC(M_{\rho_2})$ are compact, the map $\phi:M_{\rho_1}\to M_{\rho_2}$ is a bi-Lipschitz homeomorphism that lifts to an equivariant bi-Lipschitz homeomorphism $\bH^3\to \bH^3$.  Thus $\rho_1\sim_{q.i.}\rho_2$.  

If $S$ is closed then $M_{\rho_i}\cong S\times \bR$ by Bonahon's Tameness Theorem \cite[Th\'eor\`em A]{Bonahon:ends} and since $\im\rho_i$ contains no parabolics, standard compact cores for $M_{\rho_i}$ are of the form $S\times [0,1]$, for each $i = 1,2$. There are two possibilities for the geometry of the ends of $M_{\rho_i}$.  If $M_{\rho_1}$ has no geometrically infinite ends, then the same is true for $M_{\rho_2}$.  A similar argument to the previous paragraph produces an equivariant bi-Lipschitz mapping $\bH^3\to \bH^3$, and so $\rho_1\sim_{q.i}\rho_2$.  The last possibility is that $M_{\rho_i}$ each has one degenerate end and one geometrically finite end.  In this case, the convex cores of $M_{\rho_i}$ are neighborhoods of their degenerate ends, and \cite[Bi-Lipschitz Model Theorem]{BCM:ELTII} supplies us with a bi-Lipschitz homeomorphism $\phi: \cC\cC(M_{\rho_1})\to  \cC\cC(M_{\rho_2})$ inducing $\rho_2\circ \rho_1\inverse$ on fundamental groups.  Again, we extend $\phi$ to a bi-Lipschitz homeomorphism $M_{\rho_1} \to M_{\rho_2}$ mapping flaring level surfaces to flaring level surfaces in the complement of the convex core.  The conclusion of the theorem follows, as in the previous cases.
\end{proof}

There are geometrically finite representations of a closed surface $S$ of genus $g\ge 2$ that are not quasi-isometric to each other.  For example, take pants decompositions $\alpha$ and $\beta$ of a closed surface $S$ with no common curves.  There is a unique conjugacy class $\rho: \pi_1(S)\to \G$ such that the quotient manifold $M_\rho$ has $6g-6$ rank-$1$ cusps.  $K_\rho$ is a trivial interval bundle over $S$, with two boundary components $S^+$ and $S^-$.  The components of $P_\rho^\circ$ are annular subsurfaces of $S^+$ and $S^-$ with core curves $\alpha\subset S^+$ and $\beta\subset S^-$; each complementary component of $\partial K_\rho\setminus P_\rho^\circ$ is  homeomorphic to a $3$-times punctured sphere.  The only simple closed curves on three times punctured spheres are parallel to the punctures and do not have geodesic representatives in a finite area metric of non-positive curvature, so there are no ending laminations, being limits of geodesic simple closed curves,  on a three times punctured sphere.  Every relative end of this example is geometrically finite.  In fact, since the Teichm\"uller space of a three times punctured sphere is a point,  every relative end is isometric to every other relative end in this example.  The volume class vanishes by Theorem \ref{soma's theorem}, but $\rho$ is not quasi-isometric to any Fuchsian representation.  Thus, the assumption that representations have no parabolic elements cannot be dropped in  Theorem \ref{old work}.

  Now we prove Theorem \ref{thm:intro_old_work}, assuming Theorem \ref{thm:intro_main}, which is the last ingredient for our quasi-isometric volume rigidity result.
\begin{theorem}\label{thm:rigidity} 
There exists a constant $\epsilon = \epsilon(S)$ such that the following holds.  Suppose that $\rho_0: \pi_1(S)\to \G$ is a discrete and faithful representation without parabolic elements, and that $[\rho_{0}^{*}\vol_{3}]\not=0$.  If $\rho: \pi_1(S) \to \G$ is any other representation without parabolics satisfying \[\|[\rho_0^*\vol_3]- [\rho^*\vol_3]\|_\infty<\epsilon,\]
then $\rho$ is discrete and faithful, and $\rho$ is quasi-isometric to $\rho_0$.  
If $\rho_0$ is totally degenerate, then $\rho_0$ and $\rho$ are conjugate in $\G$.  
\end{theorem}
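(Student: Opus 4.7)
The plan is to take $\epsilon = \min\{\epsilon(S), v_3\}$, where $\epsilon(S)$ is the constant from Theorem \ref{old work}, and argue in two stages. First I would use Theorem \ref{thm:intro_main} together with Lemma \ref{lem:geom_elem} to force $\rho$ to be discrete; only then do the previously established rigidity results apply. As an initial observation, the hypothesis $[\rho_0^*\vol_3]\ne 0$ combined with Theorem \ref{soma's theorem} gives $\|[\rho_0^*\vol_3]\|_\infty = v_3$ and guarantees that $M_{\rho_0}$ has a geometrically infinite end; the latter is precisely the input needed to invoke the ``moreover'' clause of Theorem \ref{old work} later on.

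For the first stage, suppose for contradiction that $\rho$ is indiscrete. If $\overline{\rho(\pi_1(S))}\ne \G$, then Lemma \ref{lem:geom_elem} forces $[\rho^*\vol_3]=0$, so that $\|[\rho_0^*\vol_3]-[\rho^*\vol_3]\|_\infty = v_3$, contradicting $\|[\rho_0^*\vol_3]-[\rho^*\vol_3]\|_\infty <\epsilon\le v_3$. If instead $\rho$ is dense, I would apply Theorem \ref{thm:intro_main} with $G=\pi_1(S)$ and subgroup $H=\pi_1(S)$: density gives $\overline{\rho(H)}=\G$, while $\rho_0|_H=\rho_0$ is discrete, so the conclusion $\|[\rho^*\vol_3]-[\rho_0^*\vol_3]\|_\infty\ge v_3$ again contradicts the hypothesis. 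Hence $\rho$ must be discrete.

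With $\rho$ discrete, the ``moreover'' clause of Theorem \ref{old work}, applied with $\rho_1=\rho_0$ and $\rho_3=\rho$, ensures that $\rho$ is faithful. Theorem \ref{old work} then applies to the pair $(\rho_0,\rho)$, and the implication $(1)\Rightarrow (3)$ directly yields $\rho\sim_{q.i.}\rho_0$. In the totally degenerate case, $(1)\Rightarrow (2)$ upgrades to $[\rho_0^*\vol_3]=[\rho^*\vol_3]$, and the argument in the proof of Theorem \ref{old work} (distinguishing end invariants via bounded cohomology in the totally degenerate setting) forces $\nu(\rho_0)=\nu(\rho)$; the Ending Lamination Theorem \ref{thm:ELT} then promotes the quasi-isometry to a genuine $\G$-conjugacy.

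The substantive step is the density-exclusion in stage one, since that is exactly where the new content of Theorem \ref{thm:intro_main} enters; the remainder is essentially assembly of tools already proved. The choice $\epsilon\le v_3$ is sharp for the argument as written, in the sense that both indiscrete sub-cases are ruled out by the inequality $\|[\rho_0^*\vol_3]-[\rho^*\vol_3]\|_\infty\ge v_3$, and the requirement $\epsilon\le \epsilon(S)$ is inherited from Theorem \ref{old work}.
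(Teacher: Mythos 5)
Your proposal is correct and follows essentially the same route as the paper: exclude density via Theorem \ref{thm:intro_main}, exclude indiscrete-but-elementary via Lemma \ref{lem:geom_elem} together with Soma's Theorem \ref{soma's theorem} and the triangle inequality, then feed the resulting discrete and faithful $\rho$ into Theorem \ref{old work} and, in the totally degenerate case, the Ending Lamination Theorem \ref{thm:ELT}. Your explicit appeal to the ``moreover'' clause of Theorem \ref{old work} to obtain faithfulness is a point the paper leaves implicit, but it is the intended mechanism, so the arguments coincide.
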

\begin{proof}
Choose $\epsilon$ as in Theorem \ref{old work}. 
Then $\rho_0$ is discrete, faithful, and $[\rho_0^*\vol_3]\not= 0$, so we can apply Theorem \ref{soma's theorem} to see that $\|[\rho_0^*\vol_3]\|_\infty=v_3$, and $M_{\rho_0}$ has a geometrically infinite end.
We can apply the last statement of Theorem \ref{old work} to deduce that $\rho$ is faithful.

 By Theorem \ref{thm:intro_main}, since $\rho_0$ is discrete and $\|[\rho_0^*\vol_3]- [\rho^*\vol_3]\|_\infty<\epsilon<v_3$, $\rho$ cannot have dense image.   
The triangle inequality gives $\|[\rho^*\vol_3]\|> v_3-\epsilon> 0$.  
Thus $[\rho^*\vol_3]\not=0$ and so $\rho$ has discrete image by Lemma \ref{lem:geom_elem}.
Now that we know that $\rho$ is discrete and faithful,  we can apply the main body of Theorem \ref{old work} to see that $\rho\sim_{q.i.}\rho_0$.  This concludes the proof of the first statement of the theorem.   

If $M_{\rho_0}$ is totally degenerate, then all of the end invariants of $M_\rho$ and $M_{\rho_0}$ coincide.  By the Ending Lamination Theorem \ref{thm:ELT}, $\rho_0$ is conjugate to $\rho$ in $\G$, as in the proof of Theorem \ref{old work}.  
\end{proof}

\section{Volume classes of dense representations}\label{volume classes}

We will prove our main theorems for the $3$-dimensional volume class in this section by building efficient chains from a geometrically infinite Kleinian free group $\g = \langle a, b\rangle\le \G$ that contains no parabolics.  Then we will approximate the shape of those chains using a dense representation.  Call a $n$-chain $Z$ \emph{$\epsilon$-efficient} or just \emph{efficient} if $\frac{|\vol_n(Z)|}{\|Z\|_1}>\epsilon$.  We recall Soma's construction of efficient chains in a hyperbolic manifold $\bH^3/\Gamma$, where $\Gamma$ is any finitely generated, torsion free, infinite co-volume, geometrically infinite Kleinian group $\Gamma\le \G$.  Then we use tools from Section \ref{isometric chain maps} to turn these efficient chains into efficient $3$-chains with only one vertex, which then define chains on $\g$.  If we have a dense representation $\rho: G \to \G$, we can approximate $a$ and $b$ by sequences $a_n = \rho(x_n)$ and $b_n = \rho(y_n)$ for suitably chosen $x_n, y_n \in G$.  The shape of the chain on $\g$ with large volume and small boundary area can be approximated by chains on the groups $\langle a_n ,b_n\rangle$.  The chains in the approximates have essentially the same volume as the chains which came from the manifold $\bH^3/\g$, as in Section \ref{approximation section}.  

We have a natural way to collapse a straightened singular $3$-chain in $\bH^3/\Gamma$ with many vertices onto a chain with only $1$ vertex.  We lose some volume during this collapsing process, but the loss of volume is controlled in terms of the $\ell_1$-norm of the boundary of the original chain, which is uniformly bounded.  We need to prove a number of technical facts about these one-vertex chains that rely on geometrical facts concerning hyperbolic $3$-manifolds.  
This allows us to show that the volume class of any dense representation has maximal semi-norm.  

Then we prove some auxiliary results about certain subgroups of Kleinian groups to show that certain volume classes are separated in semi-norm, as announced in Theorem \ref{thm:intro_main} and Theorem \ref{thm:norm}.

\subsection{Constructing efficient chains}\label{sec:efficient_chains}

 There are several chain complexes in which a $3$-chain $Z$ could live, e.g. in a group $\g$ or a quotient manifold $\bH^3/\g$, and $\vol_3$ should be interpreted in whichever context it makes sense.  In what follows, we show that for chains $Z$ whose boundary has small $\ell_1$-norm, it is not so important where we compute the volume; this is most of the content of Proposition \ref{group chains}.

In \cite{Farre:bdd}, we constructed sequences of $3$-chains on geometrically infinite genus $g$ handlebodies with large volume and small boundary area.  These chains were $\epsilon_g$-efficient, where $\epsilon_g>0$ depends only on the topology of the handlebody (but not on its geometry), and for which the boundary surfaces of the chains were well controlled.  In fact, they were simplicial hyperbolic surfaces; see Section \ref{sec:simplicial_hyperbolic}.  Soma \cite{Soma:Kleinian,Soma:boundedsurfaces} constructed $(v_3-\epsilon)$-efficient chains, for any $\epsilon>0$, but the boundaries of his chains are not well controlled and grow wilder as the efficiency constant gets closer to $v_3$.  

We will revisit Soma's construction of efficient chains for a geometrically infinite manifold with free fundamental group to extract a technical feature that we require  in the proof of Lemma \ref{Borel group chains}.  Soma's chains are constructed via `smearing,' as in Thurston's construction of efficient (measure) cycles representing the fundamental class of a closed hyperbolic manifold \cite[Chapter 6]{Thurston:notes}.

We will follow \cite[Section 3]{Soma:boundedsurfaces}; note  that although Soma requires that $\Gamma$ be isomorphic to a closed surface group, \cite[Lemma 3.2]{Soma:boundedsurfaces} only depends on the topological and geometrical structure of a geometrically infinite end of a topologically tame hyperbolic $3$-manifold.  See the proof of \cite[Theorem 1]{Soma:Kleinian} for further comments.  

 For the rest of this section, fix a discrete, faithful, and geometrically infinite representation $\rho_0: F_d\to \G$ with no parabolic elements, and let $\g_0$ denote $\im \rho_0$.  Let $\pi: \bH^3 \to \bH^3/\g_0 = M_{\rho_0}$ be the covering projection. It will be convenient to assume that $M_{\rho_0} = \bH^3/\g_0$ has bounded geometry\footnote{Generically, a simply degenerate end of a complete hyperbolic $3$-manifold homeomorphic to a handlebody without parabolics has a sequence of closed geodesics with length tending to $0$.  However, bounded geometry geometrically infinite handlebodies arise, for example, as geometric limits of closed hyperbolic $3$-manifolds with bounded Heegaard genus and gluing data represented by powers of a pseudo-Anosov homeomorphism going to infinity; see \cite{NSouto}.}.  By Corollary \ref{cor:handlebody}, $M_{\rho_0}$ is homeomorphic to a handlebody $\cH_d$ of genus $d$.  There is a standard compact core $K_{\rho_0}$ that is also a handlebody, and $E = M_{\rho_0}\setminus K_{\rho_0}$ is a neighborhood of the end of $M_{\rho_0}$ that is simply degenerate; its closure is homeomorphic to $S\times [0, \infty)$, where  $S$ is an oriented surface of genus $d$.

 Find a sequence $\{(f_n:S \to E, \mathcal T_n)\}$ of  simplicial hyperbolic surfaces, homotopic to the inclusion $S\hookrightarrow S\times \{0\}$, with $X_n=\im(f_n) $ exiting $E$ toward $[E]$.  The maps $f_n$ are not embeddings, but $E\setminus X_n$ consists of some compact components and a non-compact component $E_n\in [E]$. Pass to a subsequence such that $\{X_n\}$ are pairwise disjoint (see the discussion following the Bounded Diameter Lemma \ref{lem:bounded_diameter}) and $E_m\supset E_n$ if $m<n$.   Also let $L_{m,n}$ denote the closure of $E_m\setminus E_n$, which is a compact set.  
 
Let $\sigma: \Delta_3\to \bH^3$ be a non-degenerate straight simplex; abusing notation we ignore the parameterization of the map and identify $\sigma$ with its image in $\bH^3$ or even its ordered vertex set.  
In each straight simplex, there exists a unique inscribed ball, meeting each face in a point.
Let $\cen(\sigma)\in \bH^3$ denote the center of this inscribed ball.   
We describe a Borel measure $\smear\sigma$ on the space of locally straight $3$-simplices 
\[\mathscr S(M_{\rho_0}) = (\bH^3)^4/\g_0\] 
in $M_{\rho_0}$.   If $\sigma $ is a straight simplex in $\bH^3$, then $\pi(\sigma)$ denotes the corresponding element of $\mathscr S(M_{\rho_0})$.   

Consider the Haar measure $\mu$ on $\G$ normalized so that, for any $x\in \bH^3$ and any Borel measurable set $K\subset \bH^3$, 
\[\mu(\{g\in \G: g.x \in K \subset \bH^3 \})=\Vol(K),\] 
the hyperbolic volume of $K$.  Then $\mu$ descends to a measure (with the same name) on $\g_0\backslash\G$. 
Finally, we define $\smear \sigma$ as follows: 
for a Borel measurable set $K\subset \mathscr S(M_{\rho_0})$, we define
\[\smear\sigma(K) = \mu (\{\bar g \in \g_0\backslash\G: \pi(g.\sigma) \in K\}).\]
 
Also to $K\subset M_{\rho_0}$ we associate the set of simplices
\[\mathscr S_\sigma(K) = \{\pi(g.\sigma) \in \mathscr S(M_{\rho_0}): \pi(\cen(g.\sigma)) \in K\},\]
 so that \[\smear\sigma (\mathscr S_\sigma(K))= \Vol(K).\]  

Fix a geodesic plane $\bH^2 \subset \bH^3$ and let $r: \bH^3\to \bH^3$ be reflection in $\bH^2$, so that the simplex $r\circ \sigma$ is isometric to $\sigma$ with the opposite orientation.  Define \[z(\sigma) = \frac12(\smear \sigma -\smear r\circ \sigma). \]
By restricting the support of  $z(\sigma)$ to $\mathscr S_\sigma(L_{0,k})$, we obtain a family of Borel measures $z_k(\sigma)$ with total variation $\|z_k(\sigma)\| = \Vol(L_{0,k})$ and 
\begin{equation}\label{eqn:volume_computation}
\int_{\mathscr S(M_{\rho_0})}\vol_3(\sigma') ~dz_k(\sigma)(\sigma') = \vol_3(\sigma)\Vol(L_{0,k}).
\end{equation}
  
  In Thurston's smooth measure homology theory, each $z_k(\sigma)$ defines a \emph{measure chain}, i.e. a signed measure on the space of smooth singular simplices satisfying a local finiteness condition.  A straight singular chain $Z = \sum a_i\sigma_i\in \textnormal C_k^{\str}(M_{\rho_0})$ defines a smooth measure chain $\sum a_i\delta_{\sigma_i}$, where $\delta_{\tau}$ is the Dirac measure supported on $\tau$.  This correspondence induces a continuous chain map with respect to the topology induced by the $\ell_1$- and the total variation norms, respectively.  In fact, singular homology and smooth measure homology are isometrically isomorphic with respect to the $\ell_1$- and total variation semi-norms \cite[Theorem 1.2]{Loh:measure_homology}. 

\begin{lemma}[{\cite[Lemma 3.2]{Soma:boundedsurfaces}}]\label{manifold chains}
Given $d$, there exists a constant $K_d>0$ depending only the topology of the surface $S_d \cong \partial K_{\rho_0}$, such that for every $\epsilon>0$, there is a sequence $V_k\in \textnormal{C}_3^{\str} (M_{\rho_0};\bR)$ such that the following properties hold:
\begin{enumerate}[(a)]
\item $\displaystyle \frac{|\vol_3(V_k)|}{\|V_k\|_1}>v_3-\epsilon$ for all $k$, \label{a}
\item $\|\partial V_k\|_1 \le K_d$, for all $k$, and \label{b}
\item $\displaystyle |\vol_3(V_k)|\to \infty$. \label{c}
\end{enumerate}
\end{lemma}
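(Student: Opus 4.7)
The plan is to carry out Soma's smearing argument in the geometrically infinite end $[E]$, starting from a large regular geodesic tetrahedron and using the sequence of simplicial hyperbolic surfaces $X_k = \im(f_k)$ to control the boundary.

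First I would fix a regular geodesic tetrahedron $\sigma_0 \subset \bH^3$ of volume $\vol_3(\sigma_0) > v_3 - \epsilon/2$; such a $\sigma_0$ exists because regular ideal tetrahedra have volume exactly $v_3$, and it has some finite diameter $D = D(\epsilon)$. The smeared measure $z_k(\sigma_0)$ on $\mathscr S(M_{\rho_0})$ has total variation $\|z_k(\sigma_0)\| = \Vol(L_{0,k})$, and by the identity preceding the lemma,
\[
\int_{\mathscr S(M_{\rho_0})} \vol_3(\sigma')\, dz_k(\sigma_0)(\sigma') = \vol_3(\sigma_0)\, \Vol(L_{0,k}).
\]
Since $[E]$ is geometrically infinite, the volumes $\Vol(L_{0,k}) \to \infty$, giving clause \eqref{c}.

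The next step is to bound the boundary of $z_k(\sigma_0)$. The face operator sends $\smear\sigma_0$ restricted to $\mathscr S_{\sigma_0}(L_{0,k})$ to a signed measure on $2$-simplices whose support meets $\partial L_{0,k} = X_0 \cup X_k$ only near those surfaces: because $\sigma_0$ has diameter at most $D$, a translate $g.\sigma_0$ can contribute to $\partial z_k(\sigma_0)$ only if $\pi(\cen(g.\sigma_0))$ lies within a $D$-neighborhood of $X_0 \cup X_k$. By the Ahlfors lemma and Gauss--Bonnet, each $X_k$ is $1$-Lipschitz dominated by a hyperbolic surface of area $\le 2\pi|\chi(S)| = 2\pi(2d-2)$. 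Combined with the bounded geometry assumption on $M_{\rho_0}$, the $3$-dimensional volume of a $D$-tube about $X_0 \cup X_k$ is bounded by a constant $K_d'$ depending only on $d$ and $\epsilon$ (but not on $k$). Hence the total variation of $\partial z_k(\sigma_0)$ is at most $K_d'$.

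Finally I would convert the measure chain $z_k(\sigma_0)$ into an honest straight singular chain $V_k \in \textnormal{C}_3^{\str}(M_{\rho_0};\bR)$, invoking L\"oh's isometric isomorphism between smooth measure homology and singular homology with respect to the total variation and $\ell_1$ semi-norms. Concretely, one partitions $\g_0 \backslash \G$ into sufficiently small Borel sets, picks a representative straight simplex in each, and forms the Riemann sum weighted by the $z_k(\sigma_0)$-measure; a continuity argument gives, for any $\eta > 0$, a straight chain $V_k$ with $\|V_k\|_1$ within $\eta$ of $\Vol(L_{0,k})$, with $|\vol_3(V_k)|$ within $\eta$ of $\vol_3(\sigma_0)\Vol(L_{0,k})$, and with $\|\partial V_k\|_1$ within $\eta$ of the total variation of $\partial z_k(\sigma_0)$. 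Taking $\eta$ small enough (depending on $\epsilon$ but not on $k$, since $\Vol(L_{0,k})\to\infty$ gives room in the efficiency ratio) yields \eqref{a} with constant $v_3 - \epsilon$ and \eqref{b} with $K_d = K_d' + 1$.

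The main obstacle is the last step: simultaneously approximating the signed measure $z_k(\sigma_0)$ by a straight singular chain in a way that controls volume, $\ell_1$-norm, \emph{and} boundary $\ell_1$-norm. The tension is that crude discretizations can inflate $\|\partial V_k\|_1$ when neighboring simplices in the approximation fail to cancel along shared faces, so one must choose the partition of $\g_0\backslash\G$ adapted to the support of both $z_k(\sigma_0)$ and its boundary, refining it near $X_0 \cup X_k$. This is where L\"oh's isometric identification \cite{Loh:measure_homology} does the heaviest lifting.
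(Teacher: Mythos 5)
Your plan follows essentially the same route as the paper (Soma's smearing construction: the antisymmetrized measure $z_k(\sigma)$ supported over $L_{0,k}$, followed by a discretization to a straight chain), and for each fixed $\epsilon$ it works. But there is one quantitative gap relative to the statement: the lemma asserts a single constant $K_d$ depending only on $d$, quantified \emph{before} $\epsilon$, while your boundary estimate produces a constant depending on $\epsilon$. You bound the non-cancelling part of $\partial z_k(\sigma_0)$ by the volume of a $D(\epsilon)$-tube about $X_0\cup X_k$, where $D(\epsilon)$ is the diameter of your near-maximal regular simplex; since $D(\epsilon)\to\infty$ as $\epsilon\to 0$ (and tube volume grows exponentially in the radius), this only yields $K_{d,\epsilon}$. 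The point you are missing, and the reason the paper gets a uniform constant, is that the relevant displacement is not the diameter: a face of $g.\sigma_t$ cancels against the matching face of the reflected simplex $g.r_\tau\circ\sigma_t$, and the distance between $\cen(g.\sigma_t)$ and $\cen(g.r_\tau\circ\sigma_t)$ is twice the distance from the incenter to the face plane, which is uniformly bounded over all simplices (the paper's computation gives $<2$ for every $t$). Hence cancellation can only fail for simplices whose centers lie in $\cN_2(\partial L_{0,k})$, and $\Vol(\cN_2(X_0\cup X_k))\le V\cdot|\chi(S_d)|=K_d$ independently of $\epsilon$. (For the downstream applications in the paper only boundedness in $k$ for fixed $\epsilon$ is actually used, so your weaker bound would not derail the rest of the argument, but it does not prove the lemma as stated, with the constant uniform in $\epsilon$.)

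Two smaller points. First, you should make the cancellation mechanism explicit: it comes from the antisymmetrization $z(\sigma)=\frac12(\smear\sigma-\smear r\circ\sigma)$ together with invariance of the Haar measure, pairing each translate with its reflection through a face plane; without this there is no reason for faces of distinct translates of $\sigma_0$ to cancel at all, so the claim that $\partial z_k(\sigma_0)$ is supported near $X_0\cup X_k$ needs that pairing, not just a diameter bound. Second, the ``tension'' you flag in the discretization step is resolved in the paper without refining the partition near $X_0\cup X_k$: replacing each vertex of a smeared simplex by the distinguished point of the Voronoi cell containing it defines a norm non-increasing \emph{chain map} from measure chains to straight chains (faces are discretized consistently because a shared face is literally the same triple of points), so $\|\partial V_k\|_1\le\|\partial z_k(\sigma_t)\|$ comes for free, and choosing the cells of sufficiently small diameter keeps the volume of each discretized simplex within $\epsilon/2$ of $\vol_3(\sigma_t)$, which is all that property (a) requires.
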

\begin{proof}[Sketch of proof]  We consider the regular simplex $\sigma_t$ with edge lengths $t>\!\!>0$ and the Borel measures $\{z_k(\sigma_t)\}_{k=1}^\infty$ on $\mathscr S (M_{\rho_0})$.  We know that $|\vol_3(\sigma_t) - v_3|$ decreases exponentially in $t$ \cite[Theorem 6.4.1]{Thurston:notes}, but we only need the fact that for $t$ large enough, $\vol_3(\sigma_t) > v_3 - \epsilon/2$.   

For the appropriate definition of the boundary of a measure chain, we have $\|\partial z_k(\sigma_t)\| \le K_d$, where $K_d$ only depends on the topology of the surface $S_d\cong \partial K_{\rho_0}$.  
Indeed, consider a simplex $\pi(g.\sigma_t)$ with $\cen (\pi(g.\sigma_t)) \in L_{0,k}$ but far from $\partial L_{0, k}$.  
Let $\tau$ be a face of $\sigma_t$ and let $r_\tau$ be reflection through the plane containing $\tau$; $r_\tau$ is conjugate to $r$ in $\G$.
A computation shows that 
\begin{equation}\label{center close}
d_{\bH^3}(\cen(g.r_\tau\circ \sigma_t),\cen(g.\sigma_t))<2.
\end{equation}
 Thus $\cen (\pi(g.r_\tau\circ \sigma_t))$ is far from $\partial L_{0,k}$.   The faces corresponding to $\tau$ coming from $\pi(g.\sigma_t)$ and the reflected copy match and  cancel after applying $\partial$ to $z_k(\sigma_t)$.  If $\cen (\pi (g.\sigma_t))$ is close to $\partial L_{0,k}$,  then $\pi(g.r_\tau\circ \sigma_t)$ may not be in the support of $z_k(\sigma_t)$.  However, $\pi(g.r_\tau\circ \sigma_t)\in \mathscr S_{r\circ \sigma_t}(\cN_2(\partial L_{0,k}))$.  Thus $\supp(\partial z_k(\sigma_t))$ is contained in the set of locally straight triangles which are faces of tetrahedra with centers in $\cN_2(\partial L_{0,k})$; we conclude that $\|\partial z_k(\sigma_t)\|\le \Vol(\cN_2(\partial L_{0,k}))$ \cite[Proof of Lemma 3.2]{Soma:boundedsurfaces}. Using the fact that the induced metric on $X_k$ has curvature everywhere at most $-1$, we can find  a universal constant $V>0$ such that $\Vol(\cN_2(\partial L_{0,k}))\le V\cdot |\chi(S_d)| = K_d$ (see, for example,  \cite[Proof of Theorem 1]{Soma:Kleinian}).

To each measure chain $z_k(\sigma_t)$ we will associate a straight  $3$-chain $V^t_{k}\in \textnormal C_3^{\str}(M_{\rho_0};\bR)$.  For $t$ large enough, every $1/t$-ball in $M_{\rho_0}$ is embedded, because we have assumed that $M_{\rho_0}$ has bounded geometry.  Find a maximal $(1/t)$-separated collection of points $\{p_i^t\}_{i=1}^\infty\subset M_{\rho_0}$, and let $\{U_i^t\}$ be the Voronoi cells generated by $\{p_i^t\}$.  The boundary of the closure of each cell has zero $3$-dimensional Lebesgue measure, and each cell is connected,  simply connected, precompact, and has a distinguished point $p_i^t\in U_i^t$.  For each $k$, there is a finite subset of  $\{U_i^t\} $ that meet any simplex in the  support of $z_k(\sigma_t)$, because $L_{0,k}$ is compact, so that for each $\pi(g.\sigma_t)$ with center in $L_{0,k}$, we have $\pi(g.\sigma_t)\subset \cN_{t}(L_{0,k})$.   So, only finitely many terms in the sum \begin{equation}\label{eqn:correspondence}
V_k^t := \sum z_{k}(\sigma_t)(\{\pi(g.\sigma_t): g.\sigma_t^{(0)} \in  \tilde U_{i_0}^t\times ...\times \tilde U_{i_3}^t\} ) \cdot \pi(\sigma(\tilde p_{i_0}^t, ..., \tilde p_{i_3}^t)) \in  \textnormal{C}_3^{\str} (M_{\rho_0};\bR),
\end{equation}
are non-zero.  The sum (\ref{eqn:correspondence}) ranges over all sets of the form $\tilde U_{i_0}^t\times ...\times \tilde U_{i_3}^t$ where $\tilde U_{i_j}^t$ is a lift of $U_{i_j}^t$, and $\sigma(x,y,z,w)$ denotes the straight simplex in $\bH^3$ with ordered vertex set $(x,y,z,w)$.  
The correspondence given by (\ref{eqn:correspondence}) actually defines a norm non-increasing chain map between smooth measure chains and straight chains.  In particular, $\|\partial V_k^t\|_1\le \|\partial z_k(\sigma_t)\| \le K_d$.   

For large $t$, the cells $U_i^t$ have very small diameter, so we can ensure that
\begin{equation}\label{eqn:vol_estimate}
|\vol_3(\sigma(\tilde p_{i_0}^t, ..., \tilde p_{i_3}^t)) - \vol_3(\sigma_t))| <\epsilon/2 ~\text{ and }~  \vol_3(\sigma_t)\ge v_3-\epsilon/2.
\end{equation}

For $\mu$-almost every $g\in \G$, the vertices of $\pi(g.\sigma_t)\in \supp z_k(\sigma_t)$ all lie in the interior of cells $U_{i_0}^t, ..., U_{i_3}^t$.  The Voronoi cells $\{U_i^t\}$ form a measurable partition of $M_{\rho_0}$, from which it follows that
\begin{equation}\label{eqn:chain_size}
 \|V_k^t\|_1 = \|z_k(\sigma_t)\|.
 \end{equation}
 By construction,  $\|z_k(\sigma_t)\| = \Vol(L_{0,k})$, and $\Vol(L_{0,k})\to \infty$ as $k\to \infty$, 
 because $\cup_k L_{0, k} = E_0 \in [E]$.  
 
Finally, from (\ref{eqn:vol_estimate}), (\ref{eqn:chain_size}),  and (\ref{eqn:volume_computation}), we obtain
\[ \vol_3(V_k^t) \ge \|V_k^t\|_1(v_3-\epsilon),\] 
if $t$ is large enough; set $V_k = V_k^t$.  
\end{proof}

\begin{proposition}\label{group chains}
For every positive integer $d\ge 2$, there is a constant $K_d>0$ depending only on the topology of $S_d \cong \partial K_{\rho_0}$ with the following properties.   For every $\epsilon>0$ there exists a sequence of chains $Z_k\in \Cc3(F_d;\bR)$ such that for any $x\in \bH^3$:
\begin{enumerate}[(i)]
\item $\displaystyle \frac{|\rho_0^*\vol_3^x(Z_k)|}{\|Z_k\|_1}>v_3 - \epsilon$, for all $k$, and \label{(i)}
\item $\|\partial Z_k\|_1\le K_d$, for all $k$, and \label{(ii)}
\item $\|Z_k\|_1\to \infty$ and $\displaystyle \lim_{k\to \infty} \frac{\|\partial Z_k\|_1}{\|Z_k\|_1}=0$.\label{(iii)}
\end{enumerate}
\end{proposition}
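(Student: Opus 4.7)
The strategy is to push the manifold chains $V_k$ from Lemma \ref{manifold chains} through the chain maps of Section \ref{isometric chain maps} to obtain chains on $F_d$, while keeping careful track of how much volume and norm can be lost. Fix a basepoint $\bar x\in M_{\rho_0}$ and a lift $x\in \bH^3$, apply Lemma \ref{manifold chains} with efficiency $v_3-\epsilon/2$ to obtain straight chains $V_k\in \textnormal{C}_3^{\str}(M_{\rho_0};\bR)$ satisfying (a)--(c), and set
\[
Z_k \;=\; (\rho_0\inverse)_*\circ\iota_x\circ \str_{\bar x}(V_k)\;\in\;\textnormal{C}_3(F_d;\bR).
\]
Here $(\rho_0\inverse)_*$ makes sense because $\rho_0$ is discrete and faithful, hence an isomorphism onto $\Gamma_0=\im\rho_0$, and $\iota_x$ is the isometric identification of $\textnormal{C}_\bullet^{\str}(M_{\rho_0},\{\bar x\};\bR)$ with $\textnormal{C}_\bullet(\Gamma_0;\bR)$ from Section \ref{isometric chain maps}. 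By construction,
\[
\rho_0^*\vol_3^x(Z_k)\;=\;\vol_3\bigl(\str_{\bar x}(V_k)\bigr),
\]
since $\iota_x$ intertwines the singular volume cocycle with the group cocycle $\vol_3^x$.

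\textbf{Estimating the change in volume.} The main technical ingredient is the chain homotopy $H_{\bar x}$ with $\str_{\bar x}-\id=H^2_{\bar x}\partial+\partial H^3_{\bar x}$ and $\|H^2_{\bar x}\|\le 3$ (equation (\ref{eqn:hmtpy_norm})). Because $\vol_3$ is a cocycle on $\textnormal{C}_\bullet^{\str}(M_{\rho_0};\bR)$, the $\partial H^3_{\bar x}(V_k)$ term contributes nothing, so
\[
\bigl|\vol_3(\str_{\bar x}(V_k))-\vol_3(V_k)\bigr|\;=\;\bigl|\vol_3(H^2_{\bar x}\partial V_k)\bigr|\;\le\;3\,v_3\,\|\partial V_k\|_1\;\le\;3v_3K_d.
\]
An entirely analogous computation using the bounded cochain $H_{x,y}$ from equation (\ref{eqn:chain_homotopy}) shows that replacing $x$ by another basepoint changes $\rho_0^*\vol_3^x(Z_k)$ by at most $3v_3K_d$ as well; so, up to enlarging $K_d$, we may verify (i) for the fixed basepoint $x$ and the estimate will hold for any $x$.

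\textbf{Verification of (i)--(iii).} Property (ii) is immediate: $\str_{\bar x}$ is a chain map that is $\ell_1$-norm non-increasing and $\iota_x$, $(\rho_0\inverse)_*$ are isometric, so $\|\partial Z_k\|_1=\|\str_{\bar x}(\partial V_k)\|_1\le\|\partial V_k\|_1\le K_d$. For (i), write $N_k=\|V_k\|_1$ and $N_k'=\|Z_k\|_1=\|\str_{\bar x}(V_k)\|_1\le N_k$. From Lemma \ref{manifold chains}(a) we have $N_k\le |\vol_3(V_k)|/(v_3-\epsilon/2)$, hence
\[
\frac{|\rho_0^*\vol_3^x(Z_k)|}{\|Z_k\|_1}\;\ge\;\frac{|\vol_3(V_k)|-3v_3K_d}{N_k'}\;\ge\;(v_3-\epsilon/2)\left(1-\frac{3v_3K_d}{|\vol_3(V_k)|}\right),
\]
which exceeds $v_3-\epsilon$ for $k$ large by Lemma \ref{manifold chains}(c); discarding finitely many indices establishes (i). For (iii), the trivial inequality $|\vol_3(\str_{\bar x}(V_k))|\le v_3 N_k'$ combined with the volume bound above gives $N_k'\ge |\vol_3(V_k)|/v_3-3K_d\to\infty$, so $\|\partial Z_k\|_1/\|Z_k\|_1\le K_d/N_k'\to 0$.

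\textbf{Main obstacle.} There is no deep obstacle once Lemma \ref{manifold chains} is in hand; the one point requiring care is that the straightening-to-basepoint operation $\str_{\bar x}$ can dramatically compress or cancel simplices, so $N_k'$ could in principle be much smaller than $N_k$. The chain-homotopy estimate prevents this from inflating the ratio $|\vol_3|/\|\cdot\|_1$ beyond $v_3$ only in a bounded way, but since we only seek the lower bound $v_3-\epsilon$, this is harmless; conversely, the cocycle identity $\vol_3\circ\partial=0$ is what guarantees the volume loss is of order $\|\partial V_k\|_1=O(K_d)$ rather than of order $\|V_k\|_1$, and this is the key reason the argument succeeds.
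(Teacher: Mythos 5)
Your argument is correct and follows essentially the same route as the paper's proof: push Soma's chains $V_k$ through $\str_{\bar x}$, bound the volume loss by $|\vol_3(H^2_{\bar x}\partial V_k)|\le 3v_3K_d$, identify with group chains via the isometric map $\iota_x$ and $(\rho_0^*)\inverse$, and use $|\vol_3(V_k)|\to\infty$ to absorb the bounded errors. Your treatment is, if anything, slightly more explicit than the paper's about the uniformity in the basepoint $x$ (which the paper defers to the basepoint-change estimate recalled in Lemma \ref{to infinity}), and the quantitative verification of (i)--(iii) matches the paper's.
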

\begin{proof}
 
 Choose a point $\bar x \in M_{\rho_0}$, and construct the chain map $\str_{\bar x}: \textnormal C_\bullet(M_{\rho_0};\bR) \to \textnormal C^{\str}_\bullet (M_{\rho_0},\{\bar x\};\bR)$ from  Section \ref{isometric chain maps}; recall that the operator norm satisfies $\|\str_{\bar x}\|\le 1$ and we have a chain homotopy $H^\bullet_{\bar x}$ between $\str_{\bar x}$ and $id$, such that $\|H^k_{\bar x}\| = k+1$; see (\ref{eqn:hmtpy_norm}).
Since $\vol_3$ is a bounded cocycle on the straight chains $V_k\in \textnormal C^{\str}_3(M_{\rho_0};\bR)$ provided by Lemma \ref{manifold chains}, we have
 \[|\vol_3(\str_{\bar x} V_k - V_k )| = |\vol_3(H^2_{\bar x}\partial V_k)|\le \|\vol_3\|_\infty\|H^2_{\bar x}\partial V_k\|_1.\]
 Using property (\ref{b}) and the fact that $\|H^2_{\bar x}\|=3$, we get
 \[|\vol_3(\str_{\bar x} V_k - V_k )|\le 3v_3\cdot K_d.\]

Apply the map $(\rho_{0*})\inverse\circ\iota_x: \textnormal{C}^{\str}_\bullet (M_{\rho_0}, \{\bar x\};\bR)\to \textnormal{C}_\bullet(\g_0 ;\bR)\to \textnormal{C}_\bullet(F_d ;\bR)$ to obtain a sequence 
\[Z_k = (\rho_{0*})\inverse\circ \iota_x (\str_{\bar x} V_k)\]
 so that $\rho_0^*\vol^x_3(Z_k) = \vol_3^x\iota_x(\str_{\bar x}(V_k))$. 
Since $(\rho_{0*})\inverse\circ\iota_x$ is an isometric chain map and $\|\str_{\bar x}\| \le 1$, we have $\|Z_k\|_1\le \|V_k\|_1$ and $\|\partial Z_k\|_1\le \|\partial V_k\|_1 \le K_d$, establishing property (\ref{(ii)}).

 Since $|\vol_3(V_k)|\to \infty$ as $ k \to \infty$, it follows that $|\rho_0^*\vol^x_3(Z_k)|\to \infty$, as well.  From this we see that $\|V_k\|_1$ and $\|Z_k\|_1$ tend to $\infty$, from which we obtain property (\ref{(iii)}).  Using the triangle inequality, for $k$ large enough, we have 
 \[v_3-\epsilon<  \frac{|\rho_0^*\vol^x_3(Z_k)| }{\|Z_k\|_1},\]
which establishes property (\ref{(i)}) after reindexing.  
\end{proof}

Qualitatively, $\sigma_t$ has extremely long and thin spikes; the spikes of $\pi(\sigma_t)$ wander circuitously around $M_{\rho_0}$ and are generically recurrent to any compact set in the limit as $t\to \infty$.    
\begin{lemma}\label{to infinity}
In the statement of Proposition \ref{group chains}, we may take $x\in \partial \bH^3$ such that 
\begin{itemize}
\item (\ref{(i)}) holds for all $k$;
\item if for each $k$,  we write
\[Z_k =  \sum_{j = 1}^{M_k}a^{j,k} [w_0^{j,k}, ..., w_3^{j,k}] \in \textnormal C_3(F_d; \bR),\]
then, for each $j=1, ..., M_k$, the points $\rho_0(w_0^{j,k}).x, ..., \rho_0(w_3^{j,k}).x\in \partial \bH^3$ are pairwise distinct.
\end{itemize}
\end{lemma}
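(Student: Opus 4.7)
The plan is to combine a basepoint-migration argument (from an interior point of $\bH^3$ to an ideal point) with a countable-fixed-points argument to achieve the pairwise distinctness condition.

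First I would apply Proposition \ref{group chains} with error tolerance $\epsilon/2$ in place of $\epsilon$, producing chains $Z_k \in \Cc3(F_d;\bR)$ and an interior basepoint $x_0 \in \bH^3$ for which $|\rho_0^*\vol_3^{x_0}(Z_k)|/\|Z_k\|_1 > v_3 - \epsilon/2$, together with properties $(\ref{(ii)})$ and $(\ref{(iii)})$.  Equation $(\ref{eqn:chain_homotopy})$ in Section \ref{volume class} furnishes, for each $y \in \partial \bH^3$, a cochain $H_{x_0, y} \in \Cb 2(\G;\bR)$ with $\|H_{x_0, y}\|_\infty \le 3v_3$ satisfying $\vol_3^{x_0} - \vol_3^y = \delta H_{x_0, y}$.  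Pairing with $Z_k$ gives
\[
\bigl|\rho_0^* \vol_3^{x_0}(Z_k) - \rho_0^* \vol_3^y(Z_k)\bigr| = \bigl|\rho_0^* H_{x_0, y}(\partial Z_k)\bigr| \le 3 v_3 \|\partial Z_k\|_1 \le 3 v_3 K_d.
\]
Since $\|Z_k\|_1 \to \infty$ by $(\ref{(iii)})$, for $k$ large (with bound independent of $y$) the ratio $|\rho_0^* \vol_3^y(Z_k)|/\|Z_k\|_1$ exceeds $v_3 - \epsilon$.  Reindexing the sequence restores this as a condition at every $k$, while $(\ref{(ii)})$ and $(\ref{(iii)})$ are unaffected.

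Next I would verify that each simplex $[w_0^{j,k}, \ldots, w_3^{j,k}]$ appearing in $Z_k$ has its four group elements pairwise distinct in $F_d$.  This is forced by the construction of $V_k^t$ in Lemma \ref{manifold chains}: the Voronoi cells have diameter at most $2/t$, while vertices of the regular simplex $\sigma_t$ are separated by $t \gg 1$, so no two vertices of any $g.\sigma_t$ in the support of $z_k(\sigma_t)$ share a single Voronoi lift.  Consequently each term surviving in $(\ref{eqn:correspondence})$ has vertex tuple $(\tilde p_{i_0}^t, \ldots, \tilde p_{i_3}^t)$ consisting of four distinct points of $\bH^3$.  The labeling scheme of Section \ref{isometric chain maps} then assigns these to four distinct deck transformations, since $\rho_0$ is discrete and faithful and $F_d$ is torsion-free (so $\g_0$ acts freely on $\bH^3$), and the isomorphism $(\rho_0^*)^{-1} \circ \iota_{x_0}$ carries them to four distinct elements of $F_d$.

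Finally, for each $i \ne i'$ and each $(j, k)$, the element $g_{i, i'}^{j, k} := \rho_0(w_i^{j,k})^{-1} \rho_0(w_{i'}^{j,k}) \in \g_0$ is non-trivial, hence has at most two fixed points on $\partial \bH^3$.  Ranging over the countable index set of all such $(j, k, i, i')$, the union $F$ of their fixed-point sets is a countable, and hence proper, subset of $\partial \bH^3$.  Choosing any $x \in \partial \bH^3 \setminus F$ gives $\rho_0(w_i^{j,k}).x \ne \rho_0(w_{i'}^{j,k}).x$ whenever $i \ne i'$, which is the desired distinctness.  The one delicate point in the argument is the initial verification that $V_k$ already carries simplices with four distinct vertex lifts in $\bH^3$; once this is in hand, everything else reduces to a routine coboundary bound and a countability argument.
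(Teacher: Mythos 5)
Your first step (migrating the basepoint to $\partial\bH^3$ via the coboundary bound $3v_3\|\partial Z_k\|_1$ and reindexing) and your last step (choosing $x$ outside the countable set of fixed points) are exactly the paper's argument. But the middle step contains a genuine gap, and it is precisely where the paper has to work hardest. You argue that since the Voronoi cells are small and the vertices of $g.\sigma_t$ are $t$-separated, each simplex of $V_k^t$ has four distinct vertex points in $\bH^3$, and you then claim the labeling of Section \ref{isometric chain maps} assigns them four distinct group elements because $\g_0$ acts freely. That implication is false: the label of a vertex $v_i=\gamma_i y_i$ (with $y_i\in\mathcal D$) is the deck transformation $\gamma_i$ determined by which translate of the Dirichlet domain $\mathcal D$ contains the lifted vertex, so two \emph{distinct} points lying in the \emph{same} translate $\gamma\mathcal D$ receive the same label and the simplex degenerates in $\Cc_3(F_d;\bR)$ (after which no choice of $x$ can make the four boundary points distinct). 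Freeness of the action is irrelevant to this; and since $M_{\rho_0}$ has infinite volume, $\mathcal D$ has infinite diameter, so vertices separated by distance $\approx t$ can perfectly well share a translate of $\mathcal D$ a priori.

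Ruling this out is the actual content of the paper's proof: by convexity of $\mathcal D$, a bad simplex would have a geodesic edge contained in $\mathcal D$; letting $t\to\infty$ with centers confined to the compact set $L_{0,k}$ and applying Arzel\`a--Ascoli produces a bi-infinite geodesic in $\overline{\mathcal D}$ whose projection can return to no compact set, hence exits the single end $[E]$ in both directions; the uniformly bounded-diameter, separating simplicial hyperbolic surfaces $X_n$ exiting $E$ (here bounded geometry of $M_{\rho_0}$ is used) then force the two ends of this geodesic to be asymptotic in $\bH^3$, contradicting that a bi-infinite geodesic has distinct endpoints at infinity. Your proposal would need an argument of this kind (or some substitute) to justify non-degeneracy of the group chains; the Voronoi-diameter observation does not supply it.
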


\begin{proof}
From Section \ref{volume class},  for any point $y\in\bH^3 \cup\partial \bH^3$ and for any $Z\in \Cc 3(F_d;\bR)$, 
\[|\rho_0^*\vol_3^x(Z) - \rho_0^*\vol_3^y(Z)|\le \|\partial Z\|_13v_3. \]
Using the triangle inequality and the fact that $\partial Z_k$ is bounded for all $k$, we can choose $x\in \partial \bH^3$ so that (\ref{(i)}) holds, after reindexing. 

The group $F_d$ is countable, so there are countably many attracting and repelling fixed points of non-trivial elements of $\im \rho_0$.  Choose $x\in\partial\bH^3$ that is not one of these fixed points.  Then for each $k \ge 1$ and $j = 1, ... , M_k$, if $w_0^{j,k}, ..., w_3^{j,k}$ are pairwise distinct, then  $\rho_0(w_0^{j,k}).x, ..., \rho_0(w_3^{j,k}).x\in \partial \bH^3$ are pairwise distinct. Thus, we just need to show no simplex in $Z_k$ is degenerate.  For this, we revisit the construction of Soma's chains from Lemma \ref{manifold chains}.  Fix $x\in \bH^3$ and  let $\mathcal D\subset \bH^3$ be the Dirichlet fundamental domain for $\g_0$ centered at $x$.  By definition of $\iota_x: C^{\str}_\bullet (M_{\rho_0},\{\bar x\};\bR)\to \Cc \bullet (\g_0;\bR)$ from Section \ref{isometric chain maps}, it is enough to show, given $k$,  that if $t$ is large enough, then  every pair of vertices of any simplex $\pi(\sigma(\tilde p_{i_0}, ..., \tilde p_{i_3}))$ appearing in equation (\ref{eqn:vol_estimate}) defining $V_k^{t}$ lie in distinct translates of $\mathcal D$.  By invariance and without loss of generality, it is enough to show that if $t$ is large enough, then no pair of vertices are both in $\mathcal D$.  

Suppose not.  Recall that $\mathcal D$ is convex being the intersection of countably many half-spaces, so that if two vertices of a straight simplex $ \sigma$ lie in $\mathcal D$, then an edge of $\sigma$ lies in $\mathcal D$.  Thus, for a fixed positive integer $k$, and any $n$, there is a $t_n\ge n$ and simplex $\sigma_n$ appearing in the sum $V_k^{t_n}$ with a lift $\tilde{\sigma_n}$ whose geodesic edge $\gamma_n= \tilde \sigma^n([0,1])$ is contained in $\mathcal D$.  By construction, the center of $\sigma_n$ is contained in $L_{0,k}\subset E$, which is a compact set.  Pass to a subsequence so that $\cen(\sigma_n)\to y\in M_{\rho_0}$.  Then the lift $\tilde y$ of $y$ in $\mathcal D$ is distance at most $2$ from $\gamma_n$, since the midpoint of a geodesic edge of a straight regular simplex passes close to its center (see Equation \eqref{center close}).  Passing to a further subsequence,  the Arzel\`a--Ascoli Theorem guarantees that the geodesic maps $\gamma_n$ converge to a bi-infinite geodesic $ \gamma$.  Since $\gamma_n$ is contained in $\mathcal D$, $ \gamma\subset \overline{\mathcal D}$.

  The locally geodesic projection $\bar \gamma$  of $ \gamma$ to $M_{\rho_0}$ cannot return to any compact set in $M_{\rho_0}$ in forward or reverse time, by definition of $\mathcal D$.  Thus each end of $\bar\gamma$ must exit the only end $[E]$ of $M_{\rho_0}$.  In fact, the two ends of $\gamma$ are necessarily asymptotic in $\bH^3$.  Indeed, by Lemma \ref{lem:bounded_diameter} every surface in the  sequence $\{X_n\}$ of simplicial hyperbolic surfaces exiting $E$ has uniformly bounded diameter in $M_{\rho_0}$, because we assumed that $M_{\rho_0}$ has bounded geometry.  This means that $\pi\inverse(X_n) \cap \mathcal D$ has uniformly bounded diameter.  Since each $X_n$ is separating in $M_{\rho_0}$, $\pi\inverse (X_n) \cap \mathcal D$ separates $\mathcal D$.  From this we conclude, that there is an $N$ such that for all $n\ge N$, each end of $\gamma$ must meet $\pi\inverse (X_n)\cap \mathcal D$.  Thus each end of $\gamma$ meets the same collection of uniformly bounded diameter subsets of $\bH^3$ as they tend to infinity.  But bi-infinite geodesics have distinct endpoints at infinity.   We have reached a contradiction, and we conclude that for each $k$, there is a $t_k$ large enough such that no tetrahedron in the $Z_k$ that we construct from $V_k^{t_k}$ via Proposition \ref{group chains} is degenerate.  This completes the proof of the lemma.
  \end{proof}

\begin{remark}One of the anonymous referees suggested that we could avoid the proof of Lemma \ref{to infinity} by instead appealing to \cite[Lemma 2.5]{Kuessner}.
\end{remark}

We would like to use our approximation scheme from Section \ref{approximation section} to transfer this information to our dense representation.

\begin{proposition}\label{dense group chains}
Let $G$ be a discrete group and fix $x\in \bH^3 \cup\partial \bH^3$.  If $\rho: G\to \G$ is dense, then for every $\epsilon>0$, there is a sequence of chains $D_k\in \Cc3(G; \bR)$ satisfying
\begin{enumerate}[(I)]
\item $\displaystyle \frac{|\rho^*\vol_3^x(D_k)|}{\|D_k\|_1}>v_3-\epsilon$, for all $k$, and \label{(I)}
\item $\displaystyle \lim_{k\to\infty} \frac{\|\partial D_k\|_1}{\| D_k\|_1} = 0$. \label{(II)}
\end{enumerate}
\end{proposition}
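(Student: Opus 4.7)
The plan is to combine Proposition \ref{group chains} with the approximation scheme of Proposition \ref{approximation of chains}. First I would fix once and for all a discrete, faithful, geometrically infinite representation $\rho_0 : F_2 \to \PSL_2\bC$ without parabolic elements and with bounded geometry---such $\rho_0$ exist, realized by hyperbolic structures on a genus-$2$ handlebody with a simply degenerate end. Applying Proposition \ref{group chains} to $\rho_0$ produces the constant $K := K_2$ required, together with, for each $\epsilon > 0$, a sequence $Z_k \in \Cc 3(F_2;\bR)$ satisfying properties (\ref{(i)})--(\ref{(iii)}); in particular $\|\partial Z_k\|_1 \le K$, and combining (\ref{(i)}) with the trivial bound $|\rho_0^*\vol_3^y(Z_k)| \le v_3\|Z_k\|_1$ forces $\|Z_k\|_1 \to \infty$.

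Next I would transfer these chains to $G$ via the dense homomorphism $\rho$. First handle an interior point $y \in \bH^3$: the cocycle $\vol_3^y$ is continuous on $(\PSL_2\bC)^4$, so Proposition \ref{approximation of chains} applies with $\mathcal{G} = \PSL_2\bC$ and $B = \vol_3^y$, with tolerance fixed at $1$ say. It yields chains $D_k := Z_k(1) \in \Cc 3(G;\bR)$ satisfying
\[
|\rho_0^*\vol_3^y(Z_k) - \rho^*\vol_3^y(D_k)| < 1, \qquad \|D_k\|_1 \le \|Z_k\|_1, \qquad \|\partial D_k\|_1 \le K.
\]
Combining this with (\ref{(i)}) gives $|\rho^*\vol_3^y(D_k)| \ge (v_3 - \epsilon/2)\|Z_k\|_1 - 1$, so $\|D_k\|_1 \to \infty$ by the trivial bound, and then
\[
\frac{|\rho^*\vol_3^y(D_k)|}{\|D_k\|_1} \ge \frac{(v_3 - \epsilon/2)\|Z_k\|_1 - 1}{\|Z_k\|_1} \longrightarrow v_3 - \epsilon/2,
\]
so (\ref{(I)}) holds for $y$ after reindexing. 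Property (\ref{(II)}) is immediate from $\|\partial D_k\|_1 \le K$ and $\|D_k\|_1 \to \infty$.

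To handle a point $x \in \partial\bH^3$, rather than wrestle with the discontinuity of $\vol_3^x$ at collapsed configurations, I would invoke the cochain identity $\vol_3^x - \vol_3^y = \delta H_{x,y}$ of equation (\ref{eqn:chain_homotopy}) for an interior $y$, with $\|H_{x,y}\|_\infty \le 3v_3$. For the chains $D_k$ built above,
\[
|\rho^*\vol_3^x(D_k) - \rho^*\vol_3^y(D_k)| = |\rho^* H_{x,y}(\partial D_k)| \le 3v_3\,\|\partial D_k\|_1 \le 3v_3 K,
\]
so this discrepancy, divided by $\|D_k\|_1 \to \infty$, is asymptotically zero. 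Hence (\ref{(I)}) and (\ref{(II)}) at $x$ follow from the same properties at $y$ after a further reindexing. The heavy lifting has already been done in Propositions \ref{group chains} and \ref{approximation of chains}, so the rest is bookkeeping; the one subtle point---the failure of continuity of $\vol_3^x$ at ideal points---is handled cleanly by the chain-homotopy detour, taking advantage of the uniform bound on $\|\partial D_k\|_1$, rather than forcing the approximating simplices to remain non-degenerate as in Lemma \ref{to infinity}.
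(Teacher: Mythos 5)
Your proof is correct, and its core is the same as the paper's: run Proposition \ref{group chains} for a fixed geometrically infinite $\rho_0:F_2\to\G$ of bounded geometry, then feed the resulting chains $Z_k$ into Proposition \ref{approximation of chains} with tolerance $1$ and set $D_k=Z_k(1)$. Where you genuinely diverge is at the basepoint. The paper simply applies Proposition \ref{approximation of chains} to the cocycle $\vol_3^x$ for the given $x\in\bH^3\cup\partial\bH^3$, even though that proposition is stated for continuous cocycles and $\vol_3^x$ fails to be continuous on $\G^4$ when $x$ is ideal (the paper's own device for degenerate configurations, Lemma \ref{to infinity} together with Corollary \ref{Borel approximation}, applies only to a well-chosen ideal point, not to an arbitrary fixed one). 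Your detour --- prove the estimate at an interior point $y$ and then transfer to ideal $x$ via $\vol_3^x-\vol_3^y=\delta H_{x,y}$ from (\ref{eqn:chain_homotopy}), with error at most $3v_3\|\partial D_k\|_1\le 3v_3K$ swallowed by $\|D_k\|_1\to\infty$ --- handles the boundary case cleanly and uniformly; the paper does use this coboundary comparison, but only at the level of the $F_2$-chains in Lemma \ref{to infinity}, not for the $G$-chains $D_k$. You also derive $\|D_k\|_1\to\infty$ from the volume lower bound $|\rho^*\vol_3^y(D_k)|\le v_3\|D_k\|_1$, which is the right argument; the paper asserts it follows because $\|Z_k\|_1\to\infty$, which by itself only gives an upper bound on $\|D_k\|_1$. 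So: same skeleton, but your treatment of the ideal basepoint and of the norm-divergence step is the more careful one.
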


\begin{proof}
Apply Proposition \ref{group chains} to obtain $Z_k\in \Cc3(F_d; \bR)$ that satisfy the conclusions (\ref{(i)}), (\ref{(ii)}), and (\ref{(iii)}).    For each $k$, we can now apply Proposition \ref{approximation of chains} to obtain $Z_k(1)\in \Cc3(G; \bR)$ such that 
 \[|\rho^*\vol_3^x (Z_k(1)) - \rho_0^*\vol_3^x(Z_k) | <1,\]
  $\|Z_k(1)\|_1 \le \|Z_k\|$, and $\|\partial Z_k(1)\|_1 \le \|\partial Z_k\|_1\le K_d$.  Note that $\|Z_k(1)\|$ tends to $\infty$, because $\|Z_k\|$ does; see Remark \ref{rmk:to_infinity}.  By property (\ref{(i)}) and the above approximation, we have  
 \[\displaystyle \frac{|\rho^*\vol_3^x(Z_k(1))|}{\|Z_k(1)\|_1}\ge \frac{|\rho_0^*\vol_3^x(Z_k) |}{\|Z_k\|} - \frac{1}{\|Z_k(1)\|} >v_3-\epsilon,\] for large enough $k$, because
 $\|Z_k(1)\|_1$ tends to $\infty$.  Since $\|\partial Z_k(1)\|_1$ stays bounded,  $\displaystyle \lim_{k\to\infty} \frac{\|\partial Z_k(1)\|_1}{\| Z_k(1)\|_1} = 0$.  Take $D_k = Z_k(1)$. 
\end{proof}

\noindent We can now prove the first part of Theorem \ref{thm:intro_main}.  
 
 \begin{theorem}\label{thm:v3_norm}If $G$ is a discrete group and $\rho: G\to \G$ is dense, then $[\rho^*\vol_3]\not= 0 \in \Hb3(G;\bR)$ and $\|[\rho^*\vol_3]\|_\infty = v_3$. 
 \end{theorem}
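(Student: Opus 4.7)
The plan is to combine Proposition \ref{dense group chains} directly with the criterion in Lemma \ref{scheme} to obtain the lower bound, and then to use the universal upper bound $v_3$ on the volume of a geodesic tetrahedron for the matching upper bound.

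More precisely, fix a point $x \in \bH^3$ and an arbitrary $\epsilon > 0$. Apply Proposition \ref{dense group chains} to the dense representation $\rho: G \to \G$ to obtain a sequence of chains $D_k \in \Cc_3(G;\bR)$ satisfying
\[
\frac{|\rho^*\vol_3^x(D_k)|}{\|D_k\|_1} > v_3 - \epsilon \quad \text{for all } k, \qquad \lim_{k\to\infty}\frac{\|\partial D_k\|_1}{\|D_k\|_1} = 0.
\]
These are exactly the hypotheses (i) and (ii) of Lemma \ref{scheme} for the continuous bounded class $[\vol_3^x] \in \Hcb^3(\G;\bR)$ with constant $v_3 - \epsilon$. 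Applying that lemma yields $[\rho^*\vol_3] \neq 0$ in $\Hb^3(G;\bR)$ and $\|[\rho^*\vol_3]\|_\infty \geq v_3 - \epsilon$. Since $\epsilon > 0$ was arbitrary, we conclude $\|[\rho^*\vol_3]\|_\infty \geq v_3$.

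For the reverse inequality, recall from Section \ref{volume class} that $\|\vol_3^x\|_\infty = v_3$, because the signed hyperbolic volume of any geodesic tetrahedron in $\bH^3$ is bounded in absolute value by the volume of a regular ideal tetrahedron. Since pullback by a homomorphism is norm non-increasing at the cochain level, $\|\rho^*\vol_3^x\|_\infty \leq v_3$, and hence
\[
\|[\rho^*\vol_3]\|_\infty \leq \|\rho^*\vol_3^x\|_\infty \leq v_3.
\]
Combining the two inequalities gives $\|[\rho^*\vol_3]\|_\infty = v_3$, and the non-vanishing has already been established.

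The substantive obstacle in this argument is not in this final assembly but in the construction of the chains $D_k$; this was carried out in Proposition \ref{dense group chains}, whose proof is itself the culmination of the approximation scheme of Section \ref{approximation section} and Soma's construction of $(v_3-\epsilon)$-efficient straight chains on a geometrically infinite tame hyperbolic $3$-manifold (Lemma \ref{manifold chains} and Proposition \ref{group chains}). Given those inputs, the theorem reduces to a direct application of the general criterion Lemma \ref{scheme}, together with the trivial upper bound coming from $\|\vol_3^x\|_\infty = v_3$.
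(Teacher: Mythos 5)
Your proof is correct and follows essentially the same route as the paper: feed the chains from Proposition \ref{dense group chains} into Lemma \ref{scheme}, let $\epsilon\to 0$ for the lower bound, and use that pullback is semi-norm non-increasing together with $\|[\vol_3]\|_\infty = v_3$ for the upper bound. The paper's own proof is exactly this assembly, so there is nothing to add.
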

 
 \begin{proof} 
  The chains $D_k$ from Proposition \ref{dense group chains} satisfy the hypotheses of Lemma \ref{scheme}, so that $\|[\rho^*\vol_3]\|_\infty \ge v_3-\epsilon$.  But $\epsilon>0$ was arbitrary, so $\|[\rho^*\vol_3]\|_\infty \ge v_3$.  On the other hand, $\|[\rho^*\vol_3]\|_\infty \le \|[\vol_3]\|_\infty= v_3$.
 \end{proof}
 
\subsection{Separation of volume classes in semi-norm}\label{sec:separation}
 For a finitely generated Kleinian group, the Covering Theorem \ref{thm:covering} suggests that `{most}' infinite index subgroups are geometrically finite.  We know that the volume classes for geometrically finite classes are trivial by Soma's Theorem \ref{soma's theorem}.  The following technical lemma makes repeated use of the Tameness Theorem \ref{thm:tameness} and the Covering Theorem \ref{thm:covering}. 
 
 Let $X$ be a space and $p: \hat X \to X$ be a covering space.  A map $\hat f: \hat Y\to \hat X$ is an \emph{elevation} of a map $f: Y\to X$ if $\hat Y$ is connected and  $q: \hat Y \to Y$ is a minimal covering such that $p\circ \hat f = f\circ q$.  We sometimes identify an elevation with its image in $\hat X$.  
 
\begin{lemma}\label{lem:technical}
Suppose $\rho: F_2 \to \G$ is discrete and faithful.  There is a finite index subgroup $H\le F_2$ such that for any $\hat H\le H$, isomorphic to a free group of rank $2$, $\rho\circ i$ is geometrically finite with infinite co-volume, where $i: \hat H \to F_2$ denotes inclusion.  
\end{lemma}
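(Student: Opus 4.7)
The strategy combines the Covering Theorem~\ref{thm:covering}, the Tameness Theorem~\ref{thm:tameness}, an Euler characteristic bound on covering degrees, and residual finiteness of $F_2$. The infinite co-volume condition is automatic, since $\bH^3/\rho(F_2)$ has infinite volume (as $F_2$ is not a lattice in $\G$) and any cover does too. If $\rho$ itself is geometrically finite, I take $H = F_2$: Theorem~\ref{thm:covering}(a) then shows every finitely generated subgroup yields a geometrically finite cover, since there are no simply degenerate ends of $M_\rho$ to invoke case (b). So I assume $\rho$ is geometrically infinite and fix via Theorem~\ref{thm:tameness} a standard relative compact core $K_\rho \subset M_\rho^\circ$; let $R_1, \ldots, R_k$ be the simply degenerate exterior end surfaces, each with $\chi(R_i) \le -1$.

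Suppose $\hat H \le F_2$ has rank $2$ and $\rho \circ i$ is geometrically infinite. By Theorem~\ref{thm:covering}(b), a simply degenerate end of $M_{\rho \circ i}^\circ$ finitely covers some $E_{R_i}$, inducing a finite covering of surfaces $\hat R \to R_i$ of some degree $d$. By Corollary~\ref{cor:handlebody} applied to $\rho \circ i$, $M_{\rho \circ i}$ is a genus-$2$ handlebody with cusps, so $\chi(\partial K_{\hat H} \setminus P_{\hat H}^\circ) = -2$ and thus $\chi(\hat R) \ge -2$. Since $\chi(\hat R) = d\, \chi(R_i)$, this forces $d \le 2/|\chi(R_i)| \le 2$, so only finitely many covers $\hat R \to R_i$ can occur, and they determine finitely many finitely generated subgroups $\Gamma_i^{\hat R} \le F_2$, where $\Gamma_i^{\hat R}$ is the image of $\pi_1(\hat R)$ under the composition $\pi_1(\hat R) \hookrightarrow \pi_1(R_i) \to \pi_1(K_\rho) = F_2$. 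By homotopy invariance of the classifying map, this composition agrees (up to conjugation from a choice of basepoint) with $\pi_1(\hat R) \to \pi_1(M_{\rho \circ i}) = \hat H \hookrightarrow F_2$, so $\hat H$ contains a conjugate of some $\Gamma_i^{\hat R}$.

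I split into two cases. If $\Gamma_i^{\hat R}$ has finite index in $F_2$, then $\hat H$ (containing a conjugate of $\Gamma_i^{\hat R}$) is itself finite-index in $F_2$; but a rank-$2$ finite-index subgroup of $F_2$ equals $F_2$ by Nielsen--Schreier, contradicting $\hat H \le H$ whenever $H$ is a proper finite-index subgroup. Otherwise $\Gamma_i^{\hat R}$ has infinite index and is non-trivial; since $F_2$ is residually finite, there is a finite-index normal subgroup $N_i^{\hat R} \triangleleft F_2$ with $\Gamma_i^{\hat R} \not\le N_i^{\hat R}$, and by normality no conjugate of $\Gamma_i^{\hat R}$ lies in $N_i^{\hat R}$. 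Setting $H := \bigcap_{i, \hat R} N_i^{\hat R}$ (a finite intersection of proper finite-index normal subgroups) gives a proper finite-index $H \triangleleft F_2$ containing no conjugate of any infinite-index $\Gamma_i^{\hat R}$; any rank-$2$ subgroup $\hat H \le H$ differs from $F_2$ and so cannot contain a conjugate of any finite-index $\Gamma_i^{\hat R}$ either, and hence $\rho \circ i$ is geometrically finite.

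The main obstacle will be rigorously identifying the image of $\pi_1(\hat R)$ in $\hat H$ with a conjugate of $\Gamma_i^{\hat R}$ via the covering projection, particularly when the compact cores are not straightforward products and parabolics complicate the boundary decomposition; the Euler characteristic bookkeeping likewise requires care when the parabolic locus $P_\rho^\circ$ contributes non-trivially to the decomposition of $\partial K_\rho$.
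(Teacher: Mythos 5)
Your proposal is correct in its essentials, but it takes a genuinely different route from the paper. The paper argues by cases on the parabolic locus: if $P_\rho^\circ$ has three components or $M_\rho$ has no degenerate ends, the Covering Theorem \ref{thm:covering} finishes at once; if $P_\rho^\circ=\emptyset$, an Euler characteristic count forces any degenerate end of $M_{\rho\circ i}$ to cover the closed genus-$2$ end surface with degree one, so $\hat H=F_2$, and any proper finite-index $H$ works; and if $P_\rho^\circ$ has one or two components, residual finiteness is applied to the \emph{cusp core curves} to produce $H\triangleleft F_2$ so that in the intermediate cover $M_{\rho|_H}$ every elevation of a cusp curve has degree at least two, forcing every degenerate end surface of $M_{\rho|_H}$ to have $|\chi|\ge 3$; the Covering Theorem is then applied to $\hat H\le H$ \emph{over $M_{\rho|_H}$}, where a genus-$2$ end surface (with $|\chi|\le 2$) cannot cover such a surface. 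You instead apply the Covering Theorem once, directly over $M_\rho$, bound the covering degree by $2$ via $|\chi(\hat R)|\le 2$, enumerate the finitely many index-$\le 2$ subgroups of the $\pi_1(R_i)$ together with their images $\Gamma_i^{\hat R}\le F_2$, and apply residual finiteness to exclude conjugates of the infinite-index ones from $H$, with Nielsen--Schreier disposing of the finite-index ones. Your route avoids the paper's case analysis on cusps and the elevation-degree bookkeeping; the paper's route buys that it never needs to discuss the (non-injective) maps $\pi_1(R_i)\to F_2$ or their images at all, only the cusp curves. The commutativity point you flag as the main obstacle is just naturality of $\pi_1$ under the restriction of the covering projection to the product end neighborhoods supplied by Theorems \ref{thm:covering} and \ref{thm:tameness}, so it is not a genuine obstruction.

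Two small repairs are needed. First, residual finiteness gives you nothing if some $\Gamma_i^{\hat R}$ is trivial, and you assert non-triviality without proof; it does hold, since a simply degenerate end contains closed geodesics homotopic into its cross-section, so the image of $\pi_1(R_i)$ in $F_2$ is infinite, and an index-$\le 2$ subgroup of $\pi_1(R_i)$ surjects onto an index-$\le 2$ (hence infinite) subgroup of that image; alternatively, only the $\Gamma$'s that actually arise need excluding, and those are non-elementary. Second, if every $\Gamma_i^{\hat R}$ happens to have finite index, your intersection $\bigcap_{i,\hat R} N_i^{\hat R}$ is an empty intersection and $H=F_2$ is not proper, while your finite-index case explicitly requires $H$ proper; intersect with a fixed proper finite-index normal subgroup of $F_2$ to guarantee this. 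Neither issue affects the viability of the argument.
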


\begin{proof}
By Corollary \ref{cor:handlebody}, there is a standard compact core $K_\rho$ of  $M_\rho$ and a homeomorphism $f: \cH_2\to K_\rho$ inducing $\rho$ on fundamental groups.  The collection of core curves $\nu=\{\nu_1, ..., \nu_m\} \subset \partial \cH_2$ of $f\inverse (P_\rho^\circ )$ consists of at most $3$ homotopically essential distinct disjoint simple closed curves.   The inclusion $\partial\cH_2\to \cH_2$ induces a surjection $\iota: \pi_1(\partial \cH_2) \to \pi_1(\cH_2) = F_2$.  
We abuse notation and write $\langle \nu \rangle\le \pi_1(\partial \cH_2)$ to denote the conjugacy classes of the cyclic subgroups corresponding to the components of $\nu$.  The  $F_2$-conjugacy classes of $\iota(\langle \nu\rangle )$ are non-trivial and pairwise distinct, as they correspond to the distinct parabolic cusps of $M_\rho$.

Note that if $P_\rho^\circ$ has $3$ components, then $\nu$ is a pants decomposition of $\partial\cH_2$, hence $\rho$ is maximally cusped hence every relative end is geometrically finite.  Therefore, since $M_\rho$ has no geometrically infinite relative ends,  the Covering Theorem \ref{thm:covering} guarantees that $\rho$ restricts to a geometrically finite representation on {\it any} subgroup $\hat H\le F_2$.  

Now, we assume that $M_\rho$ does have at least one geometrically infinite relative end.  If $P_\rho^\circ$ is empty, then $M_\rho$ has exactly one geometrically infinite end $E = M_\rho\setminus K_\rho$.
We claim that for any proper subgroup $\hat H \le F_2$ of rank $2$, $\rho\circ i:\hat H \to \G$ is geometrically finite.  As above, $K_{\rho\circ i}$ is homeomorphic to a closed handlebody $\hat \cH_2$ of genus $2$ with boundary $\partial \hat \cH_2$.  If  $M_{\rho\circ i}$ does have a geometrically infinite relative end $[\hat E]$,  the Covering Theorem \ref{thm:covering} supplies us with a finite sheeted cover $\hat E\to E$ that defines a finite sheeted cover $\hat Y \to \partial \cH_2$, where $\hat Y\subset \partial \hat \cH_2$ is a homotopically essential subsurface. The only possibility is that $\hat Y = \partial \hat \cH_2$, and that $\hat Y \to \partial \cH_2$ is degree one, hence $\hat H$ maps onto $F_2$.  This is a contradiction to the assumption that $\hat H$ is a proper subgroup of $F_2$,  and so $\rho\circ i$ is geometrically finite.    
Thus we may take $H$ to be  any proper, finite index subgroup of $F_2$, so that any rank $2$ free subgroup $\hat H \le H$ has infinite index in $H$, by Euler characteristic considerations.   In this case, $\rho\circ i$ is geometrically finite, as desired.

The remaining case to consider is that $P_\rho^\circ$ has either $1$ or $2$ components.  For a based loop $\alpha\in \pi_1(\partial \cH_2)$ representing a component of $\nu$, write $\iota (\alpha) = w_\alpha\in F_2$.  Since free groups are residually finite \cite{Stallings:graphs}, there is a finite index normal subgroup $H\triangleleft F_2$ such that $\{w_\alpha : \alpha \subset \nu\} \subset F_2\setminus H$.  Since $H$ is normal,  no conjugate of $w_\alpha$ lies in $H$.  In terms of our notation from before, $\iota(\langle \nu\rangle )\subset F_2\setminus H$.  The covering $M_{\rho|_H}\to M_\rho$ has finite degree $[F_2: H]<\infty$ and $M_{\rho|_H}$ is homeomorphic to the interior of a closed handlebody $\cH_k$ of genus $k= [F_2:H]+1$.  Moreover, $M_{\rho|_H}\to M_\rho$ extends to a covering $\cH_k\to \cH_2$ of closed handlebodies restricting to a finite cover $\partial \cH_k \to \partial \cH_2$; the geometrically infinite relative ends of $M_{\rho|_{H}}$ are elevations of the geometrically infinite relative ends of $M_\rho$.  

By construction of $H$, no conjugate of $\alpha$ lifts to $\partial \cH_k$, so any elevation $\tilde \nu \subset p\inverse (\nu)$ covers a component of $\nu$ with degree at least $2$.  Thus if $Y$ is any component of $\partial \cH_2 \setminus \nu$, and $\tilde Y\subset p\inverse(Y)$ is an elevation, then $p$ restricts to a cover $\tilde Y \to Y$ and further restricts to a cover $\partial \tilde Y \to \partial Y$ of degree at least $2$ on every component of $\partial \tilde Y$.  
If $Y$ is a one holed torus or three holed sphere, and $\tilde Y \to Y$ is a degree $2$ covering, then $\tilde Y$ can only be a $4$-holed sphere or a torus with $2$ holes.  In either case, there is a boundary component of $\tilde Y$ which maps homeomorphically onto a boundary component of $Y$.  Thus our construction requires that the degree of $\tilde Y \to Y$ must be at least $3$; equivalently, $|\chi(\tilde Y)|\ge 3$.
If $Y$ is a two holed torus or four holed sphere, then $|\chi (\tilde Y)| \ge  2|\chi(Y)| \ge 4$.  In particular, $|\chi (\tilde Y)| \ge 3$, for any component $\tilde Y \subset \partial \cH_k \setminus p\inverse(\nu)$.  

Finally, if $\hat H\le H$ is free of rank $2$, then $\hat H$ is a proper subgroup of $H$.  We apply Corollary \ref{cor:handlebody} once more to see that  $M_{\rho\circ i}$ is the interior of a closed genus $2$ handlebody $\hat \cH_2$.  If $M_{\rho\circ i}$ has a geometrically infinite relative end $\hat E$, then $\hat E \cong \hat Y \times [0,\infty)$, where $\hat Y$ is an essential subsurface of a genus $2$ surface.  In particular, either $\hat Y$ is closed or $|\chi(\hat Y)|\le 2$.  By Euler characteristic considerations, $\hat Y$ cannot cover any component $\tilde Y\subset \partial \cH_k\setminus p\inverse (\nu)$.  Therefore, by the Covering Theorem \ref{thm:covering}, $M_{\rho\circ i}$ has no geometrically infinite ends.  This completes the proof of the lemma.
\end{proof}

We would like to  showcase the utility of Lemma \ref{lem:technical} by finding rank $2$ free subgroups of $F_2$ on which one representation is dense and another is geometrically finite or geometrically elementary.  We are thankful to one of the anonymous referees for pointing out a useful observation. 

\begin{observation}\label{obs:finite_index_dense}
Let $\mathcal G$ be a connected topological group, $G\le \mathcal G$ be a dense subgroup, and suppose $H\le G$ has finite index.  Then $H$ is dense in $\mathcal G$. 
\end{observation}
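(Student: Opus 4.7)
The plan is to argue that the closure $\overline{H}$ is a finite index closed subgroup of $\mathcal{G}$, and then use connectedness of $\mathcal{G}$ to force $\overline{H} = \mathcal{G}$.

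First I would use the finite index hypothesis on $H \le G$ to write $G = \bigsqcup_{i=1}^{n} g_i H$ for some coset representatives $g_1, \dots, g_n \in G$. Since left translation by any element of $\mathcal{G}$ is a self-homeomorphism and closure commutes with finite unions, taking closures gives
\[
\mathcal{G} \;=\; \overline{G} \;=\; \overline{\bigcup_{i=1}^{n} g_i H} \;=\; \bigcup_{i=1}^{n} g_i\, \overline{H}.
\]
Thus $\mathcal{G}$ is covered by finitely many left cosets of the closed subgroup $\overline{H}$, so $[\mathcal{G}:\overline{H}] \le n < \infty$.

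The key step, and the one I expect to be essentially the whole content of the argument, is the following standard observation: a finite index closed subgroup of a connected topological group must equal the whole group. Indeed, choosing distinct coset representatives and using that each coset $g_i \overline{H}$ is closed, the complement of any one coset is the finite union of the remaining cosets, hence closed. Therefore every coset is clopen; in particular $\overline{H}$ itself is clopen in $\mathcal{G}$. Since $\overline{H}$ contains the identity and $\mathcal{G}$ is connected, the only clopen set containing $e$ is $\mathcal{G}$ itself, so $\overline{H} = \mathcal{G}$, which is exactly the claim that $H$ is dense. No estimate or continuity argument beyond the homeomorphism property of translation is needed, so there is no genuine obstacle here; the only thing to be careful about is invoking connectedness at the last step, which is why the connectedness hypothesis on $\mathcal{G}$ is indispensable.
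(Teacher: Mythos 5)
Your proof is correct and follows essentially the same route as the paper: pass to the closure $\overline{H}$, note it has finite index in $\mathcal{G}$, observe it is clopen since its complement is a finite union of closed cosets, and invoke connectedness. The only difference is that you spell out the finite-index step for $\overline{H}$ via translated closures, which the paper leaves implicit.
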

\begin{proof}
Since $\overline H$ has finite index in $\overline G = \mathcal G$, the left cosets $\{g\overline H: g\in \mathcal G\}$ form a finite partition of $\mathcal G$, so that $\overline H$ is open, being the complement of a finite union of closed sets.  Then $\overline H$ is an open, closed, and non-empty subset of a connected space $\mathcal G$; thus $\overline H = \mathcal G$.
\end{proof}   
\begin{proposition}\label{prop:dense_not}
Let $\rho: F_2\to \G$ be dense and faithful and suppose $\rho_0: F_2\to \G$ is discrete.  Then there is a free rank $2$ subgroup $i:\hat H \to F_2$ such that $\rho\circ i$ is a dense representation and  $\rho_0\circ i$ is either geometrically elementary or discrete, faithful, and geometrically finite.  In particular, 
\begin{equation*}\label{eqn:dense_zero}
\|[(\rho\circ i)^*\vol_3]\|_\infty = v_3 \text{ and } [(\rho_0\circ i)^*\vol_3]= 0 \in \Hb 3(\hat H;\bR).
\end{equation*}
\end{proposition}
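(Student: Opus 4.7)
The plan is to split the argument into three cases based on $\rho_0$, and in each case produce a rank $2$ free subgroup $\hat H\le F_2$ such that $\rho|_{\hat H}$ is dense while $\rho_0|_{\hat H}$ is either geometrically elementary or discrete, faithful, and geometrically finite. Case (a): $\rho_0$ is already geometrically elementary, so any subgroup of $F_2$ inherits this property and I only need to find $\hat H$ on which $\rho$ is dense. Case (b): $\rho_0$ is discrete and faithful with non-elementary image; I would invoke Lemma \ref{lem:technical} to find a finite index $H\le F_2$ such that every rank $2$ free subgroup $\hat H\le H$ has $\rho_0|_{\hat H}$ discrete, faithful, and geometrically finite, and by Observation \ref{obs:finite_index_dense} the image $\rho(H)$ remains dense. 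Case (c): $\rho_0$ is discrete but non-faithful with non-elementary image $\Gamma=\rho_0(F_2)$. Here $\ker\rho_0\triangleleft F_2$ is non-trivial, so faithfulness of $\rho$ and simplicity of the connected Lie group $\G$ force the closure of the non-trivial normal subgroup $\rho(\ker\rho_0)\triangleleft\rho(F_2)$ to equal $\G$; hence $\rho(\ker\rho_0)$ is dense, and so is every coset of $\ker\rho_0$ under $\rho$. I also fix a Schottky pair $(\gamma_1,\gamma_2)$ in $\Gamma$, obtained by ping-pong on sufficiently high powers of two loxodromic elements of $\Gamma$ with disjoint axes; the prescribed set from which $g_i$ may be chosen is the dense coset $\gamma_i^0\cdot\ker\rho_0$, where $\rho_0(\gamma_i^0)=\gamma_i$.

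In all three cases the task reduces to finding $g_1,g_2$ in prescribed subsets of $F_2$ whose $\rho$-images are dense in $\G$ and such that $\langle\rho(g_1),\rho(g_2)\rangle$ is dense in $\G$. I would first select $g_1$ so that $\rho(g_1)$ is loxodromic with non-real trace; this is an open non-empty condition in $\G$, so the dense prescribed set contains such an element. Recalling from the proof of Lemma \ref{lem:geom_elem} that the maximal proper closed Lie subgroups of $\G$ are compact, Borel (stabilizers of ideal points), or conjugates of $\PSL_2\bR$ (stabilizers of geodesic planes), the only such subgroups containing $\rho(g_1)$ are the two Borel subgroups $P_+$ and $P_-$ fixing the endpoints of its axis: $\rho(g_1)$ is not elliptic since it is loxodromic, and it preserves no geodesic plane since its trace is not real. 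The complement $\G\setminus(P_+\cup P_-)$ is open and non-empty, so by density I pick $g_2$ in the prescribed set with $\rho(g_2)\notin P_+\cup P_-$.

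Now set $\hat H=\langle g_1,g_2\rangle$. If $\rho(g_1)$ and $\rho(g_2)$ commuted, $\rho(g_2)$ would preserve the axis of $\rho(g_1)$ and fix its endpoints, contradicting the choice of $g_2$. So $g_1$ and $g_2$ do not commute in the free group $F_2$, forcing $\hat H$ to be free of rank exactly $2$. Any maximal proper closed subgroup of $\G$ containing $\rho(\hat H)$ must contain $\rho(g_1)$, hence equals $P_+$ or $P_-$, which $\rho(g_2)$ avoids; therefore $\rho|_{\hat H}$ is dense. In case (c), $\rho_0|_{\hat H}$ is a surjection from the rank $2$ free group $\hat H$ onto the rank $2$ free Schottky group $\langle\gamma_1,\gamma_2\rangle$, and since $F_2$ is Hopfian this surjection is an isomorphism, so $\rho_0|_{\hat H}$ is discrete, faithful, and its image is geometrically finite.

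The statements about volume classes follow at once: Theorem \ref{thm:v3_norm} gives $\|[(\rho\circ i)^*\vol_3]\|_\infty=v_3$ since $\rho\circ i$ is dense, and $[(\rho_0\circ i)^*\vol_3]=0$ by Lemma \ref{lem:geom_elem} in case (a) or by Theorem \ref{soma's theorem} in cases (b) and (c). I expect the most delicate point to be case (c), in which one must simultaneously arrange the algebraic constraint $\rho_0(g_i)=\gamma_i$ and the analytic constraint that $\rho(g_1),\rho(g_2)$ generate a dense subgroup of $\G$; the key input is the simplicity of $\G$, which forces $\rho(\ker\rho_0)$ to be dense and thereby enlarges the prescribed cosets to dense subsets on which the density argument operates.
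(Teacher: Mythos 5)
The central step of your proposal --- ``pick $\rho(g_1)$ loxodromic with non-real trace and $\rho(g_2)\notin P_+\cup P_-$, then $\langle\rho(g_1),\rho(g_2)\rangle$ is dense'' --- does not work, and this breaks all three of your cases. Your argument tacitly assumes that any proper closed subgroup of $\G$ containing such a pair sits inside a maximal proper closed subgroup from the short list (compact, Borel, plane stabilizer), but that list only constrains subgroups of positive dimension; nothing in your conditions rules out that $\langle\rho(g_1),\rho(g_2)\rangle$ is \emph{discrete}. Indeed two loxodromics in ping-pong position satisfy everything you require and generate a Schottky group, which is proper and closed but contained in no point, line, plane, or ideal-point stabilizer. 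This is exactly the difficulty the paper's proof confronts: in Case 2 it chooses the second generator (using density, inside the prescribed set) so that two specific words move a point by less than the Margulis constant, invokes the Margulis Lemma \ref{lem:Margulis} to force the group to be indiscrete (it contains a free group, so it is not virtually nilpotent), and only \emph{then} applies the structure of positive-dimensional closed subgroups via Lemma \ref{lem:geom_elem}; in the faithful case it sidesteps the issue entirely by quoting \cite{BG:dense} to produce a dense rank-$2$ free subgroup of $H$. Your proposal contains no analogue of this indiscreteness step, so the conclusion ``therefore $\rho|_{\hat H}$ is dense'' is unjustified (and false for some choices meeting your stated conditions).

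There are also two smaller errors in the classification you quote, which would persist even after granting positive dimension. The stabilizer of a geodesic line is a maximal proper closed subgroup that your list omits: an element $\rho(g_2)$ swapping the two endpoints of $\axis(\rho(g_1))$ lies outside $P_+\cup P_-$ yet generates with $\rho(g_1)$ a subgroup of the line stabilizer. And the stabilizer of a geodesic plane is a conjugate of $\operatorname{PGL}_2\bR$, not of $\PSL_2\bR$; its second component contains loxodromics with purely imaginary trace (rotation part $\pi$), so ``non-real trace'' does not prevent $\rho(g_1)$ from preserving a plane. Both are fixable by adding open conditions, but the discreteness gap above is the essential one. That said, your Case (c) framework is attractive and salvageable: density of $\rho(\ker\rho_0)$ from simplicity of $\G$, choosing $g_i$ in the dense cosets $\gamma_i^0\ker\rho_0$ so that $\rho_0(\hat H)$ is a prescribed Schottky group, and faithfulness via the Hopf property are all sound; if you replace your density criterion by the paper's Margulis-type choice (pick $\rho(g_2)$ in its dense coset very close to a suitable target adapted to $\axis(\rho(g_1))$, as in Lemma \ref{threshold} or Case 2 of the paper's proof), that case goes through and even yields a slightly different endgame from the paper, whose Case 2 instead arranges $\rho_0(\hat H)$ to be cyclic, hence geometrically elementary.
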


\begin{proof} We break the proof into two cases:

\para{Case 1}
Assume that $\rho_0$ is faithful.  By Lemma \ref{lem:technical}, there is a finite index subgroup $H< F_2$ such that for any subgroup $\hat H\le H$, which is free of rank $2$, $\rho_0\circ i $ is geometrically finite, where  $i: \hat H\to H\to F_2$ is inclusion.  By Observation \ref{obs:finite_index_dense}, $\overline{\rho(H)} = \G$.  According to \cite[Theorem 1.1]{BG:dense}, there is a free subgroup $\hat H\le H$ of rank $2$ such that $\overline{\rho(\hat H)} =\G$.  Applying Lemma \ref{lem:technical}, $\rho_0\circ i$ is geometrically finite.  Thus $[(\rho_0\circ i)^*\vol_3]=0$, by Soma's Theorem \ref{soma's theorem}, while  Theorem \ref{thm:v3_norm} gives $\|[(\rho\circ i)^*\vol_3]\|_\infty = v_3$. 

\para{Case 2} 
Assume that $\rho_0$ is not faithful, and let $K = \ker \rho_0$.  Then $K\triangleleft F_2$ is a free group of rank at least $2$ (perhaps infinite rank).  Since $\rho$ is faithful, $\rho(K)$ is not virtually abelian. It follows, e.g. from \cite[Lemma 2.3]{MatTan}, that  either there is an $a\in K$ such that $\rho(a)$ is loxodromic or $\rho(K)$ fixes a point in $\bH^3$.    

For sake of contradiction, suppose $\rho(K)$ fixes a point in $\bH^3$.  Since $K$ is normal in $F_2$, the set of points fixed by $\rho(K)$ is a non-empty $\rho(F_2)$-invariant subset of $\bH^3$.  But $\rho$ is dense, so the closure is all of $\bH^3$.  This  can only happen if $\rho(K) = \{1\}$.  However, $\rho$ is faithful and $K$ is non-trivial, which is a contradiction.  Thus $\rho(K)$ does not have a global fixed point.

A loxodromic element $\gamma\in \G$ stabilizes a geodesic  $\axis(\gamma)\subset\bH^3$, and acts as a ``screw motion'' of complex length $\tau(\gamma) = \ell + i \theta$ along $\axis(\gamma)$.  That is, $\ell\in\bR$ is the translation length of $\gamma$, which is realized along $\axis(\gamma)$, and $\theta\in\bR/2\pi\bZ$ represents rotation around $\axis (\gamma)$. 

We have an $a\in K$ such that $\rho(a)$ is loxodromic.  Let $y\in \bH^3$ be a point on $\axis (\rho(a))\subset \bH^3$, and let $\epsilon>0$ be much smaller than the $3$-dimensional  Margulis constant $\mu_3$. 
 Using density of $\rho$, we can find $b\in F_2$ such that
\begin{itemize}
\item   $\rho(b)$ is loxodromic,
\item  $|\tau(\rho(a)) - \tau(\rho(b))|<\epsilon$,  
\item  $\tau(\rho(b))$ is not real,
\item the attracting and repelling fixed points of $\rho(a)$ and $\rho(b)$ at infinity are pairwise distinct, and 
\item  $\axis(\rho(b))$ is within distance $\epsilon/10$ of $\axis (\rho(a))$ in a  $2|\tau(\rho(a))|$-neighborhood of $y$.
\end{itemize}
Let $\hat H = \langle a, b\rangle$; our goal is to show that $\rho(\hat H)$ is dense in $\G$.  Taking $z = \rho(a).y$, the above conditions imply that $\rho(ab\inverse).z$ and $\rho(ab^{-2}a).z$ are both closer than $\mu_3$ to $z$.  Let $H' =\langle ab\inverse, ab^{-2} a\rangle$; by the Margulis Lemma \ref{lem:Margulis}, $\rho(H')\le \G$ is either indiscrete or virtually abelian.  Since $\rho$ is faithful and $H'$ is free, $\rho(H')$ is free, hence not virtually abelian.   So $\overline{\rho(H')}$ is a closed Lie subgroup of $\G$  with positive dimension that is not virtually abelian, and since $\hat H \ge H'$,  $\overline{\rho(\hat H)}$ has the same property.    Moreover, the endpoints of the axes of $\rho(a)$ and $\rho(b)$ at infinity are pairwise distinct, so $\overline {\rho(\hat H)}$ does not stabilize an ideal point, and $\overline{\rho(\hat H )}$ is not conjugate into $\PSL_2\bR$, since the complex translation length of $\rho(b)$ is not real.  By Lemma \ref{lem:geom_elem}, $\rho|_{\hat H}$ is dense.

By construction, $\rho_0(\hat H)$ is cyclic, hence geometrically elementary.  We conclude as in Case 1,  supplementing Soma's Theorem \ref{soma's theorem} with the proof of Lemma \ref{lem:geom_elem} in case $\rho_0(\hat H)$ has torsion, say.  

\end{proof}
\begin{remark}
The argument given in Case 2 also proves the proposition in Case 1; we will use the argument from Case 1 in the next section.
Proposition \ref{prop:dense_not} is an improvement of a result in a previous draft of this manuscript, and strengthens our main theorems from that version.
\end{remark}

Our first main theorem now follows quite easily.  We only need to observe that restrictions to subgroups induce semi-norm non-increasing maps in bounded cohomology.  The first step in the proof of the following is a reduction; we use \cite[Theorem 1.1]{BG:dense} to find a free subgroup of rank $2$ of $H$, densely embedding into $\G$ via $\rho$. 

 \begin{theorem}\label{thm:main}
 Suppose $\rho: G \to \G$ is a dense representation of a discrete group $G$.  If $\rho_0: G\to \G$ is any other representation and there is a subgroup $H\le G$ such that $\overline{\rho(H)} = \G$, but $\rho_0$ is geometrically elementary or discrete restricted to $H$, then \[\|[\rho^*\vol_3]-[\rho_0^*\vol_3]\|_\infty\ge v_3. \] 
 \end{theorem}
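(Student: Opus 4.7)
The plan is to exploit the fact that restriction to a subgroup is semi-norm non-increasing in bounded cohomology, and to reduce the problem to the setting of Proposition \ref{prop:dense_not}. The first step is to invoke \cite[Theorem 1.1]{BG:dense}, applied to the dense subgroup $\rho(H)\le \G$, in order to produce a rank $2$ free subgroup $H'\le H$ such that $\rho|_{H'}: F_2\to \G$ is a dense faithful embedding. Restriction from $G$ to $H'$ then gives
\[\|[\rho^*\vol_3]-[\rho_0^*\vol_3]\|_\infty \ge \|[(\rho|_{H'})^*\vol_3]-[(\rho_0|_{H'})^*\vol_3]\|_\infty,\]
so it will suffice to bound the right-hand side below by $v_3$.

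Next I would observe that the hypothesis on $\rho_0|_H$ is inherited by $\rho_0|_{H'}$: both discreteness and the property of being geometrically elementary pass to subgroups. In the subcase where $\rho_0|_{H'}$ is geometrically elementary, the argument inside the proof of Lemma \ref{lem:geom_elem} (choose a basepoint inside the invariant point, line, or plane so that every tetrahedron in the relevant cocycle has zero volume) applies regardless of whether $\rho_0|_{H'}$ happens to be discrete, and gives $[(\rho_0|_{H'})^*\vol_3]=0$. The bound $v_3$ then follows at once from Theorem \ref{thm:v3_norm} applied to $\rho|_{H'}$. In the remaining subcase where $\rho_0|_{H'}$ is discrete, we are precisely in the setting of Proposition \ref{prop:dense_not}: $\rho|_{H'}$ is dense and faithful, $\rho_0|_{H'}$ is discrete, and the domain is $F_2$. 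Applying that proposition produces a further rank $2$ free subgroup $\hat H \le H'$ on which $\rho$ remains dense while $\rho_0$ is either geometrically elementary or discrete, faithful, and geometrically finite. Either alternative, combined with Lemma \ref{lem:geom_elem} or Soma's Theorem \ref{soma's theorem}, delivers $[(\rho_0|_{\hat H})^*\vol_3]=0$, and Theorem \ref{thm:v3_norm} gives $\|[(\rho|_{\hat H})^*\vol_3]\|_\infty=v_3$. A final semi-norm non-increasing restriction from $H'$ to $\hat H$ then yields the required inequality.

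The main technical difficulty has in fact already been addressed in Proposition \ref{prop:dense_not}, which leans on Lemma \ref{lem:technical} (itself a consequence of the Tameness Theorem \ref{thm:tameness} and the Covering Theorem \ref{thm:covering}) and on the Margulis Lemma \ref{lem:Margulis} to locate the auxiliary subgroup $\hat H$, together with the construction of efficient chains underpinning Theorem \ref{thm:v3_norm}. Given those inputs, the proof of Theorem \ref{thm:main} is essentially a matter of repeated application of norm non-increasing restriction together with careful case analysis; the only point requiring attention is ensuring that the Breuillard--Gelander subgroup $H'$ is simultaneously rank $2$ free and that $\rho|_{H'}$ is faithful, which is exactly what is needed to invoke Proposition \ref{prop:dense_not} as a black box.
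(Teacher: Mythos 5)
Your proposal is correct and follows essentially the same route as the paper: reduce via \cite[Theorem 1.1]{BG:dense} to a rank $2$ free subgroup on which $\rho$ is dense and faithful, split into the geometrically elementary case (handled by Lemma \ref{lem:geom_elem}) and the discrete case (handled by Proposition \ref{prop:dense_not} together with Soma's Theorem \ref{soma's theorem}), and conclude by Theorem \ref{thm:v3_norm} and the fact that restriction is semi-norm non-increasing. The only cosmetic difference is that you restrict in two stages ($G\to H'\to\hat H$) where the paper restricts once, which changes nothing.
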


\begin{proof}
By \cite[Theorem 1.1]{BG:dense}, there is a free subgroup $F_2\le H$ such that $\rho$ is dense and faithful on $F_2$.  Moreover, $\rho_0$ is still geometrically elementary or discrete on $F_2\le H$, because these properties pass to subgroups, as we have seen. If $\rho_0$ is geometrically elementary, the volume class of $\rho_0$ vanishes when restricted to this $F_2$, by the proof of Lemma \ref{lem:geom_elem}, so we set $\hat H = F_2$ and let $i: \hat H \to F_2\to G$ denote inclusion.  Otherwise, we apply Proposition \ref{prop:dense_not} to obtain a free rank $2$ subgroup $i: \hat H\to  F_2\to G$ where $\|[(\rho\circ i)^*\vol_3]\|_\infty = v_3$ and $[(\rho_0\circ i)^*\vol_3]= 0$.  

Since $i^*: \Hb\bullet(G;\bR)\to \Hb\bullet (\hat H;\bR)$ is norm non-increasing, we have
\begin{align*}
v_3&= \|[(\rho\circ i)^*\vol_3]\|_\infty \\ &= \|[(\rho\circ i)^*\vol_3] - [(\rho_0\circ i)^*\vol_3]\|_\infty \\ &   =\|[i^*([\rho^*\vol_3] - [\rho_0^*\vol_3])\|_\infty \\  &\le \|[\rho^*\vol_3] - [\rho_0^*\vol_3]\|_\infty.  \label{norm nonincreasing 2}
\end{align*}
This is what we wanted to show.
\end{proof}

If we are more careful, we can obtain the following generalization without too much extra work.
\begin{theorem}\label{ell 1}
Suppose $\{\rho_i: G\to \G\}_{i = 1}^N$ is a collection of dense representations of a discrete group $G$ such that for every $i = 1,2, ... , N$ there is a subgroup $\iota_i:H_i\hookrightarrow G$ such that $\overline{\rho_i(H_i)} = \G$, and $\rho_i\circ \iota_j: H_j \to \G$ is geometrically elementary or discrete for $i\not=j$.  Then for any $a_1, ..., a_N\in \bR$, we have 
\[\left\| \sum_{i = 1}^N a_i[\rho_i^*\vol_3] \right \|_\infty \ge \max\{|a_i|\} \cdot v_3. \]
Consequently, $\{[\rho_i^*\vol_3]\}\subset \Hbr3(G;\bR)$ is a linearly independent discrete set.
\end{theorem}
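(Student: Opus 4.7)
The strategy mirrors Theorem \ref{thm:main}: isolate a single summand by restriction to a well-chosen subgroup. Fix an index $k$ realizing $|a_k| = \max_i |a_i|$. Since pullback by inclusion is norm non-increasing on bounded cohomology,
\[\left\|\sum_{i=1}^N a_i [\rho_i^*\vol_3]\right\|_\infty \ \ge\ \left\|\iota_k^*\textstyle\sum_{i=1}^N a_i [\rho_i^*\vol_3]\right\|_\infty,\]
and on $H_k$ the representation $\rho_k$ is dense while each $\rho_i|_{H_k}$, $i\neq k$, is geometrically elementary or discrete by hypothesis.

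The main step is to descend further to a rank-$2$ free subgroup $j:F\hookrightarrow H_k$ for which $\rho_k\circ j$ is dense and $[(\rho_i\circ j)^*\vol_3] = 0$ for every $i\not=k$ \emph{simultaneously}. Once such an $F$ is in hand we will conclude
\[\left\|\textstyle\sum_{i=1}^N a_i [(\rho_i\circ j)^*\vol_3]\right\|_\infty \ =\ |a_k|\cdot\|[(\rho_k\circ j)^*\vol_3]\|_\infty \ =\ |a_k|\cdot v_3\]
by Theorem \ref{thm:v3_norm}, which combined with the previous display gives the norm estimate. The linear-independence statement is then immediate: a relation $\sum a_i[\rho_i^*\vol_3]\in \mathcal Z$ would force $\max_i|a_i|\cdot v_3\le 0$.

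To construct $F$, the plan is to iterate, with some care, the technology already assembled in Proposition \ref{prop:dense_not}. By \cite[Theorem 1.1]{BG:dense}, first find $F_0\le H_k$ free of rank $2$ with $\rho_k|_{F_0}$ dense and faithful. For each $i\neq k$ for which $\rho_i|_{F_0}$ is discrete, Lemma \ref{lem:technical} (when $\rho_i|_{F_0}$ is additionally faithful) or its adaptation via the discrete faithful quotient $F_0\twoheadrightarrow \rho_i(F_0)$ (when that restriction fails to be faithful) produces a \emph{finite-index} subgroup $H_i^*\le F_0$ with the property that every rank-$2$ free subgroup of $H_i^*$ has geometrically finite image under $\rho_i$. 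Intersecting finitely many such subgroups yields $H^{**}\le F_0$ of finite index; by Observation \ref{obs:finite_index_dense}, $\rho_k|_{H^{**}}$ remains dense, and a further application of \cite[Theorem 1.1]{BG:dense} supplies the sought-after rank-$2$ free subgroup $F\le H^{**}$ with $\rho_k|_F$ dense. For each $i\not=k$: if $\rho_i|_{F_0}$ was geometrically elementary then so is $\rho_i|_F$, and its volume class vanishes by Lemma \ref{lem:geom_elem}; otherwise the containment $F\le H_i^*$ forces $\rho_i|_F$ to be geometrically finite, with vanishing volume class by Soma's Theorem \ref{soma's theorem}.

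The principal technical obstacle is precisely preserving the vanishing of volume classes across the several restrictions involved. Geometric elementarity transfers automatically to subgroups, but geometric finiteness generally does not; the Covering Theorem \ref{thm:covering} permits geometrically infinite relative ends to appear in covers of geometrically finite manifolds, so naive stepwise shrinking can re-animate volume classes of indices processed at earlier stages. The role of Lemma \ref{lem:technical} in the plan is to dodge this issue: within the finite-index subgroup $H_i^*$, the geometrically finite conclusion for $\rho_i$ holds uniformly over \emph{every} rank-$2$ free subgroup, so intersecting finitely many such $H_i^*$ controls all indices at once and sidesteps the iterative preservation problem entirely.
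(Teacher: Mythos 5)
Your overall architecture is the same as the paper's (isolate the maximal coefficient, pass to a rank-$2$ free subgroup on which $\rho_k$ is dense while all other classes vanish, then use that restriction is norm non-increasing together with Theorem \ref{thm:v3_norm}), and your faithful branch is sound. The genuine gap is your treatment of the indices $i\neq k$ for which $\rho_i|_{F_0}$ is discrete but \emph{not} faithful. You invoke an unspecified ``adaptation'' of Lemma \ref{lem:technical} via the quotient $F_0\twoheadrightarrow\rho_i(F_0)$, but that lemma and its proof are specific to discrete \emph{faithful} representations of $F_2$: they rest on Corollary \ref{cor:handlebody} (the quotient manifold is a genus-$2$ handlebody, hence of infinite volume) and on the Covering Theorem in the infinite-volume setting. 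If $\rho_i|_{F_0}$ is not faithful, then by Hopficity $\rho_i(F_0)$ is not free of rank $2$; it is an arbitrary $2$-generated Kleinian group, possibly with torsion and possibly a lattice (the figure-eight knot group is $2$-generated). In the lattice case nothing guarantees a finite-index $H_i^*\le F_0$ all of whose rank-$2$ free subgroups have geometrically finite image: already in $F_0$ itself, preimages of two free generators of the (geometrically infinite) fiber subgroup generate a rank-$2$ free subgroup with degenerate image, and the Covering Theorem gives no control over subgroups of finite-covolume groups. Moreover, even ``geometrically finite image'' would not give the vanishing you need: Soma's Theorem \ref{soma's theorem} requires infinite covolume (and faithfulness), so a rank-$2$ free subgroup of $H_i^*$ surjecting onto a $2$-generated finite-covolume image leaves $[(\rho_i\circ j)^*\vol_3]=0$ unproven. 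This is exactly the case the paper treats by a different device: in Case 2 of Proposition \ref{prop:dense_not} the nontrivial kernel $K=\ker\rho_i$ is used, together with density of $\rho_k$, to choose the two generators of the new subgroup either with one generator in $K$ or conjugate by an element of $K$, forcing $\rho_i$ of that subgroup to be \emph{cyclic}, hence geometrically elementary with vanishing class by Lemma \ref{lem:geom_elem}. You need to import that kernel trick (one index at a time) for your non-faithful indices; as written, your plan does not cover them.

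Two smaller remarks. First, the worry motivating your intersection scheme is unfounded: in the paper's iteration each processed index is left either geometrically elementary or discrete, faithful, geometrically finite with free rank-$2$ (hence infinite-covolume) image, and in the infinite-volume setting the Covering Theorem \ref{thm:covering} forces degenerate ends upstairs to cover degenerate ends downstairs, so geometric finiteness does persist to all further finitely generated subgroups; no ``re-animation'' of earlier indices can occur, which is precisely why the paper's nested chain $H_1^{(N)}\le\dots\le H_1^{(1)}=H_1$ works. Second, once the non-faithful case is handled by the kernel trick, your non-iterative variant (intersecting the finite-index subgroups from Lemma \ref{lem:technical} and using Observation \ref{obs:finite_index_dense} plus one further application of \cite[Theorem 1.1]{BG:dense}) is a legitimate alternative to the paper's induction, though it buys little beyond avoiding the (harmless) iteration.
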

\begin{proof}
For convenience, we assume that $|a_1| = \max\{|a_i|\}$.
First of all, by \cite{BG:dense} we may assume that $H_1\cong F_2$ and that $\rho_1|_{H_1}$ is faithful.  We inductively define a nested family of rank $2$ subgroups 
\[H_1^{(N)}\le ... \le H_1^{(2)}\le H_1^{(1)} = H_1\]
 by successively applying Proposition \ref{prop:dense_not}.  The result is that $\rho_1|_{H_1^{(i)}}$ is dense and faithful for all $i$.  Additionally, for each $j\ge 2$, denote inclusion by $\iota_1^{(j)}: H_1^{(j)}\to G$ so that for $i\in\{2, ..., N\}$, 
  \[[(\rho_i\circ \iota_1^{(j)})^*\vol_3] = 0 \in \Hb 3(H_1^{(j)};\bR).\]

As in Theorem \ref{thm:main}, the operator $\iota_1^{(N)*}: \Hb 3(H_1^{(N)};\bR)\to \Hb 3(G; \bR)$ is norm non-increasing.  Applying Theorem \ref{thm:v3_norm}, we see

\begin{align*}
|a_1|\cdot v_3&= \left\|a_1[(\rho_1\circ \iota_1^{(N)} )^*\vol_3]\right\|_\infty \\ 
&= \left\|[a_1(\rho_1\circ \iota_1^{(N)})^*\vol_3] + \sum_{i = 2}^N a_i[(\rho_i\circ \iota_1^{(N)})^*\vol_3]\right\|_\infty \\ 
&   =\left\|\iota_1^{(N)*}\left(\sum_{i = 1}^N a_i[\rho_i^*\vol_3] \right)\right\|_\infty \\ 
& \le \left\| \sum_{i = 1}^N a_i[\rho_i^*\vol_3] \right \|_\infty,
\end{align*}
as promised.  
\end{proof}

\section{Borel classes of dense representations}\label{Borel}
In this section, we show that pullbacks of the Borel class under dense representations have maximal semi-norm.  
Since the structure of discrete subgroups $\g\le \PSL_n\bC$ is not well understood, we cannot give a simple criterion for the differences of pullbacks of Borel classes to be separated in semi-norm for arbitrary representations.  Recall that there is a unique conjugacy class of irreducible representations $\iota_n: \G\to \PSL_n\bC$.  We will  work with a dense representation $\rho: G\to \PSL_n\bC$ and another representation $\rho_0': G\to \PSL_n\bC$ that factors through $\G$ via $\iota_n$.    We can then use tools developed in previous sections and $3$-dimensional hyperbolic geometry to give some criteria for which pullbacks to $G$ of the bounded Borel class are separated in semi-norm.  

It would be interesting to investigate whether maximality of the semi-norm of the pullback of the Borel class under discrete and faithful representations $\rho: \pi_1(S) \to \PSL_n\bC$ can serve as a profitable definition for `geometrical infiniteness', where $S$ is an orientable surface of finite type.  In fact, this paper grew out of an attempt to initiate such an investigation.  

\begin{theorem}\label{Borel:main}
Let $G$ be a discrete group and $\rho: G \to \PSL_n\bC$ be dense.  Then \[\|\rho^*\beta_n \|_\infty = v_3 \frac{n(n^2-1)}{6}.\]
\end{theorem}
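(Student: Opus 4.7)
The upper bound $\|\rho^*\beta_n\|_\infty \le v_3\frac{n(n^2-1)}{6}$ is immediate from Theorem \ref{Borel class theorem} and the fact that pullback of bounded cohomology classes is norm non-increasing, so the content of the theorem is the reverse inequality. The plan is to adapt the proof of Theorem \ref{thm:v3_norm} by substituting the Borel cocycle $B_n^{\hat\iota_n(x)}$ for $\vol_3^x$ via the Veronese comparison of Proposition \ref{prop:borel_values}.

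First I will fix $\epsilon > 0$ and choose any discrete, faithful, geometrically infinite representation $\rho_0 : F_2 \to \G$ without parabolic elements; for concreteness, take a singly degenerate Kleinian representation whose quotient is a handlebody of genus $2$. Proposition \ref{group chains} together with Lemma \ref{to infinity} then produces a constant $K = K_2$, a point $x \in \partial \bH^3$, and chains $Z_k \in \Cc 3(F_2; \bR)$ with $\|\partial Z_k\|_1 \le K$, $\|Z_k\|_1 \to \infty$,
\[\frac{|\rho_0^*\vol_3^x(Z_k)|}{\|Z_k\|_1} > v_3 - \frac{6\epsilon}{n(n^2-1)},\]
and such that the four vertices $\rho_0(w_0^{j,k}).x,\dots,\rho_0(w_3^{j,k}).x$ of every simplex appearing in $Z_k$ are pairwise distinct points of $\partial \bH^3$. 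Proposition \ref{prop:borel_values}, applied coefficient-wise to $Z_k$, then yields
\[(\iota_n \circ \rho_0)^* B_n^{\hat\iota_n(x)}(Z_k) \;=\; \frac{n(n^2-1)}{6}\,\rho_0^*\vol_3^x(Z_k),\]
so the Borel cocycle is evaluated on $Z_k$ with efficiency at least $v_3 \frac{n(n^2-1)}{6} - \epsilon$.

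Next I will transport this estimate to $G$ using density of $\rho$. The pairwise distinctness of the vertex tuples is precisely what enables an appeal to Corollary \ref{Borel approximation}; applied with approximation parameter $1$, for each $k$ it produces a chain $D_k \in \Cc 3(G;\bR)$ with $\|D_k\|_1 \le \|Z_k\|_1$, $\|\partial D_k\|_1 \le K$, and
\[\bigl|(\iota_n \circ \rho_0)^* B_n^{\hat\iota_n(x)}(Z_k) \;-\; \rho^* B_n^{\hat\iota_n(x)}(D_k)\bigr| < 1.\]
Because $\|Z_k\|_1 \to \infty$, the chains $D_k$ will satisfy the hypotheses of Lemma \ref{scheme} (taking $\mathcal G = \PSL_n\bC$ with the discrete topology, so that the non-continuous cocycle $B_n^{\hat\iota_n(x)}$ is admissible) with efficiency parameter any value less than $v_3 \frac{n(n^2-1)}{6} - \epsilon$. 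Since $\rho^* B_n^{\hat\iota_n(x)}$ represents $\rho^*\beta_n \in \Hb 3(G;\bR)$, Lemma \ref{scheme} will then yield $\|\rho^*\beta_n\|_\infty \ge v_3 \frac{n(n^2-1)}{6} - \epsilon$, and letting $\epsilon \downarrow 0$ closes the argument.

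The one subtlety to watch, and the reason a direct translation of the proof of Theorem \ref{thm:v3_norm} does not work verbatim, is that $B_n^F$ is not continuous on $(\PSL_n\bC)^4$, so Proposition \ref{approximation of chains} does not apply on the nose. Enforcing the non-degeneracy clause of Lemma \ref{to infinity} at the level of the source chains $Z_k$ is exactly what pushes the relevant $4$-tuples of flags into the generic locus where continuity of $B_n^{\hat\iota_n(x)}$ holds (Lemma \ref{lem:borel_cont}), and it is this observation that unlocks the weaker approximation statement of Corollary \ref{Borel approximation}; overcoming this continuity failure is the only real obstacle, and once it is handled the rest of the argument is a mechanical application of the now-established machinery.
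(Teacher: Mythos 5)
Your proposal is correct and follows essentially the same route as the paper: the paper's proof of Theorem \ref{Borel:main} likewise feeds the chains of Proposition \ref{group chains}, adjusted by Lemma \ref{to infinity}, through Proposition \ref{prop:borel_values} and Corollary \ref{Borel approximation} (packaged as Lemma \ref{Borel group chains}) and then applies Lemma \ref{scheme}, with the upper bound from Theorem \ref{Borel class theorem}. Your closing remark about the failure of continuity of $B_n^F$ being handled exactly by the non-degeneracy clause of Lemma \ref{to infinity} is precisely the point the paper makes as well.
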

  The following will be immediate from the proof of Theorem \ref{Borel:main} and Theorem \ref{thm:main}.  
\begin{corollary}\label{cor:Borel_main}
 Let $G$ be a discrete group and  $\rho: G \to \PSL_n\bC$ be dense.  Suppose also that  $\rho_0 : G \to \PSL_2\bC$ is such that there exists a subgroup $H\le G$ such that $\overline{\rho(H)} = \PSL_n\bC$ and $\rho_0$ is geometrically elementary or discrete and faithful restricted to $H$.  Then \[\|\rho^*\beta_n - (\iota_k\circ \rho_0)^*\beta_k \|_\infty \ge v_3 \frac{n(n^2-1)}{6},\] for all $ k\ge 2$.
\end{corollary}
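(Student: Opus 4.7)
The plan is to mimic the proof of Theorem \ref{thm:main}, adapted to the Borel setting, by reducing to a rank-$2$ free subgroup of $H$ on which $\rho$ is still dense and the pullback of $\beta_k$ through $\rho_0$ vanishes. First I would apply \cite[Theorem 1.1]{BG:dense} to find a rank-$2$ free subgroup $F_2 \le H$ such that $\rho|_{F_2}$ is dense in $\PSL_n\bC$. Since being geometrically elementary and being discrete and faithful both pass to subgroups, $\rho_0|_{F_2}$ remains geometrically elementary or discrete and faithful.

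Next I would produce a rank-$2$ subgroup $\iota: \hat H \hookrightarrow F_2 \hookrightarrow G$ on which $\rho$ remains dense while $[(\rho_0\circ\iota)^*\vol_3] = 0$. If $\rho_0|_{F_2}$ is geometrically elementary, I set $\hat H = F_2$ and apply Lemma \ref{lem:geom_elem}. Otherwise $\rho_0|_{F_2}$ is discrete and faithful, and I apply the argument of Proposition \ref{prop:dense_not} in Case 1 (which requires faithfulness of $\rho_0$ and is the reason faithfulness is hypothesized in the corollary) to obtain $\hat H \le F_2$ on which $\rho$ is dense and $\rho_0$ is discrete, faithful, geometrically finite of infinite co-volume; then Theorem \ref{soma's theorem} gives $[(\rho_0\circ\iota)^*\vol_3] = 0$. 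In either case, Theorem \ref{Borel class theorem} applied to $\iota_k\circ (\rho_0\circ \iota) : \hat H \to \PSL_k\bC$ gives
\[
(\iota_k \circ \rho_0 \circ \iota)^*\beta_k \;=\; \frac{k(k^2-1)}{6}\,[(\rho_0\circ\iota)^*\vol_3] \;=\; 0 \;\in\; \Hb 3(\hat H;\bR).
\]

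Finally, Theorem \ref{Borel:main} applied to the dense representation $\rho\circ\iota : \hat H \to \PSL_n\bC$ yields $\|(\rho\circ\iota)^*\beta_n\|_\infty = v_3\,\tfrac{n(n^2-1)}{6}$. Because the restriction map $\iota^* : \Hb 3(G;\bR)\to \Hb 3(\hat H;\bR)$ is semi-norm non-increasing, I conclude
\[
\|\rho^*\beta_n - (\iota_k\circ\rho_0)^*\beta_k\|_\infty \;\ge\; \|\iota^*\bigl(\rho^*\beta_n - (\iota_k\circ\rho_0)^*\beta_k\bigr)\|_\infty \;=\; \|(\rho\circ\iota)^*\beta_n\|_\infty \;=\; v_3\,\tfrac{n(n^2-1)}{6}.
\]
The main (minor) obstacle is verifying that the subgroup machinery of Lemma \ref{lem:technical} and Proposition \ref{prop:dense_not}, which was developed for the target $\PSL_2\bC$, applies to produce $\hat H$ when the ambient dense representation lands in $\PSL_n\bC$; this is why the hypothesis insists that $\rho_0$ factor through $\PSL_2\bC$, so that the $3$-dimensional Kleinian classification theory governing $\rho_0$ is available. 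The requirement that $\rho_0|_H$ be \emph{faithful} (in addition to discrete) also enters here, since the higher-rank setting offers no analog of Case 2 of Proposition \ref{prop:dense_not}, where the hyperbolic $3$-geometric constructions specific to $\G$ were used to drop faithfulness in Theorem \ref{thm:main}.
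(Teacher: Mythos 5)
Your proposal is correct and follows essentially the same route as the paper's own proof: reduce via \cite[Theorem 1.1]{BG:dense} and the Case 1 argument of Proposition \ref{prop:dense_not} to a rank-$2$ subgroup $\hat H$ on which $\rho$ is dense and $[(\rho_0\circ\iota)^*\vol_3]=0$, then combine Theorem \ref{Borel class theorem}, Theorem \ref{Borel:main}, and the fact that $\iota^*$ is semi-norm non-increasing. Your explicit treatment of the geometrically elementary case via Lemma \ref{lem:geom_elem} and your remark on why faithfulness of $\rho_0|_H$ is needed match how the paper handles these points in Theorem \ref{thm:main} and the surrounding discussion.
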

\begin{proof}
We apply \cite[Theorem 1.1]{BG:dense} to obtain $ F_2 \le   H$ such that $\rho|_{F_2}$ is faithful and dense. The argument in Proposition \ref{prop:dense_not} (Case $1$) provides us with a rank $2$ subgroup $i: \hat H\to F_2$ such that $\rho\circ i$ is dense and faithful, while $[(\rho_0\circ i)^*\vol_3] = 0$.    Then $(\iota_k\circ\rho_0\circ i )^*\beta_k =0$ for all $k\ge2$, by Theorem \ref{Borel class theorem}, but $\|(\rho\circ i)^*\beta_n\|_\infty= v_3\frac{n(n^2-1)}6$ by Theorem \ref{Borel:main}.  Since $i^*$ is semi-norm non-increasing on bounded cohomology, the corollary follows (as in the proof of Theorem \ref{thm:main}). 
\end{proof}
Note that in the previous section, we obtain stronger results; in Corollary \ref{cor:Borel_main} we have made the additional assumption that $\rho_0$ is faithful, in addition to being discrete on $H$.  This is because the structure of positive dimensional Lie subgroups of $\G$ is particularly simple (see the proof of Case 2, Proposition \ref{prop:dense_not}).  We believe that with more work, it should be possible to upgrade the results in this section.
\begin{remark} Another easy consequence of Theorem \ref{Borel:main} and the triangle inequality is that if $\rho_1: G \to \PSL_n\bC$ and $\rho_2: G \to \PSL_k\bC$ are both dense, then \[\|\rho_1^*\beta_n - \rho_2^*\beta_k \|_\infty\ge v_3\left| \frac{n(n^2-1)}{6} - \frac{k(k^2-1)}{6}\right|. \] 
\end{remark}

We begin proving Theorem \ref{Borel:main} by finding a sequence of efficient chains on which the Borel class evaluates to a large number with controlled boundary.  These chains come from hyperbolic geometry, together with the explicit description of the cocycle $B_n^F$ given by \cite{BBI:borel} (see Proposition \ref{prop:borel_values}) and the explicit formula for the semi-norm of the pullback under the irreducible representation $\iota_n: \G\to \PSL_n\bC$; see Theorem \ref{Borel class theorem}.
\begin{lemma}\label{Borel group chains}
If $G$ is a discrete group and $\rho: G\to \PSL_n\bC$ is dense, then for every $\epsilon>0$, there is a $y\in \mathscr F(\bC^n)$ and a sequence of chains $D_k\in \Cc3(G; \bR)$ such that 
\begin{enumerate}[(A)]
\item $\displaystyle \frac{|\rho^*B_n^y(D_k)|}{\|D_k\|_1}>(v_3-\epsilon)\frac{n(n^2-1)}{6}$, for all $k$, and
\item $\displaystyle \lim_{k\to\infty} \frac{\|\partial D_k\|_1}{\| D_k\|_1} = 0$.
\end{enumerate}
\end{lemma}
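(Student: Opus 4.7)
The plan is to port the efficient chains from Proposition~\ref{group chains} into the higher rank setting via the Veronese embedding $\hat\iota_n:\partial\bH^3\to \mathscr F(\bC^n)$, and then to push them to $G$ using density of $\rho$ and Corollary~\ref{Borel approximation}. The genericity constraint on flag configurations, required so that the cocycle $B_n$ behaves, is exactly why Lemma~\ref{to infinity} was set up with pairwise distinct boundary vertices.

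First I would fix a discrete, faithful, geometrically infinite representation $\rho_0: F_2\to \G$ without parabolic elements, and apply Proposition~\ref{group chains} (with $\epsilon/2$ in place of $\epsilon$) together with Lemma~\ref{to infinity}. This yields a point $x\in\partial\bH^3$ and chains
\[Z_k =\sum_{j=1}^{M_k} a^{j,k}[w_0^{j,k},\ldots, w_3^{j,k}] \in \Cc 3 (F_2; \bR)\]
with $|\rho_0^*\vol_3^x(Z_k)|/\|Z_k\|_1>v_3-\epsilon/2$, $\|\partial Z_k\|_1\le K_2$, $\|\partial Z_k\|_1/\|Z_k\|_1\to 0$, and such that the four points $\rho_0(w_i^{j,k}).x$ are pairwise distinct in $\partial\bH^3$ for all $i,j,k$. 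Setting $y := \hat\iota_n(x)\in \mathscr F(\bC^n)$, Proposition~\ref{prop:borel_values} identifies the pullback at the cochain level,
\[(\iota_n\circ \rho_0)^*B_n^y(Z_k) = \tfrac{n(n^2-1)}{6}\,\rho_0^*\vol_3^x(Z_k).\]

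The pairwise distinctness of vertices is precisely the hypothesis needed for Corollary~\ref{Borel approximation}, so for each $k$ I obtain $D_k:= Z_k(1)\in \Cc 3(G;\bR)$ satisfying $|(\iota_n\circ\rho_0)^*B_n^y(Z_k)-\rho^*B_n^y(D_k)|<1$, $\|D_k\|_1\le \|Z_k\|_1$, and $\|\partial D_k\|_1\le \|\partial Z_k\|_1\le K_2$. To verify (A), the triangle inequality and the identity above give
\[\frac{|\rho^*B_n^y(D_k)|}{\|D_k\|_1} \ge \frac{n(n^2-1)}{6}\frac{|\rho_0^*\vol_3^x(Z_k)|}{\|Z_k\|_1} - \frac{1}{\|Z_k\|_1} > \frac{n(n^2-1)}{6}(v_3-\epsilon/2) - \frac{1}{\|Z_k\|_1},\]
which exceeds $\frac{n(n^2-1)}{6}(v_3-\epsilon)$ once $\|Z_k\|_1$ is sufficiently large; here I use that $\|Z_k\|_1\to\infty$, which follows from $|\rho_0^*\vol_3^x(Z_k)|\to\infty$ (Lemma~\ref{manifold chains}(c) and the proof of Proposition~\ref{group chains}) together with the trivial bound $|\rho_0^*\vol_3^x(Z_k)|\le v_3\|Z_k\|_1$. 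Reindexing gives (A) for all $k$. Property (B) is immediate from the uniform bound $\|\partial D_k\|_1\le K_2$ and the fact that $\|D_k\|_1\to\infty$, the latter forced by (A) and the bound $\|\rho^*B_n^y\|_\infty \le \frac{n(n^2-1)}{6}v_3$.

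There is no single hard step here: the essential work lies in the Soma-style construction of efficient chains (Lemma~\ref{manifold chains}), the flexibility of the approximation scheme (Corollary~\ref{Borel approximation}), and the Veronese identity for cocycles (Proposition~\ref{prop:borel_values}). The only delicate point is navigating the failure of continuity of $B_n^y$ at non-generic flag configurations; this is precisely what Lemma~\ref{to infinity} was designed to handle, by keeping each vertex tuple inside $\mathscr F(\bC^n)^{(4)}$ so that Lemma~\ref{lem:borel_cont} can be invoked through Corollary~\ref{Borel approximation}.
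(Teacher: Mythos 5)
Your proposal is correct and follows essentially the same route as the paper's proof: fix a geometrically infinite $\rho_0:F_2\to\G$ without parabolics, take the efficient chains of Proposition \ref{group chains} adjusted by Lemma \ref{to infinity} so every vertex quadruple at $x\in\partial\bH^3$ is non-degenerate, set $y=\hat\iota_n(x)$, convert volume to Borel values via Proposition \ref{prop:borel_values}, and transfer to $G$ with Corollary \ref{Borel approximation}. The only nit is your justification of $\|D_k\|_1\to\infty$: it is not forced by (A) together with the sup-norm bound, but rather by $|\rho^*B_n^y(D_k)|\ge \frac{n(n^2-1)}{6}|\rho_0^*\vol_3^x(Z_k)|-1\to\infty$ combined with the trivial inequality $|\rho^*B_n^y(D_k)|\le \|B_n^y\|_\infty\|D_k\|_1$ -- all ingredients you already have in hand.
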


\begin{proof}
As in the proof of Proposition \ref{dense group chains}, fix a geometrically infinite discrete and faithful representation $\rho_0: F_2\to \PSL_2\bC$ with no parabolic elements, take $\epsilon>0$ and apply Proposition \ref{group chains}.  We now have $K_2>0$ and $Z_k\in \Cc 3(F_2;\bR)$ that satisfy conditions (\ref{(i)}), (\ref{(ii)}), and (\ref{(iii)}) of the conclusion of Proposition \ref{group chains}.  By Lemma \ref{to infinity} we may assume that $x\in \partial \bH^3$ for the conclusion (\ref{(i)}), and no ideal simplex in $({\rho_0}_*Z_k).x$ is degenerate.  Recall that the irreducible representation $\iota_n$ induces an equivariant continuous map of boundaries $\hat\iota_n: \partial \bH^3 \to \mathscr{F}(\bC^n)$.  Take $y =\hat\iota_n(x)$, so that by equivariance of $\hat\iota_n$, we have $\hat\iota_n( \rho_{0*}(Z_k).x) = (\iota_n\circ\rho_0)_*(Z_k).y$.  By Proposition \ref{prop:borel_values}, $\iota_n^*B_n^y = \frac{n(n^2-1)}6 \vol_3^x$.  Thus,
\[(\iota_n\circ\rho_0)^*B_n^y(Z_k) = \frac{n(n^2-1)}6 \rho_0^*\vol_3^x(Z_k).\] 
Since no simplex is degenerate, thanks to Lemma \ref{to infinity}, we can apply Corollary \ref{Borel approximation} for each $k$ to obtain $Z_k(1)\in \Cc 3(G;\bR)$ such that 
 \[|\rho^*B_n^y (Z_k(1)) - (\iota_n\circ\rho_0)^*B_n^y(Z_k) | <1.\]
As in the proof of Proposition \ref{dense group chains}, the two conclusions now follow with $D_k = Z_k(1)$.
\end{proof}

\begin{proof}[Proof of Theorem \ref{Borel:main}]
The chains from Lemma \ref{Borel group chains} satisfy the hypotheses of Lemma \ref{scheme}.  Thus $\|\rho^*\beta_n \|_\infty \ge (v_3-\epsilon) \frac{n(n^2-1)}{6}$, for all $\epsilon>0$. By Theorem \ref{Borel class theorem}, $\|\rho^*\beta_n \|_\infty\le v_3 \frac{n(n^2-1)}{6}$, which yields the desired equality.
\end{proof}

\section{Constructions, examples, and questions}\label{sec:applications}

We now give some applications of the work that we have done to show that subspaces of bounded cohomology spanned by the pullback of volume classes can be quite large.  We also pose some questions.
\subsection{Constructing incompatible representations}\label{constructions}
We will construct a family of representations $\{\rho_\theta: F_2\to \G: \theta\in \Lambda\}$ such that, given any finite subset $\{\rho_{\theta_1}, ..., \rho_{\theta_N}\}$, there are subgroups $H_i\le F_2$ such that $\overline{\rho_{\theta_i}(H_i)}= \G$, but $\rho_{\theta_i}|_{H_j}$ is discrete, faithful, and convex co-compact, i.e. $\rho_{\theta_i}|_{H_j}$ is a \emph{marked Schottky group} group for $i\not = j$.   Marked Schottky groups are, in particular, geometrically finite, so we can apply Theorem \ref{ell 1} to show that the volume classes of these representations are linearly independent in reduced bounded cohomology.  Furthermore, the set $\Lambda$ has cardinality that of the continuum.  The construction of dense representations in this section shares some features with the construction found in \cite[Appendix A]{GGKW}.

We start with  a loxodromic element $a \in  \PSL_2\bC$ such that the complex translation length $\tau(a)$ of $a$ has translational part $r$ and non-zero rotational part strictly between $0$ and $\pi/8$.  In particular, $a$ is not conjugate into $\PSL_2\bR$. 
Find an elliptic element $b(\theta)$  with rotation angle $2\pi\theta$, where $\theta\in \bR/\bZ$ is irrational and with fixed line meeting $\axis( a)$ orthogonally in a point $x$.  For infinitely many values $n$, $\axis (b(\theta)^nab(\theta)^{-n})$  makes a very small angle with $\axis (a)$ at $x$. 
\begin{lemma}\label{threshold} Let $r>0$ be given.  There is a threshold $\tau_0\in (0,1/2)$ such that if $n \theta \in (-\tau_0, \tau_0)\mod 1$, then $\langle a, b(\theta)^nab(\theta)^{-n}\rangle$ is dense in $\G$.
\end{lemma}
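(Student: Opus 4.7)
The plan is to apply Lemma~\ref{lem:geom_elem}: it suffices to show that $\Gamma_n := \langle a, c_n\rangle$ is neither geometrically elementary nor discrete, where $c_n = b(\theta)^n a b(\theta)^{-n} = R a R^{-1}$ and $R := b(\theta)^n$ rotates $\bH^3$ by angle $2\pi s$ about $\axis(b(\theta))$, with $s := n\theta \bmod 1 \in (-\tau_0, \tau_0)$. Once both properties are established, Lemma~\ref{lem:geom_elem} immediately gives $\overline{\Gamma_n} = \G$.

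For non-elementarity, I would argue geometrically. Since $\axis(b(\theta))$ meets $\axis(a)$ orthogonally at $x$, the two endpoints of $\axis(b(\theta))$ on $\partial\bH^3$ are the only fixed points of $R$ at infinity, and they are distinct from the endpoints of $\axis(a)$. Hence $R$ moves both ideal endpoints of $\axis(a)$, so $\axis(c_n) = R(\axis(a))$ is a geodesic distinct from $\axis(a)$ (though still passing through $x$) sharing no endpoint with it; this rules out a common fixed ideal point and a common invariant geodesic for $\Gamma_n$. Since $a$ and $c_n$ are loxodromic, they have no fixed point in $\bH^3$, and because $a$ has non-real complex translation length, it preserves no totally geodesic plane, so neither does $\Gamma_n$. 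Thus $\Gamma_n$ is not geometrically elementary.

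For non-discreteness, the key element is the commutator $d_n := [R, a] = c_n a^{-1} \in \Gamma_n$, which is non-trivial (as $\axis(a) \ne \axis(c_n)$ for $s \ne 0$) and depends continuously on $s$, with $d_n \to \mathrm{id}$ as $s \to 0$. I would apply Jorgensen's inequality to the pair $(d_n,\, a d_n a^{-1})$: both elements lie in $\Gamma_n$, have the same small complex translation length, and the quantity $|\tr^2(d_n) - 4| + |\tr[d_n, a d_n a^{-1}] - 2|$ tends to zero as $s \to 0$. Choosing $\tau_0 = \tau_0(r, \tau(a))$ small enough that this sum drops below $1$, Jorgensen's inequality forces the subgroup $\langle d_n, a d_n a^{-1}\rangle \le \Gamma_n$ to be either discrete-elementary or non-discrete; in the latter case, $\Gamma_n$ itself is non-discrete, as desired.

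The main obstacle is ensuring that Jorgensen's inequality yields non-discreteness rather than being absorbed by elementarity of the auxiliary subgroup. That is, one must verify that $\langle d_n, a d_n a^{-1}\rangle$ is non-elementary for all small $s \ne 0$, equivalently that $d_n$ does not lie in the centralizer of $a$ in $\G$. If it did, then $R a R^{-1} a^{-1}$ would commute with $a$, forcing $R$ to preserve $\axis(a)$; but $R$ fixes only $\axis(b(\theta))$ and its ideal endpoints, and by our orthogonality assumption this geodesic is transverse to $\axis(a)$, giving the required non-centralization. Making this quantitative --- namely, producing an explicit lower bound on the Jorgensen quantity's deviation from the elementary regime as $s \to 0$ --- constitutes the technical core of the argument, and fixes the choice of $\tau_0$.
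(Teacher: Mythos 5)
Your overall skeleton is sound and runs parallel to the paper's: produce an element close to the identity, invoke a discreteness criterion (you use Jorgensen's inequality where the paper uses the Kazhdan--Margulis Lemma \ref{lem:Margulis}), rule out the elementary alternative, and finish with Lemma \ref{lem:geom_elem}; your geometric argument that $\langle a, c_n\rangle$ is not geometrically elementary is also fine. The genuine gap is in the step you defer: your claim that non-elementarity of $\langle d_n,\, a d_n a^{-1}\rangle$ is \emph{equivalent} to $d_n$ not lying in the centralizer of $a$ is false, and the centralizer computation you sketch does not rule out the elementary case of Jorgensen. The group $\langle d_n,\, a d_n a^{-1}\rangle$ is elementary whenever $d_n$ and $a d_n a^{-1}$ share a fixed point on $\partial\bH^3$ (the group then fixes that point, even though it may be wildly non-discrete), and also in degenerate cases where $d_n$ fails to be loxodromic; none of these forces $d_n$ to commute with $a$. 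So after Jorgensen tells you ``elementary or non-discrete,'' you have not excluded the elementary branch, and the lemma does not follow. Moreover, the issue is not the quantitative one you flag (bounding the Jorgensen quantity away from the elementary regime); it is the qualitative verification that $d_n$ is loxodromic and that $\axis(d_n)$ and $a\,\axis(d_n)$ have disjoint endpoint sets at infinity.

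This missing verification is exactly where the paper does its real work: writing $c = b^n a b^{-n}$, it computes traces explicitly to show that $c^{-1}a$ and $ca^{-1}$ (note $ca^{-1}$ is your $d_n$) are loxodromic (imaginary part of $\tr(c^{-1}a)$ nonzero, using that the rotational part of $\tau(a)$ lies strictly in $(0,\pi/8)$ and $\tau\neq 0,1/2$), that their axes are distinct, and that the axes are not asymptotic (via $\tr(c^{-1}a^2c^{-1})\neq\pm2$); Ping-Pong then yields a free subgroup, which is what rules out the virtually abelian/elementary alternative. To repair your proof you would need the analogous computations for the pair $(d_n,\, a d_n a^{-1})$ — in particular that $d_n$ is loxodromic and that $a$ moves $\axis(d_n)$ to a geodesic sharing no ideal endpoint with it — at which point your argument becomes essentially the paper's with Jorgensen substituted for Margulis. (A minor additional point, shared with the statement itself: one must exclude $n\theta\equiv 0$, since then $b(\theta)^n=\id$ and the group is cyclic; in context this is automatic because $\theta$ is irrational.)
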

\begin{proof}
For any $\tau \in (0,1)$, $\axis (b(\tau)ab(\tau)\inverse)= b(\tau)\axis (a)$, and  $\axis(a) \cap b(\tau)\axis (a) = \{x\}$, since the fixed line of $b(\tau)$ meets $\axis(a)$ at $x$.    Using hyperbolic trigonometry, there is a $\tau_0 = \tau_0(r)$ such that if $-\tau_0<\tau<\tau_0\mod 1$, then in an $(r+1)$-neighborhood of $x$, the Hausdorff distance between $\axis(a)$ and $b(\tau)\axis (a)$ is at most $\mu_3$.   Specifically, take $\tau_0\in (0,1)$ such that $2\sinh\inverse (\sin(2\pi\tau_0)\sinh(r+1))<\mu_3$.  Suppose $-\tau_0<\tau<\tau_0 \mod 1$, and for convenience, write $b$ for $b(\tau)$.  

Then $d(ax, bab\inverse x)<\mu_3$, because $d(x,ax)= d(x,bab\inverse x)=r<r+1$.  This means that $d(ba\inverse b\inverse a x, x)<\mu_3$.  Similarly, $d(bab\inverse a\inverse x, x)<\mu_3$.    Set $c = bab\inverse$; by the Margulis Lemma \ref{lem:Margulis}, the group $\langle c\inverse a, ca\inverse \rangle$ is indiscrete if it is not virtually abelian. We will show that $\langle c\inverse a, ca\inverse\rangle$ contains a free subgroup, so it is not virtually abelian.  
 
 An element $\gamma\in \G$ is loxodromic if and only if the trace of a lift of $\gamma$ to $\textnormal {SL}_2\bC$ is not in the interval $[-2,2]$.  Trace identities give 
\[\tr(c\inverse a) = \tr(ac\inverse) = \tr(ca\inverse).\]
By construction, $a$ and $c$ can be represented by matrices
\begin{equation*}
a = \begin{pmatrix}
e^{\tau (a)/2} & 0 \\
0 & e^{-\tau (a)/2}
\end{pmatrix} \text{ and } 
c = 
\begin{pmatrix}
\cos(2\pi\tau) & -\sin(2\pi \tau) \\
\sin(2\pi\tau) & \cos(2\pi\tau)
\end{pmatrix}
\begin{pmatrix}
e^{\tau (a)/2} & 0 \\
0 & e^{-\tau (a)/2}
\end{pmatrix} 
 \begin{pmatrix}
\cos(2\pi\tau) & \sin(2\pi \tau) \\
-\sin(2\pi\tau) & \cos(2\pi\tau)
\end{pmatrix}.
\end{equation*}
Explicit computation with matrices shows that as long as the rotational part of $\tau(a)$ is not an integer multiple of $\pi$ and $b$ is not rotation by  $0$ or $\pi$ (i.e. $\tau \not=0,1/2$), then the imaginary part of $\tr(c\inverse a ) = \tr(c a\inverse)$ is different from $0$.  Thus our assumptions guarantee  that both $c\inverse a$ and $ca\inverse$ are loxodromic.  
The traces are equal, so the complex translation lengths are equal.  This means that if $\axis(c\inverse a)= \axis( ca\inverse)$, then $c\inverse a$ and $ca\inverse$ are either equal or inverse to one another.  Inspection shows that $c\inverse a.x \not = ca\inverse.x$, if the rotational part of $\tau(a)$ is small enough (less than $\pi/8$, for example).  If $c\inverse a= (ca\inverse)\inverse$, then $c$ commutes with $a$, which only happens if $b$ is rotation by $0$ or $\pi$.  

So, the axes of $c\inverse a$ and $ca\inverse$ are different.  If the axes are asymptotic, then again since $\tau(c\inverse a ) = \tau(ca\inverse)$, the product $c\inverse a (ca\inverse)\inverse=c\inverse a^2 c\inverse$ stabilizes a horosphere, hence has trace equal to $\pm 2$.  Another explicit computation shows that the imaginary part of the $\tr(c\inverse a^2c\inverse)$  vanishes only when $b$ is rotation by $0$ or $\pi$ or the rotational part of $\tau(a)$ is an integer multiple of $\pi/8$.  We have assumed that the rotational part of $\tau(a)$ is non-zero and less than $\pi/8$, so the set of fixed points of $c\inverse a$ and $ca\inverse$ at infinity are distinct.  
By   the Ping-Pong Lemma, for large enough $k$, $\langle (c\inverse a)^k  , (ca\inverse)^k \rangle $ is a Schottky group.  In particular, $\langle c\inverse a, ca\inverse \rangle$ contains a free subgroup, and so it is indiscrete.  Thus $\langle a, bab\inverse\rangle$ is indiscrete, because $\langle c\inverse a, ca\inverse \rangle\le \langle a, bab\inverse\rangle$.

Finally,  $\langle a, bab\inverse\rangle$ does not fix an ideal point, nor does it stabilize a plane, since the rotational part of $\tau(a)$ is non-trivial.   By the proof of Lemma \ref{lem:geom_elem}, $\langle a, bab\inverse\rangle$ is dense.  
\end{proof}

There are also infinitely many values of $n$ such that $\axis(a)$ is nearly orthogonal to  $b(\theta)^n\axis(a)$.  The following is immediate from the Ping-Pong Lemma and another direct computation in $\bH^2\subset \bH^3$.

\begin{lemma}\label{long enough} For $r> \frac{1+\cos(\pi/8)}{1-\cos(\pi/8)}$, if the translational part of $\tau(a)$ is at least $r$ and 
$n\theta \in ( 1/8, 3/8)\mod 1$, then $\langle a, b(\theta)^nab(\theta)^{-n}\rangle $ is a Schottky group of rank $2$.  
\end{lemma}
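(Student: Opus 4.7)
The plan is to verify the classical Ping\nobreakdash-Pong criterion on the sphere at infinity $\partial \bH^3 = S^2$. First I would set up the geometry: since $b(\theta)^n$ is elliptic with fixed axis $L$ meeting $\axis(a)$ orthogonally at $x$, and $c := b(\theta)^n a b(\theta)^{-n}$ satisfies $\axis(c) = b(\theta)^n \cdot \axis(a)$, both axes pass through $x$ and both lie in the unique totally geodesic plane $\Pi \subset \bH^3$ through $x$ perpendicular to $L$. The angle they make at $x$ equals the rotation angle of $b(\theta)^n$, namely $\alpha := 2\pi n \theta \pmod{2\pi} \in (\pi/4, 3\pi/4)$. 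Passing to the ball model with $x$ at the origin, both axes are diameters of $\Pi$, so their four endpoints lie on the equatorial great circle $\partial \Pi \subset S^2$ at positions $0, \pi, \alpha, \alpha + \pi$; consequently any two distinct endpoints are at spherical distance at least $\min(\alpha, \pi - \alpha) > \pi/4$.

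Next I would compute the sizes of the isometric disks of $a, a^{-1}, c, c^{-1}$ on $S^2$. Using stereographic projection from the repelling fixed point $p$ of $a$ (with the attracting fixed point $q$ at the origin), the boundary action becomes $z \mapsto e^{-(r + i\phi)} z$, where $\phi$ is the rotational part of $\tau(a)$, so the Euclidean derivative has constant modulus $e^{-r}$. The spherical derivative is $|a'|_{S^2}(z) = e^{-r}(1 + |z|^2)/(1 + e^{-2r}|z|^2)$, and setting this equal to $1$ yields $|z| = e^{r/2}$. Translating via $|z| = \tan(\theta_q/2)$, with $\theta_q$ the spherical distance from $q$, the isometric circle is the round spherical circle of angular radius $\rho_r := 2\arctan(e^{-r/2})$ about $p$. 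Crucially $\phi$ plays no role, because only $|a'|$ enters the calculation; equivalently, rotation about $\axis(a)$ is a spherical isometry fixing $p, q$ and stabilizing each concentric spherical cap about them. The same computation for $a^{-1}, c, c^{-1}$ shows that each of the four Ping\nobreakdash-Pong caps is a round spherical disk of the common angular radius $\rho_r$ about the corresponding fixed point, and $a$ (resp.\ $c$) maps the complement of its repelling cap into its attracting cap.

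Finally I would check disjointness of the four caps and conclude. Pairwise disjointness of spherical caps of radius $\rho_r$ about points at pairwise distance $> \pi/4$ is implied by $2\rho_r < \pi/4$, i.e.\ $\arctan(e^{-r/2}) < \pi/16$, i.e.\ $e^{r/2} > \cot(\pi/16)$. The half-angle identities $\cos^2(\pi/16) = (1 + \cos(\pi/8))/2$ and $\sin^2(\pi/16) = (1 - \cos(\pi/8))/2$ give $\cot^2(\pi/16) = (1 + \cos(\pi/8))/(1 - \cos(\pi/8))$, so the required inequality becomes $e^r > (1 + \cos(\pi/8))/(1 - \cos(\pi/8))$. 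The hypothesis $r > (1 + \cos(\pi/8))/(1 - \cos(\pi/8))$ is strictly stronger since $e^x > x$ for all $x \geq 0$, so the four caps are pairwise disjoint and the Ping\nobreakdash-Pong Lemma yields that $\langle a, c \rangle = \langle a, b(\theta)^n a b(\theta)^{-n} \rangle$ is a free Schottky group of rank two. There is no serious obstacle—the only subtle point is that the rotational part of $\tau(a)$ plays no role in the isometric-disk computation, which is immediate from the argument above.
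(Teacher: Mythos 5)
Your proof is correct and follows exactly the route the paper intends (the paper only asserts the lemma is "immediate from the Ping-Pong Lemma and another direct computation"): ping-pong for $\langle a, c\rangle$ with quantitative control coming from the fact that both axes pass through $x$ and the rotational part of $\tau(a)$ is irrelevant to the size of the attracting/repelling regions. Your spherical-cap computation in the visual metric from $x$ even recovers the paper's threshold on the nose, since disjointness needs $e^{r}>\cot^2(\pi/16)=\frac{1+\cos(\pi/8)}{1-\cos(\pi/8)}$ and the stated hypothesis $r>\frac{1+\cos(\pi/8)}{1-\cos(\pi/8)}$ implies this via $e^{r}>r$.
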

\noindent Now fix $a\in \PSL_2\bC$ with translation length $r + it$, with $r> \frac{1+\cos(\pi/8)}{1-\cos(\pi/8)}$ and $t\in (0,\pi/8)$.
\begin{lemma}\label{N reps}
Let $\{ \theta_1, ..., \theta_N\}\subset (0,1)$ be a rationally independent set of irrational numbers, $F_2 = \langle z_1, z_2\rangle$, and let $\rho_{\theta_i} : F_2\to \G$ be defined by $\rho_{\theta_i}(z_1) = a$ and $\rho_{\theta_i}(z_2) = b(\theta_i)$.  There are integers $n_1, ..., n_N$ such that $H_i : = \langle z_1, z_2^{n_i} z_1 z_2^{-n_i}\rangle$  satisfies $\rho_{\theta_i}(H_i)\le \G$ is dense but $\rho_{\theta_i}|_{H_j}$ is a marked Schottky group. 
\end{lemma}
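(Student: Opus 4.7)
The plan is to select the integers $n_1, \ldots, n_N$ by a single application of Kronecker's simultaneous approximation theorem, after which the density and Schottky conclusions will be read directly off Lemmas \ref{threshold} and \ref{long enough}. The heavy lifting has already been done in those two lemmas, so what remains is purely a combinatorial arrangement on a torus.

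For each $j \in \{1, \ldots, N\}$, I will look for an integer $n_j$ such that $n_j \theta_j \in (-\tau_0, \tau_0) \pmod 1$ and such that $n_j \theta_i \in (1/8, 3/8) \pmod 1$ for every $i \neq j$, where $\tau_0 = \tau_0(r)$ is the threshold produced by Lemma \ref{threshold} for the given translational part $r$ of $\tau(a)$. If such $n_j$ is found, then Lemma \ref{threshold} applied with $\theta = \theta_j$ and $n = n_j$ makes $\rho_{\theta_j}(H_j) = \langle a, b(\theta_j)^{n_j} a b(\theta_j)^{-n_j}\rangle$ dense in $\G$, while Lemma \ref{long enough} applied with $\theta = \theta_i$ and $n = n_j$ for $i \neq j$ makes $\rho_{\theta_i}(H_j) = \langle a, b(\theta_i)^{n_j} a b(\theta_i)^{-n_j}\rangle$ a rank $2$ Schottky group (the translation length hypothesis on $a$ is in force throughout the subsection). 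Schottky groups are classically known to be discrete, faithful, and convex cocompact.

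The existence of $n_j$ with the prescribed properties is exactly the statement that the sequence
\[ \{ (n\theta_1, \ldots, n\theta_N) \pmod 1 : n \in \bZ\} \subset (\bR/\bZ)^N \]
meets the open box
\[ U_j := I_{1,j} \times \cdots \times I_{N,j}, \qquad I_{j,j} = (-\tau_0, \tau_0),\ I_{i,j} = (1/8, 3/8)\ \text{for}\ i \neq j, \]
which is nonempty. This follows from Kronecker's equidistribution theorem, provided $\{1, \theta_1, \ldots, \theta_N\}$ is linearly independent over $\bQ$. The hypothesis that $\{\theta_1, \ldots, \theta_N\}$ is rationally independent and consists of irrationals is intended in the strong sense that ensures this (as guaranteed when the $\theta_i$ are drawn from a $\bQ$-basis $\Lambda \cup \{1\}$ of $\bR$, which is the situation of Corollary \ref{dim cor}). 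Under that assumption, the $\bZ$-orbit is equidistributed, hence dense, in the $N$-torus, and a valid $n_j$ exists for every $j$.

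The principal step in the plan is therefore just the simultaneous Kronecker arrangement; there is no real obstacle beyond bookkeeping. I emphasize that the full geometric content, namely distinguishing the dense configurations of axes from the Schottky ones in terms of the rotation parameter $n\theta$, is entirely contained in the two preceding lemmas, and that rational independence enters solely through the density of the torus orbit.
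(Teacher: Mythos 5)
Your proposal is correct and is essentially the paper's own argument: the paper also chooses each $n_j$ so that $n_j\theta_j$ lands in $(-\tau_0,\tau_0)$ while $n_j\theta_i$ lands in $(1/8,3/8)$ for $i\neq j$, obtaining such $n_j$ from density (minimality) of the orbit of $0$ under the simultaneous rotation of $(\bR/\bZ)^N$ by $(\theta_1,\dots,\theta_N)$, which is the same fact you invoke via Kronecker. Your remark that one really needs $\{1,\theta_1,\dots,\theta_N\}$ to be $\bQ$-independent (automatic in the intended application where the $\theta_i$ come from a basis $\Lambda\cup\{1\}$) is a fair and welcome precision that the paper's wording glosses over.
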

\begin{proof}
Since $\{\theta_i\}$ is a rationally  independent set of irrational numbers, the self homeomorphism of the $N$-torus $\Theta : (\bR/\bZ)^N\to (\bR/\bZ)^N$ defined by  $(x_1, ..., x_N)\mapsto (x_1+\theta_1, ..., x_N + \theta_N)$ is topologically minimal, i.e. every orbit is dense.   Let $\tau_0$ be the threshold from Lemma \ref{threshold}, and let $D = (-\tau_0, \tau_0) \subset \bR/\bZ$ and $F = (1/8, 3/8)\subset \bR/\bZ$.  For each $i$, let $p_i: (\bR/\bZ)^N\to \bR/\bZ$ be projection onto the $i$th factor and take  $U_i \subset (\bR/\bZ)^N$ to be the product of $D$'s and $F$'s where $p_i(U_i) = D$ and $p_j(U_i) = F$, if $i\not = j$.  By minimality of $\Theta$, there is an $n_i$ such that $\Theta^{n_i}(0)\in U_i$. By Lemma \ref{threshold}, $\overline{\rho_{\theta_i} (H_i)}= \G$, and $\rho_{\theta_i}|_{H_j} : H_j\to \G$ is discrete,  faithful, and convex co-compact for $i\not=j$, by Lemma \ref{long enough}. 
\end{proof}

Using the axiom of choice, we can find a basis $\Lambda\sqcup \{1\}$ for $\bR$ as a $\bQ$-vector space.  We may assume that $\Lambda\subset (0,1)$.  For each $\theta\in \Lambda$ we have the representation $\rho_\theta: F_2 \to \G$ defined as in Lemma \ref{N reps}.
\begin{theorem}\label{uncountable family}
The map \[
\begin{aligned}
\Lambda &\to \Hbr3(F_2)\\
\theta&\mapsto  [\rho_\theta^*\vol_3]
\end{aligned}\]  is injective with discrete image.  Moreover, $\{[\rho_\theta^*\vol_3]: \theta\in \Lambda\}$ is a linearly independent set, and $\#\Lambda = \# \bR$. 
\end{theorem}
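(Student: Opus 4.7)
My proposed proof hinges on combining the construction in Lemma \ref{N reps} with Theorem \ref{ell 1}, so the work is mostly bookkeeping and verifying the hypotheses of Theorem \ref{ell 1} for arbitrary finite subsets of $\Lambda$.

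First I would dispose of the cardinality claim: since $\Lambda \cup \{1\}$ is a $\bQ$-basis of $\bR$ and $\bQ$ is countable, $\Lambda$ cannot be countable (otherwise the $\bQ$-span of $\Lambda \cup \{1\}$ would be countable), so $\#\Lambda = \#\bR$.

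Next, for the linear independence statement, I would take an arbitrary finite subset $\{\theta_1,\ldots,\theta_N\} \subset \Lambda$ and verify that the family $\{\rho_{\theta_1},\ldots,\rho_{\theta_N}\}$ satisfies the hypotheses of Theorem \ref{ell 1}. Each $\rho_{\theta_i}$ is dense by construction (cf.\ Lemma \ref{threshold} applied with $n = 0$, or directly from Lemma \ref{N reps}). Lemma \ref{N reps}, applied to the (still rationally independent) subset $\{\theta_1,\ldots,\theta_N\}$, furnishes integers $n_1,\ldots,n_N$ and subgroups
\[
H_i = \langle z_1,\, z_2^{n_i} z_1 z_2^{-n_i}\rangle \le F_2
\]
such that $\overline{\rho_{\theta_i}(H_i)} = \G$, while $\rho_{\theta_i}|_{H_j}$ is Schottky (in particular discrete and faithful) for $i \ne j$. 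These are precisely the hypotheses of Theorem \ref{ell 1}, which yields
\[
\left\| \sum_{i=1}^N a_i [\rho_{\theta_i}^*\vol_3] \right\|_\infty \ge \max_i |a_i| \cdot v_3
\]
for all $a_1,\ldots,a_N \in \bR$. Since the $\infty$-semi-norm descends to the quotient norm on $\Hbr3(F_2;\bR)$, this inequality holds after projecting to the reduced space, and linear independence of $\{[\rho_\theta^*\vol_3] : \theta \in \Lambda\}$ in $\Hbr3(F_2;\bR)$ follows immediately: any finite nontrivial $\bR$-linear combination has norm at least $v_3 \cdot \max_i |a_i| > 0$, so is nonzero.

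Finally, for injectivity and discreteness of the image, I would specialize the above inequality to $N=2$, $a_1 = 1$, $a_2 = -1$: for distinct $\theta, \theta' \in \Lambda$,
\[
\|[\rho_\theta^*\vol_3] - [\rho_{\theta'}^*\vol_3]\|_\infty \ge v_3 > 0,
\]
which simultaneously shows that the map $\theta \mapsto [\rho_\theta^*\vol_3]$ is injective and that its image is a $v_3$-separated (hence discrete) subset of the Banach space $\Hbr3(F_2;\bR)$. There is no real obstacle here beyond confirming that Lemma \ref{N reps} applies uniformly to arbitrary finite rationally independent subsets of $\Lambda$ — which it does, since its proof only uses minimality of an irrational rotation on a torus of the appropriate dimension — so the theorem follows.
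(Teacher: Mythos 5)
Your proposal is correct and follows essentially the same route as the paper: arbitrary finite subsets of $\Lambda$ are rationally independent, Lemma \ref{N reps} supplies the subgroups $H_i$, and Theorem \ref{ell 1} gives the lower bound $\left\|\sum_i a_i[\rho_{\theta_i}^*\vol_3]\right\|_\infty \ge \max_i|a_i|\cdot v_3$, from which linear independence, injectivity, and the $v_3$-separation (hence discreteness) of the image all follow, with your $N=2$ specialization merely making explicit what the paper leaves implicit. One small touch-up: uncountability of $\Lambda$ alone does not yield $\#\Lambda=\#\bR$; instead note that the $\bQ$-span of an infinite set $S$ has cardinality $\#S$, so $\#\bR = \#\Lambda$ directly.
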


\begin{proof}
Any finite subset $\{\theta_1, ..., \theta_N\}\subset \Lambda$ is rationally independent.  By Lemma \ref{N reps}, the collection $\{\rho_{\theta_i}: i = 1, ..., N\}$ satisfies the hypotheses of Theorem \ref{ell 1}.  
This shows that $\{ [\rho_\theta^*\vol_3]: \theta\in \Lambda\} \subset \Hbr3(F_2)$ is linearly independent and discrete.  Injectivity follows from linear independence.  
\end{proof}

\subsection{On the spaces $\Hbr 3(\PSL_2\mathbb \bR;\bR)$ and $\Hbr 3(\PSL_2\mathbb \bC;\bR)$}
As a further application of Theorem \ref{ell 1}, we will show that both $\Hbr 3 (\PSL_2\bR; \bR)$ and $\Hbr 3 (\G;\bR)$ have dimension at least $\#\bR$.  We remind the reader that we are computing bounded cohomology of discrete groups, i.e.  $\PSL_2\bR$ and $\G$ are endowed with the discrete topology.  The key is to use the axiom of choice to find `wild' field automorphisms of $\bC$ that induce embeddings $\PSL_2\bR\hookrightarrow \G$ and $\G\hookrightarrow \G$;  these embeddings would be highly discontinuous when these groups are considered with their standard smooth topologies.  

\begin{theorem}\label{G dense} The real dimension of the degree three reduced bounded cohomology of $\PSL_2\mathbb \bR$ and $\G$ is at least $\#\bR$.  More specifically, there are dense representations $\{\rho_t: \PSL_2\bR \to \G\}_{t\in \bR}$ such that $\{[\rho_t^*\vol_3]: t\in \bR\}$ is a linearly independent set in $\Hbr 3(\PSL_2\bR;\bR)$.  The family $\{\rho_t\}$ are the restrictions of representations $\{\rho_t': \G\to \G\}$, thus $\{[{\rho_t'}^*\vol_3]: t\in \bR\}$ is a linearly independent set in $\Hbr 3(\PSL_2\bC;\bR)$.
\end{theorem}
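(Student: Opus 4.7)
The plan is to construct the families $\{\rho_t'\}$ and $\{\rho_t\}$ via wild field automorphisms of $\bC$, and then verify the hypotheses of Theorem \ref{ell 1} for every finite subfamily. Using the axiom of choice, fix a transcendence basis $B$ of $\bR$ over $\bQ$; since $|B| = \#\bR$, one may select a subfamily $\{b_t\}_{t\in\bR} \subset B$. For each $t \in \bR$, declare a $\bQ$-algebra embedding on $\bQ(B)$ by $\sigma_t(b_s) = b_s$ for $s \neq t$ and $\sigma_t(b_t) = c_t$, for a non-real value $c_t \in \bC \setminus \bR$ to be specified below, and extend (via the axiom of choice again) to a field automorphism $\sigma_t \colon \bC \to \bC$. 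Applying $\sigma_t$ entry-wise to $\SL_2\bC$ descends to a group automorphism $\rho_t' \colon \PSL_2\bC \to \PSL_2\bC$; let $\rho_t \colon \PSL_2\bR \to \PSL_2\bC$ be its restriction. Density of $\rho_t$ is immediate: $\sigma_t(\bR)$ contains the dense $\bQ$-subspace $\bQ + c_t \bQ \subset \bC$, so $\rho_t(\PSL_2\bR) = \PSL_2(\sigma_t(\bR))$ is dense in $\PSL_2\bC$. The map $\rho_t'$ is bijective, hence trivially dense.

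The crux is to verify the hypotheses of Theorem \ref{ell 1} for every finite subset $\{t_1, \ldots, t_N\} \subset \bR$; that is, to produce free subgroups $H_i \le \PSL_2\bR$ with $\overline{\rho_{t_i}(H_i)} = \PSL_2\bC$ and $\rho_{t_j}(H_i)$ discrete for $j \neq i$. I would \emph{tag} $H_i$ by the single transcendental $b_{t_i}$: construct a Schottky pair $\alpha_i, \beta_i \in \PSL_2\bR$ whose matrix entries lie in the subfield $\bQ(b_{t_i})$, and set $H_i := \langle \alpha_i, \beta_i\rangle$. For $j \neq i$ the automorphism $\sigma_{t_j}$ fixes $\bQ(b_{t_i})$ pointwise, so $\rho_{t_j}(H_i) = H_i$ remains Schottky in $\PSL_2\bR$ and is in particular discrete in $\PSL_2\bC$. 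The nontrivial requirement is to choose $\alpha_i, \beta_i$ and $c_{t_i}$ so that the substituted pair $(\sigma_{t_i}(\alpha_i), \sigma_{t_i}(\beta_i))$ generates a dense subgroup of $\PSL_2\bC$. Modeling on Section \ref{constructions}, one may take $\alpha_i$ to be a diagonal real loxodromic with translation length depending polynomially on $b_{t_i}$, and $\beta_i$ a conjugate of $\alpha_i$ by a real element of $\PSL_2\bR$ whose axis depends polynomially on $b_{t_i}$ in a way that places $\langle \alpha_i, \beta_i \rangle$ in ping-pong position; then a choice $c_{t_i} = b_{t_i} e^{i \vartheta_i}$ with irrational $\vartheta_i \in (0, \pi/8)$ promotes $\sigma_{t_i}(\alpha_i)$ to a loxodromic with nontrivial complex rotation and places the image axes in the small-angle configuration of Lemma \ref{threshold}. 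That lemma, combined with Lemma \ref{lem:geom_elem}, then forces $\rho_{t_i}(H_i)$ to be dense in $\PSL_2\bC$.

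Theorem \ref{ell 1} applied to $\{\rho_t\}$ then yields the linear independence of $\{[\rho_t^*\vol_3]\}_{t\in\bR}$ in $\Hbr 3(\PSL_2\bR;\bR)$; regarding the same $H_i \subset \PSL_2\bR \subset \PSL_2\bC$ and re-applying Theorem \ref{ell 1} to $\{\rho_t'\}$ yields the corresponding statement in $\Hbr 3(\PSL_2\bC;\bR)$. The main obstacle is the algebraic engineering in the preceding paragraph: one must produce, inside $\PSL_2\bR$, a Schottky pair parameterized by the single transcendental $b_{t_i}$ whose image under the substitution $b_{t_i} \mapsto c_{t_i}$ densely generates $\PSL_2\bC$. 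This is a finite-data refinement of the construction of Section \ref{constructions}, and the flexibility of $B$ ensures that only finitely many (generic) algebraic conditions constrain each $c_{t_i}$ per finite subfamily, so the construction succeeds.
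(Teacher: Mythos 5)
Your overall strategy coincides with the paper's: wild field maps of $\bC$ produce dense representations, one tags each representation by a free subgroup that it maps densely while all the others map it discretely (or elementarily), and Theorem \ref{ell 1} then gives linear independence. The difference, and the genuine gap, is in how the tagged subgroups are arranged. Because your $\sigma_t$ is required to fix every basis element except $b_t$, the group $\rho_{t_i}(H_i)$ is forced to be the image of the real pair $(\alpha_i,\beta_i)$ under the single substitution $b_{t_i}\mapsto c_{t_i}$ inside a fixed $\bQ$-rational matrix family, and you have not shown this image is dense. With your suggested $c_{t_i}=b_{t_i}e^{i\vartheta_i}$ the modulus is unchanged, so for the natural choice $\alpha_i=\mathrm{diag}(b_{t_i},b_{t_i}^{-1})$ the image has exactly the same (large) translation length; adding a rotational part does not disturb ping-pong, so the image of a Schottky configuration stays Schottky rather than becoming dense. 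More generally, ``Schottky at $b_{t_i}$'' and ``small-angle/Margulis configuration at $c_{t_i}$'' (the hypotheses of Lemma \ref{threshold}, which moreover require the two image axes to actually intersect — this already fails for, e.g., triangular conjugators, which leave the image geometrically elementary) are open metric conditions, not algebraic ones, so your appeal to ``finitely many generic algebraic conditions'' does not apply: a quantitative construction of a one-parameter rational family that is in ping-pong position at the real point $b_{t_i}$ and in the dense regime at the linked point $c_{t_i}$ is exactly the missing content. There are also smaller unaddressed points — you must ensure $\{c_t\}\cup(B\setminus\{b_t\})$ is algebraically independent and that $\sigma_t$ extends to an automorphism (surjectivity), e.g.\ by taking $e^{i\vartheta_t}$ algebraic, and you must normalize the basis elements $b_t$ (they are arbitrary reals) so that the real pair is genuinely Schottky — but these are routine.

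The paper avoids the single-variable constraint entirely: it chooses a transcendence basis $T\subset\bR$ and a second basis $T'$ containing the algebraically independent pair $e^a,e^b$ (Lindemann--Weierstrass) with every other element of modulus $>100$, and builds $\sigma(t)$ from a bijection $T\to T'$ sending the \emph{two} designated parameters $\alpha_t,\beta_t$ of $H_t=H_{\alpha_t,\beta_t}$ to the prescribed small non-real values $e^a,e^b$ and all other basis elements to huge values. Since the images of the tagging parameters are prescribed freely rather than obtained by substituting into the same rational functions that define the real group, density of $\rho_t(H_t)$ (short translation lengths plus non-real rotation, via the Margulis argument) and discreteness of $\rho_t(H_s)$ for $s\neq t$ (modulus $>100$, ping-pong) are immediate. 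To repair your argument you would either need to exhibit the quantitative one-parameter family described above, or relax your requirement that $\sigma_t$ fix $B\setminus\{b_t\}$ and instead prescribe the images of two basis elements per index, which is essentially the paper's construction.
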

\begin{proof} 
Let $\Hom(\bR, \bC) $ denote the set of injective homomorphisms of fields $\sigma: \bR \to \bC$.  Every field mapping takes polynomial identities with rational coefficients to polynomial identities with rational coefficients, hence induces a group homomorphism $\rho_\sigma : \PSL_2\bR \to \G$.  It is not hard to see that if $\sigma (\bR) \subset \bR$, then $\sigma$ is order preserving, hence $\sigma$ restricts to the identity mapping on $\bR$, i.e. $\sigma$ is trivial \cite[Theorem 3]{Yale:field_maps}.  In fact, if $\sigma\in \Hom(\bR, \bC)$ is not trivial, then $\sigma(\bR\setminus \bQ)$ is dense in $\bC$ \cite[Theorem 4]{Yale:field_maps}.  Hence $\rho_{\sigma}(\PSL_2\bR)$ is dense in $\G$ for non-trivial $\sigma$.  By Theorem \ref{thm:v3_norm}, $\|[\rho_{\sigma}^*\vol_3]\|_\infty=v_3$ for all non-trivial $\sigma\in \Hom(\bR, \bC)$.  

Next we will produce a family of non-trivial field maps $\sigma(t)\in \Hom(\bR, \bC)$ indexed by a set with cardinality that of the continuum, which we assume is $\bR$ for simplicity.  The field maps $\sigma(t)$  induce dense representations $\rho_t = \rho_{\sigma(t)}: \PSL_2\bR\to \G$.  Finite subsets will satisfy the hypotheses of Theorem \ref{ell 1}, but in fact any subset will satisfy the hypotheses of Theorem \ref{ell 1}.  

Let $\alpha, \beta\in \bC$, and consider
\[x(\alpha) = 
\begin{pmatrix}
 \alpha  & 0 \\
 0 & \alpha \inverse
\end{pmatrix},
\text{ and } 
y (\beta) = \begin{pmatrix}
 (\beta + \beta\inverse)/2 & (\beta - \beta\inverse)/2 \\
 (\beta - \beta\inverse)/2 &  (\beta + \beta\inverse)/2
\end{pmatrix}
\in \G.\]
We consider the group
\[ H_{\alpha, \beta} = \left\langle x(\alpha), y(\beta)\right\rangle \le \G.\]
Geometrically, as long as $|\alpha|, |\beta|\not=1$, then the two generators are loxodromic with axes that meet orthogonally in a point, hence are contained in a hyperbolic plane.  If $|\alpha|$ and $|\beta|$ are large enough, by playing Ping-Pong, we see that  $H_{\alpha, \beta}$ is Schottky, hence geometrically finite.  If $|\alpha|, |\beta|<\mu_3$, then $H_{\alpha, \beta}$ is dense in $\G$ as long as at least one of $\alpha$ or $\beta$ has argument not a multiple of $\pi/2$, essentially by an easier argument than found in the proof of Lemma \ref{threshold}. 

Since $\bC$ is algebraic over $\bQ(\bR) = \bR$, there is a transcendence basis $T\subset \bR$ for $\bC$ over $\bQ$  \cite[Theorem 19.14]{Morandi:fields}.  Let $a, b \in \bC\setminus \bR\cup i\bR$ be $\bQ$-linearly independent, algebraic numbers with magnitude  smaller than $\mu_3$.  By the Lindemann-Weierstrass Theorem \cite[Theorem 1.4]{Baker:transcendental}, $\{e^a, e^b\}$ is an algebraically independent set.  Since $\bQ(\{z\in \bC: |z|>100\}) = \bC$ is algebraic over $\bC$, we may find a transcendence basis $T'$ for $\bC$ over $\bQ$ such that $\{e^a, e^b\}\subset T'\subset \{|z|>100\}\cup \{e^a, e^b\}$   \cite[Theorem 19.14]{Morandi:fields}.  Two transcendence bases for $\bC$ over $\bQ$ have the same cardinality, that of the continuum.  The algebraic closures of $\bQ(T)$ and $\bQ(T')$ are both $\bC$, so if $\pi: T\to T'$ is a bijection, then there is a field isomorphism $\sigma_\pi : \bC \to \bC$ extending $\pi$, by the Isomorphism Extension Theorem.

Find a partition $T=A\sqcup B$, with $\#A = \#B$ and bijections $\bR \to A$ and $\bR\to B$ denoted $t\mapsto \alpha_t$ and $t\mapsto \beta_t$, respectively.  Now, for each real number $t$, we extend the assignment $\alpha_t \mapsto e^a$ and $\beta_t \mapsto e^b$ to a bijection $\pi(t): T\to T'$.  We thus induce a field isomorphism $\sigma(t)= \sigma_{\pi(t)}: \bC\to \bC$ that restricts to a field map $\bR\to\bC$ with the same name.   We obtain, for each $t\in \bR$, a homomorphism $\rho_t = \rho_{\sigma(t)}: \PSL_2\bR \to \G$ and also $\rho_t': \G\to \G$.  

By construction, $\rho_t(x(\alpha_t)) = x(e^a)$ and $\rho_t(y(\beta_t)) = y(e^b)$, so that $\rho_t(H_{\alpha_t, \beta_t})= H_{e^a,e^b}$ is dense in $\G$, by our choice of $a$ and $b$.  For $s\not= t$, we see that the group $\rho_t(\langle x(\alpha_s), y(\beta_s)\rangle)$ is Schottky, because $\sigma_t(\alpha_s), \sigma_t(\beta_s) \in \{|z|>100\}$.  Thus {\it every} subset of representations of $\{\rho_t: \PSL_2\bR\to \G : t\in \bR\}$ satisfies the hypotheses of Theorem \ref{ell 1}.  As in the proof of Theorem \ref{uncountable family}, $\{[\rho_t^*\vol_3] : t\in \bR\}$ is a linearly independent subset of $\Hbr 3 (\PSL_2\bR;\bR)$.  
\end{proof}

The cardinality of the set of functions from a continuum to itself is $2^{\#\bR}$, as is the set of bounded functions.  Thus $\dim_{\mathbb R} \Hb 3(\PSL_2 \mathbb K;\bR) \le 2^{\#\bR}$, where $\mathbb K = \bC$ or $\bR$.     The set of field mappings $\bR\to \bC$ or $\bC\to \bC$ has cardinality $2^{\#\bR}$.  
\begin{question}\label{q:G_dense}
Let $\mathbb K\in \{\bR, \bC\}$.  Is  $\dim_\bR \Hbr 3 (\PSL_2\mathbb K;\bR)=2^{\#\bR}$?  Assuming the axiom of choice, does the set of field embeddings $\bC\to \bC$ define an injective map with discrete and linearly independent image to $\Hb 3 (\PSL_2\mathbb K;\bR)$ by pulling back the standard volume class?  Without the axiom of choice, what is the dimension of $\dim_\bR \Hb 3 (\PSL_2\mathbb K;\bR)$?  Do pullbacks of volume classes span all of $\Hb 3(\PSL_2\mathbb K;\bR)$?
\end{question}

\subsection{On the collection of $\rho$-dense subgroups}\label{question}
Let $G$ be a discrete group and suppose $\rho : G\to \PSL_2\bR$ is dense.  Define the set $\mathcal {DS}(\rho)$ of \emph{$\rho$-dense subgroups} by \[\mathcal {DS}(\rho) = \{H\le_{f.g.} G: \overline {\rho(H)} = \PSL_2\bR\}, \] where $H\le _{f.g.}G$ means that $H$ is a finitely generated subgroup of $G$.   The following fact may seem surprising, at first.  An outline of the proof  was communicated to the author by Yair Minsky.  
\begin{fact} \label{density spectrum R} Let $\rho_1, \rho_2: F_2 \to \PSL_2\bR$ be dense.  If $\mathcal{DS}(\rho_1) = \mathcal{DS}(\rho_2)$, then $\rho_1$ is conjugate to $\rho_2$ in $\PSL_2\bR$.  
\end{fact}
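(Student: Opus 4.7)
My plan begins with a reduction to the faithful case. If there exists $g \in \ker \rho_1 \setminus \ker \rho_2$, then for any $h \in F_2$ the subgroup $\langle g, h\rangle$ has $\rho_1$-image $\langle \rho_1(h)\rangle$, which is cyclic and hence not dense in $\PSL_2\bR$; but one can choose $h$ so that $\rho_2(\langle g, h\rangle) = \langle \rho_2(g), \rho_2(h)\rangle$ is dense (since $\rho_2$ is dense and $\rho_2(g) \neq e$), contradicting $\mathcal{DS}(\rho_1) = \mathcal{DS}(\rho_2)$. So $\ker \rho_1 = \ker \rho_2$, and both $\rho_i$ factor through the common quotient as faithful dense representations.

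The main idea is then a Goursat-type analysis of the closure $K = \overline{(\rho_1 \times \rho_2)(F_2)} \le \PSL_2\bR \times \PSL_2\bR$, a closed subgroup surjecting onto each factor. The intersections $K \cap (\PSL_2\bR \times \{e\})$ and $K \cap (\{e\} \times \PSL_2\bR)$ project to closed normal subgroups of $\PSL_2\bR$, so by simplicity of $\PSL_2\bR$ they are either trivial or all of $\PSL_2\bR$. Goursat's lemma then gives the dichotomy: either $K = \PSL_2\bR \times \PSL_2\bR$, or $K$ is the graph of a continuous automorphism $\varphi \colon \PSL_2\bR \to \PSL_2\bR$. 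In the graph case, $\rho_2 = \varphi \circ \rho_1$, and since every continuous automorphism of $\PSL_2\bR$ is conjugation by an element of $\mathrm{PGL}_2\bR$, we get $\rho_1$ and $\rho_2$ conjugate in $\mathrm{PGL}_2\bR$; one then upgrades to $\PSL_2\bR$-conjugacy by handling the outer involution of $\PSL_2\bR$ directly.

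The main step, and where the bulk of the difficulty lies, is ruling out $K = \PSL_2\bR \times \PSL_2\bR$ using the hypothesis $\mathcal{DS}(\rho_1) = \mathcal{DS}(\rho_2)$. Assuming $K$ is full, density of $(\rho_1 \times \rho_2)(F_2)$ in $\PSL_2\bR \times \PSL_2\bR$ lets us find $g_1, \ldots, g_k \in F_2$ whose $\rho_1$- and $\rho_2$-images can be independently prescribed to lie in any open sets $U, V \subset \PSL_2\bR^k$. Choosing $U$ to be a neighborhood of a Schottky tuple of hyperbolic elements---Schottky-ness being an open condition via Ping--Pong---forces $\rho_1(\langle g_1, \ldots, g_k\rangle)$ to be discrete Schottky and hence not dense. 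If $V$ can be chosen so that every tuple in $V$ generates a topologically dense subgroup of $\PSL_2\bR$, then $\langle g_1, \ldots, g_k\rangle \in \mathcal{DS}(\rho_2) \setminus \mathcal{DS}(\rho_1)$, a contradiction. The hard part is producing such $V$: topological density is not a stable open condition for small generating tuples, since small perturbations of a dense pair can generate Schottky groups. The plan here is to invoke a Breuillard--Gelander style genericity result on dense subgroups of simple Lie groups for sufficiently large $k$, or to construct $V$ directly using tuples of elliptic rotations with carefully chosen irrational angles at well-spread fixed points where density is preserved under small perturbations.
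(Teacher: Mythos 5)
First, a caveat on the comparison: the paper does not actually prove this Fact; it only records that an outline was communicated by Minsky and that the key input is the existence of elliptic elements (of irrational angle) in dense subgroups of $\PSL_2\bR$, since $(0,2)$ is open in the image of $|\tr|$. Your route, via the closure $K=\overline{(\rho_1\times\rho_2)(F_2)}\le \PSL_2\bR\times\PSL_2\bR$, is therefore genuinely different in flavor from the hinted argument --- but it has a real gap. The dichotomy you extract from Goursat is false. When $K\cap(\PSL_2\bR\times\{e\})$ and $K\cap(\{e\}\times\PSL_2\bR)$ are both trivial, Goursat only says that $K$ is the graph of an \emph{abstract} isomorphism between the subgroups $p_1(K)$ and $p_2(K)$; since projections of closed subgroups need not be closed, these can be proper dense subgroups and the isomorphism need not be continuous. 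The paradigm is an irreducible-lattice-type situation: inside $\PSL_2(\bZ[\sqrt2])\le \PSL_2\bR\times\PSL_2\bR$ (the two Galois embeddings) one finds free subgroups of rank $2$ whose two factor projections are faithful, nonelementary and non-discrete, hence dense, while the diagonal image is discrete; then $K$ is a countable discrete group, not the full product and not the graph of any continuous automorphism, and the "isomorphism" it encodes is Galois conjugation. Your Step 3 only treats $K=\PSL_2\bR\times\PSL_2\bR$, so this third case --- which is exactly where the hypothesis $\mathcal{DS}(\rho_1)=\mathcal{DS}(\rho_2)$ has to do real work, and where the elliptic-element idea from the paper's outline enters (an irrational elliptic $\rho_1(g)$ lets $\mathcal{DS}$ detect which $\rho_1(h)$ share its fixed point, data that a Galois-twisted $\rho_2$ cannot reproduce) --- is simply not addressed.

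Two smaller points. In the reduction to the faithful case you assert, without proof, that for any nontrivial $\rho_2(g)$ there is $h$ with $\langle\rho_2(g),\rho_2(h)\rangle$ dense; this is true but needs an argument (a Zassenhaus/Margulis-neighborhood argument in the style of Case 2 of Proposition \ref{prop:dense_not}). In your Step 3 the open set $V$ you worry about does exist in $\PSL_2\bR$: pairs of elliptic elements with small nonzero rotation angles, lying in a Zassenhaus neighborhood, with distinct fixed points a definite distance apart form an open set of pairs generating nonelementary, non-discrete, hence dense subgroups; so that part is repairable, though as written it is only a plan. Finally, note that conjugation by an orientation-reversing element of $\mathrm{PGL}_2\bR$ preserves $\mathcal{DS}$, so the honest output of any such argument is conjugacy in $\mathrm{PGL}_2\bR$; your remark that one "upgrades" to $\PSL_2\bR$-conjugacy by handling the outer involution directly is not something one can expect to carry out, and this caveat applies to the statement of the Fact itself.
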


The proof of Fact \ref{density spectrum R} uses the fact that dense representations into $\PSL_2\bR$ contain elliptic elements, since $(0, 2)\subset \bR$ is  open in the image of the absolute value of the trace function.  Generically, dense representations $F_2 \to \G$ do not contain any elliptics.  Due to the fractal nature of the boundary of Schottky space, one might expect an answer to the following to be more involved.
\begin{question}\label{density question}Let $\rho_1, \rho_2: F_2 \to \PSL_2\bC$ be dense.  If $\mathcal{DS}(\rho_1) = \mathcal{DS}(\rho_2)$, then is $\rho_1$ necessarily conjugate to $\rho_2$ in $\PSL_2\bC$?
\end{question}
If Question \ref{density question} has an affirmative answer, then the volume class of (the conjugacy class of) a dense representation is distinguished and separated in semi-norm from every other such class, by Theorem \ref{thm:intro_main} and Lemma \ref{lem:geom_elem}.  We note however, that Question \ref{density question} may have a negative answer, and all conjugacy classes of dense representations can still be separated in semi-norm, because there was a tremendous amount of freedom in our choice of discrete and faithful representation used to define chains on $F_2$ in Lemma \ref{manifold chains} and Proposition \ref{dense group chains}.  Otherwise, given $\rho_0: F_2\to \PSL_2\bC$, it would be interesting to understand the set of representations that are not conjugate to $\rho_0$ but that have the same volume class in bounded cohomology.  We expect that nothing interesting happens, however.
\begin{conjecture}
For a dense representation $\rho:F_2\to \G$, if $\rho': F_2\to \G$ is any other representation such that \[ \| [\rho^*\vol] - [{\rho'}^*\vol]\|<v_3,\]
then $\rho'$ is conjugate to $\rho$.  
\end{conjecture}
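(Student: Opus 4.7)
The plan is a two-step reduction: first eliminate all $\rho'$ that are not dense, then rule out dense $\rho'$ that are not $\G$-conjugate to $\rho$.

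For the first step, I would appeal to Lemma \ref{lem:geom_elem} to rule out the possibility that $\rho'$ is indiscrete but not dense: in that case $[\rho'^*\vol_3]=0$ and $\|[\rho^*\vol_3]\|_\infty=v_3$ by Theorem \ref{thm:v3_norm}, so $\|[\rho^*\vol_3]-[\rho'^*\vol_3]\|_\infty=v_3$, violating the strict inequality in the hypothesis. If instead $\rho'$ is discrete, then Theorem \ref{thm:main} applied with $H=F_2$ (on which $\rho$ is tautologically dense) yields $\|[\rho^*\vol_3]-[\rho'^*\vol_3]\|_\infty\ge v_3$, again excluded. Hence $\rho'$ must be dense.

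The core of the proof is then the assertion that two non-conjugate dense representations $\rho,\rho':F_2\to\G$ satisfy $\|[\rho^*\vol_3]-[\rho'^*\vol_3]\|_\infty\ge v_3$. The natural route is through the density spectrum $\mathcal{DS}(\cdot)$ of Section \ref{question}: first establish an affirmative answer to Question \ref{density question}, namely that if $\rho$ and $\rho'$ are non-conjugate dense, then $\mathcal{DS}(\rho)\neq\mathcal{DS}(\rho')$. Granting this, choose $H\in\mathcal{DS}(\rho)\setminus\mathcal{DS}(\rho')$ (swapping $\rho$ and $\rho'$ if necessary); then $\rho|_H$ is dense while $\rho'|_H$ is not, so the first step applies to the pair $(\rho|_H,\rho'|_H)$. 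Since the restriction $\iota^*:\Hb 3(F_2;\bR)\to\Hb 3(H;\bR)$ is semi-norm non-increasing, we conclude
\[\|[\rho^*\vol_3]-[\rho'^*\vol_3]\|_\infty\ge\|[(\rho|_H)^*\vol_3]-[(\rho'|_H)^*\vol_3]\|_\infty\ge v_3,\]
giving the required separation and hence the conjecture.

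The main obstacle is Question \ref{density question} itself: whether $\mathcal{DS}(\cdot)$ is a complete $\G$-conjugacy invariant of dense representations of $F_2$. The analogous statement for $\PSL_2\bR$ (Fact \ref{density spectrum R}) exploits the elliptic elements forced by density into $\PSL_2\bR$, but dense representations into $\G$ generically contain no elliptics and their Schottky subgroups accumulate on the fractal boundary of Schottky space, so recovering the conjugacy class from the collection of dense subgroups looks genuinely delicate. A backup strategy avoiding Question \ref{density question} would be to exploit the flexibility of the reference Kleinian representation $\rho_0:F_2\to\G$ used to construct Soma's efficient chains in Lemma \ref{manifold chains} and Proposition \ref{group chains}, choosing $\rho_0$ so that the approximating chains $Z_k(1)\in \textnormal{C}_3(F_2;\bR)$ produced via Proposition \ref{approximation of chains} simultaneously satisfy $|\rho^*\vol_3(Z_k(1))|/\|Z_k(1)\|_1>v_3-\epsilon$ and $|\rho'^*\vol_3(Z_k(1))|/\|Z_k(1)\|_1<\epsilon$; but without a priori control on where $\rho'$ sends the approximating words in $F_2$, arranging the second estimate appears to require a new geometric input beyond what is developed in this paper.
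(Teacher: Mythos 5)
You have not been asked to reproduce a proof here, because there is none: the statement is posed in the paper as a \emph{conjecture} (immediately after Question \ref{density question} in Section \ref{question}), and the paper offers no argument for it. So the honest assessment is that your proposal is a reduction, not a proof, and you have correctly identified where the open content lies. Your first step is sound and uses exactly the ingredients the paper provides: if $\rho'$ is indiscrete but not dense, Lemma \ref{lem:geom_elem} kills $[\rho'^*\vol_3]$ while Theorem \ref{thm:v3_norm} gives $\|[\rho^*\vol_3]\|_\infty=v_3$, and if $\rho'$ is discrete, Theorem \ref{thm:main} with $H=F_2$ gives separation by $v_3$; either way the hypothesis $\|[\rho^*\vol_3]-[\rho'^*\vol_3]\|_\infty<v_3$ forces $\rho'$ to be dense. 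Likewise, your observation that $\mathcal{DS}(\rho)\neq\mathcal{DS}(\rho')$ implies separation by $v_3$ (restrict to $H\in\mathcal{DS}(\rho)\setminus\mathcal{DS}(\rho')$ and use that $\iota^*$ is semi-norm non-increasing) is exactly the remark the paper makes in the introduction.

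The genuine gap is the one you name yourself: the case of two dense, non-conjugate representations. Your route requires an affirmative answer to Question \ref{density question}, which the paper explicitly leaves open and even cautions may have a negative answer; note also that the conjecture could hold even if $\mathcal{DS}$ fails to be a complete invariant, precisely because of the flexibility in the choice of the reference Kleinian representation $\rho_0$ used to build the efficient chains in Lemma \ref{manifold chains} and Proposition \ref{dense group chains} --- this is the same ``backup strategy'' you sketch, and, as you observe, it stalls for the same reason the paper stops at a conjecture: one has no control over where $\rho'$ sends the approximating words, so one cannot force the second estimate $|\rho'^*\vol_3(Z_k(1))|/\|Z_k(1)\|_1<\epsilon$. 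In short, your proposal correctly isolates the known reductions and the missing ingredient, but it does not close the gap, and no comparison with a paper proof is possible because the statement remains open in the paper.
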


\section{Higher and lower dimensional volume classes}\label{sec:higher_dim}
For even $n\ge 4$, it is known that there is an $\epsilon_n>0$ such that the Cheeger constant of $\bH^n/\rho(H)$ is at least $\epsilon_n$ when $H$ is a free group of finite rank, a closed surface group of genus at least $2$, or a finite volume hyperbolic $3$-manifold group and $\rho$ is discrete and faithful \cite{Bowen:cheeger}.  Vanishing of the Cheeger constant is equivalent to the non-vanishing of the $n$-dimensional volume class of a discrete and faithful representation $\rho: H \to \Isom^+(\bH^n)$ \cite{KK:cheeger}, so we cannot hope to prove that a dense representation $\rho: F_2\to \Isom^+(\bH^n)$ has non-zero volume class by approximating chains built from a discrete and faithful representation $\rho_0: F_2\to \Isom^+(\bH^n)$.

In this section, we give a criterion to ensure that a dense representation $\rho: F_2\to \Isom^+(\bH^n)$ has non-vanishing $n$-dimensional volume class.  We stress, however, that we do not know if our criterion is ever satisfied for $n\ge 4$.   We recall a definition  from the introduction.

\begin{definition}\label{free approximation} Let $\Gamma$ be a discrete group, $\alpha\in \HH_n(\Gamma;\bR)$, and $K>0$.  We say that $\alpha$ is \emph{$K$-freely approximated} if there is an integer $m\ge 2$, a homomorphism $\varphi: F_m\to \Gamma$, and a chain $Z\in \textnormal C_n(F_m;\bR)$ such that $\varphi_*(Z)\in \alpha$ and $\|\partial Z\|_1\le K$.  
\end{definition}

The conclusions of the following proposition should now feel somewhat believable.  The proof more or less follows directly from the definitions and an application of the approximation scheme introduced in Section \ref{approximation section}.
\begin{proposition}\label{prop:higher_dim}
Let $n\ge 2$ and suppose $(M_i)$ is a sequence of closed and oriented hyperbolic $n$-manifolds with volume tending to infinity.  Let $[M_i]\in \HH_n(\pi_1(M_i);\bR)$ be the image of the fundamental class of $M_i$ under the natural isomorphism $\HH_n(M_i; \bR) \to \HH_n(\pi_1(M_i);\bR)$.  

If there is a $K>0$ such that $[M_i]$ is $K$-freely approximated for all $i$, then for any dense representation $\rho: F_2 \to \Isom^+(\bH^n)$,  \[[\rho^*\vol_n]\not=0 \in \Hb n(F_2; \bR).\]

\end{proposition}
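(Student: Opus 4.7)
The plan is to argue by contradiction, using the approximation scheme of Proposition \ref{approximation of chains}. Fix a basepoint $x\in \bH^n$ and suppose, toward a contradiction, that $[\rho^*\vol_n]=0$; then there exists $b\in \Cb{n-1}(F_2;\bR)$ with $\rho^*\vol_n^x = \delta b$. For each $i$, the $K$-free approximation hypothesis gives an integer $m_i\ge 2$, a homomorphism $\varphi_i:F_{m_i}\to \pi_1(M_i)$, and a chain $Z_i\in \Cc n(F_{m_i};\bR)$ such that $\varphi_{i*}(Z_i)$ is a cycle representing $[M_i]$ and $\|\partial Z_i\|_1\le K$. Let $\rho_i:\pi_1(M_i)\hookrightarrow \Isom^+(\bH^n)$ be the holonomy of the hyperbolic structure on $M_i$, and set $\rho_{0,i}:=\rho_i\circ\varphi_i: F_{m_i}\to \Isom^+(\bH^n)$.

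Because cocycle--cycle pairings descend to the cohomology--homology pairing,
\[\rho_{0,i}^*\vol_n^x(Z_i) \;=\; \rho_i^*\vol_n^x(\varphi_{i*}Z_i)\;=\; \langle \rho_i^*[\vol_n], [M_i]\rangle\;=\; \vol(M_i),\]
the last equality being the classical computation of the volume of a lattice representation. Since $\vol_n^x$ is continuous and bounded on $(\Isom^+(\bH^n))^{n+1}$, Proposition \ref{approximation of chains} applies with $G=F_2$, $\mathcal{G}=\Isom^+(\bH^n)$, $B=\vol_n^x$, and $\rho_0 = \rho_{0,i}$; taking $\epsilon=1$ produces a chain $\widetilde Z_i\in \Cc n(F_2;\bR)$ with
\[|\rho^*\vol_n^x(\widetilde Z_i) - \vol(M_i)| < 1 \qquad \text{and} \qquad \|\partial \widetilde Z_i\|_1 \le \|\partial Z_i\|_1 \le K.\]

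Under the standing assumption $\rho^*\vol_n^x = \delta b$, one would then have
\[|\rho^*\vol_n^x(\widetilde Z_i)| \;=\; |b(\partial \widetilde Z_i)| \;\le\; \|b\|_\infty \|\partial \widetilde Z_i\|_1 \;\le\; K\|b\|_\infty,\]
uniformly in $i$, contradicting the lower bound $|\rho^*\vol_n^x(\widetilde Z_i)| > \vol(M_i) - 1$, which diverges by hypothesis. The argument is essentially formal once Proposition \ref{approximation of chains} is available: note that no analogue of Lemma \ref{lem:borel_cont} is needed here, as $\vol_n^x$ is everywhere continuous for $x\in \bH^n$, and no efficiency hypothesis on $Z_i$ is needed either because we require only non-vanishing, not a lower bound on the seminorm as in Lemma \ref{scheme}. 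The real obstacle lies not in this argument but in the hypothesis itself: for $n\ge 4$ it is unclear whether any sequence of closed hyperbolic $n$-manifolds with diverging volumes can be uniformly $K$-freely approximated, as already noted in the remark following the statement.
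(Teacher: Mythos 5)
Your proposal is correct and follows essentially the same route as the paper: pair the $K$-freely approximating chains with the volume cocycle to produce values $\vol(M_i)\to\infty$, transfer them to $\Cc n(F_2;\bR)$ via Proposition \ref{approximation of chains} while keeping $\|\partial\,\cdot\,\|_1\le K$, and conclude that any bounded primitive $b$ of $\rho^*\vol_n^x$ would have to satisfy $\|b\|_\infty\ge(\vol(M_i)-1)/K$, which is impossible. The only cosmetic difference is that you phrase it as an explicit contradiction while the paper states the same estimate directly to show no bounded primitive exists.
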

\begin{proof}
By definition of $K$-free approximation, there are maps $\varphi_i : F_{m_i}\to \pi_1(M_i)$ and  chains $Z_i \in \textnormal C_n(F_{m_i};\bR)$ such that $\varphi_{i*}(Z_i)\in [M_i]$ and $\|\partial Z_i\|_1<K$.  Let $\rho_i: \pi_1(M_i) \to \Isom^+(\bH^n)$ be a hyperbolization of $M_i$;  consider $\rho_i\circ\varphi_i: F_{m_i}\to \Isom^+(\bH^n)$.  Since $\varphi_{i*}(Z_i)\in [M_i]$, for any $x\in \bH^n$, we have 
\[\langle [\varphi_{i*}(Z_i)], [\rho_i^* \vol_n^x]\rangle = \int _{M_i}d\vol =  \vol(M_i)\to \infty.\]
Indeed, 
\[\vol(M_i) = \langle \varphi_{i*}(Z_i), \rho_i^* \vol_n^x\rangle= \langle Z_i, (\rho_i\circ \varphi_i)^* \vol_n^x\rangle. \]

By Proposition \ref{approximation of chains}, there is a chain $Z_i(1)\in \textnormal C_n(F_2;\bR)$ such that 
\[|\rho^*\vol_n^x(Z_i(1))|>\vol(M_i) -1,\]
 $\|Z_i(1)\|_1\le \|Z_i\|_1$, and $\|\partial Z_i(1)\|_1\le \|\partial Z_i\|_1<K$.  If  $b \in \textnormal C^{n-1}(F_2;\bR)$ is such that $\delta b = \rho^*\vol_n^x$, then 
\[|\rho^*\vol^x_n(Z_i(1))|  = |b(\partial Z_i(1))|.\]
But then 
\[ \|b\|_\infty >\frac{\vol(M_i) - 1}{K}\] 
for every $i$. The right hand side goes to infinity with $i$, so $b$ is not bounded.  Thus $[\rho^*\vol^x_n] \not  = 0 \in \Hb n (F_2;\bR)$.   
\end{proof}

\begin{remark}\label{rem:dense_n=4}
Suppose that one can find a sequence of $n$-manifolds $(M_i)$ satisfying the hypotheses of Proposition \ref{prop:higher_dim}.   If one is able to construct dense representations $\{\rho_\theta: F_2 \to \Isom^+(\bH^n)\}_{\theta\in \Lambda}$ by analogy with those defined in Section \ref{constructions}, then for even integers $n\ge 4$, $\dim_\bR \Hb n (F_2;\bR) = \#\bR$.  Indeed, assume we have a finite collection $\{\rho_{\theta_1}, ..., \rho_{\theta_m}\}$ and subgroups $\iota_j: H_{\theta_j}\to  F_2$ such that $\rho_{\theta_k}(H_{\theta_j})$ is dense if $k=j$, and a Schottky group otherwise.  Then $[(\rho_{\theta_k}\circ \iota_j)^*\vol_n] = 0 \in \Hb n (H_{\theta_j};\bR)$ by the combination of \cite{KK:cheeger} and \cite{Bowen:cheeger}, as long as $k\not= j$.  We can then apply a slight modification of the proof of Theorem \ref{uncountable family}; we would only be able to prove linear independence of volume classes of these dense representations in $\Hb n(F_2;\bR)$ but not necessarily in the reduced space.  It seems that such  representations $\{\rho_\theta: F_2 \to \Isom^+(\bH^n)\}_{\theta\in \Lambda}$ should not be too difficult to construct by hand, as we have done for $n=3$.
\end{remark}

The above criterion is just an example of how to apply Proposition \ref{approximation of chains} to show that some bounded classes may be non-trivial, with the necessary auxiliary information.  There are notions of straight chains with bounded volume in complex hyperbolic space; the statement of the proposition can be modified appropriately.  Also, the manifolds $(M_i)$ may be chosen to be non-compact with finite volume; the relevant feature is that the fundamental group of a cross section of a cusp is amenable.  Many additional modifications can be made, and the reader is encouraged to make them.

We would like to convince the reader that free approximation of fundamental classes of manifolds is not an entirely contrived concept; although we admit that it may be a low dimensional phenomenon.  

\begin{lemma}\label{lem:surface_approx}	Let $X$ be a closed and oriented hyperbolic surface of genus $g\ge 2$.  Then $[X]$ is $2$-freely approximated.
\end{lemma}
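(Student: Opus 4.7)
The strategy is to puncture $X$ to convert $\pi_1$ into a free group, build a singular $2$-chain on the punctured surface whose boundary is a single loop parametrizing the puncture, transport that chain into the group chain complex of the free group, and then correct by a degenerate $2$-simplex $[1,1,1]$ so that the pushforward becomes a genuine cycle representing $[X]$ with boundary of $\ell_1$-norm exactly $2$.

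Fix a standard presentation $\pi_1(X) = \langle a_1, b_1, \ldots, a_g, b_g \mid w \rangle$ with $w = \prod_{i=1}^g [a_i, b_i]$, let $D \subset X$ be a small embedded open disk, and set $X' = X \setminus D$.  Then $X'$ is aspherical with $\pi_1(X') \cong F_{2g}$ generated by the $a_i, b_i$, and $\partial X'$ represents $w \in F_{2g}$.  The inclusion $\iota\colon X' \hookrightarrow X$ induces $\varphi := \iota_* \colon F_{2g} \twoheadrightarrow \pi_1(X)$ with $\varphi(w) = 1$.  First I will choose a singular chain $Z_0 \in \Cc{2}(X';\bR)$ representing the relative fundamental class $[X', \partial X'] \in \HH_2(X',\partial X';\bR)$ such that $\partial Z_0 = \ell$ is a single singular $1$-simplex based at a chosen basepoint $\bar p_0 \in \partial X'$ parametrizing $\partial X'$.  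Such a $Z_0$ exists because any initial representative has boundary a $1$-cycle on $\partial X' \cong S^1$ in the fundamental class, and the discrepancy with $\ell$ is a $1$-boundary in $\Cc{1}(\partial X';\bR)$, which can be absorbed into $Z_0$.

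Next I will apply the standard $\ell_1$-norm non-increasing chain map $\Cc{*}(X';\bR) \to \Cc{*}(F_{2g};\bR)$ induced by a classifying map $X' \to BF_{2g}$, using the straightening of Section~\ref{isometric chain maps} adapted to $\bH^2$, so that a loop at $\bar p_0$ representing $g \in F_{2g}$ is sent to the group $1$-simplex $[1,g]$.  This gives $Z_G \in \Cc{2}(F_{2g};\bR)$ with $\partial Z_G = [1,w]$.  Setting $Z := Z_G - [1,1,1]$ and using $\partial[1,1,1] = [1,1]$, we obtain $\partial Z = [1,w] - [1,1]$, so $\|\partial Z\|_1 \le 2$.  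To check that $\varphi_* Z \in [X]$, I first note that $\varphi_*(\partial Z) = [1,\varphi(w)] - [1,1] = [1,1] - [1,1] = 0$, so $\varphi_* Z$ is a cycle.  For the homology class, observe that in $\Cc{*}(X;\bR)$, $\iota_* Z_0$ has boundary $\iota_*\ell$, which bounds $D$; choosing a singular $2$-simplex $V \in \Cc{2}(D;\bR)$ by parametrizing $D$ with two edges collapsed to $\bar p_0$ gives $\partial V = \iota_*\ell$, so $\iota_* Z_0 - V$ is an absolute cycle representing $[X]$.  Under the classifying map $X \to B\pi_1(X)$, $\iota_* Z_0$ corresponds to $\varphi_* Z_G$, while $V$ corresponds to $[1,1,1]$ provided $D$ is chosen small enough that a lift of $V$ lies in a single Dirichlet fundamental domain for $\pi_1(X)$.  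Therefore $\varphi_* Z = \varphi_* Z_G - [1,1,1]$ represents $[X]$ in $\HH_2(\pi_1(X);\bR)$, completing the $2$-free approximation via $\varphi$ and $Z$.

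The main subtlety is in the last identification, where one must match the topological capping $-V$ inside $D$ with the algebraic modification $-[1,1,1]$ at the level of group chains; this is where both the smallness of $D$ and the specific degenerate-edge parametrization of $V$ are essential.  Once this is in place, exactly the same construction carried out on a degree-$d$ cover $X_d \to X$ (of genus $1 + d(g-1)$) and then divided by $d$ produces an $\epsilon$-free approximation of $[X]$ for every $\epsilon > 0$, matching the stronger statement announced in the remark immediately following the lemma.
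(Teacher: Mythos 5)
Your route (puncture $X$, take a relative fundamental cycle of $(X',\partial X')$ with boundary a single loop $\ell$ representing $w=\prod_i[a_i,b_i]$, push it into $\Cc{2}(F_{2g};\bR)$, and cap with the degenerate simplex $[1,1,1]$) is genuinely different from the paper's, which fan-triangulates the geodesic $4g$-gon to get an explicit one-vertex cycle $V\in\Cc{2}(\pi_1(X);\bR)$ with $\|V\|_1=4g-2$ and then respells its edge words in the free generators, so that $\varphi_*Z=V$ holds on the nose and the failure of the polygon to close up contributes exactly two boundary $1$-cells. Your boundary count ($\partial Z=[1,w]-[1,1]$, norm $2$) and the final covering/transfer remark are fine.

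The gap is in the sentence ``under the classifying map $X\to B\pi_1(X)$, $\iota_*Z_0$ corresponds to $\varphi_*Z_G$.'' You are using two independently chosen chain-level classifying constructions: a vertex-labelling (straightening) map $\Psi_{X'}:\Cc{\bullet}(X';\bR)\to\Cc{\bullet}(F_{2g};\bR)$ built from the universal cover of $X'$, and another one $\Psi_X:\Cc{\bullet}(X;\bR)\to\Cc{\bullet}(\pi_1(X);\bR)$ built from $\widetilde X$. The square relating $\varphi_*\circ\Psi_{X'}$ and $\Psi_X\circ\iota_*$ does not commute for arbitrary choices; the two maps only agree up to chain homotopy $h$, and since $Z_0$ is \emph{not} a cycle the discrepancy is $\partial(hZ_0)+h(\ell)$, where $h(\ell)$ is a $2$-cycle in $\Cc{2}(\pi_1(X);\bR)$ that need not bound. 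So ``$\varphi_*Z$ represents $[X]$'' does not follow as written. Moreover, the proviso you add (that $D$ be small so a lift of $V$ lies in one Dirichlet domain) addresses a non-issue: all three vertices of $V$ sit at the single point $\bar p_0$, so any vertex-labelling map sends $V$ to $[\gamma,\gamma,\gamma]=[1,1,1]$ automatically; the care is needed for $Z_0$, not for $V$. The fix is to make the two constructions compatible: choose a $\varphi$-equivariant lift $r:\widetilde{X'}\to\widetilde X$ of the inclusion, a fundamental domain $F'$ for $F_{2g}$ on $\widetilde{X'}$ with the lift of $\bar p_0$ in $F'$, and a fundamental domain for $\pi_1(X)$ on $\widetilde X$ containing $r(F')$ (possible since $r|_{F'}$ is injective and meets each $\pi_1(X)$-orbit at most once). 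With these choices $\Psi_X\circ\iota_*=\varphi_*\circ\Psi_{X'}$ holds exactly on chains supported in $X'$, and your argument then closes; alternatively, one can avoid the comparison entirely by building the one-vertex cycle directly in $X$, as the paper does.
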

\begin{proof}[Sketch of proof]
There is a nice $1$-vertex triangulation of a hyperbolic surface; see Fig. 1. 
We can find a base point $\bar x$ in $X$ and a standard generating set $\{a_1, b_1, ..., a_{g}, b_{g}\}$ for $\pi_1(X, \bar x)\le \PSL_2\bR$ such that the union of the geodesic representatives of the $a_i$'s and $b_i$'s based at $\bar x$ is embedded.  Cutting $X$ open along these arcs produces a convex identification $4g$-gon with geodesic sides that can be embedded in the hyperbolic plane with vertex set contained in the preimage of $\bar x$ under the covering projection.  By choosing a vertex $x\in \bH^2$ of this $4g$-gon, we can join every vertex not adjacent to $x$ with a geodesic segment.  This process triangulates the identification polygon and defines a chain $V\in \textnormal C_2(\pi_1(X);\bR)$ that represents $[X]$; we have $\|V\|_1 = 4g -2$.  Let $x_1,y_1, ..., x_{g}, y_{g}$ be a free basis for $F_{2g}$.  We define a homomorphsim $\varphi: F_{2g}\to \pi_1(X)$ given by $\varphi(x_j) = a_j$ and $\varphi(y_j ) = b_j$.  

\begin{figure}[ht]
    \centering
    \includegraphics{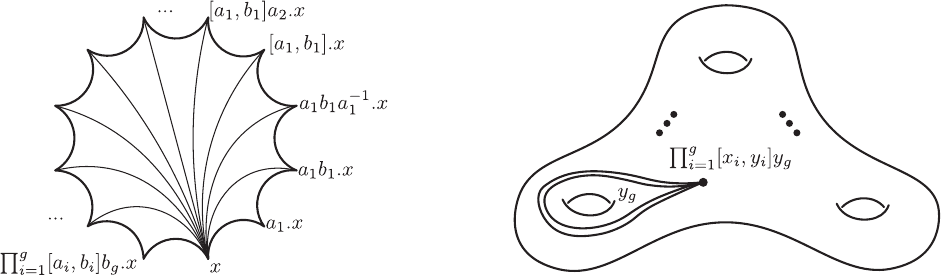}
    \label{fig:surface}
    \caption{Left: $X$ cut along a standard generating set and a nice $1$-vertex locally geodesic triangulation of $X$.  Right: Using $F_{2g}$, the polygon on the left does not quite close up to form a closed surface, because $y_g\not= \prod_{i =1}^g[x_i, y_i]y_g\in F_{2g}$.}
\end{figure}

There is a chain $Z\in \textnormal C_2(F_{2g};\bR)$ such that $\varphi_*(Z) = V$ and such that $\|Z\|_1 = \|V\|_1$ and $\|\partial Z\|_1=2$;  $Z$ is obtained by replacing words in $\{a_i, b_i\}$ with the corresponding words spelled with $\{x_i, y_i\}$.  Topologically, the quotient of $Z$ by the identifications induced by $F_{2g}$ is a $2$-dimensional CW-complex that is obtained by taking the metric completion of $X$ after removing the interior of the based geodesic $b_g$.  The boundary of this complex consists of two $1$-cells.  
\end{proof}
Take any sequence $(X_g)$ of closed hyperbolic surfaces with genus $g$ tending to $\infty$.  By Lemma \ref{lem:surface_approx}, $[X_g]$ is $2$-freely approximated for all $g$.  In the sketch of the proof of Lemma \ref{lem:surface_approx}, the chains $V_g$ are $\frac{\area (X_g)}{\|V_g\|_1}= \frac{4\pi(g-1)}{4g-2}$-efficient.    Combined with Lemma \ref{scheme}, we obtain the following.
\begin{corollary}\label{surface corollary} 
If $\rho:F_2\to \PSL_2\bR$ is a dense representation, then $\|[\rho^*\vol_2]\|_\infty = \pi$.
\end{corollary}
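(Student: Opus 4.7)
The upper bound is immediate: since $\rho$ is a group homomorphism, $\rho^*\colon \Hcb 2(\PSL_2\bR;\bR)\to \Hb 2(F_2;\bR)$ is norm non-increasing, and $\|[\vol_2]\|_\infty=\pi$ (the supremal area of an ideal hyperbolic triangle). So the task is to show $\|[\rho^*\vol_2]\|_\infty\ge \pi$, and the plan is to verify the hypotheses of Lemma \ref{scheme} with $\epsilon$ arbitrarily close to $\pi$.

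The input comes from the preceding discussion: for each $g\ge 2$ pick a closed oriented hyperbolic surface $X_g$ and, via Lemma \ref{lem:surface_approx}, a homomorphism $\varphi_g\colon F_{2g}\to \pi_1(X_g)$ together with a chain $Z_g\in \textnormal C_2(F_{2g};\bR)$ satisfying $\varphi_{g*}(Z_g)=V_g\in[X_g]$, $\|V_g\|_1=\|Z_g\|_1=4g-2$, and $\|\partial Z_g\|_1\le 2$. Let $\rho_g\colon\pi_1(X_g)\to\PSL_2\bR$ denote the hyperbolization. Since $V_g$ represents the fundamental class, Gauss--Bonnet gives
\[
(\rho_g\circ\varphi_g)^*\vol_2(Z_g)=\int_{X_g}d\vol=\area(X_g)=4\pi(g-1),
\]
so these chains are $\frac{4\pi(g-1)}{4g-2}$-efficient.

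Now fix a dense representation $\rho\colon F_2\to\PSL_2\bR$ and a free basis $\{z^1,z^2\}$ of $F_2$. For each $g$, I apply Proposition \ref{approximation of chains} with $\mathcal G=\PSL_2\bR$, $\rho_0=\rho_g\circ\varphi_g\colon F_{2g}\to\PSL_2\bR$, and the continuous cocycle $B=\vol_2\in \Cb 2(\PSL_2\bR;\bR)$ to produce a chain $Z_g(1)\in \textnormal C_2(F_2;\bR)$ such that
\[
\bigl|\rho^*\vol_2(Z_g(1))-(\rho_g\circ\varphi_g)^*\vol_2(Z_g)\bigr|<1,\qquad \|Z_g(1)\|_1\le 4g-2,\qquad \|\partial Z_g(1)\|_1\le 2.
\]
(Implicit in the construction, Lemma \ref{approximation} selects neighborhoods $U_i\subset\PSL_2\bR$ of $\rho_g(\varphi_g(z^i))$ small enough that any $2g$-tuple of $\rho$-values in these neighborhoods approximates the $\rho_0$-images of the finitely many words appearing in $Z_g$; density of $\rho$ supplies the required images in $U_i$.) Then
\[
\frac{|\rho^*\vol_2(Z_g(1))|}{\|Z_g(1)\|_1}\;\ge\;\frac{4\pi(g-1)-1}{4g-2}\;\longrightarrow\;\pi,
\]
and since the numerator tends to infinity while $|\rho^*\vol_2(Z_g(1))|\le \pi\|Z_g(1)\|_1$, one has $\|Z_g(1)\|_1\to\infty$, giving
\[
\frac{\|\partial Z_g(1)\|_1}{\|Z_g(1)\|_1}\;\le\;\frac{2}{\|Z_g(1)\|_1}\;\longrightarrow\;0.
\]
Thus for every $\delta>0$, discarding finitely many $g$ puts the sequence $(Z_g(1))$ into the framework of Lemma \ref{scheme} with threshold $\pi-\delta$, yielding $\|[\rho^*\vol_2]\|_\infty\ge \pi-\delta$; letting $\delta\to 0$ completes the proof.

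No step here looks genuinely difficult: Lemma \ref{lem:surface_approx} does all the geometric work of producing low-boundary chains of prescribed area, Proposition \ref{approximation of chains} transfers their volume-evaluations across $\rho$, and Lemma \ref{scheme} converts the efficiency plus boundary smallness into a lower bound on the seminorm. The only point requiring care is ensuring $\|Z_g(1)\|_1\to\infty$ (so that the boundary ratio goes to zero), which is forced automatically by the trivial bound $|\rho^*\vol_2(Z_g(1))|\le\pi\|Z_g(1)\|_1$ and the fact that $|\rho^*\vol_2(Z_g(1))|\to\infty$.
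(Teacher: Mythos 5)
Your proposal is correct and is essentially the paper's own argument: the paper also combines the $2$-freely approximated, $\frac{4\pi(g-1)}{4g-2}$-efficient one-vertex chains from Lemma \ref{lem:surface_approx} with Proposition \ref{approximation of chains} (via Lemma \ref{approximation}) and the criterion of Lemma \ref{scheme} to get $\|[\rho^*\vol_2]\|_\infty\ge\pi-\delta$ for all $\delta>0$, with the upper bound coming from $\|[\vol_2]\|_\infty=\pi$. The only nitpick is notational: the neighborhoods in Lemma \ref{approximation} are taken around the images of the free basis of $F_{2g}$ under $\rho_g\circ\varphi_g$ (not of a basis of $F_2$), but this is exactly how you in fact use them, so nothing is missing.
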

Corollary \ref{surface corollary} is obtained easily from the sophisticated theory of computing bounded cohomology via boundary maps; see, e.g. \cite[Section 4.3]{BIW:hermitian}.  However, we emphasize that our hands on approach leads to a completely elementary and geometric proof.

\begin{remark} For any closed hyperbolic $3$-manifold $M$, there is a sequence of covers $M_d\to M$ such that $[M_d]$ is $K$-freely approximated for all $d$; the constant $K$ is a function of the genus of a certain immersed $\pi_1$-injective surface in $M$, that of a virtual fiber.  The proof, which we omit, relies on the positive resolution of the Virtual Fibering Conjecture.
\end{remark}

\providecommand{\bysame}{\leavevmode\hbox to3em{\hrulefill}\thinspace}
\providecommand{\href}[2]{#2}

\Addresses

\end{document}